\numberwithin{equation}{section}
 \newbox \abstractbox
\renewenvironment{abstract}{\global\setbox\abstractbox=\vbox\bgroup
 \hsize=\textwidth
  \vskip 1.2cm
  \noindent\unskip \textbf{Abstract.}
 }
 {
 \egroup}
\def\@settitle{%
  \bgroup
  \centering
  \vglue1cm
  \fontsize{12}{15}\fontseries{b}\selectfont
  \@title
  \vskip 20pt plus 6pt minus 8pt
  \egroup
}
\def\@setauthors{%
  \begingroup
  \trivlist
  \centering \bfseries
 \normalsize\@topsep30\p@\relax
  \advance\@topsep by -\baselineskip
  \item\relax
  \andify\authors
 {\rmfamily\authors}%
  \endtrivlist
  \endgroup
}
\def\@setaddresses{\par
  \nobreak \begingroup
\normalsize
  \def\author##1{\nobreak\addvspace\bigskipamount}%
  \def\\{\unskip, \ignorespaces}%
  \interlinepenalty\@M
  \def\address##1##2{\begingroup
    \par\addvspace\bigskipamount\noindent
    \@ifnotempty{##1}{(\ignorespaces##1\unskip) }%
    {\ignorespaces##2}\par\endgroup}%
  \def\curraddr##1##2{\begingroup
    \@ifnotempty{##2}{\nobreak\indent{\itshape Current address}%
      \@ifnotempty{##1}{, \ignorespaces##1\unskip}\/:\space
      ##2\par}\endgroup}%
  \def\email##1##2{\begingroup
    \@ifnotempty{##2}{\nobreak\noindent{\itshape E-mail address}%
      \@ifnotempty{##1}{, \ignorespaces##1\unskip}\/:
       ##2\par}\endgroup}%
   \def\urladdr##1##2{\begingroup
    \@ifnotempty{##2}{\nobreak\indent{\itshape URL}%
      \@ifnotempty{##1}{, \ignorespaces##1\unskip}\/:\space
      \ttfamily##2\par}\endgroup}%
  \addresses
  \endgroup
}
  \renewcommand\section{\@startsection{section}{1}{\z@}%
 {27pt plus 6pt minus 8pt}{14pt plus 6pt minus 8pt}
 {\center\normalfont\large\bfseries}}
 \def\subsection{\@startsection{subsection}{2}%
  \z@{.5\linespacing\@plus.7\linespacing}{.2\linespacing}%
  {\normalfont\bfseries}}
\def\subsubsection{\@startsection{subsubsection}{3}%
  \z@{.5\linespacing\@plus.7\linespacing}{-.5em}%
  {\normalfont\bfseries}}
\newcommand{\eps}{\varepsilon}
\newcommand{\norm}[1]{\Vert#1\Vert}
\newcommand{\abs}[1]{\left\vert#1\right\vert}
\newcommand{\inner}[1]{\left(#1\right)}
\newcommand{\comi}[1]{\left<#1\right>}
\newcommand{\normm}[1]{{ \vert\kern-0.25ex \vert\kern-0.25ex \vert #1
		\vert\kern-0.25ex \vert\kern-0.25ex \vert}}
\def\inte#1{
\displaystyle\mathop{#1\kern0pt}^\circ }
\let\pa=\partial
\let\al=\alpha
\let\f=\frac
\let\om=\omega
\let\ka=\theta
\def\cP{{\mathcal P}}
\def\pa{\partial}
\def\virgp{\raise 2pt\hbox{,}}
\def\cdotpv{\raise 2pt\hbox{;}}
 \def\si{\sigma}
\def\C{\mathop{\mathbb C\kern 0pt}\nolimits}
\def\DD{\mathop{\mathbb D\kern 0pt}\nolimits}
\def\EE{\mathop{{\mathbb E \kern 0pt}}\nolimits}
\def\K{\mathop{\mathbb K\kern 0pt}\nolimits}
\def\N{\mathop{\mathbb N\kern 0pt}\nolimits}
\def\Q{\mathop{\mathbb Q\kern 0pt}\nolimits}
\def\R{\mathop{\mathbb R\kern 0pt}\nolimits}
\def\SS{\mathop{\mathbb S\kern 0pt}\nolimits}
\def\<{\langle}
\def\>{\rangle}
\def\S{\mathbb{S}}
\def\R{\mathbb{R}}
\def\T{\mathbb{T}}
\def\Z{\mathbb{Z}}
\def\N{\mathbb{N}}
\def\si{\sigma}
\def\th{\theta}
\def\ga{\gamma}
\def\al{\alpha}
\def\be{\beta}
\def\de{\delta}
\def\vphi{\varphi}
\def\pa{\partial}
\def\vep{\varepsilon}
\def\ZZ{\mathop{\mathbb Z\kern 0pt}\nolimits}
\def\TT{\mathop{\mathbb T\kern 0pt}\nolimits}
\def\P{\mathop{\mathbb P\kern 0pt}\nolimits}
\def\th{\theta}
\newcommand{\beq}{\begin{equation}}
\newcommand{\eeq}{\end{equation}}
\newcommand{\ben}{\begin{eqnarray}}
\newcommand{\een}{\end{eqnarray}}
\newcommand{\beno}{\begin{eqnarray*}}
\newcommand{\eeno}{\end{eqnarray*}}
\newtheorem{thm}{Theorem}[section]
\newtheorem{lem}[thm]{Lemma} 
\newtheorem{corollary}[thm]{Corollary} 
\newtheorem{prop}[thm]{Proposition} 
\theoremstyle{assumption}
\newtheorem{definition}[thm]{Definition}
 \theoremstyle{remark}
\newtheorem{rmk}[thm]{Remark}
\begin{document}

\title[Existence and sharp smoothing effect for the Boltzmann]
{On the Boltzmann equation with soft potentials: Existence, uniqueness and  smoothing  effect of mild solutions}

\author[]{ Ling-Bing He, Jie Ji and Wei-Xi Li}
\address[L.-B. He]{Department of Mathematical Sciences, Tsinghua University\\
Beijing 100084,  P. R.  China.} \email{hlb@tsinghua.edu.cn}
\address[J. Ji]{School of Mathematics, Nanjing University  of Aeronautics and Astronautics\\
	Nanjing 211106,  P. R.  China.} \email{jij\_24@nuaa.edu.cn}
\address[W.-X. Li]{School of Mathematics and Statistics, Wuhan University,
	Wuhan 430072,  China   \&  Hubei Key Laboratory of Computational Science, Wuhan University, Wuhan 430072,  P. R. China } \email{wei-xi.li@whu.edu.cn}

\keywords{Non-cutoff Boltzmann equation, spatially inhomogeneous,  soft potentials, analytic regularity, Gevrey class} 

\subjclass[2020]{35B65; 35Q20}

\begin{abstract}
We consider the spatially inhomogeneous Boltzmann equation without angular cutoff  for soft potentials.  For any given  initial datum such that the mass, energy and entropy densities are bounded and  the mass is   away from vacuum,  we establish the local-in-time existence and uniqueness of mild solutions,  and further provide the first result on sharp smoothing  effect in  analytic space or Gevrey space for soft potentials. 
\end{abstract}

\maketitle

\setcounter{tocdepth}{1}
\tableofcontents


\renewcommand{\theequation}{\thesection.\arabic{equation}}
\setcounter{equation}{0}

\section{Introduction}

The spatially inhomogeneous Boltzmann equation reads 
\ben\label{Bolt}
\pa_t f+v\cdot \partial_xf=Q(f,f),\quad f|_{t=0}=f_0,
\een
where $f(t,x,v)$ stands for  the distribution function for particles at position $x$,  time $t\geq0$ with velocity $v$. The Boltzmann collision operator $Q$ is a bilinear operator which acts   on the velocity variable $v$, that is
\beno
Q(f,g)=\int_{\R^3}\int_{\mathbb{S}^2}B(v-v_*,\si)(f'_*g'-f_*g)d\si dv_*,
\eeno
where here and below we use the standard shorthand $f=f(v),g_*=g(v_*),f'=f(v'),g'_*=g(v'_*)$, with $v',v'_*$  given by
	\beno
	v'=\frac{v+v_*}{2}+\frac{|v-v_*|}{2}\sigma \ \textrm{and}\ v_*'=\frac{v+v_*}{2}-\frac{|v-v_*|}{2}\sigma,\quad \si\in\mathbb{S}^2.
	\eeno
	This representation is consistent with the physical conservation laws of moment and energy for   elastic collisions:
	\begin{equation}\label{conservation}
	v+v_*=v'+v'_*,  \quad
	|v|^2+|v_*|^2=|v'|^2+|v'_*|^2.
	\end{equation}
The nonnegative function $B(v-v_*,\si)$ in the collision operator is called the Boltzmann collision kernel.  Physically, it is  assumed to depend only on $|v-v_*|$ and the deviation angle $\theta$ with
\begin{equation}\label{cotheta}
	 \cos\theta= \frac{v-v_*}{|v-v_*|}\cdot\si.
\end{equation} 
In the present work,  the { basic assumptions on the kernel} ${B}$  can be concluded as follows:
\begin{enumerate}[label= $(\boldsymbol{A\arabic*})$ , leftmargin=*, widest=ii]
\item  	 The Boltzmann kernel $B$ takes the form: 
		\begin{equation}
        \label{gamma}
			B(v-v_*,\si)=|v-v_*|^\ga b  (\cos\theta),
		\end{equation}
		 where   $\cos\theta$ is defined by \eqref{cotheta} and $b$ is a nonnegative function.
		 \item  Without loss of generality, we assume  $B(v-v_*,\si)$ is supported on the set $(v-v_*)\cdot\sigma\geq 0$  which corresponds to $ \theta \in[0,\pi/2]$, since  otherwise $B$ may be replaced by its symmetrized form:		 
		  		\beno
		\overline{B}(v-v_*,\si)=B(v-v_*,\sigma)+ B(v-v_*, -\sigma).		\eeno
		 
		 \item We consider the non-cutoff Boltzmann equation and  assume the angular function $b$ in \eqref{gamma}  is not locally integrable and  has the following singular behavior:
		\begin{equation}
			\label{s}
			  \sin\theta b(\cos\theta)\sim   \theta^{-1-2s},\quad 0\leq\th\leq\f\pi2, 
		\end{equation}
		where  $0<s<1$.
		\item  The parameters $\ga$ and $s$ in \eqref{gamma} and \eqref{s} satisfy that  $\ga+2s>-1$ and $-3<\ga< 0$.
\end{enumerate}

\begin{rmk} 
 Note that the Boltzmann kernels  considered here  include the potential of the inverse power law as a typical physical model.
For inverse repulsive potential $\f1{r^{p-1}}$, it holds that $\gamma = \frac {p-5} {p-1}$ and $s = \frac 1 {p-1}$ with $p > 2$. It is easy to check that $\gamma + 4s = 1$ which implies that  assumption $(\boldsymbol{(A4)})$  is fulfilled  for the full range of the inverse power law model with soft potentials (i.e., $\ga<0$). In addition, the cases $\gamma > 0$ and $\gamma = 0$ correspond to so-called hard and Maxwellian potentials, respectively.
\end{rmk}

 \subsection{Function  spaces and statement of the main results}
 
 We will use  $\norm{\cdot}_{L^2}$ and $\inner{\cdot, \cdot}_{L^2}$ to denote the norm and inner product of  $L^2=L^2(\mathbb T_x^3\times \mathbb R_v^3)$   and use the notation   $\norm{\cdot}_{L_v^2}$ and $\inner{\cdot, \cdot}_{L_v^2}$  when the variable $v$ is specified. Similar notation  will be used for $L^\infty$.  Moreover, denote by  $L^p_{x}L^q_v=L^p(\mathbb T_x^3; L^q(\mathbb{R}_v^3))$  the classical Lebesgue space, and  similarly for the Sobolev space $H^p_{x}H^q_v.$

In this paper we will work with solutions which are away from vacuum and  admit bounded mass, energy  and entropy densities.  Precisely,  suppose the initial datum $f_{in}$  in \eqref{Bolt} satisfies that $f_{in}\geq  0$ and that
 \begin{equation}\label{finite}
\forall\ x\in\mathbb T_x^3,\qquad \left\{
\begin{aligned}
	&0<m_0\leq \int_{\mathbb R^3} f_{in}(x,v)dv \leq M_0, \\
& \int_{\mathbb R^3}  f_{in}(x,v) \abs v^2 dv\leq E_0,\\
&  \int_{\mathbb R^3}   f_{in}(x,v)\log \big(1+ f_{in}(x,v)\big )dv\leq H_0,
\end{aligned}
	\right. 
	\end{equation}
where here and throughout the paper $m_0, M_0, E_0$ and $H_0$ are given constants. 	We will prove solutions to the Boltzmann equation \eqref{Bolt} will preserve the above property, saying $f\geq 0$ and  
 \begin{equation}\label{aat}
\forall\ (t,x)\in [0,T]\times \mathbb T_x^3,\qquad  \left\{
\begin{aligned}
	&0<\frac{m_0}{2} \leq \int_{\mathbb R^3} f(t,x,v)dv\leq 2M_0,\\
&\int_{\mathbb R^3} f(t,x,v)	 |v|^2dv\leq 2E_0,\\
&\int_{\mathbb R^3} f(t,x,v)	 \log \big (1+f(t,x,v)\big )dv\leq 2H_0.\\
\end{aligned}
\right.
\end{equation}

Before stating the main result, we first recall the definition of the Gevrey spaces which are denoted by $G^r=G^r(\mathbb T_x^3\times \mathbb R_v^3)$ with $r>0$. 
We say a function $f(x,v)$ belongs to $G^r$, if there exists a constant  $C>0$ such that  
\beno
\forall\ \al,\be\in\Z^3_+,\quad \|\pa^\al_x\pa^\be_vf\|_{L^2}\leq C^{|\al|+|\be|+1}[(|\al|+|\be|)!]^r,
\eeno
or equivalently,   \begin{equation}\label{eqgev}
	e^{c(-\Delta_x-\Delta_v)^{\frac{1}{2r}}} f\in L^2_{x,v}  
\end{equation}
for some constant $c>0.$
Here   $ e^{c(-\Delta_x-\Delta_v)^{\frac{1}{2r}}} f$ is defined by 
\begin{align*}
	\mathcal F_{x,v} \Big(e^{c(-\Delta_x-\Delta_v)^{\frac{1}{2r}}} f\Big)(\xi,\eta)=e^{c(|\xi|^2+|\eta|^2)^{\frac{1}{2r}}}  \mathcal F_{x,v} f(\xi,\eta),
\end{align*}
where $\mathcal F_{x,v}$ represents  the full Fourier transform with respect to $(x,v)$ and $(\xi,\eta)$ are the Fourier dual variable of $(x,v)$. 
In particular, $f$ is real analytic if $r=1$ and   ultra-analytic if $0<r<1$.

\begin{thm}\label{thm:Gevrey}
	Assume the non-cutoff collision kernel $B$  satisfies $\mathbf{(A1)}-\mathbf{(A3)}$ above with the numbers $s,\gamma$ therein satisfying $\mathbf{(A4)}$.  Suppose that the initial datum $f_{in}$ in \eqref{Bolt} is non-negative,  satisfying  condition \eqref{finite} and  that 
	\begin{equation}\label{inquant}
		\norm{e^{a_0(1+|v|^2) } f_{in}}_{H_x^3 L_v^2 }<+\infty
	\end{equation}
	for some constant $a_0>0.$  Then
	the following assertions hold true.
	\begin{enumerate}[label=(\roman*), leftmargin=*, widest=ii]
		\item The Boltzmann equation \eqref{Bolt} admits a unique local solution 
 $f\in L^\infty([0,T],H^3_xL^2_v)$ for some $T>0,$ which is non-negative and satisfies condition \eqref{aat}.  
\item  For  any $0<t\leq T,$ the solution $f(t, x,v)$ is of  Gevrey regular in $x,v$,  that is,      
 \begin{equation*} 
	\forall \  0<t\leq T, \quad  f(t,\cdot)\in G^{\max   \{(2\tau)^{-1}, 1 \}}, 	 	
\end{equation*}
 where here and throughout the paper,
  \begin{equation}\label{tau}
	 	\tau:=\frac{2s}{2-\gamma}.
	\end{equation}
  Moreover,  for  any   $\lambda>\max\big \{1, \frac{1}{2\tau} \big \}$   we can find a constant $C_*>0$,  depending only on $\lambda,$ the quantity in \eqref{inquant}    and the constants $m_0, M_0, E_0$ and $H_0$ in \eqref{finite}, such that the following quantitative estimate
	\begin{equation}\label{alpha1}
	\quad  	\sup_{t\leq T}t^{(\lambda+1)\abs\alpha+ \lambda \abs\beta} \norm{\partial_x^{\alpha}\partial_{v}^{\beta}f(t)}_{H^3_xL^2_v} \leq    C_*^{|\alpha|+|\beta|+1} [(|\alpha|+|\beta|)!]^{\max \{(2\tau)^{-1}, 1\}}
	\end{equation}
	holds true for any $\al,\be\in\Z^3_+$. 
	\end{enumerate} 
\end{thm}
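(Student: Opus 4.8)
I would prove (i) and (ii) in turn: first construct the solution by a weighted energy method in $H^3_xL^2_v$, then run a time‑weighted Gevrey hierarchy on it. For (i), set $\langle v\rangle=(1+|v|^2)^{1/2}$ and pass to the Gaussian‑weighted unknown $g=e^{a(t)\langle v\rangle^2}f$ with $a(t)=a_0-\lambda_0t$ for a large $\lambda_0>0$, so that $\pa_tg+v\cdot\pa_xg+\lambda_0\langle v\rangle^2g=e^{a(t)\langle v\rangle^2}Q(f,f)$. One runs a linearized iteration and closes energy estimates in $H^3_xL^2_v$ (recall $H^3_x$ is a Banach algebra on $\mathbb T^3$), using two inputs available from the earlier sections: the non‑cutoff coercivity estimate, which under the hydrodynamic bounds \eqref{aat} produces a dissipation term of order $s$ in $v$ with velocity weight $\langle v\rangle^{\gamma/2}$, and the trilinear upper bounds for $Q$ in weighted spaces; the damping $\lambda_0\langle v\rangle^2g$ together with the Gaussian weight absorbs the velocity‑weight losses caused by $\gamma<0$, which is where $-3<\gamma<0$ and $\gamma+2s>-1$ enter. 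This gives $\frac{d}{dt}\|g\|^2_{H^3_xL^2_v}+(\text{dissipation})\ls\|g\|^3_{H^3_xL^2_v}$, hence a local solution on $[0,T]$ by iteration plus compactness; uniqueness follows from an $L^2$‑type difference estimate using $Q(f,f)-Q(\tilde f,\tilde f)=Q(f-\tilde f,f)+Q(\tilde f,f-\tilde f)$. The passage from \eqref{finite} to \eqref{aat} is by continuity in $t$: the upper bounds on mass, energy and entropy densities are read off from the weighted estimate (which controls all polynomial moments and, via $f\log(1+f)\ls f+f^2$, the entropy density), while the non‑vacuum lower bound $\int f\,dv\ge m_0/2$ comes from integrating along the characteristics of $\pa_t+v\cdot\pa_x$ using the $L^\infty_{t,x,v}$ bound on the collision frequency $\int\!\!\int B(v-v_*,\sigma)f_*\,d\sigma dv_*$, valid for soft potentials under \eqref{aat}.

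\textbf{Part (ii): set‑up.} Keep $g=e^{a(t)\langle v\rangle^2}f$, and for $\alpha,\beta\in\Z^3_+$ with $N=|\alpha|+|\beta|$ introduce the time‑weighted derivatives $g_{\alpha,\beta}:=t^{(\lambda+1)|\alpha|+\lambda|\beta|}\pa_x^\alpha\pa_v^\beta g$. Using $\pa_v^\beta(v_kh)=v_k\pa_v^\beta h+\beta_k\pa_v^{\beta-e_k}h$ one gets
\[
\begin{aligned}
\pa_tg_{\alpha,\beta}+v\cdot\pa_xg_{\alpha,\beta}+\lambda_0\langle v\rangle^2g_{\alpha,\beta}
={}&\frac{(\lambda+1)|\alpha|+\lambda|\beta|}{t}\,g_{\alpha,\beta}\\
&-\frac1t\sum_k\beta_k\,g_{\alpha+e_k,\beta-e_k}+\mathcal Q_{\alpha,\beta},
\end{aligned}
\]
where $\mathcal Q_{\alpha,\beta}=t^{(\lambda+1)|\alpha|+\lambda|\beta|}\pa_x^\alpha\pa_v^\beta\big(e^{a(t)\langle v\rangle^2}Q(f,f)\big)$; the exponents $(\lambda+1,\lambda)$ are chosen precisely so that the hypoelliptic commutator term $\frac1t\sum_k\beta_kg_{\alpha+e_k,\beta-e_k}$ stays at total order $N$ and scales like $t^{-1}$, matching the time‑weight term. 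The plan is to run an $L^2$ energy estimate on $g_{\alpha,\beta}$ (with $H^3_x$ "for free", untouched by the time weight — at $t>0$ this still yields $f(t,\cdot)\in G^\mu$ by Sobolev embedding), sum over $\alpha,\beta$ against factorial weights, and close a differential inequality for the Gevrey norm $X(t):=\sum_{\alpha,\beta}\frac{\eps^N}{(N!)^\mu}\|g_{\alpha,\beta}\|_{H^3_xL^2_v}$ with $\mu=\max\{(2\tau)^{-1},1\}$ and $\eps>0$ small.

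\textbf{Part (ii): closing.} Pairing with $g_{\alpha,\beta}$: the weighted collision term $\mathcal Q_{\alpha,\beta}$ is handled by first passing to the conjugated operator $Q_a(\cdot,\cdot)=e^{a\langle v\rangle^2}Q(e^{-a\langle v\rangle^2}\cdot,e^{-a\langle v\rangle^2}\cdot)$ (the Gaussian weight is compatible with $Q$), then expanding by Leibniz into $\sum_{\alpha'\le\alpha,\,\beta'\le\beta}\binom{\alpha}{\alpha'}\binom{\beta}{\beta'}Q_a(\pa_x^{\alpha'}\pa_v^{\beta'}g,\pa_x^{\alpha-\alpha'}\pa_v^{\beta-\beta'}g)$ and using the trilinear estimate; the combinatorial bound $\binom{\alpha}{\alpha'}\binom{\beta}{\beta'}(N_1!)^\mu(N_2!)^\mu/(N!)^\mu\ls1$ (using $\mu\ge1$, $N_1+N_2=N$) makes the nonlinear contribution close as $X(t)$ times the square root of the total dissipation times lower‑order factors. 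The two $t^{-1}$ terms are absorbed by interpolation between consecutive levels: when $\beta\ne0$ one has $g_{\alpha,\beta}=t^{\lambda}\pa_{v_i}g_{\alpha,\beta-e_i}$, hence $\|g_{\alpha,\beta}\|_{H^{s-1}_v}\ls t^{\lambda}\|g_{\alpha,\beta-e_i}\|_{H^s_v}$ (level‑$(N-1)$ dissipation), and $\|g_{\alpha,\beta}\|_{L^2}\le\|g_{\alpha,\beta}\|_{H^s_v}^{1-s}\|g_{\alpha,\beta}\|_{H^{s-1}_v}^{s}$ then lets both $t^{-1}$ terms be bounded by a small multiple of the level‑$N$ dissipation plus $N^{1/s}t^{\kappa}$ (some $\kappa\ge0$) times the level‑$(N-1)$ dissipation, the surplus powers of $t$ being non‑negative thanks to $\lambda>\max\{1,(2\tau)^{-1}\}$; the exceptional cases $\beta-e_k=0$ in the commutator are dealt with by an integration by parts in $x_k$, turning the term into a cross‑pairing of levels $N-1$ and $N+1$. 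Summing gives $\frac{d}{dt}X(t)^2+(\text{total dissipation})\ls X(t)^3+X(t)^2$; since $t^{(\lambda+1)|\alpha|+\lambda|\beta|}\to0$ as $t\to0^+$ for $N\ge1$, $X(0)=\|e^{a_0\langle v\rangle^2}f_{in}\|_{H^3_xL^2_v}<\infty$ by \eqref{inquant}, so a continuation argument yields $X(t)\le2X(0)$ on a short interval, which after discarding the $\eps^N/(N!)^\mu$ weights is exactly \eqref{alpha1} with $C_*$ fixed by $\eps$ and $X(0)$.

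\textbf{Main obstacle.} The crux is making the absorption of the $t^{-1}$ time‑weight and hypoelliptic commutator terms quantitative and uniform in the derivative order $N$: the dissipation is the degenerate, velocity‑weighted fractional norm with weight $\langle v\rangle^{\gamma/2}$, which at large velocities weakens like $\langle v\rangle^\gamma$, and it is the balance of this degeneracy against the transport operator $v\cdot\pa_x$ and the Gaussian weight that forces the threshold $\tau=2s/(2-\gamma)$ and the Gevrey exponent $\max\{(2\tau)^{-1},1\}$. Getting the interpolation exponents, the admissible range of $\lambda$, and the factorial bookkeeping to fit together exactly — so that the hierarchy closes with precisely this index and no loss — is the technical heart of the argument.
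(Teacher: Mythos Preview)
Your strategy for Part (i) is close to the paper's (time-decreasing Gaussian weight, $H^3_x$ energy, iteration), so no objection there. The real issue is Part~(ii).

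When you differentiate the time weight you generate the term $\dfrac{(\lambda+1)|\alpha|+\lambda|\beta|}{t}\,g_{\alpha,\beta}$ on the right of your energy identity. Paired with $g_{\alpha,\beta}$ this gives $\dfrac{N}{t}\|g_{\alpha,\beta}\|^2$ with a \emph{bad sign}. Your absorption mechanism relies on writing $g_{\alpha,\beta}=t^{\lambda}\partial_{v_i}g_{\alpha,\beta-e_i}$ and interpolating against the velocity dissipation; but this is available only when $\beta\neq0$. For pure spatial derivatives ($\beta=0$, $\alpha\neq0$) you have $\dfrac{(\lambda+1)|\alpha|}{t}\|g_{\alpha,0}\|^2$ and nothing to absorb it: the dissipation is purely in $v$, writing $g_{\alpha,0}=t^{\lambda+1}\partial_{x_i}g_{\alpha-e_i,0}$ gains nothing from it, and the integration-by-parts trick you mention only touches the transport commutator $\frac{1}{t}\sum_k\beta_k g_{\alpha+e_k,\beta-e_k}$, which vanishes when $\beta=0$. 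This is a genuine gap, not a bookkeeping issue: the hierarchy simply does not close on the pure-$x$ modes.

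This is precisely why the paper does \emph{not} work with $t^{(\lambda+1)|\alpha|+\lambda|\beta|}\partial_x^\alpha\partial_v^\beta$ directly. Instead it introduces the vector fields $H_\delta=\frac{1}{\delta+1}t^{\delta+1}\partial_{x_1}+t^{\delta}\partial_{v_1}$, chosen so that $[\partial_t+v\cdot\partial_x,\,H_\delta^k]=\delta k\,t^{\delta-1}\partial_{v_1}H_\delta^{k-1}$: the commutator contains \emph{only} a $\partial_v$ acting on one lower order, and crucially there is \emph{no} $\frac{k}{t}H_\delta^k$ term at all. One closes estimates on $H_\delta^k f$ (absorbing the single $\partial_{v_1}$ by the dissipation via the uniform-ellipticity Corollary giving $\|\langle D_v\rangle^{\tau}u\|\lesssim\|\langle v\rangle u\|+|||u|||$, $\tau=\frac{2s}{2-\gamma}$), and only \emph{afterwards} recovers $t^{\lambda+1}\partial_{x_1}$ and $t^{\lambda}\partial_{v_1}$ as linear combinations of $H_{\delta_1}$ and $H_{\delta_2}$ for two well-chosen $\delta_1>\delta_2\ge1$. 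A secondary point: your interpolation produces a factor $N^{1/s}$, but the degenerate weight $\langle v\rangle^{\gamma/2}$ in the dissipation means the effective uniform regularity gain is of order $\tau$, not $s$; the correct factor is $N^{1/(2\tau)}$, and this is what fixes the Gevrey index $\max\{(2\tau)^{-1},1\}$.
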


\begin{rmk}
	To overcome  the nonlinearity, it seems that  \(L^\infty_xL^2_v\) is a reasonable space for the local well-posedness theroy, and  it is natural to work with $H^{\frac{3}{2}+}_x L_v^2$ if energy method applies.  In this paper, we  restrict the spatial component to \(H^3_x\) rather than \(H^{\frac{3}{2}+}_x\), in order  to control the lower bound of the density function (see the proof of Proposition \ref{lem:mc0} for detail).   However, if only the smoothing effect is concerned with, then we can reduce the function setting to  $H^{\frac{3}{2}+}_x L_v^2$ and conclude that for any solution $f\in L^\infty([0,T], H^{\frac{3}{2}+}_x L_v^2)$ to \eqref{Bolt} with condition \eqref{aat}  fulfilled,  the quantitative estimate \eqref{alpha1} holds true.   
\end{rmk}

\begin{rmk}
	  It is widely believed that the local well-posedness property is more difficult  in the case of hard potentials; however, the method presented in this text is  applicable for \(0 \leq \gamma \leq 1\).
\end{rmk}

\begin{rmk}Note that the  quantitative estimate \eqref{alpha1} yields  
  the global analytic or Gevrey  regularity, that is, the radius of the convergence is independent of $(x,v)$. For global regularity, the Gevrey index   \(\max \{(2\tau)^{-1}, 1\} = \max \big \{\frac{2-\gamma}{4s}, 1\big \}\) seems to be sharp for initial data with the exponential weight \(e^{a_0 (1+\abs v^2)}\). Interested readers may refer to 
 Subsection \ref{sharpG} for the discussion on the sharpness of the Gevrey index.  More generally,   if we replace the above exponential weight by the sub-exponential weights   \(e^{a_0 (1+\abs v^2)^{\frac{\beta}{2}}}\) with \(\beta \in\big  (\frac{\gamma}{2} + s, 2\big )\), then the Gevrey regularity with index \(\max \big \{\frac{\beta-\gamma }{2\beta s}, 1\big \}\) can also be achieved  though with more intricate calculations.
\end{rmk}

\begin{rmk}
When the grazing collisions are considered, the Landau equation can be derived as the grazing limit of the Boltzmann equation.  The same results to that in Theorem \ref{thm:Gevrey}  with $s$ therein replaced by $1$,  still hold for the Landau equation, including  the Coulomb potential case.
\end{rmk}

\subsection{Brief review} 
Since the well-posedness and smoothing effect theory are well-explored for the non-cutoff Boltzmann equations, in what follows, we will provide a brief overview of the previous works closely related to our result.

 $\bullet$ \textbf{Local-wellposedness:} There are several existence results  for the inhomogeneous non-cutoff Boltzmann equation.    In \cite{MR1851391, MR2679369,  MR2765735, MR3177640}, the initial data are in higher order Sobolev spaces and  required to exhibit Gaussian decay in velocity. In \cite{MR4704643,MR3376931, MR4112183,MR4179789,MR4416998}, local well-posedness was established under the condition that the initial data have polynomial decay.  Recently,   a local solution was constructed by Henderson-Snelson-Tarfulea \cite{CSA2} with initial data in a  weighted \(L^\infty_{x,v}\)-space. We believe that a reasonable  function space for existence and uniqueness  may be   \(L^\infty_xL^2_v\) with polynomial decay in velocity. Our main objective in this work is to minimize the regularity requirement in the velocity space, specifically targeting \(L^2_v\). However, Sobolev regularity in the spatial variable and exponential decay in the  velocity variable are still required.

$\bullet$ \textbf{Gevrey regularity:} Due to the diffusive properties of the Boltzmann collision operator, as discussed in \cite{MR1765272, MR2863853, MR1949176, MR1639275, MR2784329, MR3942041}, the equation has attracted significant attention in the study of regularity issues. Extensive research has been conducted on the $C^\infty$ or $H^{+\infty}$ smoothing effects for the non-cutoff Boltzmann equation and related models, seeing e.g. \cite{MR2885564, MR2038147, MR3325244, MR2679369,MR4433077}. Additionally, Gevrey or Gelfand smoothing properties have been explored for these or related models, as discussed in \cite{MR2679746, MR4356815,  MR2467026, MR3665667, MR4612704} and their references. Different from the heat   equation,  the spatially inhomogeneous  Boltzmann equation is a degenerate parabolic equation.  It is highly non-trivial to improve the  Gevrey regularity to analyticity for degenerate elliptic operators.   In fact for the inhomogeneous Boltzmann equations,  so far very few analytic solutions  are available.  Recently, in \cite{chenlixu2024}, the author obtained  analytic or sharp Gevrey regularity for the inhomogeneous non-cutoff Boltzmann equation with hard potentials in the perturbation setting around  the normalized global Maxwellian.   In this work, we aim to investigate the analytic and sharp Gevrey class regularity effects for the spatially inhomogeneous non-cutoff Boltzmann equation with soft potentials. To our knowledge, this is the first work on analytic or Gevrey regularity for soft potentials.

\subsection{Difficulties and methodology} 

To investigate the well-posedness property of the Boltzmann equation,  
one main difficulty  lies  in  the  low  regularity.  In fact,  the data considered in this text only belong to $L^2_v$ for the velocity variable. So to achieve  the coercivity estimate, it is crucial  to continuously ensure both the lower and upper bounds of the  density, as well as the upper bounds on the energy and the entropy (see \eqref{finite} for definitions).  Another difficulty is 
  the lack of smallness on the  data, different from the perturbation setting. This may prevent us to close the energy when estimating the   nonlinear term $Q(f, f)$ with  the top  derivatives involved in. To ensure the  lower or upper bounds in \eqref{finite} are preserved at positives times, we   impose    \(H^3_x\)-regularity in the spatial variable, taking the advantage of the fact that  \(\|\partial_x f\|_{L^\infty} \leq C \|f\|_{H^3_x}\).  Moreover, to overcome the absence of the smallness condition,  we will work with the Sobolev space   \(\mathcal{H}^3_{x,N}L^2_v\) (see \eqref{mathch2} below for the definition) which is  equivalent to the classical Sobolev space  \(H^3_{x}L^2_v\). More details can be found in Lemma \ref{lem:j1}.
     
   For soft potentials, a major difficulty is that the collision operator \(Q\) is not uniformly elliptic and degenerates at infinity in the velocity variable (see \eqref{lower} and \eqref{trinorm}). To overcome this difficulty, a typical approach is to use some well-chosen weights to compensate for the degeneration, or conversely, to accept a trade-off in regularity for additional weights. Previous works \cite{CSA2, HJ2, MR4433077} demonstrate that solutions may  become  \(C^\infty\) at positive times for initial data with any order weights.  In \cite{HJ2}, it was shown that for the initial data with only polynomial weights in the velocity variable, the solution can only gain finite Sobolev regularity when \(\gamma + 2s < 0\). This implies that exponential weights are necessary to achieve analytic or Gevrey regularity.  
   
    It is well-known that the spatially inhomogeneous Boltzmann equation without cut-off admits a hypoelliptic property,  which can transfer regularity from the velocity variable to the spatial variable (see  e.g.\cite{MR1949176}).  However, this indirect method can only improve  $C^\infty$-regularity  to the Gevrey class of index \(\frac{1+2s}{2s}\) (see \cite{MR4356815}), even in the case of hard potentials. Thus, new methods are needed to address the regularity of both \(x\) and \(v\) at the same time. For this purpose  we introduce auxiliary vector fields that enable us to obtain regularity for both spatial and temporal variables simultaneously. As with hard potentials, the analytic or sharp Gevrey class smoothing effect depends on a quantitative estimate of directional derivatives with respect to the auxiliary vector fields \(H_\delta\), introduced in \cite{chenlixu2024}. These fields are defined by:
\begin{equation}
   	\label{vecM}
   H_\delta= \frac{1}{\delta+1}t^{\delta+1} \partial_{x_1}+ t^{\delta} \partial_{v_1},
   \end{equation}
where \(\delta \geq 1\) is an arbitrary constant. The advantage of these vector fields is that the spatial derivatives do not appear in the commutator between \(H_\delta\) and the transport operator, as observed from:
\begin{equation*} 
  	[\partial_t+v\,\cdot\,\partial_x, \,\,   H_\delta]=\delta t^{\delta-1}\partial_{v_1},
  	   \end{equation*}
where \([\cdot, \cdot]\) denotes the commutator between two operators. More generally,
 \begin{equation}\label{kehigher}
\forall\ k\geq 1,\quad 	[\partial_t+v\,\cdot\,\partial_x, \,\,   H_\delta ^k]=\delta kt^{\delta-1}\partial_{v_1} H_\delta^{k-1}.
\end{equation}
Note that \eqref{kehigher} can be derived using induction on \(k\) (see \cite{chenlixu2024} for details). This, combined with uniform elliptic estimates in  Corollary \ref{corollary:coer}, allows us to apply the diffusion in the velocity variable to obtain a crucial estimate of the directional derivatives \(H^k_\delta f\). Moreover, classical derivatives can be generated as linear combinations of \(H_\delta\) (see \eqref{generate}), which provides the desired quantitative estimate on classical derivatives.

 In this text, let $\lambda>\max\big \{1, \frac{1}{2\tau} \big \}$ be an arbitrary  given number.  We define   $\delta_1$ and $\delta_2$   by setting
\begin{equation}\label{de1de2}
	\delta_1 :=\lambda, \quad 	\delta_2:= \left\{
	\begin{aligned}
	&1,  \  \textrm{ if } \  {\ga\over 2}+2s \geq 1, \\
	&	 1+(1-2\tau)\lambda, \  \textrm{ if } \ {\ga\over 2}+2s < 1.
	\end{aligned}
	\right.
\end{equation}
Direct verification shows that
\begin{equation*}
\delta_1>\delta_2\geq 1.	
\end{equation*}
Accordingly, 
  let $H_{\delta_1}$ and  $H_{\delta_2}$ be  defined by \eqref{vecM}:
\begin{equation*}
	 H_{\delta_1}=\frac{1}{\delta_1+1} t^{\delta_1+1}\partial_{x_1}+t^{\delta_1}\partial_{v_1} \ \textrm{ and } \  H_{\delta_2}=\frac{1}{\delta_2+ 1} t^{\delta_2+1}\partial_{x_1}+t^{\delta_2}\partial_{v_1}.
\end{equation*}
Then  $\partial_{x_1}$ and $\partial_{v_1}$ can be generated by the linear combination of $H_{\delta_j}, j=1,2$, that is,
\begin{equation}
	\label{generate}
\left\{
\begin{aligned}
&t^{\lambda+ 1}\partial_{x_1}=	t^{\delta_1+ 1}\partial_{x_1}=\frac{(\delta_2+ 1)(\delta_1+1)}{\delta_2-\delta_1}  H_{\delta_1}-\frac{(\delta_2+ 1)(\delta_1+1)}{\delta_2-\delta_1} t^{\delta_1-\delta_2}H_{\delta_2},\\
&t^{\lambda}\partial_{v_1}=t^{\delta_1}\partial_{v_1}=-\frac{\delta_1+ 1}{\delta_2-\delta_1} H_{\delta_1}+\frac{\delta_2+ 1}{\delta_2-\delta_1}t^{\delta_1-\delta_2}H_{\delta_2} .
\end{aligned}
\right.
\end{equation}
This enables to control the classical  derivatives in terms of the directional derivatives in $H_{\delta_1} $ and $H_{\delta_2}.$
More generally, we define 
\begin{equation}\label{hdel12}
	H_{\delta_i, j}=\frac{1}{\delta_i+1} t^{\delta_i+1}\partial_{x_j}+t^{\delta_i}\partial_{v_j}.
\end{equation}
with $\delta_1, \delta_2$ defined in \eqref{de1de2} and $1\leq j\leq 3.$

Finally, to establish the well-posedness of the nonlinear Boltzmann equation, we use an approximation method involving linear equations. More details on this approximation process can be found in Sections \ref{sec:hinfty}
and \ref{sec:sobsmo}.

 \subsection{Notations and quantitative estimate for the collision operator}
We first list some notations used throughout the paper. 
  For a vector-valued  function or operator $A=(A_1,A_2, \ldots, A_n)$,  
  we used the convention that 
  \begin{equation*}
  	\norm{A} =\bigg (\sum\limits_{1\leq k\leq n}\norm{A_k}^2\bigg )^{\frac12}
  \end{equation*}
  for some given norm $\norm{\cdot}$.  For two operators  $T$ and $S,$   the commutator $[T,S]$ is defined by $[T,S]=TS-ST$.

We use the notation that $\comi{\cdot}=(1+\abs{\cdot}^2)^{1\over2}$. So for $v\in\mathbb R^3$ we have
$\comi v=(1+\abs v^2)^{1\over 2}$. Meanwhile $\comi {D_x}^r$ and  $\comi {D_v}^r, r\in\mathbb R,$ stand  for two Fourier multipliers with symbols 
 $
\comi {\xi }^r $ and $\comi {\eta}^r,
$
respectively, that is,
\begin{equation*}
\mathcal F_x\big(	\comi {D_x}^r h\big)(\xi, v)=\comi \xi ^r  (\mathcal F_x h)(\xi, v) \textrm{ and } \mathcal F_v\big(	\comi {D_v}^r h\big)(x, \eta)=\comi \eta ^r (\mathcal F_v h)(x, \eta).
\end{equation*}
Here and below, $\mathcal F_x$ and $\mathcal F_v$ stand  for the partial Fourier transform with respect to $x$ and $v, $ respectively, and $(\xi,\eta)\in\mathbb Z^3\times\mathbb R^3 $ is the Fourier dual variable of $(x,v)\in \mathbb T^3\times\mathbb R^3.$  Similarly,  $\comi{D_{x,v}}^{r}$ is the Fourier multiplier with symbol $(1+\abs\xi^2+\abs\eta^2)^{\frac{r}{2}}.$

We recall some facts on the  lower and upper bounds established by \cite{MR3942041,MR2959943} for the collision operator $Q$. The first one is concerned with the trilinear estimate.  If $\ga<0$, then it holds that (cf. \cite[Theorem 1.4]{MR3942041} for instance)
	\begin{equation}\label{uppbound}
	|(Q(g,h),f)_{L_v^2}| \leq C \|\comi v^7g\|_{L^2_v}\big (\normm{h}+\|\comi v^{\frac{\gamma}{2}+s} h\|_{L^2_{v}}\big )\normm{f},
	\end{equation}
where here and below the triple-norm $\normm{\cdot}$ is defined by     
\begin{equation}\label{trinorm}
	\normm{f}^2:=\norm{\comi v^{\frac{\gamma}{2}}\comi{D_v}^sf}_{L^2_v}^2+\norm{\comi v^{\frac{\gamma}{2}}(-\triangle_{\SS^2})^{s/2} f}_{L_v^2}^2.
\end{equation}
Note \eqref{uppbound} is just a specific case of \cite[Theorem 1.4]{MR3942041}  by choosing $(a,b)=(s,s)$, $(a_1,b_1)=(0,s)$ and $(w_1, w_2)=\big (\frac{\gamma}{2}+s, \frac{\gamma}{2}\big)$ therein.  Next we recall the coercivity estimate for the Boltzmann collision operator. 
Suppose $g=g(t,x,v)$ is a non-negative function  satisfying   that
	\begin{equation*}
	\|g(t,x,\cdot)\|_{L_v^1}\geq m_0, \ \,  \|\comi v^2 g(t,x,\cdot)\|_{L^1_v}\leq E_0 \ \textrm{ and }\    \|g(t,x,\cdot)\|_{L\log L}<H_0,
	\end{equation*}
	with $m_0, E_0, H_0>0$ three positive constants. Then there exist two constants $C_1,   C_2>0$, depending only on   $m_0, E_0$ and $ H_0$ above, such that	(cf.  \cite[Theorem 1.2]{MR3942041} or \cite{MR2959943} for instance)
	\begin{equation}\label{lower+}
			(-Q(g,f),f)_{L^2_v}\geq C_1	\normm{f}^2-  C_2\big(1+ \|\comi v^6 g\|_{L^2_v}\big)\|\comi v^{\frac{\gamma}{2}+s}f\|^2_{L^2_{v}}.
	\end{equation}
Interested readers may refer to \cite{MR3942041} for the proof of 	\eqref{uppbound} and \eqref{lower+}. 
  Note that  the constants $C_1$ and $C_2$ above are independent of $t,x$, and thus we integrate \eqref{lower+} with respect to $x\in \mathbb T^3$ to conclude that
  \begin{equation}
  	\label{lower}
  		(-Q(g,f),f)_{L^2}\geq C_1	\normm{f}_{L^2_x}^2-  C_2\big(1+ \|\comi v^6 g\|_{L_x^\infty L^2_v}\big)\|\comi v^{\frac{\gamma}{2}+s}f\|^2_{L^2},
  \end{equation}    
  recalling $L^2=L^2_{x,v}$.

\subsection{Dyadic decomposition and sharpness of Gevrey index}\label{sharpG}

Before discussing the sharpness of Gevrey index,  we first recall  some basic facts on the dyadic decomposition (cf. \cite{MR2768550} for more details).    Define  a ball $\mathcal B$ and a ring $\mathcal C$ by setting
\begin{equation*}
	\mathcal B:=\big \{\xi\in\R^3;\  |\xi|\leq 4/3\big \}, \quad \mathcal C:=\big \{\xi\in\R^3;\  3/4\leq|\xi|\leq 8/3\big \}.
\end{equation*}
Accordingly, one may introduce two radial functions $\psi\in C_0^\infty(\mathcal B)$ and $\vphi\in C_0^\infty(\mathcal C)$ which satisfy that
\begin{equation*}
\left\{
\begin{aligned}
& 0\leq  \psi,\vphi\leq1,~\textrm{ and }\psi(\xi)+\sum_{j\geq0}\vphi(2^{-j}\xi)=1 \textrm{ for any } \xi\in\R^3,\\
& |j-k|\geq2 \Rightarrow  \textrm{Supp}~\vphi(2^{-j}\cdot)\cap \textrm{Supp}~\vphi(2^{-k}\cdot)=\emptyset,\\
&  j\geq1 \Rightarrow  \textrm{Supp}~\psi\cap \textrm{ Supp } \vphi(2^{-j}\cdot)=\emptyset.
\end{aligned}
\right.
\end{equation*}
 Moreover, we define two dyadic operators $\mathcal P_j$ and $\Delta_j$  in the phase space and its frequency  space, respectively, that is, 
\ben\label{Defcpj}
\mathcal P_{-1}f(v):=\psi(v)f(v),~\mathcal P_jf(v):=\vphi(2^{-j}v)f(v),~j\geq0,
\een
and
\ben\label{Deffj}
\Delta_{-1}f(\xi):=\psi(D)f(\xi),~\Delta_jf(\xi):=\vphi(2^{-j}D)f(\xi),~j\geq0,
\een 
recalling $\psi(D)$ is the Fourier multiplier with symbol $\psi(\xi)$, and similarly for $\vphi(2^{-j}D).$
Then for any  $u\in \mathcal{S}'$,  the dual of Schwartz space $\mathcal{S}$,   we have 
\begin{equation*}
	u=\mathcal P_{-1}u+\sum\limits_{j\geq0}\mathcal P_ju=\Delta_{-1}u+\sum_{j\geq0}\Delta_ju, 
\end{equation*}
which holds in the sense of distribution. 
We may use  the dyadic operator $\Delta_j$, to characterize weighted  Sobolev spaces (cf. \cite[Section 2.7]{MR2768550}) as follows:
\begin{equation*}
\|u\|^2_{H_v^m}\sim	\sum_{j=-1}^\infty2^{2jm}\|\Delta_j u\|^2_{L_v^2},\quad m\in\mathbb R,
	\end{equation*}
where here and below  by  $A\sim B$ we mean $C^{-1}A\leq B\leq CA$ for some generic constant $C$.   
More generally, we have the following characterization of   weighted Sobolev spaces:
\begin{equation}\label{chara}
	\norm{\comi v^p \comi{D_v}^m u}^2_{L_v^2}\sim \norm{\comi{D_v}^m\comi v^p  u}^2_{L_v^2}\sim   \sum_{j, k=-1}^\infty2^{2kp}  2^{2mj}\|\Delta_j\mathcal P_ku\|^2_{L^2}.
\end{equation}
We refer to  \cite[Lemmas 4.11 and 4.15]{MR4704643} for the proof of \eqref{chara}.

Induced by \eqref{lower}, it suffices to consider the following simplified toy model of the Boltzmann equation to show the sharpness of Gevrey index: 
\beno
\pa_t f+v\cdot\partial_xf+ \<v\>^{\ga}(-\Delta_v)^sf+ \<v\>^{\ga} (-\Delta_{\S^2})^{s}f=0.
\eeno
Furthermore, if we restrict the spatially  homogeneous case   and radial case (i.e., $f=f(t,v)=f(t, |v|)$),  it is reasonable  to  consider the following  toy model:
\ben\label{toymodel}
\pa_tf +\<v\>^\gamma \<D_v\>^{2s}f=0.
\een
We will demonstrate  the sharp Gevrey regularity for the above toy model  via localized method.  
Recall that  the dyadic operators $\mathcal P_j$ and $\Delta_j$ in phase space and frequency space are defined in \eqref{Defcpj} and \eqref{Deffj}.  By virtue of \eqref{chara}, formally  we may rewrite    \eqref{toymodel} as 
\beno
\pa_t \Delta_j\mathcal P_kf+2^{2sj}2^{\ga k}\Delta_j\mathcal P_kf \approx  0.
\eeno
Thus, at $t=1$ we have 
\beno
\Delta_j\mathcal P_kf(1,v) \approx e^{-2^{2sj}2^{\ga k}}\Delta_j\mathcal P_kf_{in}(v) \approx e^{-2^{2sj}2^{\ga k}}e^{-a_0 2^{2k}}\Delta_j\mathcal P_k\big(e^{a_0\<v\>^2}f_{in}(v)\big),
\eeno
or equivalently, 
\begin{equation*}
	 e^{ 2^{j\frac{1}{r}}} \Delta_j\mathcal P_kf(1,v)\approx e^{2^{j\frac{1}{r}}} e^{-2^{2sj}2^{\ga k}}e^{-a_0 2^{2k}}\Delta_j\mathcal P_k\big(e^{a_0\<v\>^2}f_{in}(v)\big). 
\end{equation*}
In view of \eqref{eqgev}, 
if the index of the Gevrey function $\Delta_j\mathcal P_kf(1,v)$ is $r$, then 
%
%
%
 it is necessary that 
\beno
\forall \ j\in\mathbb Z_+,\quad \inf_{k\in \mathbb Z_+}\Big(2^{2sj}2^{\ga k}+  a_02^{2k}\Big) \gtrsim  2^{j\frac{1}{r}}.
\eeno
Direct verification shows
that
\begin{equation*}
	\inf_{k\in\mathbb Z_+}\Big(	2^{2sj}2^{\ga k}+  a_02^{2k}\Big)\gtrsim   2^{2sj}2^{2sj\frac{\gamma}{2-\gamma}}\gtrsim 2^{\frac{4s}{2-\gamma}j}.
\end{equation*}
Combining the above estimate we have   $r=\frac{2-\gamma}{4s}.$ 

On the other hand, note the coefficient $\comi v^\gamma=(1+\abs v^2)^{\gamma/2}$ in \eqref{toymodel} is only (locally) analytic but not ultra-analytic for $ \gamma<0$.  Then heuristically it seems reasonable that the ultra-analyticity could not be achievable and the analyticity  should be  the best regularity setting  we may expect for the toy model \eqref{toymodel}. 
Thus  $\max\{\frac{2-\gamma}{4s},\ 1\}$ can be regarded as the sharp   Gevrey index.

\subsection{Organization of the paper} The rest of this paper is arranged as follows. In Section 2, we introduce some auxiliary lemmas, including estimates and applications related to the collision operator. Sections 3, 4, and 5 focus on deriving \emph{a priori} estimates for the analyticity and Gevrey regularity of solutions to the linear Boltzmann equation. In Sections 6 and 7, we establish the well-posedness and Gevrey regularity of local solutions for both linear and nonlinear Boltzmann equations. Additional supplementary material is included in the Appendix.

\section{Preliminaries}
In this part we list some facts to be used frequently when proving the main result, which are concerned with the uniform elliptic estimate in velocity, commutator estimates and  quantitative estimates on the collision operator.  To lighten the notations, we will use $C$ in the following discussion to denote some generic constant that may vary from line to line. 

\subsection{Regularity in velocity} We begin with the uniform elliptic estimate in velocity variable.  The main result can be stated as follows, which may  be regarded as a specific interpolation between   weighted Sobolev spaces. Its proof relies on the dyadic decomposition  in Subsection \ref{sharpG}.
 
\begin{lem}\label{lem:velo} Let  $\tau$ be defined in \eqref{tau}, that is, 
	$
		\tau=\frac{2s}{2-\gamma}.
	$ 
	There exists a constant $C>0$ depends on $s$ and $\gamma$ such that  
	\begin{equation*}
  \|\comi {D_v}^\tau u\|_{L_v^2}\leq C\inner{	\norm{\comi v u}_{L_v^2}+	\norm{\comi v^{\gamma/2} \comi {D_v}^s u}_{L_v^2}} 
	\end{equation*}
holds true for any regular function $u$,  where here and below by regular functions we mean that the norms involved of these functions are finite.   	
\end{lem}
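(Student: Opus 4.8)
The plan is to prove this by a dyadic decomposition in the velocity variable, interpolating the weight $\comi v$ against the fractional derivative $\comi{D_v}^s$ at each frequency scale. Write $u = \sum_{j\geq -1}\Delta_j u$ using the Littlewood--Paley decomposition of Subsection~\ref{sharpG}, so that $\|\comi{D_v}^\tau u\|_{L_v^2}^2 \sim \sum_{j\geq -1} 2^{2j\tau}\|\Delta_j u\|_{L_v^2}^2$. The strategy is: for each fixed $j$, bound $2^{j\tau}\|\Delta_j u\|_{L_v^2}$ by a geometric-mean-type combination of (a suitable piece of) $\|\comi v u\|_{L_v^2}$ and $\|\comi v^{\gamma/2}\comi{D_v}^s u\|_{L_v^2}$, then sum in $j$.

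First I would treat the low frequencies $j\leq j_0$ (for a threshold $j_0$ to be chosen) trivially: there $2^{j\tau}\lesssim 1$ and $\|\Delta_j u\|_{L_v^2}\lesssim \|u\|_{L_v^2}\lesssim \|\comi v u\|_{L_v^2}$, and the finitely many terms are harmless (or, more cleanly, one observes $\tau = \tfrac{2s}{2-\gamma}\in(0,s)$ since $\gamma<0$, so $\tau < s$ and even the summed low-frequency contribution is controlled). For high frequencies, I would further decompose in the spatial/velocity size variable using the phase-space projections $\mathcal P_k$, so that on the support of $\mathcal P_k$ we have $\comi v \sim 2^k$. The key algebraic point is the elementary interpolation inequality
\begin{equation*}
  2^{j\tau} \lesssim 2^{-k} + 2^{j s}2^{k\gamma/2}\cdot 2^{k(1-\gamma/2)}\,2^{-js}\,2^{j\tau},
\end{equation*}
which one arranges so that the exponents match: writing $\tau = \theta s$ with the complementary weight exponent determined by $1-\theta$, the choice $\theta = \tau/s$ forces a balance $2^{js}2^{k\gamma/2}$ against $2^k$ precisely when $2^{2sj}2^{\gamma k}\sim 2^{2k}$, i.e. $k\sim \frac{sj}{1-\gamma/2}$, and at that balance point $2^{j\tau}\sim 2^{k}$, reproducing the heuristic computation $\inf_k(2^{2sj}2^{\gamma k}+2^{2k})\gtrsim 2^{4sj/(2-\gamma)}$ from Subsection~\ref{sharpG}. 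Concretely I would split, for each $j$, the sum over $k$ at $k_* := \lfloor \tfrac{sj}{1-\gamma/2}\rfloor$: for $k\leq k_*$ use the bound $2^{j\tau}\|\Delta_j\mathcal P_k u\|_{L_v^2}\lesssim 2^{js}2^{k\gamma/2}\|\Delta_j\mathcal P_k u\|_{L_v^2}$ (since $2^{j\tau}\lesssim 2^{js}2^{k_*\gamma/2}\lesssim 2^{js}2^{k\gamma/2}$ as $\gamma<0$ and $k\le k_*$), contributing to $\|\comi v^{\gamma/2}\comi{D_v}^s u\|$; for $k > k_*$ use $2^{j\tau}\lesssim 2^{k_*}\lesssim 2^{k}$, contributing to $\|\comi v u\|$. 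Using near-orthogonality of the $\Delta_j\mathcal P_k$ (as in the characterization \eqref{chara}) and summing the resulting geometric series in $k$ on each side, then in $j$, yields the claimed estimate.

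The main obstacle I anticipate is not the frequency bookkeeping but controlling the commutator/overlap errors: $\Delta_j$ and $\mathcal P_k$ do not commute and $\mathcal P_k$ does not preserve compact frequency support, so the clean relation $\|\comi v^p\comi{D_v}^m u\|^2\sim\sum 2^{2kp}2^{2mj}\|\Delta_j\mathcal P_k u\|^2$ must be used carefully — in particular the operator $\comi v^{\gamma/2}$ on the frequency side spreads things mildly, and one must absorb the resulting almost-diagonal tails. Since \eqref{chara} is quoted from \cite[Lemmas 4.11 and 4.15]{MR4704643}, I would lean on it directly rather than reprove these estimates, and handle the residual double-sum with a Schur-test / Young's inequality argument on the almost-diagonal kernel $2^{-|k-k'|N}$. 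Once that is in place the inequality follows by choosing $N$ large and summing.
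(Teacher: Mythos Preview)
Your proposal is correct and rests on the same tool the paper uses, namely the phase--space characterization \eqref{chara}. Once that equivalence is taken as a black box, your splitting at $k_*(j)=\lfloor sj/(1-\gamma/2)\rfloor$ gives the pointwise bound $2^{2j\tau}\le 2^{2js}2^{k\gamma}$ for $k\le k_*$ and $2^{2j\tau}\le 2^{2k}$ for $k>k_*$, and summing over the double index immediately yields the additive estimate; no geometric series or Schur test is actually needed, and your worry about commutator tails is already absorbed in the cited characterization.

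The paper's proof is shorter and takes a slightly different route: instead of splitting in $k$, it applies H\"older's inequality with exponent $\theta=\tfrac{2}{2-\gamma}$ directly to the double sum $\sum_{j,k}2^{2j\tau}\|\Delta_j\mathcal P_ku\|^2$, obtaining the multiplicative interpolation
\[
\|\comi{D_v}^\tau u\|_{L^2_v}\le C\,\|\comi v^{\gamma/2}\comi{D_v}^s u\|_{L^2_v}^{\theta}\,\|\comi v\,u\|_{L^2_v}^{1-\theta},
\]
and then concludes with Young's inequality. Your argument recovers the additive form directly; the paper's gives the stronger product form first. Both are standard and essentially equivalent here, but the H\"older route is a couple of lines, avoids case analysis, and makes the interpolation parameter $\theta$ explicit.
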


 \begin{proof} 
 We recall the characterization \eqref{chara} of   weighted Sobolev spaces in terms of the dyadic decomposition, that is, 
\begin{equation}\label{chara+}
	\norm{\comi v^p \comi{D_v}^m u}^2_{L_v^2}\sim \norm{\comi{D_v}^m\comi v^p  u}^2_{L_v^2}\sim   \sum_{j, k=-1}^\infty2^{2kp}  2^{2mj}\|\Delta_j\mathcal P_ku\|^2_{L^2}.
\end{equation}
For any pairs $(m_j, p_j), j=1,2,3$, satisfying that
\begin{equation}\label{m1m2}
	 m_1=m_2\th +m_3(1-\th) \textrm{ and }  p_1=p_2\th+p_3(1-\th) \textrm{  for  some} \  0<\theta<1, 
\end{equation}
it follows from Cauchy inequality  that 
\begin{multline*}
\sum_{j,k=-1}^\infty 2^{2m_1 j}2^{2p_1k}\|\Delta_j\mathcal P_ku\|^2_{L^2}\\
\leq \Big(\sum_{j,k=-1}^\infty 2^{2m_2 j}2^{2p_2k}\|\Delta_j\mathcal P_ku\|^2_{L^2}\Big)^{\th}\Big(\sum_{j,k=-1}^\infty 2^{2m_3 j}2^{2p_3k}\|\Delta_j\mathcal P_ku\|^2_{L^2}\Big)^{1-\th}.
\end{multline*}
This with \eqref{chara+}  yields that, under condition \eqref{m1m2},
	\begin{align*}
		\norm{\comi v^{p_1}\comi{D_v}^{m_1}u}_{L^2_{v}}\leq 	C \norm{\comi v^{p_2}\comi{D_v}^{m_2}u}_{L^2_{v}}^\theta	\norm{\comi v^{p_3}\comi{D_v}^{m_3}u}_{L^2_{v}}^{1-\theta}.
	\end{align*}
In particular,
	we apply  the above estimate by choosing 
	\begin{equation*}
		(m_2,p_2)=\Big (s,\frac{\gamma}{2}\Big), \ \ (m_3,p_3)= (0,1),\ \  \theta=\frac{2}{2-\gamma},
	\end{equation*}
	this gives $(m_1, p_1)=(\tau, 0)$ with $\tau =\frac{2s}{2-\gamma}$. Hence
	 \begin{align*}
	\|\comi{D_v}^{\tau} u\|_{L^{2}_v}\leq C\|\comi{v}^{\frac{\gamma}{2}}\comi{D_v}^su\|_{L^2_{v}}^{\theta}\|\comi v u\|_{L^2_{v}}^{1-\theta} \  \textrm{  with  } \ \theta=\frac{2}{2-\gamma}. 
\end{align*}	
Using Young's inequality, the proof of Lemma \ref{lem:velo} is thus completed.
\end{proof}

As an immediate consequence of Lemma \ref{lem:velo}, we have the following uniform elliptic estimate in velocity. 

\begin{corollary}\label{corollary:coer}
There exists a constant $C>0$ such that, for any regular function $u$ we have
	\begin{equation*}
	\|\comi {D_v}^{\tau} u\|_{L_v^2}\leq C\inner{	\norm{\comi v u}_{L_v^2}+	\normm{ u}},
	\end{equation*}
	where  $\normm{\cdot}$ is defined in \eqref{trinorm} and  $\tau $ is given in \eqref{tau}.
\end{corollary}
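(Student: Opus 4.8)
\textbf{Proof plan for Corollary \ref{corollary:coer}.} The plan is to read off the estimate directly from Lemma \ref{lem:velo}, after observing that the right-hand side of that lemma is already controlled by the triple norm. Concretely, Lemma \ref{lem:velo} gives
\[
\|\comi {D_v}^\tau u\|_{L_v^2}\leq C\Big(\norm{\comi v u}_{L_v^2}+\norm{\comi v^{\gamma/2}\comi {D_v}^s u}_{L_v^2}\Big),
\]
so it suffices to bound the last term by $\normm{u}$. This is immediate from the definition \eqref{trinorm}: since
\[
\normm{u}^2=\norm{\comi v^{\frac{\gamma}{2}}\comi{D_v}^s u}_{L^2_v}^2+\norm{\comi v^{\frac{\gamma}{2}}(-\triangle_{\SS^2})^{s/2} u}_{L_v^2}^2
\]
is a sum of two nonnegative quantities, the first of which is precisely $\norm{\comi v^{\gamma/2}\comi{D_v}^s u}_{L_v^2}^2$, we get $\norm{\comi v^{\gamma/2}\comi{D_v}^s u}_{L_v^2}\leq\normm{u}$. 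Substituting this into the displayed estimate yields
\[
\|\comi {D_v}^{\tau} u\|_{L_v^2}\leq C\Big(\norm{\comi v u}_{L_v^2}+\normm{u}\Big),
\]
which is the assertion of the corollary.

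There is essentially no obstacle here: the content lies entirely in Lemma \ref{lem:velo} (the interpolation between the weighted Sobolev spaces via the dyadic characterization \eqref{chara}), and Corollary \ref{corollary:coer} is merely the reformulation of that lemma in the notation $\normm{\cdot}$ that will be convenient when combined with the coercivity estimate \eqref{lower}. The only (trivial) point to state is the inclusion $\norm{\comi v^{\gamma/2}\comi{D_v}^s u}_{L_v^2}\leq\normm{u}$, and one may absorb any change of constants into the generic $C$.
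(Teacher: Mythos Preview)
Your proposal is correct and matches the paper's approach exactly: the paper states the corollary as ``an immediate consequence of Lemma \ref{lem:velo}'' without writing out any proof, and your argument spells out precisely the trivial observation (that $\norm{\comi v^{\gamma/2}\comi{D_v}^s u}_{L_v^2}\leq\normm{u}$ by definition \eqref{trinorm}) needed to pass from that lemma to the corollary.
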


\subsection{Commutator estimates and applicatons}

 To overcome the lack of the uniform ellipticity in  the velocity variable,   we  will  perform estimates in a weighted Sobolev space  with time-dependent weight function $\omega=\omega(t,v)$  defined by 
\begin{equation}\label{weifun}
	\omega=  e^{(a_0-t) \<v\>^2},\quad a_0>0\quad\mbox{and}\quad 0\leq t\leq\frac{a_0}{2}.
\end{equation} 
  The following properties will be used frequently:  for any $(t,v)\in \big[0,\frac{a_0}{2}\big ]\times\mathbb R_v^3,$
\ben\label{vweight}
	\partial_t\omega(t,v)=- \<v\>^2\omega(t,v),\quad |\partial_{v_j}\omega(t,v)|\leq 2a_0 \comi v \omega(t, v) \ \textrm{ with }\  j=1,2,3. 
\een
We first deal with  the commutator between   the collision operator and  weight functions.

\begin{lem}\label{Ighf}
	Define $\mu_\theta$  by  setting $\mu _\theta=e^{-\ka\<v\>^2}$ with $\theta=\theta(t)$ a given function. Then there exists a constant $C>0$ such that for any regular $g,h$ and $f$, we have that
	\begin{multline*}\label{Ig1}
	\big|\big(\mu^{-1}_\theta Q(g,\ h)-Q(g,\ \mu^{-1}_\theta h),\  f \big )_{L^2_v}\big| \\
	\leq  C \|\mu_\th^{-1} g\|_{L_v^2}\big (\normm{\mu_\th^{-1}h} +\|\comi v^{\frac{\ga}{2}+s}\mu_\th^{-1} h\|_{L^2_{v}}\big )\|\comi v^{\frac{\ga}{2}+s}f\|_{L^2_{v}},
	\end{multline*}
 recalling $\normm{\cdot}$ is defined by \eqref{trinorm}. 
\end{lem}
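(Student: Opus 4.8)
The plan is to reduce the estimate to a concrete trilinear integral by computing the commutator directly, to use the collisional conservation law \eqref{conservation} to make a cancellation visible, and then to bound the resulting integral by the argument behind the upper bound \eqref{uppbound}. I would start from the kernel representation of $Q$ and observe that in $\mu_\theta^{-1}Q(g,h)-Q(g,\mu_\theta^{-1}h)$ the two \emph{loss} contributions cancel, since the loss part of $Q(g,\mu_\theta^{-1}h)$ is exactly $\mu_\theta^{-1}(v)$ times the loss part of $Q(g,h)$; hence only the gain part survives and
\[
\mu_\theta^{-1}Q(g,h)-Q(g,\mu_\theta^{-1}h)=\int_{\R^3}\int_{\mathbb S^2}B(v-v_*,\sigma)\,g'_*h'\,\big(\mu_\theta^{-1}(v)-\mu_\theta^{-1}(v')\big)\,d\sigma\,dv_*.
\]
Next, \eqref{conservation} gives $|v|^2+|v_*|^2=|v'|^2+|v'_*|^2$, hence $\mu_\theta(v)\mu_\theta(v_*)=\mu_\theta(v')\mu_\theta(v'_*)$. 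Writing $g'_*=\mu_\theta(v'_*)(\mu_\theta^{-1}g)'_*$ and $h'=\mu_\theta(v')(\mu_\theta^{-1}h)'$, the weight factor $\mu_\theta(v'_*)\mu_\theta(v')\big(\mu_\theta^{-1}(v)-\mu_\theta^{-1}(v')\big)$ collapses to $\mu_\theta(v_*)-\mu_\theta(v'_*)$, so that, with $\tilde g:=\mu_\theta^{-1}g$ and $\tilde h:=\mu_\theta^{-1}h$,
\[
\mu_\theta^{-1}Q(g,h)-Q(g,\mu_\theta^{-1}h)=\int_{\R^3}\int_{\mathbb S^2}B(v-v_*,\sigma)\,\tilde g'_*\,\tilde h'\,\big(\mu_\theta(v_*)-\mu_\theta(v'_*)\big)\,d\sigma\,dv_*.
\]
The point of this form is that $\mu_\theta$ now enters only through the bounded smooth increment $\mu_\theta(v_*)-\mu_\theta(v'_*)$, while all three functions $\tilde g,\tilde h,f$ already carry exactly the $\mu_\theta^{-1}$-weight appearing on the right-hand side of the lemma.

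It then remains to pair this identity with $f$ and to estimate the trilinear integral. For the increment I would use the mean value theorem together with the elementary identities $|v_*-v'_*|=|v-v'|=|v-v_*|\sin(\theta/2)$ (with $\theta$ the deviation angle of \eqref{cotheta}), $|v-v_*|=|v'-v'_*|$, and $|\nabla\mu_\theta(w)|\le C\comi w\,\mu_\theta(w)$. This bounds $|\mu_\theta(v_*)-\mu_\theta(v'_*)|$ by a quantity carrying \emph{one extra power of $\sin(\theta/2)$} — which softens the otherwise non-integrable angular factor $\sin\theta\,b(\cos\theta)\sim\theta^{-1-2s}$ — and carrying a Gaussian factor, which (via the standard bounds $\comi{v_*}\lesssim\comi{v'}\comi{v'_*}$, $|v-v_*|\lesssim\comi{v'}\comi{v'_*}$) may be used to redistribute the accompanying polynomial weights and the extra factor $|v-v_*|$ onto $\tilde g'_*,\tilde h'$ and $f$. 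After these reductions the expression is of exactly the type treated in the proof of \eqref{uppbound} in \cite{MR3942041}, but with a strictly milder kernel; running that argument yields the claimed bound, the built-in velocity decay of the increment making the polynomial weight $\comi v^7$ on $g$ (present in \eqref{uppbound}) unnecessary, and the gain of $\sin(\theta/2)$ downgrading $\normm f$ to the plain weighted norm $\|\comi v^{\gamma/2+s}f\|_{L^2_v}$, while the residual angular difference terms are absorbed into the $\normm{\tilde h}$ that already appears on the $h$-slot of \eqref{uppbound}.

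The step I expect to be the main obstacle is the borderline angular integrability when $s\ge 1/2$: a single extra power of $\sin(\theta/2)$ only lowers the singularity to $\theta^{-2s}$, which is still not integrable. To cover the full range $0<s<1$ one must instead exploit the cancellation hidden in the $\sigma$-average — a second-order Taylor expansion of $\mu_\theta$ at $v_*$, whose odd first-order part, since $v'_*-v_*$ is collinear with $\frac{v-v_*}{|v-v_*|}-\sigma$, integrates against $b(\cos\theta)\,d\sigma$ only to the finite moment $\int_{\mathbb S^2}(1-\cos\theta)\,b(\cos\theta)\,d\sigma$, finite precisely because $s<1$ — or, equivalently, split the angular integration at $\theta\sim\delta$ and estimate the remaining regular part by Cauchy–Schwarz as in the Grad cutoff case. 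Fitting this into the Carleman-type framework of \cite{MR3942041} while simultaneously keeping the soft-potential singularity $|v-v_*|^\gamma$ at $v=v_*$ under control (where only $\gamma>-3$ guarantees local integrability) is the technical heart of the argument.
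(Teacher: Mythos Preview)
Your derivation of the commutator identity --- loss parts cancel, conservation law collapses the Gaussian weights to $\mu_\theta(v_*)-\mu_\theta(v'_*)$ --- is correct and coincides with the paper's opening step. The gap is in your treatment of the angular singularity for $s\ge 1/2$. You propose a second-order Taylor expansion of $\mu_\theta$ at $v_*$ and assert that the first-order term integrates against $b(\cos\theta)\,d\sigma$ to the finite moment $\int(1-\cos\theta)b\,d\sigma$. But that $\sigma$-average cannot be isolated: in your trilinear integral the factors $\tilde g'_*,\tilde h'$ (and, after a pre-post change, $f'$) also depend on $\sigma$, so the first-order contribution is not $\big(\int(v'_*-v_*)b\,d\sigma\big)$ times something $\sigma$-independent. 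Decoupling via $f'=f+(f'-f)$ restores two powers of $\sin(\theta/2)$ on the cross term but at the price of a derivative on $f$, contradicting the goal of controlling $f$ only in $\|\comi v^{\gamma/2+s}\,\cdot\,\|_{L^2_v}$; decoupling via $\tilde h'$ similarly costs a derivative that would have to be packaged into $\normm{\tilde h}$, which you do not carry out.

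The paper sidesteps this entirely. After the pre-postcollisional change of variables (so $\tilde g_*,\tilde h$ are unprimed and only $f'$ carries a prime) it uses the algebraic identity $(\mu_\theta)'_*-(\mu_\theta)_*=\big[(\mu_\theta^{1/4})'_*+(\mu_\theta^{1/4})_*\big]^2\big[(\mu_\theta^{1/4})'_*-(\mu_\theta^{1/4})_*\big]^2+2(\mu_\theta^{1/2})_*\big[(\mu_\theta^{1/2})'_*-(\mu_\theta^{1/2})_*\big]$ to split into $\mathcal I_1,\mathcal I_2,\mathcal I_3$. The first summand is already \emph{quadratic} in the increment, hence carries $\sin^2(\theta/2)$ without any symmetry argument; $\mathcal I_1$ and $\mathcal I_3$ are then handled by citing \cite[Proposition~2.7]{MR4515094}. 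For $\mathcal I_2$ the paper applies Cauchy--Schwarz, sending the single increment $[(\mu_\theta^{1/2})'_*-(\mu_\theta^{1/2})_*]$ into the $(f')^2$ factor (where it appears squared, restoring $\sin^2(\theta/2)$ and yielding only $\|\comi v^{\gamma/2+s}f\|_{L^2_v}$) and leaving $\int B\,|(\mu_\theta^{-1/2}g)_*|\,(\tilde h-\tilde h')^2$ in the other, which is rewritten as $(\tilde h')^2-\tilde h^2+2\tilde h(\tilde h-\tilde h')$ and finished by the cancellation lemma together with \eqref{uppbound}. It is this Cauchy--Schwarz split --- not a $\sigma$-symmetry --- that keeps $f$ in a plain weighted $L^2$ norm.
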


\begin{proof} 
It follows from the energy conservation law in \eqref{conservation} that 	 
\begin{align*}
	\mu_\theta^{-1}=(\mu_\theta^{-1})_*' (\mu_\theta^{-1})' (\mu_\theta)_*,
\end{align*}
recalling  the standard shorthand that $f=f(v),f_*=f(v_*),f'=f(v')$ and $f'_*=f(v'_*)$.  
This enables us to write
\begin{align*}
	\mu_\ka^{-1}Q(g,h) &=	\int_{\R^3}\int_{\mathbb{S}^2}\mu_\ka^{-1}B(v-v_*,\si)(g'_*h'-g_*h)d\si dv_*\\
	&= \int_{\R^3}\int_{\mathbb{S}^2}B(v-v_*,\si)\Big ((\mu_\theta)_* (\mu_\ka^{-1} g)_*'  (\mu_\theta^{-1} h)' - g_*(\mu_\theta^{-1} h) \Big )d\si dv_*.
\end{align*}
On the other hand,  observe 
\begin{equation*}
	Q(g,\ \mu_\ka^{-1}h)  
	= \int_{\R^3}\int_{\mathbb{S}^2}B(v-v_*,\si)\Big ( (\mu_\theta)_*' (\mu_\theta^{-1}g)'_*(\mu_\theta^{-1} h)'- g_*(\mu_\theta^{-1} h)\Big )d\si dv_*.
\end{equation*}
As a result, we have
\begin{multline*}
 	\mu_\ka^{-1}Q(g, h)-Q(g,\mu_\ka^{-1}h)  \\
 = \int_{\mathbb R^3}\int_{\S^2} B(v-v_*,\si)\big [(\mu_\ka)_*-(\mu_\ka)'_*\big](\mu_\ka^{-1}g)'_*(\mu_\ka^{-1}h)'dv_*d\si.
\end{multline*}
Thus
\begin{equation*}
	\begin{aligned}
		& \big(	\mu_\ka^{-1}Q(g,h)-Q(g,\mu_\ka^{-1}h),\ f\big)_{L_v^2}\\
		&\quad=\int_{\R^6\times\S^2}B(v-v_*,\si)\big [(\mu_\ka)_*-(\mu_\ka)'_*\big ](\mu_\ka^{-1}g)'_*(\mu_\ka^{-1}h)' f d\sigma dv_*dv\\
		&\qquad=\int_{\R^6\times\S^2}B(v-v_*,\si)\big[(\mu_\theta)'_*-(\mu_\theta)_* \big ](\mu^{-1}_{\theta} g)_*(\mu^{-1}_{\theta}h) f'd\sigma dv_*dv,
	\end{aligned}
\end{equation*}
where  the second equality follows from the standard   pre-postcollisional change of variables: 
\begin{equation}
	\label{prepost}
	(v,v_*,\sigma) \rightarrow(v',v_*', (v-v_*)/|v-v_*|),
\end{equation}
 which has unit Jacobian (cf. \cite[Subsection 4.5]{MR1942465} for instance).  This with the fact    that  
\begin{equation*}
	 (\mu_\theta)'_*-(\mu_\theta)_*  =[(\mu_\ka^{1\over4})_*'+(\mu_\ka^{1\over4})_*\big]^2\big[(\mu_\ka^{1\over4})_*'-(\mu_\ka^{1\over4})_*\big]^2+2(\mu_\theta^{1\over2})_*\big [(\mu_\ka^{1\over2})_*'-(\mu_\ka^{1\over2})_*\big],
\end{equation*}
yields that
\begin{equation}\label{j1j2j3}
	\big|\big(	\mu_\ka^{-1}Q(g,h)-Q(g,\mu_\ka^{-1}h),\ f\big)_{L_v^2} \big|\leq  	\mathcal{I}_1+	\mathcal{I}_2+	\mathcal{I}_3
\end{equation}
with
\begin{align*}
\mathcal{I}_1&:=\Big| \int_{\R^6\times\S^2} B  \big [(\mu_\ka^{1\over4})_*'+(\mu_\ka^{1\over4})_*\big]^2\big[(\mu_\ka^{1\over4})_*'-(\mu_\ka^{1\over4})_*\big]^2(\mu^{-1}_{\ka} g)_*(\mu^{-1}_{\ka}h) f'd\sigma dv_*dv\Big|,\\
	\mathcal{I}_2&:= 2\Big| \int_{\R^6\times\S^2} B  \big [(\mu_\ka^{1\over2})_*'-(\mu_\ka^{1\over2})_*\big ](\mu^{1\over2}_\ka\mu^{-1}_{\ka} g)_*\big[\mu^{-1}_{\ka}h-(\mu^{-1}_{\ka}h)'\big ]f'd\sigma dv_*dv\Big| ,\\
	\mathcal{I}_3&:= 2\Big| \int_{\R^6\times\S^2} B \big [(\mu_\ka^{1\over2})_*'-(\mu_\ka^{1\over2})_*\big](\mu^{1\over2}_\ka\mu^{-1}_{\ka} g)_*(\mu^{-1}_{\ka}h)'f'd\sigma dv_*dv\Big|,
\end{align*}
where we write $B=B(v-v_*,\si)$ for short. 
The upper bound of $\mathcal {I}_1$ and $\mathcal {I}_3$ just  follows from   \cite[Proposition 2.7]{MR4515094}. In fact  observe that  the estimates  in  \cite[Proposition 2.7]{MR4515094}  are uniform with respect to the parameter $\eps,$ and thus we repeat  the argument in the proof of \cite[Proposition 2.7]{MR4515094} (seeing Step 1 and Step 3     therein)  to conclude   that
\begin{equation}\label{j1j3}
	\mathcal {I}_1  + \mathcal {I}_3 \leq C \|\mu^{-1}_\ka g\|_{L_v^2}\|\comi v^{\frac{\ga}{2}+s}\mu^{-1}_\ka h\|_{L^2_{v}}\|\comi v^{\frac{\ga}{2}+s}f\|_{L^2_{v}}.
\end{equation} 
It remains to deal with  $\mathcal {I}_2$. To do so we write 
\begin{equation}\label{j2}
\begin{aligned}
	 \mathcal{I}_2 & \leq 2  \Big(\int_{\R^6\times\S^2}B(v-v_*,\si) \,\big|(\mu^{1\over2}_\ka\mu^{-1}_\ka g)_*\big|\, \big[ \mu^{-1}_\ka h-(\mu^{-1}_\ka h)'\big ]^2d\si dv_*dv\Big)^{1\over2}\\
& \quad\times \Big(\int_{\R^6\times\S^2}B(v-v_*,\si)\big[(\mu_\ka^{1\over2})_*'-(\mu_\ka^{1\over2})_*\big]^2\,\big|(\mu^{1\over2}_\ka\mu^{-1}_\ka g)_*\big|\,(f')^2d\si dv_*dv\Big)^{1\over2}\\
&:=2 (\mathcal {I}_{2,1})^{1\over2}(\mathcal{I}_{2,2})^{1\over2}.
\end{aligned}
\end{equation}
Using  \cite[Proposition 2.7]{MR4515094} again (seeing Step 2 in the proof therein), we have 
\begin{equation}\label{j22}
	\mathcal {I}_{2,2}\leq C \|\mu^{-1}_\ka g\|_{L_v^2}\|\comi v^{\frac{\ga}{2}+s}f\|_{L^2_{v}}^2.
	\end{equation}
 For $\mathcal {I}_{2,1}$,  we use the relation 
 \begin{align*}
 	\big[ \mu^{-1}_\ka h-(\mu^{-1}_\ka h)'\big ]^2=\big((\mu^{-1}_\th h)'\big)^2-(\mu^{-1}_\ka h)^2+2(\mu^{-1}_\ka h)\big[(\mu^{-1}_\ka h)- (\mu^{-1}_\ka h)'\big],
 \end{align*}
to split that
\beno
\mathcal {I}_{2,1} = \mathcal{I}_{2,1,1} +\mathcal{I}_{2,1,2},
\eeno
where 
\begin{align*}
	\mathcal{I}_{2,1,1} = \int_{\R^6\times\S^2}B(v-v_*,\si) \,\big| (\mu^{-\frac{1}{2}}_\ka g)_*\big|\,\big[\big((\mu^{-1}_\th h)'\big)^2-(\mu^{-1}_\ka h)^2\big]d\si dv_* dv
\end{align*}
 and
 \begin{align*}
 	\mathcal I_{2,1,2}&=2\int_{\R^6\times\S^2}B(v-v_*,\si) \,\big|(\mu^{-{1\over2}}_\ka  g)_*\big|\, (\mu^{-1}_\ka h) \big[ \mu^{-1}_\ka h-(\mu^{-1}_\ka h)'\big ] d\si dv_*dv\\
 	&=-2\big (Q( |\mu^{-{1\over2}}_\th g|,\ \mu^{-1}_\th h),\ \mu^{-1}_\th h\big)_{L^2_v},
 \end{align*}
   the last line holding because of the pre-postcollisional change of variables \eqref{prepost}. Moreover, 
   by cancellation lemma (cf. \cite[Lemma 1 and Remark 6]{MR1765272} for instance), one has 
   \begin{align*}
   	 \int_{\R^3\times\S^2}B(v-v_*,\si)  \big[\big((\mu^{-1}_\th h)'\big)^2-(\mu^{-1}_\ka h)^2\big]d\si   dv=C\int_{\mathbb R^3} \abs{v-v_*}^\gamma (\mu^{-1}_\ka h)^2 dv,
   \end{align*}
  and thus 
  \begin{multline*}
  |	\mathcal{I}_{2,1,1}|=C\int_{\R^6}|v-v_*|^\ga\, \big|(\mu^{-{1\over2}}_\ka g)_*\big|\, (\mu^{-1}_\ka h)^2 dv_*dv\\
  	\leq C \|\comi v^{|\gamma|}\mu^{-{1\over2}}_\ka g\|_{L_v^1}\|\comi v^{\frac{\ga}{2}}\mu^{-1}_\ka h\|^2_{L^2_{v}}\leq C \|\mu^{-1}_\ka g\|_{L_v^2}\|\comi v^{\frac{\ga}{2}}\mu^{-1}_\ka h\|^2_{L^2_{v}}.  \end{multline*}
 As for $\mathcal I_{2,1,2}=-2\big (Q( |\mu^{-{1\over2}}_\th g|,\ \mu^{-1}_\th h),\ \mu^{-1}_\th h\big)_{L^2_v},$  it follows from   \eqref{uppbound} that 
\begin{multline*}
\abs{\mathcal I_{2,1,2}}  
\leq  C\|\comi v^7\mu_\theta^{-{1\over2}}g\|_{L^2_v}\big (\normm{\mu_\theta^{-1}h}+\|\comi v^{\frac{\gamma}{2}+s} \mu_\theta^{-1}h\|_{L^2_{v}}\big )\normm{\mu_\theta^{-1}h}\\
\leq C	\|\mu_\theta^{-1}g\|_{L^2_v}\big (\normm{\mu_\theta^{-1}h}+\|\comi v^{\frac{\gamma}{2}+s} \mu_\theta^{-1}h\|_{L^2_{v}}\big )\normm{\mu_\theta^{-1}h}.
\end{multline*}
As a result,  we 
 combine the above estimates,  to conclude  that
\beno
 \mathcal{I}_{2,1} \leq |\mathcal{I}_{2,1,1}|+|\mathcal{I}_{2,1,2}|\leq C 	\|\mu_\theta^{-1}g\|_{L^2_v}\big (\normm{\mu_\theta^{-1}h}^2+\|\comi v^{\frac{\gamma}{2}+s} \mu_\theta^{-1}h\|_{L^2_{v}}^2\big ) 
.
\eeno
Substituting the above estimate and \eqref{j22} into \eqref{j2}  yields
\begin{equation*}
	\mathcal{I}_2 \leq \|\mu_\theta^{-1}g\|_{L^2_v}\big (\normm{\mu_\theta^{-1}h}+\|\comi v^{\frac{\gamma}{2}+s} \mu_\theta^{-1}h\|_{L^2_{v}}\big ) \|\comi v^{\frac{\gamma}{2}+s}f\|_{L^2_{v}}.
\end{equation*}
Finally we combine the above estimate and  \eqref{j1j3} with \eqref{j1j2j3} to conclude the assertion in Lemma \ref{Ighf}, completing the proof.
\end{proof}

As an immediate consequence of \eqref{uppbound} and Lemma \ref{Ighf}, we have the following  corollary.  
 
   \begin{corollary}\label{cor:upper}
		Let $\om$ be defined in  \eqref{weifun}, then there exists a constant $C>0$, such that for any regular $g,h$ and $f$, we have  
	\begin{multline}
		\label{Ig}
		\notag\big|\big( \om Q(g,h),\om f\big)_{L_v^2} \big| \\
		\leq C \|\omega  g \|_{L_v^2}\big (\normm{\omega h} +\|\comi v^{\frac{\ga}{2}+s} \omega h\|_{L^2_{v}}\big )\big (\normm{\omega f} +\|\comi v^{\frac{\ga}{2}+s} \omega f\|_{L^2_{v}}\big).
	\end{multline}
\end{corollary}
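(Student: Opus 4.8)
The plan is to write the left-hand side as a sum of the ``commutator'' term treated in Lemma \ref{Ighf} and a ``genuine collision'' term to which the trilinear estimate \eqref{uppbound} applies, and then absorb all the resulting weighted norms into the two triple-norm-plus-weight factors on the right. Concretely, with $\mu_\theta = e^{-\theta\langle v\rangle^2}$ chosen so that $\mu_\theta^{-1} = \omega$, i.e. $\theta = \theta(t) = -(a_0-t)$, we decompose
\begin{equation*}
\bigl(\omega Q(g,h),\ \omega f\bigr)_{L_v^2}
= \bigl(\mu_\theta^{-1} Q(g,h)-Q(g,\mu_\theta^{-1}h),\ \omega f\bigr)_{L_v^2}
 + \bigl(Q(g,\ \omega h),\ \omega f\bigr)_{L_v^2}.
\end{equation*}
The first term is estimated directly by Lemma \ref{Ighf}, which bounds it by $C\|\omega g\|_{L_v^2}(\normm{\omega h}+\|\comi v^{\frac\ga2+s}\omega h\|_{L_v^2})\|\comi v^{\frac\ga2+s}\omega f\|_{L_v^2}$; since $\|\comi v^{\frac\ga2+s}\omega f\|_{L_v^2}\leq \normm{\omega f}+\|\comi v^{\frac\ga2+s}\omega f\|_{L_v^2}$, this is already of the desired form.

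For the second term I would apply the trilinear estimate \eqref{uppbound} with $(g,h,f)$ there replaced by $(g,\ \omega h,\ \omega f)$, giving the bound
\begin{equation*}
\bigl|\bigl(Q(g,\ \omega h),\ \omega f\bigr)_{L_v^2}\bigr|
\leq C\|\comi v^7 g\|_{L_v^2}\bigl(\normm{\omega h}+\|\comi v^{\frac\ga2+s}\omega h\|_{L_v^2}\bigr)\normm{\omega f}.
\end{equation*}
The one mismatch is the factor $\|\comi v^7 g\|_{L_v^2}$ versus the desired $\|\omega g\|_{L_v^2}$; this is where the exponential weight does the work, since $\comi v^7 \leq C_{a_0}\, e^{(a_0-t)\langle v\rangle^2} = C_{a_0}\,\omega$ uniformly for $0\leq t\leq a_0/2$ because $a_0-t\geq a_0/2>0$, so $\|\comi v^7 g\|_{L_v^2}\leq C\|\omega g\|_{L_v^2}$. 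Combining the two bounds and collecting terms yields the claimed inequality.

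The only mild obstacle is bookkeeping: one must check that the sign convention in $\mu_\theta=e^{-\theta\langle v\rangle^2}$ matches $\omega=e^{(a_0-t)\langle v\rangle^2}$ (so $\theta=t-a_0<0$ on the relevant time interval, and Lemma \ref{Ighf} is applied with this $\theta$), and that the polynomial weight $\comi v^7$ appearing in \eqref{uppbound} is indeed dominated by $\omega$ on $[0,a_0/2]$ — both are immediate from \eqref{weifun}. No genuinely new estimate is needed; the corollary is a direct consequence of \eqref{uppbound} and Lemma \ref{Ighf} as asserted.
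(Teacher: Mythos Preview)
Your approach is exactly the one in the paper: split $(\omega Q(g,h),\omega f)_{L_v^2}$ into the commutator term handled by Lemma \ref{Ighf} and the term $(Q(g,\omega h),\omega f)_{L_v^2}$ handled by \eqref{uppbound}, then absorb $\|\comi v^7 g\|_{L_v^2}$ into $\|\omega g\|_{L_v^2}$ via the exponential weight. One small bookkeeping slip: with $\mu_\theta=e^{-\theta\langle v\rangle^2}$ you want $\mu_\theta^{-1}=\omega$, which forces $\theta=a_0-t>0$ (not $\theta=-(a_0-t)$); the paper applies Lemma \ref{Ighf} with this choice, and the lemma places no sign restriction on $\theta$ anyway.
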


\begin{proof}
We first write
\begin{align*}
		\big(\om Q(g,h),\om f\big)_{L_v^2}=\big( Q(g,  \om h),\om f\big)_{L_v^2}+\big[\big(  Q(g,h),\om f\big)_{L_v^2}-\big(  Q(g, \om h),\om f\big)_{L_v^2}\big].
	\end{align*}
By \eqref{uppbound} as well as the fact that $\norm{\comi v^7 g}_{L_v^2}\leq C\norm{\omega g}_{L_v^2}$, it follows that 
	\begin{align*}
		|(Q(g,\omega h),\  \omega f)_{L_v^2}| \leq C \|\omega g\|_{L^2_v}\big (\normm{\omega h}+\|\comi v^{\frac{\gamma}{2}+s}\omega h\|_{L^2_{v}}\big )\normm{\omega f}.
	\end{align*}
Applying  Lemma \ref{Ighf} with $\theta(t)=a_0-t$, we have
\begin{multline*}
	 \big|\big( \om Q(g,h),\om f\big)_{L_v^2}-\big(  Q(g, \om h),\om f\big)_{L_v^2}\big| \\
	\leq C \|\omega  g \|_{L_v^2}\big (\normm{\omega h} +\|\comi v^{\frac{\ga}{2}+s}\omega h\|_{L^2_{v}}\big )\|\comi v^{\frac{\ga}{2}+s}\omega f\|_{L^2_{v}}.
\end{multline*}
As a result, combining the above estimates yields the assertion in Corollary \ref{cor:upper}.  The proof is thus completed.	
\end{proof}

\subsection{Estimates in the spatial and velocity spaces} 
The estimates in the previous subsection are   with respect to  the velocity variable $v$.  If we further  deal with these estimates therein in the whole space $\mathbb T_x^3\times\mathbb R_v^3,$  then one possible way is to work with $H_x^3 L_v^2$ so that we can take advantage of  the fact that $H_x^3$ is an algebra with respect to arithmetic product of functions.  However, to overcome the absence of the smallness assumption on  solutions,  instead of $H_x^3$ we will perform estimates in the following anisotropic norm  $\norm{\cdot}_{\mathcal H_{x}^3}$ which is equivalent to $\norm{\cdot}_{H_x^3}$ and  defined by 
\begin{equation}
	\label{mathch2}
  	\norm{f}^2_{\mathcal{H}^3_x}=\norm{f}^2_{\mathcal{H}^3_{x,N}}:=\sum_{|\al|=3}\norm{\pa^\al_xf}^2_{L^2_x}+N\norm{f}^2_{L^2_x} ,
\end{equation}
where $N\geq 1$ is a large given number, see Lemma \ref{lem:j1} below  for more detail. In the following discussion, we will write $\mathcal H_{x}^3$ for short, omitting the subscript $N$ without confusion.
Similarly, define
\begin{equation*}
\|f\|^2_{\mathcal{H}^3_xL_v^2}:=\sum_{|\al|=3}\|\pa^\al_xf\|^2_{L^2}+N\|f\|^2_{L^2} \textrm{  and  } \normm{f}^2_{\mathcal{H}^3_x}:=\sum_{|\al|=3}\normm{\pa^\al_xf}^2_{L_x^2}+N\normm{f}^2_{L_x^2},
\end{equation*}
recalling $L^2=L_{x,v}^2$ and $\normm{\cdot}$ is defined in \eqref{trinorm}.
As to be seen in the proof of Lemma \ref{lem:j1}, the large number $N$ in \eqref{mathch2} is used  to overcome
the lack of smallness of  the solution $f$, recalling we consider in this text a general solution.   One may see that if   $\norm{e^{a_0\comi v^2}f_{in}}_{H_x^3 L_v^2} \ll 1$,  then  we can choose $N=1$ in \eqref{mathch2}.

\begin{definition}\label{def:c0mt}
	Throughout the paper we define the constant $C_0$ by setting 
\begin{equation}\label{c0}
C_0:=	2\norm{\omega(0) e^{-2a_0\comi v^2}}_{H_x^3 L_v^2}+4 \norm{e^{a_0\comi v^2}f_{in}}_{H_x^3 L_v^2}  
\end{equation} 
and set
\begin{equation*}
	\mathcal M_T(C_0):=\big\{h\in L^\infty([0,T]; H_x^3 L_v^2); \quad 	\sup_{t\leq T}\norm{\omega h(t)}_{H_x^3 L_v^2}  \leq C_0
\big\}.
\end{equation*}
Moreover, we define $\mathcal A_T=\mathcal A_T(m_0,M_0,E_0,H_0)$ by 
\begin{equation*}
	\mathcal A_T:=\big\{h\in L^\infty([0,T]; H_x^3 L_v^2); \quad 	h\geq 0\  \textrm{ and satisfies condition}\   \eqref{aat}
\big\}.
\end{equation*}	
\end{definition}

\begin{rmk}
	In the rest part of the paper, to lighten the notation we will denote by  $C$   some generic constants,   depending only on  the Sobolev embedding constants and   the constants $C_0, C_1, C_2$ in  \eqref{c0} and \eqref{lower}.  Moreover,   we use $C_\eps$  to stand for some generic constants depending on $\eps $ additionally.  
\end{rmk}
 
\begin{lem} \label{lem:j1}
Recall $\mathcal A_T$ and $\mathcal M_T(C_0)$ are given  in Definition \ref{def:c0mt} and $\omega$ is the weight function defined in \eqref{weifun}.   Suppose $f\in \mathcal A_T\cap \mathcal M_T(C_0)$  for some given $T>0$.   
 Then 
 there eixsts an integer $N\geq 1$ and a constant $\bar C>0$,  both  depending only on the constants $C_0,C_1$ and $C_2$ in \eqref{c0} and \eqref{lower},   such that  for any regular function $g$ we have 
\begin{equation*}
 \big (\omega Q(f, g),\ \omega  g\big )_{\mathcal{H}^3_xL^2_v}  \leq -\frac{C_1}{2}  \normm{\omega   g}_{\mathcal H_x^3}^2 +\frac12\|\comi v \omega g\|_{\mathcal H_x^3 L^2_{v}}^2 +\bar C \|  \omega g\|_{\mathcal H_x^3 L^2_{v}}^2,
   \end{equation*}
   recalling  $\mathcal H_x^3=\mathcal H_{x,N}^3$ is defined in \eqref{mathch2}.
  \end{lem}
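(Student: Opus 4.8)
The plan is to expand $Q(f,g)$ in the $\mathcal H^3_x$ norm by distributing the spatial derivatives $\partial_x^\alpha$ with $|\alpha|=3$ via the Leibniz rule, using that $Q$ acts only on $v$ and is bilinear in $(f,g)$. This produces the top-order term $\big(\omega Q(f,\partial_x^\alpha g),\omega\partial_x^\alpha g\big)_{L^2}$, which is handled by the coercivity estimate \eqref{lower} applied to the weighted function, plus a commutator term $\big(\omega Q(f,\partial_x^\alpha g),\omega\partial_x^\alpha g\big)_{L^2}-\big(Q(f,\omega\partial_x^\alpha g),\omega\partial_x^\alpha g\big)_{L^2}$ that is controlled by Lemma \ref{Ighf}; the remaining lower-order commutator terms $\big(\omega Q(\partial_x^{\alpha'}f,\partial_x^{\alpha''}g),\omega\partial_x^\alpha g\big)_{L^2}$ with $1\le|\alpha'|\le 3$ and $|\alpha'|+|\alpha''|=3$ are estimated by Corollary \ref{cor:upper}. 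First I would write $\omega Q(f,\partial_x^\alpha g) = Q(\omega f\cdot\omega^{-1}, \omega\,\partial_x^\alpha g)$-type decompositions carefully so that every factor of $f$ or $g$ carries a weight $\omega$, so that the hypotheses $f\in\mathcal M_T(C_0)$ (giving $\sup_t\|\omega f(t)\|_{H^3_xL^2_v}\le C_0$) and $f\in\mathcal A_T$ (giving the bounds \eqref{aat} needed to invoke \eqref{lower}) both become usable.

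For the main term, applying \eqref{lower} with $g$ there replaced by $f$ (which satisfies the mass/energy/entropy bounds \eqref{aat}, hence the hypotheses of the coercivity estimate up to the constants $m_0/2, 2E_0, 2H_0$) and with $f$ there replaced by $\omega\partial_x^\alpha g$ gives
$$
\big(-Q(f,\omega\partial_x^\alpha g),\,\omega\partial_x^\alpha g\big)_{L^2}\ \ge\ C_1\,\normm{\omega\partial_x^\alpha g}_{L^2_x}^2 - C_2\big(1+\|\comi v^6 f\|_{L^\infty_xL^2_v}\big)\|\comi v^{\frac\gamma2+s}\omega\partial_x^\alpha g\|_{L^2}^2,
$$
and $\|\comi v^6 f\|_{L^\infty_xL^2_v}\le \|\comi v^6 e^{-(a_0-t)\comi v^2}\omega f\|_{L^\infty_xL^2_v}\le C\|\omega f\|_{H^3_xL^2_v}\le C C_0$ by Sobolev embedding $H^3_x\hookrightarrow L^\infty_x$ and $0\le t\le a_0/2$. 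Since $\comi v^{\frac\gamma2+s}\le \comi v$ (as $\gamma/2+s<1$ — indeed $\gamma<0$, so this is immediate, and even in the borderline we absorb into $\comi v$), the negative term is bounded by $C\|\comi v\,\omega\partial_x^\alpha g\|_{L^2}^2$; summing over $|\alpha|=3$ and adding $N$ times the $|\alpha|=0$ version, the coercive part produces $-C_1\normm{\omega g}_{\mathcal H^3_x}^2$ and an error $\le \widetilde C\|\comi v\,\omega g\|_{\mathcal H^3_x L^2_v}^2$. The commutator term from Lemma \ref{Ighf} with $\theta=a_0-t$ is bounded by $C\|\omega f\|_{L^2_v}\big(\normm{\omega\partial_x^\alpha g}+\|\comi v^{\frac\gamma2+s}\omega\partial_x^\alpha g\|_{L^2_v}\big)\|\comi v^{\frac\gamma2+s}\omega\partial_x^\alpha g\|_{L^2_v}$, which after integration in $x$, Sobolev embedding on $f$, and Young's inequality contributes $\frac{C_1}{4}\normm{\omega g}^2_{\mathcal H^3_x} + C\|\comi v\,\omega g\|^2_{\mathcal H^3_xL^2_v}$-type terms.

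The lower-order Leibniz terms, with at least one spatial derivative on $f$, are where the choice of $N$ enters. Using Corollary \ref{cor:upper} on $\big(\omega Q(\partial_x^{\alpha'}f,\partial_x^{\alpha''}g),\omega\partial_x^\alpha g\big)_{L^2}$, one gets a product of $\|\omega\partial_x^{\alpha'}f\|_{L^2_v}$ (with $1\le|\alpha'|\le3$) against $\big(\normm{\omega\partial_x^{\alpha''}g}+\|\comi v^{\frac\gamma2+s}\omega\partial_x^{\alpha''}g\|_{L^2_v}\big)$ and $\big(\normm{\omega\partial_x^\alpha g}+\|\comi v^{\frac\gamma2+s}\omega\partial_x^\alpha g\|_{L^2_v}\big)$; integrating in $x$ and using $H^3_x$ as an algebra together with $\|\omega f\|_{H^3_xL^2_v}\le C_0$ bounds these by $C C_0 \big(\normm{\omega g}_{\mathcal H^3_x}^2 + \|\comi v^{\frac\gamma2+s}\omega g\|^2_{\mathcal H^3_xL^2_v}\big)$, \emph{except} that the top piece $|\alpha''|=3$ or $|\alpha|=3$ with $|\alpha'|\ge1$ must be interpolated: one spatial derivative's worth of $g$ is lower order, so by Gagliardo--Nirenberg/interpolation in $x$ between $\|g\|_{L^2}$ and $\|\partial_x^3 g\|_{L^2}$, a small parameter $\eps$ appears, $\le \eps\normm{\omega g}^2_{\mathcal H^3_x} + C_\eps\|\omega g\|^2_{\mathcal H^3_x L^2_v}$, where the constant $C_\eps$ forces the weight $N$ in \eqref{mathch2} to be large (so that $N\|\omega g\|^2_{L^2}$ dominates). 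The main obstacle is precisely this bookkeeping: tracking which terms genuinely have a "spare" derivative to interpolate so the $\normm{\cdot}$ losses all come with a small constant, choosing $N=N(C_0,C_1,C_2)$ large enough that the coercivity $C_1/2$ survives after subtracting all the $\eps$-errors, and confirming the residual is of the stated form $\frac12\|\comi v\,\omega g\|^2_{\mathcal H^3_xL^2_v}+\bar C\|\omega g\|^2_{\mathcal H^3_xL^2_v}$ — in particular keeping the $\comi v$-weighted error coefficient at exactly $\tfrac12$, which I expect comes from the single top-order application of \eqref{lower} after the $\comi v^{\gamma/2+s}\le\comi v$ bound and a careful constant chase rather than from Young's inequality.
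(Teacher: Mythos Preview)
Your overall strategy matches the paper's: Leibniz expansion in $x$, coercivity \eqref{lower} plus the commutator Lemma \ref{Ighf} for the diagonal pieces, Corollary \ref{cor:upper} for the cross terms, and interpolation in $x$ to create a small parameter on the lower-order Leibniz terms. Two points, however, need correction.

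\medskip

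\textbf{The $\tfrac12$ coefficient on $\|\comi v\,\omega g\|^2_{\mathcal H^3_xL^2_v}$.} Your bound $\comi v^{\gamma/2+s}\le\comi v$ produces a \emph{fixed} constant $C_2(1+CC_0)$ in front of $\|\comi v\,\omega g\|^2$, and there is no ``constant chase'' that reduces it to $\tfrac12$. The paper instead uses the interpolation
\[
\|\comi v^{\frac{\gamma}{2}+s}h\|_{L^2}\le \eps\|\comi v\,h\|_{L^2}+C_\eps\|h\|_{L^2},
\]
valid because $\frac{\gamma}{2}+s<1$ (see \eqref{intero}). This converts every occurrence of $\|\comi v^{\gamma/2+s}\omega g\|^2$ --- from the coercivity remainder, from Lemma \ref{Ighf}, and from Corollary \ref{cor:upper} --- into $\eps\|\comi v\,\omega g\|^2+C_\eps\|\omega g\|^2$, and then one chooses $\eps$ small (specifically $\eps=\min\{C_1/(4C_0),1/2\}$) so that the $\comi v$-weighted pieces sum to at most $\tfrac12\|\comi v\,\omega g\|^2_{\mathcal H^3_xL^2_v}$ while the large $C_\eps$ lands on the harmless unweighted term. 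Your closing guess that it comes ``rather than from Young's inequality'' is exactly backwards: it \emph{is} this weight interpolation (a Young-type splitting) that does the job.

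\medskip

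\textbf{The role of $N$.} After applying Corollary \ref{cor:upper} to the Leibniz cross terms and interpolating $\|h\|_{H^{11/4}_x}\le\eps\|h\|_{H^3_x}+C_\eps\|h\|_{L^2_x}$, the dangerous residual is $C_\eps C_0\,\normm{\omega g}^2_{L^2_x}$ --- the triple norm at the $L^2_x$ level --- not $C_\eps\|\omega g\|^2_{\mathcal H^3_xL^2_v}$ as you wrote. The latter would be acceptable as-is (it is exactly the $\bar C$ term in the conclusion), so your stated term would not explain why $N$ is needed. The actual term $\normm{\omega g}^2_{L^2_x}$ involves $v$-derivatives and cannot be absorbed into $\bar C\|\omega g\|^2_{\mathcal H^3_xL^2_v}$; instead one uses $\normm{\omega g}^2_{L^2_x}\le N^{-1}\normm{\omega g}^2_{\mathcal H^3_x}$ (directly from the definition \eqref{mathch2}) and takes $N=4C_\eps C_0/C_1$ so that this piece, together with the $\eps C_0$ piece, totals at most $\tfrac{C_1}{2}\normm{\omega g}^2_{\mathcal H^3_x}$ and is absorbed by the coercivity.
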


\begin{rmk}
	In the following discussion,   we will fix the integer  $N$ constructed in Lemma \ref{lem:j1}. Without confusion we usually write $\mathcal H_x^3$ instead of $\mathcal H_{x,N}^3$  for short, omitting the subscript $N.$
\end{rmk}

\begin{proof}[Proof of Lemma \ref{lem:j1}]
	In view of    definition \eqref{mathch2}  of $\mathcal{H}^3_x$, we write, recalling $L^2=L_{x,v}^2$, 
\begin{equation}\label{s1s2s3}
\begin{aligned}
 \big (\omega Q(f, g),\ \omega  g\big )_{\mathcal{H}^3_xL^2_v} &=N\big (\omega   Q(f, g),\ \omega  g\big )_{L^2}+\sum_{|\al|=3}(\om Q(f,\ \pa^\al_xg),\ \om \pa^\al_x g)_{L^2}\\
&\qquad +\sum_{\stackrel{\beta\leq\alpha}{ |\al|=3, |\beta| \geq 1}} {\alpha\choose\beta} \big (\om Q(\pa^{\beta}_xf, \ \pa^{\alpha-\beta}_xg\big ),\ \om \pa^\al_x g)_{L^2}\\
&:=S_1+S_2+S_{3}.
\end{aligned}
\end{equation}

\noindent\underline{\it Upper bound of $S_1$ and $S_2$}. 
For $S_{1}$,  we write
\begin{equation}\label{s1}
S_{1}=N\big (  Q(f, \omega g),\  \omega  g\big )_{L^2}
+N\Big[ \big (\omega   Q(f, g),\ \omega  g\big )_{L^2}-\big (   Q(f, \omega g),\ \omega  g\big )_{L^2}\Big].
\end{equation}
It follows from \eqref{lower} as well as Definition \ref{c0} of $C_0$  that
\ben\label{comest}
	  \big (  Q(f, \omega g),\ \omega  g\big )_{L^2}\notag&\leq& -C_1 \normm{\om g}^2_{L_x^2} +C_2 \big(1+ \|\comi v^6 f\|_{L_x^\infty L^2_v}\big) \|\comi v^{\frac{\ga}{2}+s}\om g\|^2_{L^2}\\
	  &\leq& -C_1 \normm{\om g}^2_{L_x^2} +\eps C_0 \|\comi v \om g\|^2_{L^2}+C_\eps C_0  \| \om g\|^2_{L^2},	  
\een
the last inequality using  the fact $\f \ga 2+s<1$ and  
  the interpolation inequality that
  \begin{equation}\label{intero}
 \forall\ \eps>0,  \  \ 	\norm{\comi v^{\frac{\gamma}{2}+s}h}_{L^2}\leq \eps \norm{\comi v h}_{L^2}+C_{\eps} \norm{\comi v^{\frac{\gamma}{2}}h}_{L^2}\leq \eps \norm{\comi v h}_{L^2}+C_{\eps} \norm{h}_{L^2}. 
  \end{equation}
On the other hand, using Lemma  \ref{Ighf} with $\mu_\theta^{-1}=\omega$ and the   Sobolev  inequality that $\norm{\cdot}_{L_x^\infty}\leq C\norm{\cdot}_{H_x^3}$,    we have that, for any $\eps>0,$
\begin{multline*}
	\big|  \big (\omega   Q(f, g),\ \omega  g\big )_{L^2}-\big (   Q(f, \omega g),\ \omega  g\big )_{L^2}\big|\\
	  \leq C\norm{\omega f}_{H_x^3L_v^2}\big(\normm{\omega g}_{L_x^2}+\norm{\comi v^{\frac{\gamma}{2}+s}\omega g}_{L^2} \big)\norm{\comi v^{\frac{\gamma}{2}+s}\omega g}_{L^2}\\	\leq \eps C_0 \big( \normm{\omega g}_{L_x^2}^2+\norm{\comi v \omega g}_{L^2}^2\big)+C_\eps C_0 \norm{ \omega g}_{L^2}^2,
\end{multline*}
the last line holding because of   the interpolation inequality \eqref{intero}.  
  Combing the above estimate and \eqref{comest} with \eqref{s1},  we conclude that, for any
$\eps>0,$
\begin{equation*}
	S_1 \leq   -C_1 N\normm{\om g}^2_{L_x^2} +\eps C_0 \big(N \normm{\omega g}_{L_x^2}^2+N\norm{\comi v \omega g}_{L^2}^2\big) 
	+C_\eps  C_0 N\norm{ \omega g}_{L^2}^2.
\end{equation*}
Similarly, 
\begin{multline*}
	S_2 \leq    -C_1\sum_{|\al|=3}\normm{\om \partial_x^\alpha g}^2_{L_x^2}\\
	+ \eps C_0\sum_{|\al|=3}\big( \normm{\omega \partial_x^\alpha g}_{L_x^2}^2+\norm{\comi v \omega \partial_x^\alpha g}_{L^2}^2\big) 
	 +C_\eps  C_0 \sum_{|\al|=3}   \norm{ \omega \partial_x^\alpha g}_{L^2}^2.
\end{multline*}
Combining the two estimates above and recalling definition \eqref{mathch2} of $\mathcal H_x^3$, we have that
\begin{equation}\label{s1s2}
	S_1+S_2 \leq -C_1\normm{\omega g}^2_{\mathcal H_x^3} 
	+\eps C_0 \big( \normm{\omega  g}_{\mathcal H_x^3}^2+\norm{\comi v \omega   g}_{\mathcal H_{x}^3L_v^2}^2\big) 
	 +C_\eps C_0\norm{ \omega g}_{\mathcal H_{x}^3 L_v^2}^2.
\end{equation}

\noindent\underline{\it Upper bound of $S_3$}. 
It remains to deal with $S_3$.  We use Corollary
  \ref{cor:upper} as well as the fact that
  \begin{equation*}
  	\begin{aligned}
  	\norm{fg}_{L_x^2}\leq   C\norm{f }_{L_x^2}	\norm{ g}_{H_x^{\frac74}}, 
  	\end{aligned}
  \end{equation*}
to obtain that  
 \begin{equation*}
 \begin{aligned}
 	S_3&=\sum_{\stackrel{\beta\leq\alpha}{ |\al|=3, |\beta| \geq 1}} {\alpha\choose\beta} \big (\om Q(\pa^{\beta}_xf, \ \pa^{\alpha-\beta}_xg\big ),\ \om \pa^\al_x g)_{L^2}\\
 	& \leq C  \|\omega f \|_{H_x^{3} L_v^2}  \Big (\normm{\omega    g}_{H_x^{\frac{11}{4}}}+\|\comi v^{\frac{\gamma}{2}+s} \omega  g\|_{H_x^{\frac{11}{4}} L^2_{v}}\Big ) 
 	  \big (\normm{\omega   g}_{H_x^3} +\|\comi v^{\frac{\gamma}{2}+s} \omega g\|_{H_x^3 L^2_{v}}\big).
 \end{aligned}
 \end{equation*}
 This,  with  the fact $f\in\mathcal M_T(C_0)$, the interpolation \eqref{intero}  and
 \begin{equation*}
 \forall\  \eps>0,\quad 	\norm{ h}_{H_x^{\frac{11}{4}}}\leq \eps  \norm{ h}_{H_x^{3}}+C_\eps  \norm{ h}_{L_x^{2}}, 
 \end{equation*}
 implies that
 \begin{equation}\label{ss3}
 	S_{3}\leq \eps  C_0 \big (\normm{\omega   g}_{H_x^3}^2+\|\comi v \omega g\|_{H_x^3 L^2_{v}}^2\big)+C_\eps  C_0 \big (\normm{\omega   g}_{L_x^2}^2+\|  \omega g\|_{H_x^3 L^2_{v}}^2\big).
 \end{equation}
   Now we combine the upper bound of  $S_3$ above and   estimate \eqref{s1s2} on $S_1+S_2$ with \eqref{s1s2s3}, to conclude
 \begin{multline}\label{Nlarge}
 \big (\omega Q(f, g),\ \omega  g\big )_{\mathcal{H}^3_xL^2_v}  \leq -C_1 \normm{\omega   g}_{\mathcal H_x^3}^2 +\eps C_0\|\comi v \omega g\|_{\mathcal H_x^3 L^2_{v}}^2 \\
 + 	\Big(\eps C_0+\frac{C_\eps C_0}{N}\Big)  \normm{\omega   g}_{\mathcal H_x^3}^2 +C_\eps C_0  \|  \omega g\|_{\mathcal H_x^3 L^2_{v}}^2.
   \end{multline}
 Now we choose $\eps=\min\{\frac{C_1}{4C_0},\f12\}$ and 
$
 	N=4C_\eps C_0/C_1
$ in the above estimate, this gives
 the assertion in Lemma \ref{lem:j1}. The proof is thus completed. 
\end{proof}

Note in the proof of Lemma \ref{lem:j1}  we only ask $N$ large in estimate \eqref{Nlarge} so that       the term$\normm{\omega   g}_{L_x^2}$ in \eqref{ss3} can be absorbed. So if we  repeat the  the proof  of Lemma \ref{lem:j1} and replace $\mathcal H_x^3 L_v^2$ therein by $H_x^3 L_v^2$(or let $N=1$), we have the following lemma.

\begin{lem}\label{lem:h3}
Under the hypothesis of Lemma \ref{lem:j1}, we can find a constant $\bar C,$   depending only on the constants $C_0,C_1$ and $C_2$ in \eqref{c0}  and \eqref{lower},   such that  for any regular function $g$ we have 
\begin{equation*}
 \big (\omega Q(f, g),\ \omega  g\big )_{H^3_xL^2_v}  \leq -\frac{C_1}{2}  \normm{\omega   g}_{  H_x^3}^2 +\frac12\|\comi v \omega g\|_{ H_x^3 L^2_{v}}^2+\bar C\normm{\omega   g}_{L_x^2}^2 +\bar  C\|  \omega g\|_{  H_x^3 L^2_{v}}^2.
   \end{equation*}
\end{lem}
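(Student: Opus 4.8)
The strategy is to repeat the proof of Lemma~\ref{lem:j1} word for word with the large parameter $N$ fixed to $N=1$, so that the anisotropic norm $\mathcal H^3_{x,N}$ collapses to a norm equivalent to $\norm{\cdot}_{H^3_x}$, and then simply to track the single term that the choice of a large $N$ was used to absorb. Accordingly, I would first decompose, as in \eqref{s1s2s3} but with every occurrence of $\mathcal H^3_x$ replaced by $H^3_x$,
\[
\big(\omega Q(f,g),\ \omega g\big)_{H^3_x L^2_v}=S_1+S_2+S_3,
\]
where $S_1=\big(\omega Q(f,g),\ \omega g\big)_{L^2}$, $S_2=\sum_{|\al|=3}\big(\omega Q(f,\pa^\al_x g),\ \omega\pa^\al_x g\big)_{L^2}$, and $S_3$ gathers the mixed commutator terms with $1\le|\beta|\le 3=|\al|$. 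For $S_1$ and $S_2$ nothing has to change: after peeling off $Q(f,\omega g)$ exactly as in \eqref{s1}, one applies the coercivity estimate \eqref{lower} together with the definition \eqref{c0} of $C_0$ and the interpolation inequality \eqref{intero}, and controls the weight commutator $\omega Q(f,\cdot)-Q(f,\omega\,\cdot)$ via Lemma~\ref{Ighf} (with $\mu_\theta^{-1}=\omega$) and the Sobolev embedding $\norm{\cdot}_{L^\infty_x}\le C\norm{\cdot}_{H^3_x}$. This reproduces the analogue of \eqref{s1s2},
\[
S_1+S_2\le -C_1\normm{\omega g}^2_{H^3_x}+\eps C_0\big(\normm{\omega g}^2_{H^3_x}+\norm{\comi v\,\omega g}^2_{H^3_x L^2_v}\big)+C_\eps C_0\norm{\omega g}^2_{H^3_x L^2_v},
\]
and one observes that $N$ played no role whatsoever in these two bounds.

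For $S_3$ I would argue exactly as in the proof of Lemma~\ref{lem:j1}: using Corollary~\ref{cor:upper}, the product estimate $\norm{fg}_{L^2_x}\le C\norm{f}_{L^2_x}\norm{g}_{H^{7/4}_x}$, the membership $f\in\mathcal M_T(C_0)$, the interpolation \eqref{intero}, and $\norm{h}_{H^{11/4}_x}\le\eps\norm{h}_{H^3_x}+C_\eps\norm{h}_{L^2_x}$, one gets the analogue of \eqref{ss3},
\[
S_3\le \eps C_0\big(\normm{\omega g}^2_{H^3_x}+\norm{\comi v\,\omega g}^2_{H^3_x L^2_v}\big)+C_\eps C_0\big(\normm{\omega g}^2_{L^2_x}+\norm{\omega g}^2_{H^3_x L^2_v}\big).
\]
Adding the last two displays and choosing $\eps$ small enough, depending only on $C_0$ and $C_1$, so that the contribution $\eps C_0\normm{\omega g}^2_{H^3_x}$ is absorbed into $-C_1\normm{\omega g}^2_{H^3_x}$, leaving $-\tfrac{C_1}{2}\normm{\omega g}^2_{H^3_x}$, and so that the total coefficient of $\norm{\comi v\,\omega g}^2_{H^3_x L^2_v}$ does not exceed $\tfrac12$, one obtains the asserted inequality with $\bar C=C_\eps C_0$.

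The only point worth emphasizing --- and the reason the statement carries an extra summand compared with Lemma~\ref{lem:j1} --- is the fate of the lower-order term $C_\eps C_0\normm{\omega g}^2_{L^2_x}$ produced by $S_3$. In Lemma~\ref{lem:j1} it was eliminated by means of $\normm{\omega g}^2_{L^2_x}\le N^{-1}\normm{\omega g}^2_{\mathcal H^3_{x,N}}$ with $N$ chosen large; here, since $N=1$, no such gain is available, so this term simply survives on the right-hand side, yielding precisely the additional $\bar C\normm{\omega g}^2_{L^2_x}$ in the claim. Thus there is no genuine new difficulty: the whole content of the lemma is the observation that the passage from $\mathcal H^3_{x,N}$ to $H^3_x$ costs exactly this one harmless lower-order triple-norm term, all the substantive estimates (coercivity~\eqref{lower}, the commutator Lemma~\ref{Ighf}, and the bilinear bound of Corollary~\ref{cor:upper}) being insensitive to the value of $N$.
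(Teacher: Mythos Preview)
Your proposal is correct and follows exactly the approach indicated in the paper: one repeats the proof of Lemma~\ref{lem:j1} with $N=1$ (equivalently, with $\mathcal H^3_x$ replaced by $H^3_x$), and observes that the only role of a large $N$ was to absorb the term $C_\eps C_0\normm{\omega g}^2_{L^2_x}$ arising from \eqref{ss3}, which now simply remains on the right-hand side. The choice of $\eps$ depending only on $C_0$ and $C_1$ then yields the stated inequality with $\bar C=C_\eps C_0$, just as you describe.
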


 The following lemma is concerned with the upper bound of the trilinear term.

\begin{lem}\label{lem:j2}
For any $\eps>0,$ we have
\begin{multline*}
	 \big| \big (\omega Q(g,  f),\ \omega  h\big )_{\mathcal{H}^3_xL^2_v}\big|  \leq \eps  \big(\normm{\omega h}_{\mathcal H_x^3}^2+\norm{\comi v \omega h}_{\mathcal H_x^3L_v^2}^2\big) \\ +C\eps^{-1}  \norm{\omega g}_{\mathcal H_x^3L_v^2}^2  \big(\normm{\omega f}_{H_x^{3}}^2+\norm{\comi v\omega f}_{H_x^{3} L_v^2}^2\big).   
\end{multline*}
\end{lem}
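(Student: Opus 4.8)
The strategy mirrors the treatment of $S_3$ in the proof of Lemma \ref{lem:j1}, but now with the roles of the two arguments of $Q$ and of the test function slightly rearranged. Writing out the $\mathcal H^3_x$-inner product via \eqref{mathch2}, we decompose
\[
\big(\omega Q(g,f),\,\omega h\big)_{\mathcal H^3_xL^2_v}
= N\big(\omega Q(g,f),\,\omega h\big)_{L^2}
+\sum_{|\alpha|=3}\sum_{\beta\le\alpha}\binom{\alpha}{\beta}\big(\omega Q(\partial_x^\beta g,\,\partial_x^{\alpha-\beta}f),\,\omega\,\partial_x^\alpha h\big)_{L^2}.
\]
For each term I would apply Corollary \ref{cor:upper} in the $v$-variable (pointwise in $x$) to bound
\[
\big|\big(\omega Q(\partial_x^\beta g,\partial_x^{\alpha-\beta}f),\omega\partial_x^\alpha h\big)_{L^2_v}\big|
\le C\|\omega\partial_x^\beta g\|_{L^2_v}\big(\normm{\omega\partial_x^{\alpha-\beta}f}+\|\comi v^{\frac\gamma2+s}\omega\partial_x^{\alpha-\beta}f\|_{L^2_v}\big)\big(\normm{\omega\partial_x^\alpha h}+\|\comi v^{\frac\gamma2+s}\omega\partial_x^\alpha h\|_{L^2_v}\big),
\]
and then integrate in $x$, distributing the three $x$-derivatives by a Sobolev product estimate. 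Here I want $g$ to absorb most of the derivatives (since the statement puts $g$ in the full $\mathcal H^3_x$-norm and only asks for $H^3_x$-control of $f$): using $\|uv\|_{L^2_x}\le C\|u\|_{H^{3}_x}\|v\|_{H^{7/4}_x}$ and its variants, and the interpolation $\|\cdot\|_{H^{7/4}_x}\le\eps\|\cdot\|_{H^{3}_x}+C_\eps\|\cdot\|_{L^2_x}$, one routes the top-order $x$-derivative onto the $h$-factor or onto $g$, keeping $f$ at order $\le 3$ throughout and $h$ in the $\mathcal H^3_x$-norm. This produces a bound of the form
\[
\big|\big(\omega Q(g,f),\omega h\big)_{\mathcal H^3_xL^2_v}\big|
\le C\,\norm{\omega g}_{\mathcal H^3_xL^2_v}\big(\normm{\omega f}_{H^3_x}+\|\comi v^{\frac\gamma2+s}\omega f\|_{H^3_xL^2_v}\big)\big(\normm{\omega h}_{\mathcal H^3_x}+\|\comi v^{\frac\gamma2+s}\omega h\|_{\mathcal H^3_xL^2_v}\big).
\]

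The remaining steps are the same interpolation manoeuvres already used in Lemma \ref{lem:j1}. I would apply \eqref{intero} (in the $x$-integrated form) to replace each factor $\|\comi v^{\frac\gamma2+s}\omega\,\cdot\,\|$ by $\eps\|\comi v\,\omega\,\cdot\,\|+C_\eps\|\omega\,\cdot\,\|$, and absorb the lower-order $\|\omega f\|$, $\|\omega h\|$ contributions; since $f$ enters only as an outer multiplier one may further bound $\|\omega f\|_{H^3_xL^2_v}+\normm{\omega f}_{H^3_x}\le\normm{\omega f}_{H^3_x}+\|\comi v\omega f\|_{H^3_xL^2_v}$ up to constants. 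Finally, Young's inequality $ab\le\eps a^2+C\eps^{-1}b^2$ applied to the product of the $h$-factor and the $(g,f)$-factor yields exactly the claimed estimate, the $\eps$-part collecting $\normm{\omega h}_{\mathcal H^3_x}^2+\|\comi v\omega h\|_{\mathcal H^3_xL^2_v}^2$ and the $C\eps^{-1}$-part collecting $\norm{\omega g}_{\mathcal H^3_xL^2_v}^2\big(\normm{\omega f}_{H^3_x}^2+\|\comi v\omega f\|_{H^3_xL^2_v}^2\big)$.

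The main obstacle is purely combinatorial bookkeeping: in the Leibniz sum one must check that every term can be arranged so that $g$ (and $h$) carry at most three $x$-derivatives each in an $L^2_x$-norm while $f$ stays inside an $H^{3}_x$-algebra-type norm, with the worst cases being $\beta=\alpha$ (all derivatives on $g$, needing $\|\partial_x^3 g\cdot f\|_{L^2_x}\le\|\partial_x^3 g\|_{L^2_x}\|f\|_{H^{7/4}_x}$) and $\beta=0$ on the $Q$-argument side versus the top derivative landing on $h$. One must also keep track that the constant $C$ is allowed to depend on $N$, which is already fixed by Lemma \ref{lem:j1}, so no smallness of $N$ is needed here — unlike in Lemma \ref{lem:j1}, there is no coercive term to protect, so the estimate is genuinely one-sided and comparatively soft.
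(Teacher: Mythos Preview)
Your proposal is correct and follows essentially the same route as the paper's proof: decompose the $\mathcal H^3_x$ inner product via \eqref{mathch2}, apply Corollary \ref{cor:upper} pointwise in $x$, distribute the $x$-derivatives by Sobolev product estimates (the paper uses simply $\|\cdot\|_{L^\infty_x}\le C\|\cdot\|_{H^3_x}$), and finish with Young's inequality. The only difference is that you invoke the $H^{7/4}_x$ interpolation and \eqref{intero} with a small parameter; these were needed in Lemma \ref{lem:j1} to manufacture smallness for absorption into the coercive term, but here the estimate is a pure upper bound, so the paper proceeds more directly---since $\frac\gamma2+s<1$ one has $\|\comi v^{\frac\gamma2+s}\,\cdot\,\|\le\|\comi v\,\cdot\,\|$ without interpolation, and the $x$-products are handled by placing one factor in $L^\infty_x\hookleftarrow H^3_x$ and the other two in $L^2_x$.
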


\begin{proof}
The argument is similar to that in  Lemma \ref{lem:j1}.  In fact, as in \eqref{s1s2s3},  we split
\begin{equation*}
	\begin{aligned}
&\big| \big (\omega Q(g,  f),\ \omega  h \big )_{\mathcal{H}^3_xL^2_v} \big| \\
&\qquad\leq N \big| \big (\omega   Q(g,  f),\ \omega  h\big )_{L^2} \big| +\sum_{\stackrel{ |\al|=3 }{\beta\leq\alpha}}\binom{\alpha}{\beta}\big|\big (\om Q(\pa^{\beta}_xg, \ \pa^{\alpha-\beta}_x f\big ),\ \om \pa^\al_x h)_{L^2}\big|.
\end{aligned}
\end{equation*}
Using Corollary \ref{cor:upper} as well as  the fact that $\norm{\cdot}_{L_x^\infty}\leq C\norm{\cdot}_{H_x^3}$,  we have 
\begin{align*}
	& N \big| \big (\omega   Q(g,  f),\ \omega  h\big )_{L^2} \big| \\
	&\qquad  \leq   C N\norm{\omega g}_{L^2} \big(\normm{\omega f}_{H_x^{3}}+\norm{\comi v^{\frac{\gamma}{2}+s}\omega f}_{H_x^{3} L_v^2}\big) \big( \normm{\omega h}_{L_x^2} +\norm{\comi v^{\frac{\gamma}{2}+s}\omega h}_{L^2}\big)
	 \end{align*}
and  
\begin{align*}
	&\sum_{\stackrel{\beta\leq\alpha}{ |\al|=3 }}\binom{\alpha}{\beta}\big|\big (\om Q(\pa^{\beta}_xg, \ \pa^{\alpha-\beta}_x f\big ),\ \om \pa^\al_x h)_{L^2}\big| \\
	&\quad \leq   C   \norm{\omega   g}_{H_x^3L_v^2} \big(\normm{\omega f}_{H_x^{3}}+\norm{\comi v^{\frac{\gamma}{2}+s}\omega f}_{H_x^{3} L_v^2}\big) 	 \big(  \normm{\omega  h}_{H_x^3}+\norm{\comi v^{\frac{\gamma}{2}+s}\omega  h}_{H_x^3L_v^2}\big).
	 \end{align*}
As a result,   we combine the above  estimates   to conclude  the assertion in Lemma \ref{lem:j2}. The proof is thus completed.  
\end{proof}

 \begin{rmk}
 	Let $N=1$ in the the proof of Lemma \ref{lem:j2}, we have that, for any $\eps>0,$  
\begin{multline*}
	 \big| \big (\omega Q(g,  f),\ \omega  h\big )_{H^3_xL^2_v}\big|  \leq \eps  \big(\normm{\omega h}_{  H_x^3}^2+\norm{\comi v \omega h}_{ H_x^3L_v^2}^2\big) \\ +C\eps^{-1}  \norm{\omega g}_{ H_x^3L_v^2}^2  \big(\normm{\omega f}_{H_x^{3}}^2+\norm{\comi v\omega f}_{H_x^{3} L_v^2}^2\big).   
\end{multline*}
 	
 \end{rmk}

\section{\emph{A priori} estimate}

We will adopt the iteration strategy to construct solutions to the Boltzmann equation \eqref{Bolt}, and a key part is to derive the uniform estimate for a regularized  version of equation \eqref{Bolt} and iterative equations.  To clarify the argument, we first prove \emph{a priori} estimate for a linear Boltzmann equation,  seeing   Theorem \ref{thm:apri} below with the proof  postponed to Sections \ref{sec:analy} and \ref{sec:gev}.

Before stating the \emph{a priori} estimate, we first introduce some notations to be used later. 
For a given constant $\rho>0$   we define
\begin{equation}\label{lrhok}
	L_{\rho, k}=\left\{
	\begin{aligned}
		& 1, \    \textrm{ if } \ k=0, \\
		 & \frac{(k+1)^3}{ \rho^{ k-1 }(k!)^{\max \{(2\tau)^{-1}, 1\} }} ,\    \textrm{ if } \  k \geq 1, 
	\end{aligned}
	\right.
\end{equation}
 with $\tau$  defined in \eqref{tau}. For any given norm $\norm{\cdot}$, we define
\begin{equation}\label{pkh}
		\norm{ \omega \vec{ H}_{j}^\alpha h}: =
		 		   \|\omega H_{\delta_1, j} ^{\alpha_1} H_{\delta_2, j}^{\alpha_2} h \ \| \  \textrm{ for }\ \alpha=(\alpha_1,\alpha_2)\in\mathbb Z_+^2,
\end{equation}
where       $H_{\delta_i,j}$ are defined by \eqref{hdel12},  that is,
\begin{equation*}
	H_{\delta_i, j}=\frac{1}{\delta_i+1} t^{\delta_i+1}\partial_{x_j}+t^{\delta_i}\partial_{v_j},  \quad 1\leq j\leq 3,\quad i=1,2,
\end{equation*}
with $\delta_1,\delta_2$  given in \eqref{de1de2}. Finally, 
denote
\begin{equation}\label{dk}
	 \norm{\omega {\bm D}^k h }_{ \mathcal H_x^3  L_v^2}=
	 \left\{
	 \begin{aligned}
	 & \sum_{1\leq j\leq 3} \sum_{1\leq i\leq 2}\norm{\omega H_{\delta_i,j}^k h }_{ \mathcal H_x^3  L_v^2},\  \ \textrm{if }\ 	  {\gamma \over 2}+2s\geq 1,\\
	 &\sum_{1\leq j\leq 3} \sup_{|\alpha|=k} \norm{\omega \vec H_{j}^{\alpha} h }_{ \mathcal H_x^3  L_v^2}, \ \  \textrm{if }\ 	 {\gamma \over 2}+2s< 1.
	 \end{aligned}
\right.
\end{equation}

 \begin{definition}\label{assum} Using the notations in \eqref{lrhok} and \eqref{pkh},    we introduce   two Banach spaces $ X_{\rho, T}$ and   $Y_{\rho,T} $, equipped  with the   norms  $ \abs{h}_{X_{\rho,T}}$ and $ \abs{h}_{Y_{\rho,T}}$ defined as below,  respectively.   If  ${\gamma \over 2}+2s\geq 1$  we define 
  \begin{equation}\label{xyrt}
  \left\{
 \begin{aligned}
 	& \abs{h}_{X_{\rho,T}}:   = \sum_{1\leq j\leq3}\sum_{1\leq i\leq 2}\,  \sup_{k\geq 0}\Big( L_{\rho,k}  \sup_{t\leq T} \norm{\omega H_{\delta_i,j}^k h }_{ \mathcal H_x^3  L_v^2} \Big), \\
 	  &\abs{h}_{Y_{\rho,T}}  : =  \sum_{1\leq j\leq3}\sum_{1\leq i\leq 2}\, \sup_{k\geq 0}\bigg[ L_{\rho,k}  \Big(   \int_0^T \big( \normm{\omega H_{\delta_i,j}^k h}_{ \mathcal H_x^3   }^2  +\norm{ \comi{v}\omega H_{\delta_i,j}^k  h}_{ \mathcal H_x^3  L_v^2} ^2 \big)dt \Big)^{1\over2} \bigg].  
 	  	\end{aligned}
 	  	\right.
 \end{equation} 
 If  ${\gamma \over 2}+2s< 1$  we define 
  \begin{equation*}
  \left\{
  \begin{aligned}
  &	\abs{h}_{X_{\rho, T}}= \sum_{ 1\leq j\leq 3}    \sup_{k\geq 0}	\Big(L_{\rho,k}\sup_{\abs\alpha=k}\, \sup_{t\leq T} \norm{\omega \vec{ H}_{j}^\alpha h(t) }_{ \mathcal H_x^3  L_v^2} \Big),\\
  & \abs{h}_{Y_{\rho, T}}= \sum_{ 1\leq j\leq 3}    \sup_{k\geq0}	\bigg[L_{\rho,k}\sup_{\abs\alpha=k}\bigg(\int_0^T\big(\normm{ \omega \vec{ H}_{j}^\alpha h }_{ \mathcal H_x^3}^2+ \norm{\comi v\omega \vec{ H}_{j}^\alpha h }_{ \mathcal H_x^3  L_v^2}^2\big)dt\bigg)^{1\over2} \bigg].
  \end{aligned}
\right.
 \end{equation*}
 Recall $\omega$ is given in \eqref{weifun}, and  $\mathcal H_x^3$ and $\normm{\cdot}$ are  defined by \eqref{mathch2} and  \eqref{trinorm}, respectively. 
 \end{definition}

 \begin{definition}\label{defm}
Given constants $T, C_*, \rho>0,$  we define  
\begin{equation*}
     \mathcal N_{T}(\rho, C_*)=\big\{f\in L^\infty([0,T]; \mathcal H_x^3 L_v^2);  \quad   \abs{f}_{X_{\rho,T}}+\abs{f}_{Y_{\rho,T}} \leq C_*\big\},
\end{equation*}
where $\abs{f}_{X_{\rho,T}}$ and $\abs{f}_{Y_{\rho,T}}$ are given in Definition \ref{assum}. 
\end{definition}

\begin{thm}\label{thm:apri}
Let $C_0$ be the constant given in \eqref{c0}, and
let $\mathcal A_T, \mathcal M_T(C_0)$  be given in Definition \ref{def:c0mt}, and  let $\mathcal N_{T}(\rho, C_*)$ be  given  above. 
Then  there exists a time $T_*$ and  three constants $\rho, \tilde C, C_*>0,$ all depending only on $C_0$ in \eqref{c0} and the numbers $C_1,C_2$ in \eqref{lower}, such that for any given  $T\leq T_*$,  if 
\begin{equation*}
	f\in\mathcal A_T\cap \mathcal M_T(C_0)\cap \mathcal N_{T}(\rho, C_*)
\end{equation*}
 and if     
   $g$ solves the linear Boltzmann equation
\begin{equation}\label{flineq}
	\partial_tg+v\cdot\partial_xg=Q(f,g),\quad g|_{t=0}=f_{in},
\end{equation}   
 satisfying the condition 
 \begin{equation}\label{apong}
	\left\{
	\begin{aligned}
		& \abs{g}_{X_{\rho, T}}+\abs{g}_{Y_{\rho, T}}
<+\infty,\\
& 	\lim_{t\rightarrow 0} \norm{\omega {\bm D}^k g(t) }_{ \mathcal H_x^3  L_v^2} =0\  \textrm{ for any } \ k\geq 1,
	\end{aligned}
	\right. 
\end{equation}
then it holds that 
 \begin{equation}\label{3.7}
 	\abs{g}_{X_{\rho, T}}+\abs{g}_{Y_{\rho, T}}
\leq \tilde C  C_* \bigg[\int_0^T \big( \normm{\omega  g}_{ \mathcal H_x^3   }^2  +\norm{ \comi{v}\omega g}_{ \mathcal H_x^3  L_v^2} ^2 \big)dt\bigg]^{1\over2}+\frac{C_*}{2}.
 \end{equation} 
Moreover, there exists a constant $\Theta$ depending only on $C_0,C_1$ and $C_2$, such that  if 
 \begin{equation*}
 	 \bigg[\int_0^T \big( \normm{\omega  g}_{ \mathcal H_x^3   }^2  +\norm{ \comi{v}\omega g}_{ \mathcal H_x^3  L_v^2} ^2 \big)dt\bigg]^{1\over2} \leq \Theta,
 \end{equation*}
 then
 \begin{equation*}
 	g\in\mathcal A_T\cap \mathcal M_{T}(C_0)\cap \mathcal N_T(\rho, C_*).
 \end{equation*}
 Here we used the notation in  \eqref{pkh} and \eqref{dk}.
 	 \end{thm}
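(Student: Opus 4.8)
The proof will proceed by an energy estimate applied to the directional derivatives $H_{\delta_i,j}^k g$ (or $\vec H_j^\alpha g$), organized by induction on the order $k$ of differentiation, with careful bookkeeping of the combinatorial constants $L_{\rho,k}$. First I would differentiate the linear equation \eqref{flineq}: applying $H_{\delta_i,j}^k$ and using the commutator formula \eqref{kehigher}, one gets
\[
(\partial_t+v\cdot\partial_x)\big(H_{\delta_i,j}^k g\big)=H_{\delta_i,j}^k Q(f,g)+\delta_i k\, t^{\delta_i-1}\partial_{v_j}H_{\delta_i,j}^{k-1}g.
\]
Then I would take the weighted inner product with $\omega^2 H_{\delta_i,j}^k g$ in $\mathcal H_x^3 L_v^2$. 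The transport term $v\cdot\partial_x$ integrates to zero (periodic in $x$), the time derivative produces $\frac12\frac{d}{dt}\|\omega H_{\delta_i,j}^k g\|_{\mathcal H_x^3 L_v^2}^2$ plus the good term $-\frac12\|\comi v\,\omega H_{\delta_i,j}^k g\|^2$ coming from $\partial_t\omega=-\comi v^2\omega$ (see \eqref{vweight}). The leading collision contribution is handled by writing $H_{\delta_i,j}^k Q(f,g)=\sum_{k'\le k}\binom{k}{k'}Q(H_{\delta_i,j}^{k'}f,H_{\delta_i,j}^{k-k'}g)$ up to lower-order transport corrections; the diagonal term $k'=0$ is estimated by Lemma \ref{lem:j1} (giving the coercive gain $-\frac{C_1}{2}\normm{\omega H_{\delta_i,j}^k g}_{\mathcal H_x^3}^2$), while the remaining terms $k'\ge1$ are treated by the trilinear bound of Lemma \ref{lem:j2}, feeding in the hypothesis $f\in\mathcal N_T(\rho,C_*)$ to control $\|\omega H_{\delta_i,j}^{k'}f\|$.

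\textbf{Key steps in order.} (1) Set up the differentiated equation and the basic energy identity as above. (2) Bound the commutator term $\delta_i k\,t^{\delta_i-1}\partial_{v_j}H_{\delta_i,j}^{k-1}g$: here one uses Corollary \ref{corollary:coer} to absorb $\comi{D_v}^\tau$-type derivatives of $H_{\delta_i,j}^{k-1}g$ into $\normm{\cdot}$, and crucially uses the identity \eqref{generate} to re-express $t^{\delta_i}\partial_{v_j}$ (equivalently $t^\lambda\partial_{v_1}$) in terms of $H_{\delta_1,j},H_{\delta_2,j}$ — this is where the precise choice $\delta_1=\lambda$, $\delta_2$ from \eqref{de1de2}, and the condition $\lambda>\max\{1,(2\tau)^{-1}\}$ enter, producing the factor $t^{\delta_i-1}$ with a positive power of $t$ that one converts into a small constant times $T^{\text{something}}$. (3) Estimate the collision terms via Lemmas \ref{lem:j1}, \ref{lem:j2}, \ref{lem:h3}, using $f\in\mathcal A_T$ so the coercivity \eqref{lower} applies and $f\in\mathcal M_T(C_0)\cap\mathcal N_T(\rho,C_*)$ for the amplitude bounds. (4) Integrate in time from $0$ to $T$, using the vanishing-at-$t=0$ condition in \eqref{apong} to kill boundary terms at $k\ge1$ and $g|_{t=0}=f_{in}$ at $k=0$; multiply by $L_{\rho,k}$, take $\sup_k$, and sum the convolution $\sum_{k'}\binom{k}{k'}L_{\rho,k}/(L_{\rho,k'}L_{\rho,k-k'})$ — the superfactorial normalization in \eqref{lrhok} with the Gevrey exponent $\max\{(2\tau)^{-1},1\}$ is designed so this is summable (bounded by $C\rho^{-1}$ or similar), which after choosing $\rho$ large and $T_*$ small yields the self-improving inequality \eqref{3.7} with the $\frac{C_*}{2}$ coming from the initial data and the $\tilde C C_*\big[\int_0^T(\cdots)\big]^{1/2}$ from the lowest-order ($k=0$) energy. (5) For the last assertion, feed \eqref{3.7} together with the smallness $[\int_0^T(\cdots)]^{1/2}\le\Theta$ back into itself to get $|g|_{X_{\rho,T}}+|g|_{Y_{\rho,T}}\le C_*$, and separately propagate $\mathcal A_T$ (using an $L^1_v/L\log L$-type estimate, cf.\ the forthcoming Proposition \ref{lem:mc0} on the density lower bound) and $\mathcal M_T(C_0)$ (the $k=0$ energy estimate with the constant $C_0$ from \eqref{c0}).

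\textbf{Main obstacle.} The delicate point is the combinatorial closure in step (4): one must verify that the constants $L_{\rho,k}$ are chosen so that both the Leibniz sum over $k'$ from the collision term \emph{and} the loss-of-one-derivative commutator term (which relates order-$k$ quantities to order-$(k-1)$ and, through \eqref{generate} applied to $\partial_{v_j}H_{\delta_i,j}^{k-1}g$, to order-$k$ quantities in the \emph{other} vector field) both close with a gain of a small constant. In the regime $\gamma/2+2s<1$ this is genuinely more subtle because $\delta_2$ itself depends on $\lambda$ via $1+(1-2\tau)\lambda$, so the $t$-powers $t^{\delta_i-1}$ and the mismatch $t^{\delta_1-\delta_2}$ in \eqref{generate} must be tracked to confirm they carry strictly positive exponents — this forces the case split in \eqref{dk} and in Definition \ref{assum}, where one works with the full multi-index $\vec H_j^\alpha$ rather than pure powers. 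A secondary technical nuisance is justifying the manipulations (differentiation under the integral, integration by parts in $x$, the pre-/post-collisional changes of variable inside $Q$) at the level of regularity where $g$ is only known a priori to have finite $X_{\rho,T}$-norm; this is where hypothesis \eqref{apong} and an approximation/mollification argument are invoked.
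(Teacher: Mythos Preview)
Your plan is correct and matches the paper's approach (Sections \ref{sec:analy}--\ref{sec:gev}: Proposition \ref{lem:mc0} for $g\in\mathcal A_T\cap\mathcal M_T(C_0)$, then the directional-derivative recursion of Propositions \ref{thm: dide} and \ref{thm:sharpgev}, closed by choosing $\rho$ large).

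One mechanism in your step (2) needs correction: the smallness that closes the commutator contribution does \emph{not} come from ``a positive power of $t$ converted into a small constant times $T^{\text{something}}$'' --- indeed $\delta_2=1$ when $\gamma/2+2s\ge1$, so $t^{\delta_2-1}\equiv1$ gives nothing.  The gain instead comes from the combinatorics: after multiplying by $L_{\rho,k}$ one has $k^{\max\{1,(2\tau)^{-1}\}}L_{\rho,k}/L_{\rho,k-1}\lesssim 1/\rho$, which is small because $\rho$ is chosen large (see \eqref{tck} and the choice of $\rho$ after \eqref{mainest+}).  The $t$-exponents in the Gevrey case are only checked to be \emph{nonnegative} (see \eqref{intersec}, where one verifies $\delta_j-1-(1-2\tau)\delta_1\ge0$) so that the factors are bounded by $1$ on $[0,T]\subset[0,1]$; they are not a source of smallness.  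Relatedly, the identity \eqref{generate} enters the a priori estimate only in the Gevrey case $\gamma/2+2s<1$ (see \eqref{mixedder}, where $t^{\delta_1}\partial_{v_1}$ is rewritten in terms of $H_{\delta_1},H_{\delta_2}$ after an interpolation); in the analytic case the commutator is handled directly via $\comi{D_v}^{1/2}\le\comi{D_v}^\tau$ since $\tau\ge1/2$ there (see \eqref{dp1}--\eqref{ktd1}).
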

 
 We will proceed through Sections \ref{sec:analy} and \ref{sec:gev} to prove Theorem \ref{thm:apri}. In Section \ref{sec:analy} we consider the case when ${\ga\over 2}+2s\geq 1$, and the case of ${\ga\over 2}+2s<1$ is investigated in Section \ref{sec:gev}.  In the following discussion, we use   $C$  to denote   some generic constants,   depending only on  the Sobolev embedding constants and   the constants $C_0, C_1, C_2$ in  \eqref{c0} and \eqref{lower}.  Moreover,   we use $C_\eps$  to stand for some generic constants depending on $\eps $ additionally.

\section{Proof of the \emph{a priori} estimate: Analytic  regularity}\label{sec:analy}

This and the next sections are devoted to proving Theorem \ref{thm:apri}.   
 To clarify the argument we  consider in this section the case when ${\ga\over 2}+2s\geq 1$, recalling $\gamma$ and $s$ are the numbers given in \eqref{gamma} and \eqref{s}, respectively.  Then  we have  that 
\begin{align*}
		 \max\Big \{\frac{1}{2\tau},1\Big \}= \max\Big \{\f{2-\gamma}{4s},1\Big \}=1
\end{align*}
in  the assertion \eqref{alpha1}.  So this section is  concerned with the analytic regularization effect. The remaining Gevrey smoothing effect is postponed to Section \ref{sec:gev}. In view of \eqref{lrhok}, when  ${\gamma \over 2}+2s\geq 1$  we have 
 \begin{equation}\label{lrma}
 	L_{\rho, k}=\left\{
	\begin{aligned}
		& 1, \    \textrm{ if } \ k=0, \\
		 & \frac{(k+1)^3}{ \rho^{ k-1 }k!} ,\    \textrm{ if } \  k \geq 1. 
	\end{aligned}
	\right.
 \end{equation}
 Let $H_\delta$ be defined by \eqref{vecM}. Then in the case of ${\gamma \over 2}+2s\geq 1$,   Theorem \ref{thm:apri} will follow  from the quantitative estimate  on  the  directional derivations  with respect to $H_\delta.$ 
 
The following proposition is devoted to proving $g\in \mathcal A_T\cap  \mathcal M_{T}(C_0)$ for some small $T$ and $\Theta$.

 \begin{prop}
 \label{lem:mc0}
 Let $C_0$ be the   constant  given in \eqref{c0}. 
Then  there exists a time $T_*$ and  a constant  $ \Theta$, both depending only on $C_0$  in \eqref{c0} and the numbers $C_1$ and $C_2$ in \eqref{lower}, such that for any given  $T\leq T_*$,  if 
\begin{equation*}
	f\in\mathcal A_T\cap \mathcal M_T(C_0), 
\end{equation*}
 then for any solution $g$ to the linear Boltzmann equation
 \eqref{flineq} satisfying  the condition that
 \begin{equation}\label{+apong}
|g|_{X_{\rho, T}}+|g|_{Y_{\rho,T}}<+\infty\  \textrm{ and }\   \Big( \int_0^T  \normm{\omega   g}_{H_x^3}^2dt\Big)^{1\over2} dt \leq \Theta  
\end{equation}
for some $\rho>0$, 
 we have 	
 \begin{equation*}
 	g\in \mathcal A_T\cap  \mathcal M_{T}(C_0). 
 \end{equation*}
 \end{prop}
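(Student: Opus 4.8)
The plan is to propagate the three pointwise-in-$(t,x)$ bounds in \eqref{aat} together with the weighted Sobolev bound $\sup_{t\le T}\norm{\omega g(t)}_{H^3_xL^2_v}\le C_0$ by a continuity (bootstrap) argument on $[0,T]$, choosing $T_*$ and $\Theta$ small at the end. The key observation is that all four quantities are controlled once we know $\sup_{t\le T}\norm{\omega g(t)}_{H^3_xL^2_v}$, because the weight $\omega=e^{(a_0-t)\comi v^2}$ dominates any polynomial weight $\comi v^m$ uniformly on $[0,a_0/2]$, and because $H^3_x\hookrightarrow L^\infty_x$. So the heart of the matter is to close the energy estimate for $\norm{\omega g}^2_{H^3_xL^2_v}$ in time, and then to upgrade this to the pointwise hydrodynamic bounds.

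First I would derive the differential inequality for $\norm{\omega g(t)}^2_{H^3_xL^2_v}$. Applying $\partial_x^\alpha$ with $|\alpha|\le 3$ (and the $N$-weighted zeroth order term) to \eqref{flineq}, pairing with $\omega^2\partial_x^\alpha g$, and using $\partial_t\omega=-\comi v^2\omega$ from \eqref{vweight}, the transport term $v\cdot\partial_x$ is skew-adjoint and contributes nothing, while the time derivative of the weight produces a good term $-\norm{\comi v\omega g}^2_{H^3_xL^2_v}$. The collision term $(\omega Q(f,\partial_x^{\alpha}g),\omega\partial_x^\alpha g)$ plus the lower-order commutator terms is exactly what Lemma \ref{lem:h3} estimates: using $f\in\mathcal A_T\cap\mathcal M_T(C_0)$ it is bounded by $-\tfrac{C_1}{2}\normm{\omega g}^2_{H^3_x}+\tfrac12\norm{\comi v\omega g}^2_{H^3_xL^2_v}+\bar C\normm{\omega g}^2_{L^2_x}+\bar C\norm{\omega g}^2_{H^3_xL^2_v}$. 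The $\tfrac12\norm{\comi v\omega g}^2$ is absorbed by the $-\norm{\comi v\omega g}^2$ from the weight, and the $\normm{\omega g}^2_{L^2_x}$ term is further dominated by interpolation ($\normm{\cdot}\lesssim\eps\norm{\comi v\cdot}+C_\eps\norm{\cdot}$ as in \eqref{intero}). This yields
\[
\frac{d}{dt}\norm{\omega g(t)}^2_{H^3_xL^2_v}+C_1\normm{\omega g}^2_{H^3_x}\le C\norm{\omega g(t)}^2_{H^3_xL^2_v}.
\]
Grönwall from $\norm{\omega(0)g(0)}_{H^3_xL^2_v}=\norm{e^{a_0\comi v^2}f_{in}}_{H^3_xL^2_v}\le C_0/4$ gives $\sup_{t\le T}\norm{\omega g(t)}_{H^3_xL^2_v}\le \tfrac{C_0}{4}e^{CT/2}\le C_0$ once $T_*$ is small, i.e.\ $g\in\mathcal M_T(C_0)$; integrating also bounds $\int_0^T\normm{\omega g}^2_{H^3_x}dt$.

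Next I would establish $g\ge0$ and the hydrodynamic bounds \eqref{aat}. Nonnegativity of $g$ follows from the maximum principle for the linear equation \eqref{flineq} (the collision operator $Q(f,\cdot)$ with $f\ge0$ preserves positivity), granted the a priori regularity in \eqref{apong}/\eqref{+apong}. For the mass, energy and entropy densities, I integrate \eqref{flineq} against $1$, $|v|^2$, and $\log(1+g)$ in $v$; the collision integrals against $1$ and $|v|^2$ vanish by the conservation laws \eqref{conservation}, so $\partial_t\int g\,dv+\dive_x\int v g\,dv=0$ and similarly for energy, and the entropy dissipation has a favourable sign. The point is that these are transport-type identities whose $L^\infty_x$ norms can drift only through the flux terms, and $\norm{\partial_x(\text{density})}_{L^\infty_x}$ is controlled via $H^3_x\hookrightarrow L^\infty_x$ by $\norm{\omega g}_{H^3_xL^2_v}\le C_0$ (the weight absorbing the $|v|^2$ and $\log$ growth). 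Hence each density moves at a rate $\lesssim C_0$ away from its initial value, so on a short time interval $[0,T_*]$ it stays within the factor-$2$ windows in \eqref{aat}; this is precisely where the $H^3_x$ (rather than $H^{3/2+}_x$) regularity is used, as flagged in the remark. The lower bound $\int g\,dv\ge m_0/2$ is the delicate one — I would get it from the mass transport equation $\partial_t\rho+\dive_x(\rho u)=0$ written along characteristics, using $\norm{\partial_x\rho}_{L^\infty}\lesssim C_0$ and $\rho(0,\cdot)\ge m_0$, shrinking $T_*$ (and, if needed, $\Theta$) so that the drift is $\le m_0/2$.

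The main obstacle I anticipate is propagating the \emph{lower} bound on the mass density: unlike the upper bounds, it is not robust under perturbations of the flux and genuinely needs the strong $H^3_x$ control of $\partial_x\rho$ together with a careful characteristic estimate; this is the step the authors themselves single out in the remark after Theorem \ref{thm:Gevrey}. A secondary technical point is ensuring all the formal integrations by parts (skew-adjointness of $v\cdot\partial_x$, moment identities, the maximum principle) are justified by the qualitative regularity built into the hypothesis \eqref{apong}/\eqref{+apong}, so that the a priori estimate is rigorous rather than merely formal. Once these are in hand, choosing $T_*=T_*(C_0,C_1,C_2)$ small and $\Theta$ small closes the bootstrap and gives $g\in\mathcal A_T\cap\mathcal M_T(C_0)$.
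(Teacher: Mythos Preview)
Your overall bootstrap strategy is the right one and matches the paper's, but two of your key claims are false for the \emph{linear} collision operator $Q(f,g)$ and cause the argument to break.

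First, the statement that ``the collision integrals against $1$ and $|v|^2$ vanish by the conservation laws'' is only half right. For $\phi=1$ one has $\int Q(f,g)\,dv=0$, which gives the mass equation as you say. But for $\phi(v)=|v|^2$ the pre--post change of variables gives
\[
\int Q(f,g)|v|^2\,dv=\int B f_*\,g\,\big(|v'|^2-|v|^2\big)\,d\sigma\,dv_*\,dv,
\]
and $|v'|^2-|v|^2=|v_*|^2-|v_*'|^2$ does \emph{not} vanish when $f\neq g$; energy is a collision invariant of the bilinear operator $Q(g,g)$, not of the linearized one. Likewise, the entropy production $\int Q(f,g)\log(1+g)\,dv$ has no reason to carry a favourable sign. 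The paper handles both by trilinear upper bounds (Theorem 1.1 of \cite{MR3942041}): it shows $\big|\int Q(f,g)|v|^2\,dv\big|\le C\|\omega f\|_{L^2_v}\|\omega g\|_{L^2_v}\le CC_0^2$ and $\big|\int Q(f,g)(\log(1+g)+\tfrac{g}{1+g})\,dv\big|\le CC_0\normm{\omega g}^2_{H^3_x}$, and then integrates in time; the entropy bound is precisely where the smallness $(\int_0^T\normm{\omega g}^2_{H^3_x}dt)^{1/2}\le\Theta$ is genuinely spent.

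Second, the interpolation ``$\normm{\cdot}\lesssim\eps\norm{\comi v\cdot}+C_\eps\norm{\cdot}$'' you invoke to absorb the $\bar C\normm{\omega g}^2_{L^2_x}$ term is false: the triple norm \eqref{trinorm} carries $\comi{D_v}^s$, which no amount of weight in $v$ can trade away. The paper does not absorb this term; it keeps it as a forcing and uses the assumed bound $(\int_0^T\normm{\omega g}^2_{L^2_x}dt)^{1/2}\le\Theta$ with $\Theta=C_0/(4\bar C)$ to close the $\mathcal M_T(C_0)$ estimate via Gr\"onwall. So $\Theta$ is needed already for $g\in\mathcal M_T(C_0)$, not merely as an afterthought for the mass lower bound.

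Finally, the mass lower bound is simpler than you suggest: no characteristic argument is needed. From $\partial_t\int g\,dv=-\partial_x\cdot\int vg\,dv$ and $H^3_x\hookrightarrow W^{1,\infty}_x$ one gets $\big|\int g(t,x,v)\,dv-\int f_{in}(x,v)\,dv\big|\le CC_0 t$, and choosing $T_*\le (CC_0)^{-1}m_0/2$ suffices.
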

 
 \begin{proof} Let $T_*$ to be determined later and we first show that 
 \beno
 g\in \mathcal M_T(C_0)\  \textrm{ for }\  T\leq T_*.
 \eeno
  If $f\in \mathcal A_T\cap \mathcal M_{T}(C_0)$ with $T\leq T_*$, then it follows from Lemma \ref{lem:h3} that, for any $t\leq T,$
 \begin{equation*}
 \big (\omega Q(f, g),\ \omega  g\big )_{H^3_xL^2_v}  \leq -\frac{C_1}{2}  \normm{\omega   g}_{  H_x^3}^2 +\frac12\|\comi v \omega g\|_{ H_x^3 L^2_{v}}^2+\bar C\normm{\omega   g}_{L_x^2}^2 +\bar  C\|  \omega g\|_{  H_x^3 L^2_{v}}^2,
   \end{equation*}
   where $\bar C$ is a constant depending only on $C_0$ and $C_1, C_2$ in \eqref{lower}. 
 As a result, we take the $H_x^3 L_v^2$ inner product with $\omega g$ on both sides of the equation
 \begin{equation*}
 	\big(\partial_t+v\cdot\partial_x+\comi v^2\big)\omega g=\omega Q(f,g), \quad g|_{t=0}=f_{in},
 \end{equation*} 
 to conclude that, recalling $0<C_1\leq 1,$
  \begin{equation*}
	\frac12\frac{d}{dt}\norm{\omega g} _{H_x^3L_v^2}^2+\frac{C_1}{2}\Big(\normm{\omega g} _{H_x^3}^2+\norm{\comi v \omega g} _{H_x^3L_v^2}^2\Big) \leq  \bar C\normm{\omega   g}_{L_x^2}^2 +\bar  C\|  \omega g\|_{  H_x^3 L^2_{v}}^2.
\end{equation*} 
 Thus there exists a time $T_*>0$   depending only on $\bar C$,   such that  
  \begin{equation}\label{wg}
 \begin{aligned}
 	&\sup_{0<t\leq T_*}\norm{\omega g} _{H_x^3L_v^2}+ \Big(C_1 \int_0^{T_*}\big(\normm{\omega g} _{H_x^3}^2+\norm{\comi v \omega g} _{H_x^3L_v^2}^2\big)dt\Big)^{1\over2}\\
 	&\qquad\qquad\qquad \qquad\qquad \leq 2\norm{e^{a_0\comi v^2} f_{in}} _{H_x^3L_v^2}+2 \bar C \Big(\int_0^{T_*}\normm{\omega   g}_{L_x^2}^2dt\Big)^{1\over2}.
 \end{aligned}
 \end{equation}
 Moreover, define
 \begin{equation*}
 	\Theta:=\frac{C_0}{4\bar C}
 \end{equation*}
 with $\bar C$ the constant in \eqref{wg}. It follows from \eqref{wg} as well as \eqref{c0}  that if 
 \begin{equation*}
   \Big(\int_0^{T_*}\normm{\omega   g}_{L_x^2}^2dt\Big)^{1\over2}\leq \Theta,
  \end{equation*}
  then
  \begin{equation}\label{inistep}
   	\sup_{0\leq t\leq T_*}\norm{\omega g} _{H_x^3L_v^2} + \Big(C_1 \int_0^{T_*}\big(\normm{\omega g} _{H_x^3}^2+\norm{\comi v \omega g} _{H_x^3L_v^2}^2\big)dt\Big)^{1\over2} \leq C_0.
 \end{equation}
  This yields $g\in\mathcal M_T(C_0)$ with $T\leq T_*$. 
  
  Next we will show that
 \begin{equation*}
 	g\in \mathcal A_T\  \textrm{ for }\ T\leq T_*.
 \end{equation*}
  We begin with the non-negativity of $g$.  Write $g=g_{+}+g_{-}$ with 
  \begin{equation*}
  	g_{+}:=\max\{g,0\}\ \textrm{ and }\  g_{-}:=-\max\{-g,0\}.
  \end{equation*}
  Since $g$ is a smooth function at positive times, then for any $\alpha,\beta\in\mathbb Z_+^3,$
  \begin{equation}\label{mp}
  	(\pa_tg_+) g_-=0 \  \textrm{ and }\  (\pa^\al_x\pa^\be_v  g_+) g_-=0, \   a.e.  \textrm{ on }    \mathbb T_x^3\times \mathbb R_v^3.
  \end{equation}
  As a result, 
  \begin{equation}\label{gminus}
  		\big(\partial_t g+v\cdot\partial_x g,\   g_{-}\big)_{L^2} =	\big(\partial_t (g_{+}+g_{-})+v\cdot\partial_x (g_{+}+g_{-}),\   g_{-}\big)_{L^2} 
  		 =\frac12\norm{g_{-}}_{L^2}^2.
  \end{equation}
  On one hand,  we write 
  \begin{align*}
  		\big(Q(f,\ g),\   g_{-}\big)_{L^2}= \big(Q(f,\ g_+),\   g_{-}\big)_{L^2}+ \big(Q(f,\ g_{-}),\   g_{-}\big)_{L^2}.
  \end{align*}
  For the first term on the right-hand side, 
 \begin{align*}
 \big(Q(f,\ g_+),\   g_{-}\big)_{L^2}&=\int_{\T^3_x}\int_{\R^6\times\S^2}B(|v-v_*|,\sigma) (f_*'(g_+)'-f_*g_+)g_{-}d\si dv_*dvdx\\
& \leq \int_{\T^3_x}\int_{\R^6\times\S^2}B(|v-v_*|,\sigma)  f_*'(g_+)' g_{-}d\si dv_*dvdx\leq 0,
 \end{align*} 
the first inequality using \eqref{mp} and the last inequality following from the fact that $B,  f_*', (g_+)'  \geq 0 $ and $g_{-}\leq 0.$ On the other hand, 
using  the pre-postcollisional change of variables (cf. \eqref{prepost}  or \cite[Subsection 4.5]{MR1942465} for instance), we write
 \begin{align*}
 \big(Q(f,\ g_-),\   g_{-}\big)_{L^2}&=\int_{\T^3_x}\int_{\R^6\times\S^2}B(|v-v_*|,\sigma) (f_*'(g_-)'-f_*g_-)g_{-}d\si dv_*dvdx\\
 &=\int_{\T^3_x}\int_{\R^6\times\S^2}B(|v-v_*|,\sigma)  f_*\big((g_-)'g_-- g_- g_{-}\big)d\si dv_*dvdx.
 \end{align*} 
This,  with the fact that
\begin{align*}
	f_*\big((g_-)'g_{-}- g_- g_{-}\big)&=\frac12 f_*\big[\big((g_{-})'\big)^2- (g_{-})^2\big]-\frac12 f_*\big ((g_-)'-g_-\big)^2\\
	&\leq \frac12 f_*\big[\big((g_{-})'\big)^2- (g_{-})^2\big],
\end{align*} 
 due to the fact $f\geq 0,$ yields 
 \begin{equation*}
 \begin{aligned}
 	 \big(Q(f, g_-),   g_{-}\big)_{L^2}&\leq \frac12 \int_{\T^3_x}\int_{\R^6\times\S^2}B(|v-v_*|,\sigma)   f_* \big[\big((g_{-})'\big)^2- (g_{-})^2\big]d\si dv_*dvdx\\
 	&\leq C\int_{\T^3_x}\int_{\R^6}|v-v_*|^\ga f_* (g_-)^2dv_*dvdx \leq C \|g_-\|^2_{L^2},
 \end{aligned}
 	 \end{equation*}
where in the last line we used cancellation Lemma   (cf. \cite[Lemma 1 and Remark 6]{MR1765272} for instance) and   the last inequality holds because of the fact that 
$
 	\norm{\comi v^{\abs\gamma}f}_{L_x^2 L_v^1}\leq C 	\norm{\omega f}_{L^2}.
$  Combining the above estimates   we have
\begin{equation*}
	 \big(Q(f, g),   g_{-}\big)_{L^2}= \big(Q(f, g_+),   g_{-}\big)_{L^2}+ \big(Q(f, g_-),   g_{-}\big)_{L^2}\leq  C \|g_-\|^2_{L^2},
\end{equation*}
which with \eqref{gminus} yields
  that
 \begin{equation*}
 \begin{aligned}
 	\frac12  \frac{d}{dt}  \|g_-\|^2_{L^2} =\big(\partial_t g+v\cdot\partial_x g,\   g_{-}\big)_{L^2}=\big(Q(f, g),  \  g_{-}\big)_{L^2}\leq C \|g_-\|^2_{L^2}.
 \end{aligned}
\end{equation*}
Thus,  
\begin{equation*}
\forall\ 0\leq t\leq T,\quad	 \|g_-(t)\|^2_{L^2}\leq C \|(f_{in})_-\|^2_{L^2}\leq 0,
\end{equation*} 
the last inequality holding because of the fact that $f_{in}\geq 0.$  Thus
     $g_- (t)=0$ for $t\in[0,T]$, and hence $g\geq0$.  
  
  Secondly, we will prove the bounds of mass, energy and entropy of $g$. For the lower and upper bounds of mass, integrating \eqref{flineq} over $\mathbb R_v^3$ yields that 
\beno
\pa_t \int_{\R^3_v}g(t,x,v)dv+\pa_x\cdot\int_{\R^3_v} vg(t,x,v)dv=0.
\eeno
So we have 
\begin{align*}
&\Big|\int_{\R^3_v}g(t,x,v)dv-\int_{\R^3_v}f_{in}(x,v)dv\Big|\leq \int_0^t \Big|\pa_x\cdot\int_{\R^3_v} vg(r,x,v)dv\Big| dr  \\
&\leq C\int_0^t  \|\om g(r)\|_{ H^3_xL^2_v}dr\leq CC_0t ,\end{align*}
the last inequality using \eqref{inistep}. 
Thus for $T_*\leq (CC_0)^{-1} \min\big \{\frac{m_0}{2},M_0\big \},$ we have that, for any  $ (t,x)\in [0,T]\times \T^3_x\subset [0,T_*]\times \T^3_x$,
\begin{equation}\label{lowerbfepsilon}
\f{m_0}2\leq \int_{\R^3_v}f_{in}(x,v)dv-\f{m_0}2\leq \int_{\R^3_v}g(t,x,v)dv\leq \int_{\R^3_v}f_{in}(x,v)dv+M_0\leq 2M_0.
\end{equation}
For the upper bound of energy, we multiply \eqref{flineq} by $\abs v^2$ and then integrate over $\mathbb R_v^3$ to get  that 
\beno
\pa_t \int_{\R^3_v} g(t,x,v)|v|^2dv+\pa_x\cdot\int_{\R^3_v} vg(t,x,v)|v|^2dv=\int_{\R^3_v}Q(f,g)|v|^2dv.
\eeno
 From    \cite[Theorem 1.1]{MR3942041}  we have that, for  any  $ (t,x)\in [0,T]\times \T^3_x$,
\beno
\int_{\R^3_v}Q\big (f(t,x,v),g(t,x,v)\big )|v|^2dv\leq C\|\om f(t,x,\cdot)\|_{ L^2_v}\|\om g(t,x,\cdot)\|_{L^2_v}\leq C C_0^2,
\eeno
the last inequality using \eqref{inistep} and  the fact that $f\in\mathcal M_T(C_0)$.  Thus we can derive that
\begin{equation}\label{2E0}
\begin{aligned}
  &\int_{\R^3_v}g(t,x,v)|v|^2dv \leq \int_{\R^3_v}f_{in}(x,v)|v|^2dv+\int_0^t \Big|\pa_x\cdot\int_{\R^3_v} vg(r,x,v)|v|^2dv\Big| dr+CC_0^2t\\
&\leq E_0+C \int_0^t  \|\om g(r)\|_{ H^3_xL^2_v}dr+C  C_0^2 t \leq E_0+C(C_0+C_0^2)t\leq 2E_0,
\end{aligned}
\end{equation}
if we choose $T_*\leq \frac{E_0}{ C(C_0+C^2_0)}.$

Finally, for the upper bound of entropy, we have 
\begin{align*}
&\pa_t \int_{\R^3_v} g\log\big (1+g\big )dv=\int_{\R^3_v}(\pa_t g)\log(1+g)dv+\int_{\R^3_v}\f g{1+g} \pa_t gdv\\
&=\int_{\R^3_v}Q(f,g)\Big(\log(1+g)+\f g{1+g}\Big)dv-\int_{\R^3_v}\Big[(v\cdot \pa_x g)\log(1+g)+ \frac{ (v\cdot\partial_x g)g}{1+g} \Big]dv.
\end{align*}
Direct verification shows that
\begin{eqnarray*}
	\Big| \int_{\R^3_v}\Big[(v\cdot \pa_x g)\log(1+g)+ \frac{ (v\cdot\partial_x g)g}{1+g} \Big]dv\Big|\leq C\|\om g\|^2_{H^3_xL^2_v}.
\end{eqnarray*}
Moreover, by  \cite[Theorem 1.1]{MR3942041} we have  
\begin{align*}
	&\Big|\int_{\R^3_v}Q(f,g)\Big(\log(1+g)+\f g{1+g}\Big)dv\Big|\\
	& \leq  \|\om f\|_{H^3_xL^2_v} \normm{\om g}_{H^3_x}
 \Big (\|\log(1+g)\|_{H^3_xH^s_v}+\big \|  g/( 1+g)\big \|_{H^3_xH^s_v}\Big )\\
& \leq C  \|\om f\|_{H^3_xL^2_v} \normm{\om g}_{H^3_x}
 \|g\|_{H^3_xH^s_v}  \leq CC_0\normm{\om g}^2_{H^3_x},
\end{align*}
where in the last line we used Lemma \ref{lem:A.2} in Appendix \ref{sec:appinter} as well as the fact that $f\in\mathcal M_T(C_0).$  Then we combine the above estimates to conclude that 
\begin{eqnarray*}
	\pa_t \int_{\R^3_v} g\log\big (1+g\big )dv \leq  C\|\om g\|^2_{H^3_xL^2_v}+CC_0\normm{\om g}^2_{H^3_x}
\end{eqnarray*} 
and thus, for any $(t,x)\in [0,T]\times \mathbb T_x^3\subset  [0,T_*]\times \mathbb T_x^3,$
\begin{align*}
& \int_{\R^3_v} g(t,x,v)\log\big (1+g(t,x,v)\big )dv\\
&\leq \int_{\R^3_v} f_{in}(x,v)\log\big (1+f_{in}(x,v)\big )dv +\int_{0}^t \Big(C\|\om g(r)\|^2_{H^3_xL^2_v} +CC_0 \normm{\om g(r)}^2_{H^3_x}\Big)dr\\
& \leq H_0+CC^2_0t+CC_0\Theta^2\leq 2H_0,
\end{align*}
the second inequality using \eqref{+apong} and \eqref{inistep} and the last inequality holding provided we choose $T_*$ and $\Theta$ small such that  $T_*\leq H_0/(4CC^2_0)$ and $\Theta^2<H_0/(4CC_0).$ This with \eqref{lowerbfepsilon} and \eqref{2E0} as well as the fact that $g\geq 0$ yields 
  $g\in \mathcal A_T.$ The proof of Proposition \ref{lem:mc0} is completed. \end{proof}

\begin{prop}
\label{thm: dide} 
Let ${\gamma \over 2}+2s\geq 1$ and let   $C_0$ be   given in \eqref{c0}.
Then  there exists a time $T_*$ 
depending only on $C_0$ in \eqref{c0} and the numbers $C_1$ and $C_2$ in \eqref{lower}, such that for any given  $T\leq T_*$,  if 
\begin{equation*}
	f\in\mathcal A_T\cap \mathcal M_T(C_0)\ \textrm{ and }\  |f|_{X_{\rho, T}} <+\infty\  \textrm{ for some }\ \rho>0, \end{equation*}
 and  if    
   $g$ is a solution to the linear Boltzmann equation \eqref{flineq}
 satisfying   condition \eqref{apong},  
 then  
 there exists a constant $\bar C$ depending only on $C_0, C_1, C_2$ in \eqref{c0} and \eqref{lower} such that 
   the following estimate
 	\begin{equation}
 		\label{mainest}
 		\begin{aligned}
 			  &L_{\rho,k}\sup_{0 < t \leq T}\norm{ \omega  H_{\delta}^{k}g(t)}_{\mathcal{H}^3_xL^2_v}\\
    &\qquad   +L_{\rho,k} \bigg[ \int_{0}^{T}\inner{\normm{\omega H_{\delta}^{k}g (t)}^{2}_{\mathcal{H}^3_x}+\|\<v\>\om H^k_\de g(t)\|^2_{\mathcal{H}^3_xL^2_v}}dt \bigg ]^{1\over 2}\\
 & \leq  \bar C \bigg[ \int_{0}^{T}\inner{\normm{\omega  g  }^{2}_{\mathcal{H}^3_x}+\|\<v\>\om  g \|^2_{\mathcal{H}^3_xL^2_v}}dt \bigg ]^{1\over 2}|f|_{X_{\rho, T}}  + \frac{\bar C}{\rho} |g|_{Y_{\rho, T}} +\frac{\bar C }{\rho} \abs{f}_{X_{\rho, T}} \abs{g}_{Y_{\rho, T}}
 		\end{aligned}
 	\end{equation} 
 	holds true for any integer $k\geq2.$   	Note that $L_{\rho,k}$ in \eqref{mainest} is defined by \eqref{lrma}.
\end{prop}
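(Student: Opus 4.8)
The plan is to run a weighted energy estimate for $H_\delta^k g$ in $\mathcal H^3_x L^2_v$, controlling the collision contribution by a Leibniz expansion together with Lemmas \ref{lem:j1}--\ref{lem:j2}, and absorbing the transport commutator $[\partial_t+v\cdot\partial_x,H_\delta^k]$ after an interpolation in the velocity variable. Concretely, applying $H_\delta^k$ to \eqref{flineq} and using \eqref{kehigher} gives $(\partial_t+v\cdot\partial_x)H_\delta^k g=H_\delta^k Q(f,g)+\delta k t^{\delta-1}\partial_{v_1}H_\delta^{k-1}g$; since $\omega=\omega(t,v)$ is $x$-independent and $\partial_t\omega=-\comi v^2\omega$ by \eqref{vweight}, one gets $(\partial_t+v\cdot\partial_x+\comi v^2)(\omega H_\delta^k g)=\omega H_\delta^k Q(f,g)+\delta k t^{\delta-1}\omega\partial_{v_1}H_\delta^{k-1}g$. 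Taking the $\mathcal H^3_x L^2_v$ inner product with $\omega H_\delta^k g$ (the transport term vanishing by skew-symmetry) yields $\tfrac12\tfrac{d}{dt}\norm{\omega H_\delta^k g}_{\mathcal H^3_x L^2_v}^2+\norm{\comi v\omega H_\delta^k g}_{\mathcal H^3_x L^2_v}^2=\mathcal C_k+\mathcal R_k$, where $\mathcal C_k$ is the collision inner product and $\mathcal R_k=\delta k t^{\delta-1}(\omega\partial_{v_1}H_\delta^{k-1}g,\omega H_\delta^k g)_{\mathcal H^3_x L^2_v}$.

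For $\mathcal C_k$, writing $H_\delta^k$ as a finite sum of $c\,t^{(\delta+1)a+\delta b}\partial_{x_1}^a\partial_{v_1}^b$ with $a+b=k$, the $x_1$-derivatives commute with $Q$ exactly, while the $v_1$-derivatives (after transferring derivatives onto the $v_*$-integration) produce collision operators of the same type as $Q$, obeying the bounds of \cite{MR3942041} and Corollary \ref{cor:upper}. The term in which all of $H_\delta^k$ acts on $g$ has leading part $(\omega Q(f,H_\delta^k g),\omega H_\delta^k g)_{\mathcal H^3_x L^2_v}$, which by the $\mathcal H^3_x$-version of Lemma \ref{lem:j1} is $\le-\tfrac{C_1}{2}\normm{\omega H_\delta^k g}_{\mathcal H^3_x}^2+\tfrac12\norm{\comi v\omega H_\delta^k g}_{\mathcal H^3_x L^2_v}^2+\bar C\norm{\omega H_\delta^k g}_{\mathcal H^3_x L^2_v}^2$; moving the first two summands to the left leaves a genuine coercive gain $\tfrac12\normm{\omega H_\delta^k g}_{\mathcal H^3_x}^2+\tfrac12\norm{\comi v\omega H_\delta^k g}_{\mathcal H^3_x L^2_v}^2$ there. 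Each remaining term (all $H_\delta$'s on $f$, the mixed terms, and the commutator remainders) carries a combinatorial weight $\lesssim\binom kp$ and is bounded by Corollary \ref{cor:upper}; factoring out the common factor $\normm{\omega H_\delta^k g}_{\mathcal H^3_x}+\norm{\comi v\omega H_\delta^k g}_{\mathcal H^3_x L^2_v}$ and applying Young's inequality once (to absorb an $\eta$-fraction on the left), then integrating in time and using $\sup_{t\le T}\norm{\omega H_{\delta}^{p}f}_{\mathcal H^3_x L^2_v}\le |f|_{X_{\rho,T}}/L_{\rho,p}$ together with $\big(\int_0^T(\normm{\omega H_{\delta}^{q}g}_{\mathcal H^3_x}^2+\norm{\comi v\omega H_{\delta}^{q}g}_{\mathcal H^3_x L^2_v}^2)dt\big)^{1/2}\le|g|_{Y_{\rho,T}}/L_{\rho,q}$ for $q\ge1$ (the level-$0$ norms of $g$ being what appears in \eqref{mainest} when $q=0$), multiplying by $L_{\rho,k}$ and invoking the identity $\binom{k}{p}\tfrac{L_{\rho,k}}{L_{\rho,p}L_{\rho,q}}=\tfrac{(k+1)^3}{\rho\,(p+1)^3(q+1)^3}$ (summable over $p+q=k$, $p,q\ge1$ uniformly in $k$), one obtains precisely the first term $\bar C[\int_0^T(\normm{\omega g}_{\mathcal H^3_x}^2+\norm{\comi v\omega g}_{\mathcal H^3_x L^2_v}^2)dt]^{1/2}|f|_{X_{\rho,T}}$ and the term $\tfrac{\bar C}{\rho}|f|_{X_{\rho,T}}|g|_{Y_{\rho,T}}$ in \eqref{mainest}.

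The main obstacle is $\mathcal R_k$: $\partial_{v_1}$ has order $1$, whereas the coercivity only provides $\comi{D_v}^s$. I would therefore split $\comi v^{\gamma/2}\comi{D_v}^s$ onto the $H_\delta^k g$ factor and its inverse onto $\partial_{v_1}H_\delta^{k-1}g$, which, modulo lower-order commutators between $\omega$, $\comi v^{\gamma/2}$, $\comi{D_v}^s$ and $\partial_{v_1}$, gives $|\mathcal R_k|\le\eta\normm{\omega H_\delta^k g}_{\mathcal H^3_x}^2+C_\eta(\delta k)^2 t^{2\delta-2}\norm{\comi v^{-\gamma/2}\comi{D_v}^{1-s}\omega H_\delta^{k-1}g}_{\mathcal H^3_x L^2_v}^2$. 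The crucial point is that in the regime $\tfrac\gamma2+2s\ge1$ treated here one has $s\ge\tfrac12$ and $\gamma+4s\ge2$, so the interpolation underlying Lemma \ref{lem:velo} (taken with $(m_2,p_2)=(s,\gamma/2)$, $(m_3,p_3)=(0,1)$ and exponent $\theta=(1-s)/s$) yields $\norm{\comi v^{-\gamma/2}\comi{D_v}^{1-s}u}_{L^2_v}\le C(\normm u+\norm{\comi v u}_{L^2_v})$; and $t^{2\delta-2}$ is bounded on $[0,T_*]$ since $\delta\ge1$. Choosing $\eta$ small absorbs the first term on the left, and integrating the rest in time, then using $\delta k\,\tfrac{L_{\rho,k}}{L_{\rho,k-1}}=\tfrac{\delta(k+1)^3}{\rho k^3}\le\tfrac{8\delta}{\rho}$, produces the term $\tfrac{\bar C}{\rho}|g|_{Y_{\rho,T}}$ in \eqref{mainest}.

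Putting the pieces together, the energy identity becomes $\tfrac12\tfrac{d}{dt}\norm{\omega H_\delta^k g}_{\mathcal H^3_x L^2_v}^2+\tfrac12\norm{\comi v\omega H_\delta^k g}_{\mathcal H^3_x L^2_v}^2+\tfrac{C_1}{2}\normm{\omega H_\delta^k g}_{\mathcal H^3_x}^2\le\bar C\norm{\omega H_\delta^k g}_{\mathcal H^3_x L^2_v}^2+(\text{good sources})$. Since $\lim_{t\to0}\norm{\omega{\bm D}^k g(t)}_{\mathcal H^3_x L^2_v}=0$ by \eqref{apong}, integrating from $0$, running Grönwall to absorb $\bar C\norm{\omega H_\delta^k g}^2$ (this is what pins down how small $T_*$ must be), taking $\sup_{t\le T}$, multiplying by $L_{\rho,k}$ and inserting the bounds above gives \eqref{mainest} for every $k\ge2$. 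The delicate part throughout is the handling of $\mathcal R_k$: it is the requirement to close the interpolation of Lemma \ref{lem:velo} while simultaneously beating the $k$-dependent factor $\delta k\,L_{\rho,k}/L_{\rho,k-1}$ by $1/\rho$ that forces both the hypothesis $\tfrac\gamma2+2s\ge1$ of this section and the precise choice \eqref{lrma} of $L_{\rho,k}$.
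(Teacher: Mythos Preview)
Your proposal is essentially correct and follows the same architecture as the paper: energy estimate for $\omega H_\delta^k g$ in $\mathcal H^3_x L^2_v$, Leibniz expansion of $H_\delta^k Q(f,g)$, coercivity from Lemma~\ref{lem:j1} for the diagonal piece, trilinear bounds from Corollary~\ref{cor:upper} for the off-diagonal pieces, and then the factorial bookkeeping via $L_{\rho,k}$. Two points of comparison are worth noting.

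First, the Leibniz formula for $H_\delta^k$ acting on $Q$ is \emph{exact}: since $B$ depends only on $v-v_*$, one has $\partial_{v_1}Q(f,g)=Q(\partial_{v_1}f,g)+Q(f,\partial_{v_1}g)$, so $H_\delta$ is a derivation on $Q$ and $H_\delta^k Q(f,g)=\sum_j\binom{k}{j}Q(H_\delta^j f,H_\delta^{k-j}g)$ with no remainder. There are no ``commutator remainders'' to speak of here; the paper simply writes this identity (see \eqref{hk}) and proceeds. Your detour through the expansion $H_\delta^k=\sum c\,t^{(\delta+1)a+\delta b}\partial_{x_1}^a\partial_{v_1}^b$ is unnecessary.

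Second, your treatment of $\mathcal R_k$ via the $\comi v^{\gamma/2}\comi{D_v}^s$ versus $\comi v^{-\gamma/2}\comi{D_v}^{1-s}$ splitting and the interpolation with $\theta=(1-s)/s$ does work (one checks that the resulting weight exponent $p_1=\tfrac{\gamma}{2}\theta+(1-\theta)$ satisfies $p_1\ge-\gamma/2$ precisely when $\gamma+4s\ge2$), but the paper takes a cleaner route: it splits symmetrically at the level $\comi{D_v}^{1/2}$, so that $\|\comi{D_v}^{-1/2}(\omega\partial_{v_1}H_\delta^{k-1}g)\|\le\|\comi{D_v}^{1/2}(\omega H_\delta^{k-1}g)\|+C\|\comi v\,\omega H_\delta^{k-1}g\|$, and then uses that $\tau=\tfrac{2s}{2-\gamma}\ge\tfrac12$ (equivalent to $\gamma/2+2s\ge1$) together with Corollary~\ref{corollary:coer} to bound $\|\comi{D_v}^{1/2}u\|\le\|\comi{D_v}^\tau u\|\le C(\normm u+\|\comi v u\|)$. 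This avoids the commutators between $\comi v^{\gamma/2}$, $\comi{D_v}^s$ and $\omega$ that your splitting introduces, and makes the role of the threshold $\tau\ge\tfrac12$ completely transparent.
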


As a preliminary step to prove Proposition \ref{thm: dide}, we present the following lemma. 
\begin{lem}
\label{lem:prelin} Let $C_0$ be given in \eqref{c0}  and  let $	f\in\mathcal A_T\cap \mathcal M_T(C_0)$ for some $0<T<1$. Suppose  $g$ 
is a solution to the linear Boltzmann equation \eqref{flineq},  satisfying  that
\begin{equation}\label{xyt}
 \abs{g}_{X_{\rho, T}}+\abs{g}_{Y_{\rho, T}}
<+\infty\  \textrm{ for some }   \rho>0 \ \textrm{ and }\ 
 \lim_{t\rightarrow 0} \|\omega  H_\delta^{k} g(t)\|_{\mathcal{H}^3_xL^2_v}=0 \ \textrm{ for }  k\geq1. 
\end{equation}
Then there exists a constant $\tilde C$, depending only on $C_0,C_1, C_2$ in \eqref{c0} and \eqref{lower},  such that for any $k\geq 1$, it holds that 
	 \begin{equation}\label{++energy}
  \begin{aligned}
	&  \sup_{t\leq T}  \|\omega  H_\delta^{k} g(t)\|_{\mathcal{H}^3_xL^2_v}+ \bigg[\int_0^{T} \Big(\normm{\omega   H_\delta^k g}_{\mathcal H_x^3}^2 + \|\<v\>\omega H_\delta^{k}g\|^2_{\mathcal{H}^3_xL^2_v}\Big)dt\bigg]^{1\over2}\\
	&  \leq \tilde C\sup_{t\leq T}  \|\omega  H_\delta^{k} f(t)\|_{\mathcal{H}^3_xL^2_v} \bigg[\int_0^T \Big(  \normm{\omega g}_{\mathcal H_x^{3}}^2+\norm{\comi v\omega g}_{\mathcal H_x^{3} L_v^2}^2\Big) dt\bigg]^{1\over2} \\
	&\quad+ \tilde  C k\bigg[  \int_0^{T}   \Big(\normm{  \omega   H_\delta^{k-1}g}^2_{\mathcal{H}^3_x}+\| \comi v  \omega H_\delta^{k-1}g\|_{\mathcal{H}^3_xL^2_v}^2\Big) dt\bigg]^{1\over2}\\
	&\quad +\tilde C\bigg[\sum_{j=1}^{k-1} {k\choose j}\int_0^{T}  \big| \big(\omega Q (H_\delta^j f,\ H_\delta^{k-j}g), \ \omega H_\delta^{k} g\big)_{\mathcal{H}^3_xL^2_v}\big|dt\bigg]^{1\over2}.	 \end{aligned}
	 \end{equation}
\end{lem}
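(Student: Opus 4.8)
The plan is to derive \eqref{++energy} by a standard energy estimate applied to the equation satisfied by $H_\delta^k g$, carefully tracking the commutator terms. First I would apply $H_\delta^k$ to the linear equation \eqref{flineq}. Using the commutator identity \eqref{kehigher}, namely $[\partial_t+v\cdot\partial_x,\, H_\delta^k]=\delta k t^{\delta-1}\partial_{v_1}H_\delta^{k-1}$, together with the Leibniz rule for $H_\delta^k$ acting on the bilinear term $Q(f,g)$, I obtain
\begin{equation*}
	\big(\partial_t+v\cdot\partial_x\big) H_\delta^k g = -\delta k t^{\delta-1}\partial_{v_1}H_\delta^{k-1}g + \sum_{j=0}^{k}\binom{k}{j} Q(H_\delta^j f,\ H_\delta^{k-j}g).
\end{equation*}
Multiplying by the weight $\omega$ and recalling $\partial_t\omega=-\langle v\rangle^2\omega$ from \eqref{vweight}, I move to the equation for $\omega H_\delta^k g$, which carries an extra good term $\langle v\rangle^2\omega H_\delta^k g$ on the left. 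Then I take the $\mathcal{H}^3_xL^2_v$ inner product with $\omega H_\delta^k g$ and integrate in time from $0$ to $T$, using the vanishing initial condition in \eqref{xyt} to kill the boundary term at $t=0$.

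The main term to control is the diagonal part $j=k$ of the sum, i.e.\ $(\omega Q(f, H_\delta^k g),\ \omega H_\delta^k g)_{\mathcal{H}^3_xL^2_v}$; here I invoke Lemma \ref{lem:j1} (with $f$ playing the role of the first argument, legitimate since $f\in\mathcal A_T\cap\mathcal M_T(C_0)$) to bound it by $-\tfrac{C_1}{2}\normm{\omega H_\delta^k g}_{\mathcal H_x^3}^2+\tfrac12\|\langle v\rangle\omega H_\delta^k g\|_{\mathcal H_x^3L_v^2}^2+\bar C\|\omega H_\delta^k g\|_{\mathcal H_x^3L_v^2}^2$. The coercive term $-\tfrac{C_1}{2}\normm{\cdot}^2$ absorbs into the left-hand side, while the $\tfrac12\|\langle v\rangle\cdot\|^2$ term is dominated by the $\langle v\rangle^2\omega$ term coming from the weight differentiation (this is precisely why the time-dependent exponential weight \eqref{weifun} is used), and the lower-order $\bar C\|\cdot\|^2_{\mathcal H_x^3L_v^2}$ term is handled by Grönwall over the short interval $[0,T]$. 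The $j=0$ term $(\omega Q(H_\delta^k f, g),\ \omega H_\delta^k g)$ is estimated by Lemma \ref{lem:j2} with $(g,f,h)=(H_\delta^k f, g, H_\delta^k g)$, producing $\eps(\normm{\omega H_\delta^k g}_{\mathcal H_x^3}^2+\|\langle v\rangle\omega H_\delta^k g\|^2)$ plus $C\eps^{-1}\|\omega H_\delta^k f\|_{\mathcal H_x^3L_v^2}^2(\normm{\omega g}_{H_x^3}^2+\|\langle v\rangle\omega g\|_{H_x^3L_v^2}^2)$; after choosing $\eps$ small and integrating in time, taking the supremum of $\|\omega H_\delta^k f\|$ out of the time integral yields the first term on the right of \eqref{++energy}. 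The commutator contribution $\delta k t^{\delta-1}\partial_{v_1}H_\delta^{k-1}g$ is paired against $\omega H_\delta^k g$ and bounded by Cauchy--Schwarz: since $t^{\delta-1}\le 1$ on $(0,T)\subset(0,1)$ and $\|\omega\partial_{v_1}H_\delta^{k-1}g\|\lesssim\|\langle v\rangle\omega H_\delta^{k-1}g\|+\|\omega\langle D_v\rangle H_\delta^{k-1}g\|\lesssim\normm{\omega H_\delta^{k-1}g}+\|\langle v\rangle\omega H_\delta^{k-1}g\|$ (using that the weight commutes appropriately and $\langle D_v\rangle\le\langle D_v\rangle^s$ is controlled — here $s\le 1$ so actually $\langle D_v\rangle$ needs the $\langle v\rangle$-modified elliptic bound, which is why the $\langle v\rangle$-weighted term appears), I get the term $\tilde C k[\int_0^T(\normm{\omega H_\delta^{k-1}g}_{\mathcal H_x^3}^2+\|\langle v\rangle\omega H_\delta^{k-1}g\|^2)dt]^{1/2}$ after splitting the $L^2_t$ norm of the product with another Cauchy--Schwarz.

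The remaining intermediate terms $1\le j\le k-1$ in the Leibniz sum are left untreated at this stage and simply collected into the last term of \eqref{++energy}, to be estimated in the proof of Proposition \ref{thm: dide} using $|f|_{X_{\rho,T}}$ and $|g|_{Y_{\rho,T}}$ together with the combinatorial properties of $L_{\rho,k}$. Assembling: I bound $\sup_t\|\omega H_\delta^k g(t)\|^2_{\mathcal H_x^3L_v^2}$ plus the good integrated quantities on the left, move the small-$\eps$ pieces to the left, apply a Grönwall argument on $[0,T]$ to absorb the lower-order $\|\omega H_\delta^k g\|^2_{\mathcal H_x^3L_v^2}$, and take square roots; the resulting inequality is exactly \eqref{++energy} once $T_*$ is chosen small enough. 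The main obstacle I anticipate is the bookkeeping in the commutator term: verifying that $\partial_{v_1}H_\delta^{k-1}g$ really is controlled by $\normm{\omega H_\delta^{k-1}g}_{\mathcal H_x^3}$ and $\|\langle v\rangle\omega H_\delta^{k-1}g\|_{\mathcal H_x^3L_v^2}$ requires Corollary \ref{corollary:coer} (the uniform elliptic estimate $\|\langle D_v\rangle^\tau u\|_{L_v^2}\lesssim\|\langle v\rangle u\|_{L_v^2}+\normm{u}$) combined with $\tau\le 1$ in the present regime, plus care that $\omega$ commutes with $\partial_{v_1}$ up to a harmless $\langle v\rangle\omega$ factor via \eqref{vweight} — a point where the time-restriction $T\le a_0/2$ and the structure of $\omega$ must be used carefully to keep all constants uniform.
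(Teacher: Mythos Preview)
Your overall architecture is exactly that of the paper: apply $H_\delta^k$, use the commutator identity \eqref{kehigher}, multiply by $\omega$ so that $\partial_t\omega$ produces the dissipative $\langle v\rangle^2$ term, test against $\omega H_\delta^k g$, treat the diagonal $j=k$ with Lemma~\ref{lem:j1}, the endpoint $j=0$ with Lemma~\ref{lem:j2}, and close by Gr\"onwall. The intermediate $1\le j\le k-1$ terms are indeed just carried along, as you propose.

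There is, however, a genuine gap in your treatment of the transport commutator $\delta k\,t^{\delta-1}(\omega\partial_{v_1}H_\delta^{k-1}g,\ \omega H_\delta^k g)_{\mathcal H_x^3L_v^2}$. You propose to bound $\|\omega\partial_{v_1}H_\delta^{k-1}g\|_{\mathcal H_x^3L_v^2}$ directly by $\normm{\omega H_\delta^{k-1}g}_{\mathcal H_x^3}+\|\langle v\rangle\omega H_\delta^{k-1}g\|_{\mathcal H_x^3L_v^2}$, invoking Corollary~\ref{corollary:coer}. But that corollary only gives control of $\langle D_v\rangle^\tau$ with $\tau=\tfrac{2s}{2-\gamma}<1$; it cannot recover a full $v$-derivative, and your chain $\|\omega\langle D_v\rangle H_\delta^{k-1}g\|\lesssim\normm{\omega H_\delta^{k-1}g}$ is simply false for $s<1$. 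The paper's remedy is to \emph{distribute} the derivative across the pairing rather than place it on one factor: write
\[
k\,t^{\delta-1}\big|(\omega\partial_{v_1}H_\delta^{k-1}g,\ \omega H_\delta^k g)\big|
\le \eps\,\big\|\langle D_v\rangle^{1/2}(\omega H_\delta^{k}g)\big\|^2
+ C_\eps k^2\big\|\langle D_v\rangle^{-1/2}(\omega\partial_{v_1}H_\delta^{k-1}g)\big\|^2,
\]
so that the second term is morally $\|\langle D_v\rangle^{1/2}(\omega H_\delta^{k-1}g)\|^2$ after commuting $\partial_{v_1}$ through $\omega$ via \eqref{vweight}. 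Now \emph{both} sides only need $\langle D_v\rangle^{1/2}$, and since the present section treats the regime $\tfrac{\gamma}{2}+2s\ge 1$, one has $\tau\ge\tfrac12$, so Corollary~\ref{corollary:coer} applies to bound $\|\langle D_v\rangle^{1/2}u\|\le\|\langle D_v\rangle^{\tau}u\|\lesssim\normm{u}+\|\langle v\rangle u\|$ on each side. The $\eps$-piece is then absorbed into the coercive left-hand side, yielding exactly the $\tilde C k[\cdots]^{1/2}$ term in \eqref{++energy}. Once you replace your direct bound by this half-derivative splitting, the rest of your sketch goes through as written.
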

 
 \begin{proof}
 Observe that  condition \eqref{xyt}    
 ensures rigorous  rather than  formal computations in the following argument.  Thus  for each $k\geq 1$, we  apply $H_\delta^{k}$ to equation \eqref{flineq} and use the Leibniz formula, to get
\begin{multline}\label{hk}
(\pa_t+v\cdot\partial_x)H_\delta^{k}g = -[H_\delta^{k},\pa_t+v\cdot\partial_x]g+H_\delta^{k}Q(f,g)\\
 = k \delta t^{\delta-1}\partial_{v_1}  H_\delta^{k-1}g+\sum_{0\leq j\leq k} {k\choose j}Q(H_\delta^j f,H_\delta^{k-j}g),
\end{multline}
the last line using \eqref{kehigher}.   We multiply  $\omega$  on both sides of \eqref{hk} and observe \eqref{vweight}; this gives that 
\begin{equation}\label{hkf+}
\big (\pa_t+v\cdot\partial_x+\<v\>^2\big )\omega H_\delta^{k} g = k \delta t^{\delta-1}\omega  \partial_{v_1} H_\delta^{k-1} g +\sum_{0\leq j\leq k} {k\choose j}\omega  Q(H_\delta^j f,H_\delta^{k-j}g).
\end{equation}
Furthermore, taking $\mathcal{H}^3_xL^2_v$ inner product with  $\omega H_\delta^{k} g$, we obtain  
\begin{equation}\label{energy}
\begin{aligned}
	&\frac12 \frac{d}{dt} \|\omega  H_\delta^{k} g(t)\|^2_{\mathcal{H}^3_xL^2_v}+ \|\<v\>\omega H_\delta^{k}g\|^2_{\mathcal{H}^3_xL^2_v} \\
	& \leq     Ck t^{\delta-1}  \big| \big ( \omega  \partial_{v_1} H_\delta^{k-1}   g,\ \omega H_\delta^{k} g\big )_{\mathcal{H}^3_xL^2_v}\big|  +\big(\omega Q (   f,\ H_\delta^{k}g), \ \omega H_\delta^{k} g\big)_{\mathcal{H}^3_xL^2_v}\\
	&\quad+ \big(\omega Q (H_\delta^k f,   g),  \omega H_\delta^{k} g\big)_{\mathcal{H}^3_xL^2_v}+ \sum_{j=1}^{k-1} \binom{k}{j}   \big(\omega Q (H_\delta^j f,\ H_\delta^{k-j}g), \ \omega H_\delta^{k} g\big)_{\mathcal{H}^3_xL^2_v}.
	\end{aligned}
	\end{equation}
For   the second and the third terms on the right-hand side of \eqref{energy},  we use Lemmas \ref{lem:j1}  and  \ref{lem:j2}   to obtain that
\begin{equation*}
 \big (\omega Q(f, H_\delta^k g),  \omega  H_\delta^k g\big )_{\mathcal{H}^3_xL^2_v}  \leq -\frac{C_1}{2}  \normm{\omega   H_\delta^k  g}_{\mathcal H_x^3}^2+\frac{1}{2}\|\comi v \omega H_\delta^k g\|_{\mathcal H_x^3 L^2_{v}}^2  +C     \|  \omega H_\delta^k g\|_{\mathcal H_x^3 L^2_{v}}^2, 
    \end{equation*}
and that, for any $\eps>0,$  
   \begin{align*}
	  \big (\omega Q(H_\delta^k f, g),\ \omega  H_\delta^k g\big )_{\mathcal{H}^3_xL^2_v}  \leq & \eps  \big(\normm{\omega H_\delta^k g}_{\mathcal H_x^3}^2+\norm{\comi v \omega H_\delta^k g}_{\mathcal H_x^3L_v^2}^2\big) \\ 
	  &+C\eps^{-1}  \norm{\omega H_\delta^k f}_{\mathcal H_x^3L_v^2}^2  \big(\normm{\omega g}_{\mathcal H_x^{3}}^2+\norm{\comi v\omega g}_{\mathcal H_x^{3} L_v^2}^2\big).    
\end{align*}
Substituting the two inequalities above into \eqref{energy} yields that
\begin{equation}\label{ert}
	\begin{aligned}
	&	\frac12 \frac{d}{dt} \|\omega  H_\delta^{k} g(t)\|^2_{\mathcal{H}^3_xL^2_v}+\frac{C_1}{4} \big(\normm{\omega H_\delta^k g}_{\mathcal H_x^3}^2+\|\<v\>\omega H_\delta^{k}g\|^2_{\mathcal{H}^3_xL^2_v}\big)\\
	&\leq Ck t^{\delta-1}  \big| \big ( \omega  \partial_{v_1} H_\delta^{k-1}   g,  \omega H_\delta^{k} g\big )_{\mathcal{H}^3_xL^2_v}\big|  + \sum_{j=1}^{k-1} \binom{k}{j}   \big(\omega Q (H_\delta^j f,\ H_\delta^{k-j}g), \ \omega H_\delta^{k}g\big)_{\mathcal{H}^3_xL^2_v} \\
	&\quad+C \|  \omega H_\delta^k g\|_{\mathcal H_x^3 L^2_{v}}^2+C  \norm{\omega H_\delta^k f}_{\mathcal H_x^3L_v^2}^2  \big(\normm{\omega g}_{\mathcal H_x^{3}}^2+\norm{\comi v\omega g}_{\mathcal H_x^{3} L_v^2}^2\big).
	\end{aligned}
\end{equation}
To deal with the first term on the right-hand side of \eqref{ert},	we use the fact that
  $\delta\geq 1$ to compute that,  for any $\eps>0$ and for any  $t\in[0,T]$ with $T\leq 1$,    
\begin{multline}\label{dp1}
  k t^{\delta-1} \big| \big ( \omega  \partial_{v_1} H_\delta^{k-1}   g,\ \omega H_\delta^{k} g\big )_{\mathcal{H}^3_xL^2_v}\big|  \\
  \leq 
\eps     \norm{\comi {D_v}^{\frac12}  (\omega H_\delta^{k}g)}^2_{\mathcal{H}^3_xL^2_v}  + C \eps ^{-1}  k^2   \| \comi {D_v}^{-\frac12}(\omega   \pa_{v_1} H_\delta^{k-1}g )\|^2_{\mathcal{H}^3_xL^2_v}.
\end{multline}
Moreover,  in view of \eqref{vweight},
\begin{multline*}
	  \| \comi {D_v}^{-\frac12}(\omega \pa_{v_1} H_\delta^{k-1}g)\|_{\mathcal{H}^3_xL^2_v} \\
	  \leq     \| \comi {D_v}^{-\frac12}\pa_{v_1}  (\omega H_\delta^{k-1}g)\|_{\mathcal{H}^3_xL^2_v}+  \| \inner{ \partial_{v_1} \omega }H_\delta^{k-1}g\|_{\mathcal{H}^3_xL^2_v}\\
	 \leq    \| \comi {D_v}^{\frac12}  ( \omega  H_\delta^{k-1}g)\|_{\mathcal{H}^3_xL^2_v}+C \| \comi v  \omega H_\delta^{k-1}g\|_{\mathcal{H}^3_xL^2_v}.
\end{multline*}
Recall $\frac{\ga}{2}+2s\geq 1$ and thus 
  $\tau=\frac{2s}{2-\gamma}\geq \frac12$. This, with Corollary \ref{corollary:coer}, yields that
\begin{multline*}
\| \comi {D_v}^{\frac{1}{2}}  ( \omega  H_\delta^{k-1}g)\|_{\mathcal{H}^3_xL^2_v}\leq 	\| \comi {D_v}^{\tau}   (\omega  H_\delta^{k-1}g)\|_{\mathcal{H}^3_xL^2_v}\\
\leq C\normm{   \omega  H_\delta^{k-1}g}_{\mathcal{H}^3_x}+C  \| \comi v     \omega  H_\delta^{k-1}g\|_{\mathcal{H}^3_xL^2_v}.
	\end{multline*}
	Similarly for the first term on the right-hand side of \eqref{dp1}. Thus
	combining the above estimates yields that, for any $\eps>0,$
\begin{equation}\label{ktd1}
\begin{aligned}
	&k t^{\delta-1}  \big| \big ( \omega  \partial_{v_1} H_\delta^{k-1}   g,\ \omega H_\delta^{k} g\big )_{\mathcal{H}^3_xL^2_v}\big|  \\
&  \leq   \eps     \big(\normm{  \omega   H_\delta^{k}g}^2_{\mathcal{H}^3_x}+\| \comi v  \omega H_\delta^{k}g\|_{\mathcal{H}^3_xL^2_v}^2\big)  + C_\eps   k^2   \big(\normm{  \omega   H_\delta^{k-1}g}^2_{\mathcal{H}^3_x}+\| \comi v  \omega H_\delta^{k-1}g\|_{\mathcal{H}^3_xL^2_v}^2\big).
 \end{aligned}
\end{equation}
 Now we combine the   above estimate  with \eqref{ert}, to conclude that, for any $k\geq 1,$ 
\begin{equation*}
	\begin{aligned}
	&\frac12 \frac{d}{dt} \|\omega  H_\delta^{k} g(t)\|^2_{\mathcal{H}^3_xL^2_v}+\frac{C_1}{8}\big(\normm{\omega   H_\delta^k  g}_{\mathcal H_x^3}^2 + \|\<v\>\omega H_\delta^{k}g\|^2_{\mathcal{H}^3_xL^2_v}\big) \\
	& \leq   C \|  \omega H_\delta^k g\|_{\mathcal H_x^3 L^2_{v}}^2+  C\|  \omega H_\delta^k f\|_{\mathcal H_x^3 L^2_{v}}^2 \big(  \normm{\omega g}_{H_x^{3}}^2+\norm{\comi v\omega g}_{H_x^{3} L_v^2}^2\big)  \\
	& \quad+C    k^2   \big(\normm{  \omega   H_\delta^{k-1}g}^2_{\mathcal{H}^3_x}+\| \comi v  \omega H_\delta^{k-1}g\|_{\mathcal{H}^3_xL^2_v}^2\big)\\
  &\quad + \sum_{j=1}^{k-1} {k\choose j} \big(\omega Q (H_\delta^j f,\ H_\delta^{k-j}g), \ \omega H_\delta^{k} g\big)_{\mathcal{H}^3_xL^2_v},
	\end{aligned}
\end{equation*}
 where   $C_1$   is the constant in \eqref{lower}. This, with Gronwall's inequality and assumption \eqref{xyt},  yields the assertion in   Lemma \ref{lem:prelin}. The proof  is completed. 
\end{proof}

\begin{proof}
	[Proof of Proposition \ref{thm: dide}]   
By  Lemma \ref{lem:prelin} it follows that, for $k\geq 2,$
\begin{equation}\label{riindu}
	 \begin{aligned}
	&  \sup_{t\leq T}  \|\omega  H_\delta^{k}g(t)\|_{\mathcal{H}^3_xL^2_v}+ \bigg[\int_0^{T} \Big(\normm{\omega   H_\delta^k  g}_{\mathcal H_x^3}^2 + \|\<v\>\omega H_\delta^{k}g\|^2_{\mathcal{H}^3_xL^2_v}\Big)dt\bigg]^{1\over2}\\
	&  \leq \tilde C\sup_{t\leq T}  \|\omega  H_\delta^{k} f \|_{\mathcal{H}^3_xL^2_v} \bigg[\int_0^T \Big(  \normm{\omega g}_{\mathcal H_x^{3}}^2+\norm{\comi v\omega g}_{\mathcal H_x^{3} L_v^2}^2\Big) dt\bigg]^{1\over2} \\
	&\quad+  \tilde C k\bigg[  \int_0^{T}   \Big(\normm{  \omega   H_\delta^{k-1}g}^2_{\mathcal{H}^3_x}+\| \comi v  \omega H_\delta^{k-1}g\|_{\mathcal{H}^3_xL^2_v}^2\Big) dt\bigg]^{1\over2}\\
	&\quad +\tilde C\bigg[\sum_{j=1}^{k-1} {k\choose j}\int_0^{T}  \big| \big(\omega Q (H_\delta^j f,\ H_\delta^{k-j}g), \ \omega H_\delta^{k} g\big)_{\mathcal{H}^3_xL^2_v}\big|dt\bigg]^{1\over2},	
	 \end{aligned}
\end{equation}
where the constant $\tilde C$ depends only on $C_0,C_1$ and $C_2$ in \eqref{c0} and \eqref{lower}.  
To deal with the terms on the right-hand side, we use  Definition \ref{assum} for ${\ga\over 2}+2s\geq 1$,  to conclude   the following estimates
 that, for any $j\geq 1,$
\begin{equation}\label{max}
\sup_{t\leq T} \norm{ \omega H_\delta^j f}_{ \mathcal H_x^3  L_v^2} \leq \abs{f}_{X_{\rho,T}}\frac{ \rho^{ j-1} j! }{(j+1)^3}  
\end{equation}
and
\begin{equation}
	\label{eng}
	\bigg[\int_0^{T}\Big(  \normm{ \omega  H_\delta^jg}_{\mathcal H_x^3 }^2+ \norm{ \comi v\omega  H_\delta^jg}_{\mathcal H_x^3L_v^2}^2\Big)dt\bigg]^{1\over2}
	 \leq \abs{g}_{Y_{\rho,T}}\frac{ \rho^{ j-1} j! }{(j+1)^3}.
\end{equation} 
 Then we have  
\begin{equation}\label{tch}
\begin{aligned}
	&\tilde C\sup_{t\leq T}  \|\omega  H_\delta^{k} f \|_{\mathcal{H}^3_xL^2_v} \bigg[\int_0^T \Big(  \normm{\omega g}_{\mathcal H_x^{3}}^2+\norm{\comi v\omega g}_{\mathcal H_x^{3} L_v^2}^2\Big) dt\bigg]^{1\over2} \\
	&\leq  \tilde C  |f|_{X_{\rho, T}} \bigg[\int_0^T \Big(  \normm{\omega g}_{\mathcal H_x^{3}}^2+\norm{\comi v\omega g}_{\mathcal H_x^{3} L_v^2}^2\Big) dt\bigg]^{1\over2}  \frac{ \rho^{ k-1} k! } {(k+1)^3}. 
	\end{aligned}
\end{equation}
Moreover, using \eqref{eng}  yields that, for $k\geq 2,$
\begin{equation}\label{tck}
	 \tilde C k\bigg[  \int_0^{T}   \Big(\normm{  \omega   H_\delta^{k-1}g}^2_{\mathcal{H}^3_x}+\| \comi v  \omega H_\delta^{k-1}g\|_{\mathcal{H}^3_xL^2_v}^2\Big) dt\bigg]^{1\over2} \leq  \frac{\tilde C}{\rho} |g|_{Y_{\rho, T}}\frac{ \rho^{ k-1} k! }{(k+1)^3}. 
\end{equation}
For the last term on the right-hand side of \eqref{riindu}, we use  Corollary  \ref{cor:upper}, to obtain that
\begin{multline}
    \label{wq}
    \big (\omega Q(H_\de^jf,  H_\de^{k-j}g),\ \omega  H_\de^{k} g\big)_{\mathcal{H}^3_xL^2_v}\\
 \leq  C  \|\om H^j_\de f\|_{\mathcal{H}^3_xL^2_v}\Big(\normm{\om H^{k-j}_\de g}_{\mathcal{H}^3_x}+\|\comi v\om H^{k-j}_\de g\|_{\mathcal{H}^3_xL^2_v}\Big) \\
 \times \Big(\normm{\om H^{k}_\de g}_{\mathcal{H}^3_x}+\|\comi v\om H^{k}_\de g\|_{\mathcal{H}^3_xL^2_v}\Big).
\end{multline}
So that
\begin{equation}\label{368}
\begin{aligned}
&\sum_{j=1}^{k-1} {k\choose j}\int_0^{T}  \big|\big(\omega Q (H_\delta^j f,\ H_\delta^{k-j}g), \ \omega H_\delta^{k} g\big)_{\mathcal{H}^3_xL^2_v}\big|dt\\
&  \leq     C \bigg[\int_0^{T}\Big(\normm{\om H^{k}_\de g}_{\mathcal{H}^3_x}^2+\|\comi v\om H^{k}_\de g\|_{\mathcal{H}^3_xL^2_v}^2\Big) dt\bigg]^{1\over2}\\
&\quad\quad\times \sum_{j=1}^{k-1}\binom{k}{j}  \sup_{t\leq T} \|\om H^j_\de f\|_{\mathcal{H}^3_xL^2_v}    \bigg[\int_0^{T}\Big(\normm{\om H^{k-j}_\de g}_{\mathcal{H}^3_x}^2+\|\comi v\om H^{k-j}_\de g\|_{\mathcal{H}^3_xL^2_v}^2\Big) dt\bigg]^{1\over2}.
	 \end{aligned}
\end{equation}
For the last term on the right-hand side, we use \eqref{max} and \eqref{eng}, to compute
\begin{equation}
    \label{comp}
    \begin{aligned}
	&\sum_{j=1}^{k-1}\binom{k}{j}  \sup_{t\leq T} \|\om H^j_\de f\|_{\mathcal{H}^3_xL^2_v}    \bigg[\int_0^{T}\Big(\normm{\om H^{k-j}_\de g}_{\mathcal{H}^3_x}^2+\|\comi v\om H^{k-j}_\de g\|_{\mathcal{H}^3_xL^2_v}^2\Big) dt\bigg]^{1\over2}\\
	 &\leq  \abs{f}_{X_{\rho, T}} \abs{g}_{Y_{\rho, T}}\sum_{j=1}^{k-1}\frac{k!}{j!(k-j)!}\frac{ \rho^{j-1}j!}{(j+1)^3}\frac{  \rho^{k-j-1}(k-j)!}{(k-j+1)^3}\\
	 &\leq  \frac{C }{\rho} \abs{f}_{X_{\rho, T}} \abs{g}_{Y_{\rho, T}} \frac{  \rho^{k-1}k!}{(k+1)^3}, 
\end{aligned}
\end{equation} 
where in the last line, we use inequality
\begin{align*}
\sum_{ 1\leq j \leq k-1}\frac{ (k+1)^3}{(j+1)^3(k-j+1)^3}\leq \sum_{j\in \Z_+}\frac{C}{(j+1)^3}\leq C. 
\end{align*} 
Substituting the above inequality into \eqref{368} yields that, for any $\eps>0,$
\begin{multline*}
\sum_{j=1}^{k-1} {k\choose j}\int_0^{T}  \big|\big(\omega Q (H_\delta^j f,\ H_\delta^{k-j}f), \ \omega H_\delta^{k} f\big)_{\mathcal{H}^3_xL^2_v}\big|dt\\
	\leq \eps \int_0^{T}\Big(\normm{\om H^{k}_\de f}_{\mathcal{H}^3_x}^2+\|\comi v\om H^{k}_\de f\|_{\mathcal{H}^3_xL^2_v}^2\Big) dt+ \eps^{-1}  \bigg( \frac{C }{\rho} \abs{f}_{X_{\rho, T}} \abs{g}_{Y_{\rho, T}} \frac{  \rho^{k-1}k!}{(k+1)^3}\bigg)^2.
\end{multline*}
We combine the above estimate and \eqref{tch} and \eqref{tck} with \eqref{riindu}, to conclude that, for any $k\geq 2$, 
\begin{multline*}
	 \frac{(k+1)^3}{ \rho^{ k-1} k! }  \sup_{t\leq T}  \|\omega  H_\delta^{k} f(t)\|_{\mathcal{H}^3_xL^2_v}+ \frac{(k+1)^3}{ \rho^{ k-1} k! }  \bigg[\int_0^{T} \Big(\normm{\omega   H_\delta^k  f}_{\mathcal H_x^3}^2 + \|\<v\>\omega H_\delta^{k}f\|^2_{\mathcal{H}^3_xL^2_v}\Big)dt\bigg]^{1\over2} \\
	 \leq \tilde C  |f|_{X_{\rho, T}} \bigg[\int_0^T \Big(  \normm{\omega g}_{\mathcal H_x^{3}}^2+\norm{\comi v\omega g}_{\mathcal H_x^{3} L_v^2}^2\Big) dt\bigg]^{1\over2} + \frac{C}{\rho} |g|_{Y_{\rho, T}} +\frac{C }{\rho} \abs{f}_{X_{\rho, T}} \abs{g}_{Y_{\rho, T}}. 
\end{multline*}
Symmetrically,    the above estimate  still holds  true  if we replace $H_\delta$ by  
 	\begin{equation*}
 		\frac{1}{\delta+1}t^{\delta+1} \partial_{x_j}+ t^{\delta} \partial_{v_j} \textrm{ with } j=2 \textrm{ or } 3.
 	\end{equation*}
 This completes the proof of Proposition  \ref{thm: dide}. 
 \end{proof}

 \begin{proof}[Proof of Theorem \ref{thm:apri}: 
the case of $\frac{\gamma}{2}+2s\geq 1$]  Recall $C_0$ is given in \eqref{c0} and  $\mathcal H_x^3=\mathcal H_{x,N}^3$ with $N$ depending only on $C_0$ and $C_1$ (see  Lemma \ref{lem:j1}). It   follows from \eqref{inistep} that 
	\begin{equation}\label{wg0}
	\begin{aligned}
		& \sup_{t\leq T}  \|\omega    g \|_{\mathcal{H}^3_xL^2_v}+ \bigg[\int_0^{T} \Big(\normm{\omega      g}_{\mathcal H_x^3}^2 + \|\<v\>\omega   g\|^2_{\mathcal{H}^3_xL^2_v}\Big)dt\bigg]^{1\over2}\\
		&\leq 
		N\sup_{t\leq T}  \|\omega    g \|_{H^3_xL^2_v}+N \bigg[\int_0^{T} \Big(\normm{\omega      g}_{H_x^3}^2 + \|\<v\>\omega   g\|^2_{H^3_xL^2_v}\Big)dt\bigg]^{1\over2}\leq   \frac{NC_0}{C_1}.
		\end{aligned}
	\end{equation}
	Moreover, 
	we  use \eqref{++energy} for $k=1$ to obtain that 
\begin{equation}\label{ke1}
	 \begin{aligned}
	&  \sup_{t\leq T}  \|\omega  H_\delta g(t)\|_{\mathcal{H}^3_xL^2_v}+ \bigg[\int_0^{T} \Big(\normm{\omega   H_\delta   g}_{\mathcal H_x^3}^2 + \|\<v\>\omega H_\delta g\|^2_{\mathcal{H}^3_xL^2_v}\Big)dt\bigg]^{1\over2}\\
	&  \leq \tilde C\sup_{t\leq T}  \|\omega  H_\delta f \|_{\mathcal{H}^3_xL^2_v} \bigg[\int_0^T \Big(  \normm{\omega g}_{\mathcal H_x^{3}}^2+\norm{\comi v\omega g}_{\mathcal H_x^{3} L_v^2}^2\Big) dt\bigg]^{1\over2} \\
	&\quad+  \tilde C  \bigg[  \int_0^{T}   \Big(\normm{  \omega    g}^2_{\mathcal{H}^3_x}+\| \comi v  \omega  g\|_{\mathcal{H}^3_xL^2_v}^2\Big) dt\bigg]^{1\over2}\\
	&\leq \tilde C\sup_{t\leq T}  \|\omega  H_\delta f \|_{\mathcal{H}^3_xL^2_v} \bigg[\int_0^T \Big(  \normm{\omega g}_{\mathcal H_x^{3}}^2+\norm{\comi v\omega g}_{\mathcal H_x^{3} L_v^2}^2\Big) dt\bigg]^{1\over2} +\frac{N\tilde C C_0}{C_1},	
	 \end{aligned}
\end{equation}	
the last inequality using \eqref{wg0}. 
Now we choose $C_*$ large enough such that 
	\begin{equation*}
		C_*\geq  192 \frac{N\tilde C C_0}{C_1} +1.
	\end{equation*}
 As a result,   we combine \eqref{max} and \eqref{ke1} to obtain  that if
 \begin{equation*}
	\abs{f}_{X_{\rho, T}}\leq C_*,
\end{equation*}
then for $0\leq k\leq 1$ it holds that 
\begin{equation*}
	\begin{aligned}
&     L_{\rho,k} \sup_{t\leq T}  \|\omega  H_\delta^{k} f(t)\|_{\mathcal{H}^3_xL^2_v}+L_{\rho,k} \bigg[\int_0^{T} \Big(\normm{\omega   H_\delta^k  f}_{\mathcal H_x^3}^2 + \|\<v\>\omega H_\delta^{k}f\|^2_{\mathcal{H}^3_xL^2_v}\Big)dt\bigg]^{1\over2}\\
&\leq \tilde C C_*  \bigg[\int_0^T \Big(  \normm{\omega g}_{\mathcal H_x^{3}}^2+\norm{\comi v\omega g}_{\mathcal H_x^{3} L_v^2}^2\Big) dt\bigg]^{1\over2} + \frac{C_*}{24}.
\end{aligned}
\end{equation*}
Furthermore, for  $k\geq 2,$  by Proposition \ref{thm: dide}  it follows that if  
\begin{equation*}
\abs{f}_{X_{\rho, T}}\leq C_*
\end{equation*}
then
\begin{equation}
 		\label{mainest+}
 		\begin{aligned}
 			  &L_{\rho,k}\sup_{t \leq T}\norm{ \omega  H_{\delta}^{k}g(t)}_{\mathcal{H}^3_xL^2_v}\\
    &\qquad   +L_{\rho,k} \bigg[ \int_{0}^{T}\inner{\normm{\omega H_{\delta}^{k}g (t)}^{2}_{\mathcal{H}^3_x}+\|\<v\>\om H^k_\de g(t)\|^2_{\mathcal{H}^3_xL^2_v}}dt \bigg ]^{1\over 2}\\
 & \leq   \bar C  \bigg[ \int_{0}^{T}\inner{\normm{\omega  g  }^{2}_{\mathcal{H}^3_x}+\|\<v\>\om  g \|^2_{\mathcal{H}^3_xL^2_v}}dt \bigg ]^{1\over 2}|f|_{X_{\rho, T}}    + \frac{\bar C}{\rho} |g|_{Y_{\rho, T}} +\frac{\bar C }{\rho} \abs{f}_{X_{\rho, T}} \abs{g}_{Y_{\rho, T}}\\
 & \leq   \bar C C_* \bigg[ \int_{0}^{T}\inner{\normm{\omega  g  }^{2}_{\mathcal{H}^3_x}+\|\<v\>\om  g \|^2_{\mathcal{H}^3_xL^2_v}}dt \bigg ]^{1\over 2}    + \frac{\bar C}{\rho} |g|_{Y_{\rho, T}} +\frac{\bar C C_*}{\rho}  \abs{g}_{Y_{\rho, T}}.
 		\end{aligned}
 	\end{equation} 
 Symmetrically,  the  two   estimates above   still hold   true  if we replace $H_\delta$ by  
 	\begin{equation*}
 		\frac{1}{\delta+1}t^{\delta+1} \partial_{x_j}+ t^{\delta} \partial_{v_j} \textrm{ with } j=2 \textrm{ or } 3.
 	\end{equation*}
Consequently,  in view of   \eqref{xyrt}
, it follows that  
  \begin{equation*}
  \begin{aligned}
  	 |g|_{X_{\rho, T}}+|g|_{Y_{\rho, T}}\leq &    C C_* \bigg[ \int_{0}^{T}\inner{\normm{\omega  g  }^{2}_{\mathcal{H}^3_x}+\|\<v\>\om  g \|^2_{\mathcal{H}^3_xL^2_v}}dt \bigg ]^{1\over 2} \\
  	&\qquad  \qquad + \frac{1}{4} C_*  + \frac{  C}{\rho} |g|_{Y_{\rho, T}} +\frac{  C C_* }{ \rho}  \abs{g}_{Y_{\rho, T}} 
  \end{aligned}
 \end{equation*}	
 for some constant $C$ depending only only $C_0, C_1$ and $C_2$  in \eqref{c0} and \eqref{lower}. 
We choose $\rho$ large enough such that 
\begin{equation*}
	\rho\geq  2 C  +2 C C_*.
\end{equation*}
Then it follows that 
\begin{equation}\label{12C}
	 |g|_{X_{\rho, T}}+|g|_{Y_{\rho, T}}\leq    2C C_* \bigg[ \int_{0}^{T}\inner{\normm{\omega  g  }^{2}_{\mathcal{H}^3_x}+\|\<v\>\om  g \|^2_{\mathcal{H}^3_xL^2_v}}dt \bigg ]^{1\over 2}+\frac{C_*}{2}.
\end{equation}
Thus \eqref{3.7} holds true for $\tilde{C}=2 C$. Moreover, 
by Proposition \ref{lem:mc0}, we can find two constants $T_*, \Theta>0$, such that for any $T\leq T_*,$ if 
\begin{equation*}
	f\in\mathcal A_T\cap \mathcal M_T(C_0) \ \textrm{ and }\  \bigg[ \int_0^T \big( \normm{\omega  g}_{ \mathcal H_x^3   }^2  +\norm{ \comi{v}\omega g}_{ \mathcal H_x^3  L_v^2} ^2 \big)dt \bigg]^{1\over2} \leq \Theta, \end{equation*} 
	then 
	\begin{equation*}
		g\in\mathcal A_T\cap\mathcal M_T(C_0). 
	\end{equation*}
Shrinking $\Theta$ if necessary, we deduce from \eqref{12C} that  if  
\beno
\bigg[\int_{0}^{T}\inner{\normm{\omega  g  }^{2}_{\mathcal{H}^3_x}+\|\<v\>\om  g \|^2_{\mathcal{H}^3_xL^2_v}}dt\bigg]^{1\over2}<\Theta,
\eeno
then  
\beno
|g|_{X_{\rho, T}}+|g|_{Y_{\rho, T}}\leq 2 C  C_*\Theta+\f{C_*} 2<C_*.
\eeno
This yields that $g\in \mathcal N_T(\rho, C_*).$
Thus we complete the proof of Theorem \ref{thm:apri} in the case of  $\frac{\gamma}{2}+2s\geq 1.$
\end{proof}

\section{Proof of the \emph{a priori} estimate: smoothing effect in  Gevrey space}\label{sec:gev}

 In this part we will prove Theorem \ref{thm:apri}
  for  the   remaining case that $\frac{\ga}{2}+2s<1$, and the argument is similar to that in the previous case of   $\frac{\ga}{2}+2s\geq 1$, with the main difference arising from the estimate on the commutator between the transport operator and $H_{\delta_i}, i=1,2.$    Accordingly we will work with the  
  mixed  derivatives of $H_{\delta_1,j}$ and $H_{\delta_2,j}$ (see \eqref{pkh} or \eqref{mixedder} below for more detail).    
  
   For ${\gamma \over 2}+2s< 1$  we have,  in view of Definition \ref{assum},
 \begin{equation*}
 	\abs{h}_{X_{\rho, T}}= \sum_{ 1\leq j\leq 3}    \sup_{k\geq0}	\Big(L_{\rho,k}\sup_{\abs\alpha=k} \sup_{t\leq T} \norm{\omega \vec{ H}_{j}^\alpha h(t) }_{ \mathcal H_x^3  L_v^2} \Big),
 		 \end{equation*}
 		 and
 		 \begin{equation*}
 		 \abs{h}_{Y_{\rho, T}}= \sum_{ 1\leq j\leq 3}    \sup_{k\geq0}	\bigg[L_{\rho,k}\sup_{\abs\alpha=k}\bigg(\int_0^T\Big(\normm{ \omega \vec{ H}_{j}^\alpha h(t) }_{ \mathcal H_x^3}^2+ \norm{\comi v\omega \vec{ H}_{j}^\alpha h(t) }_{ \mathcal H_x^3  L_v^2}^2\Big)dt\bigg)^{1\over2} \bigg],
 		 \end{equation*}
 where  we use the notation that
 	\begin{equation*}
 		\vec H_j^\alpha:=H_{\delta_1,j}^{\alpha_1}H_{\delta_2,j}^{\alpha_2} \ \textrm{ for } \ \alpha=(\alpha_1,\alpha_2)\in\mathbb Z_+^2,
 	\end{equation*} 
and 
 \begin{equation}\label{lrt}
 	L_{\rho, k}=\left\{
	\begin{aligned}
		& 1, \    \textrm{ if } \ k=0, \\
		 & \frac{(k+1)^3}{ \rho^{ k-1 }(k!)^{(2\tau)^{-1}}} ,\    \textrm{ if } \  k \geq 1. 
	\end{aligned}
	\right.
 \end{equation}
 Note  that Proposition \ref{lem:mc0} still holds true in the case of   ${\gamma \over 2}+2s<1.$ Thus as in the previous Section \ref{sec:analy}, in order  to prove  Theorem \ref{thm:apri}  for  ${\gamma \over 2}+2s<1$,  it suffices to proving the following  quantitative estimate  on  the mixed directional derivations  with respect to $H_{\delta_1}$  and $H_{\delta_2}$, which is the counterpart of Proposition \ref{thm: dide} in the case of   ${\gamma \over 2}+2s<1.$

\begin{prop}\label{thm:sharpgev}
There exists a time $T_*$  depending only on $C_0$ in \eqref{c0} and the numbers $C_1$ and $C_2$ in \eqref{lower}, such that for any given  $T\leq T_*$,  if 
\begin{equation*}
	f\in\mathcal A_T\cap \mathcal M_T(C_0), \quad |f|_{X_{\rho, T}} <+\infty\  \textrm{ for some }\ \rho>0, \end{equation*}
 and    if 
   $g$ is a solution to the linear Boltzmann equation \eqref{flineq}
 satisfying   condition \eqref{apong},  
 then    there exists a constant $\bar C$ depending only on $C_0, C_1, C_2$ in \eqref{c0} and \eqref{lower}, such that 
   the following estimate
   \begin{equation} \label{mainquan+}
   \begin{aligned}
   & L_{\rho,k}\sup_{\abs\alpha=k}\   \sup_{t \leq T}  \norm{ \omega  \vec H^\alpha f(t)}_{\mathcal{H}^3_xL^2_v}  \\ 	
&\qquad+ L_{\rho,k} \sup_{\abs\alpha=k}   \Big(  \int_{0}^{T}\big(\normm{\omega   \vec H^\alpha f }^{2}_{\mathcal{H}^3_x}+\|\<v\>\om \vec H^\alpha f\|^2_{\mathcal{H}^3_xL^2_v}\big)dt \Big )^{1\over 2} \\
& \leq  \bar C \bigg[ \int_{0}^{T}\inner{\normm{\omega  g  }^{2}_{\mathcal{H}^3_x}+\|\<v\>\om  g \|^2_{\mathcal{H}^3_xL^2_v}}dt \bigg ]^{1\over 2}|f|_{X_{\rho, T}}    + \frac{\bar C}{\rho} |g|_{Y_{\rho, T}} +\frac{\bar C }{\rho} \abs{f}_{X_{\rho, T}} \abs{g}_{Y_{\rho, T}} 
 \end{aligned}
 	 	\end{equation}
 	holds true for any integer $k\geq 2$,   where  
 	\begin{equation*}
 		\vec H^\alpha:=H_{\delta_1}^{\alpha_1}H_{\delta_2}^{\alpha_2},\quad \alpha=(\alpha_1,\alpha_2)\in\mathbb Z_+^2, 
 	\end{equation*} 
 	with $\delta_1,\delta_2$ given in \eqref{de1de2} and $H_{\delta_j}$ defined by \eqref{vecM}.   
 	 	 	Note $L_{\rho,k}$ in \eqref{mainquan+}  is defined by \eqref{lrt}.
\end{prop}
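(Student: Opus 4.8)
The plan is to mirror the structure of the proof of Proposition \ref{thm: dide} from Section \ref{sec:analy}, but now working with the mixed directional derivatives $\vec H^\alpha = H_{\delta_1}^{\alpha_1}H_{\delta_2}^{\alpha_2}$ with $|\alpha| = k$, and tracking carefully the factorial powers $(k!)^{(2\tau)^{-1}}$ appearing in $L_{\rho,k}$ in \eqref{lrt}. The first step is to establish the analogue of Lemma \ref{lem:prelin}: apply $\vec H^\alpha$ to the linear equation \eqref{flineq}, use the Leibniz rule together with the commutator identity \eqref{kehigher} applied to each of $H_{\delta_1}$ and $H_{\delta_2}$ separately, then multiply by $\omega$, use \eqref{vweight}, and take the $\mathcal H^3_x L^2_v$ inner product with $\omega \vec H^\alpha g$. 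The new commutator terms produced are of the form $\delta_i \alpha_i t^{\delta_i-1}\partial_{v_1}\vec H^{\alpha-e_i}g$, $i=1,2$ (with $e_1,e_2$ the standard basis of $\mathbb Z^2_+$); these lower-order directional derivatives must be absorbed via Corollary \ref{corollary:coer} and an interpolation of the type carried out in \eqref{dp1}--\eqref{ktd1}. The essential arithmetic difference from Section \ref{sec:analy} is that $\tau < \frac12$ when $\frac{\gamma}{2}+2s<1$, so $\comi{D_v}^{1/2}$ is \emph{not} controlled by the coercivity norm directly; this is precisely why $\delta_2$ is chosen as in \eqref{de1de2}, namely $\delta_2 = 1+(1-2\tau)\lambda$, so that the factor $t^{\delta_i-1}$ compensates for the "missing" $\frac12 - \tau$ powers of $\comi{D_v}$. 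Concretely, one interpolates $\comi{D_v}^{1/2} \le C\comi{D_v}^\tau + C$ after absorbing the excess derivative loss into a power of $t$ using $t \le T \le 1$ and the relation between $\delta_1-\delta_2$ and $1-2\tau$; the bookkeeping here governs which vector field $H_{\delta_1}$ or $H_{\delta_2}$ can bear the commutator term.

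Once the energy inequality analogous to \eqref{++energy} is in hand (with $H_\delta^k$, $H_\delta^{k-1}$ replaced by $\sup_{|\alpha|=k}\vec H^\alpha$, $\sup_{|\alpha|=k-1}\vec H^\alpha$, and the bilinear sum running over splittings $\alpha = \beta + (\alpha-\beta)$ with $1\le |\beta|\le k-1$), I would apply Gronwall on $[0,T]$ with $T\le T_*$ small, using the initial condition $\lim_{t\to 0}\norm{\omega \bm D^k g(t)}_{\mathcal H^3_x L^2_v}=0$ from \eqref{apong} to kill boundary terms. Then, exactly as in \eqref{max}--\eqref{comp}, I would insert the definitions of $|f|_{X_{\rho,T}}$ and $|g|_{Y_{\rho,T}}$ for the case $\frac{\gamma}{2}+2s<1$: for any multi-index $\beta$ with $|\beta|=j$, $\sup_{t\le T}\norm{\omega \vec H^\beta f}_{\mathcal H^3_x L^2_v} \le |f|_{X_{\rho,T}} L_{\rho,j}^{-1}$, and similarly for the $Y$-norm. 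The combinatorial factor now requires the inequality
\begin{equation*}
	\sum_{\beta \le \alpha,\ 1\le |\beta|\le |\alpha|-1} \binom{\alpha}{\beta} \frac{1}{L_{\rho,|\beta|}L_{\rho,|\alpha|-|\beta|}} \le \frac{C}{\rho}\,\frac{1}{L_{\rho,|\alpha|}},
\end{equation*}
whose proof reduces, after using $\binom{\alpha}{\beta}\binom{|\alpha|}{|\beta|}^{-1}\le 1$ coordinatewise and sub-multiplicativity of $k\mapsto (k!)^{(2\tau)^{-1}}$ (valid since $(2\tau)^{-1}\ge 1$ in this regime), to the familiar $\sum_{1\le j\le k-1}\frac{(k+1)^3}{(j+1)^3(k-j+1)^3}\le C$ already used in \eqref{comp}. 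The Gevrey weight $(k!)^{(2\tau)^{-1}}$ is compatible with this scheme precisely because the exponent is $\ge 1$; this is where the "$\max$" with $1$ in the Gevrey index enters.

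The main obstacle I anticipate is the interpolation step controlling the commutator term $\alpha_i\delta_i t^{\delta_i-1}\partial_{v_1}\vec H^{\alpha-e_i}g$: one must show that the power of $t$ gained, combined with the coercivity gain $\normm{\cdot}$ (which only gives $\comi{D_v}^\tau$ with $\tau<\frac12$), suffices to close the estimate with a loss of exactly one unit of $|\alpha|$ and a bounded multiplicative constant, so that when divided by $L_{\rho,|\alpha|}$ and compared to $L_{\rho,|\alpha|-1}$, the ratio is $O(k)\cdot O(\rho^{-1})\cdot(k!)^{-(2\tau)^{-1}}\cdot((k-1)!)^{(2\tau)^{-1}} = O(k^{1-(2\tau)^{-1}}/\rho)$, which is bounded since $(2\tau)^{-1}>1$. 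Getting the powers of $t$ to match requires using $\delta_1 - \delta_2 = 2\tau\lambda$ and $\delta_2 - 1 = (1-2\tau)\lambda$ so that, for the $H_{\delta_2}$-direction commutator, $t^{\delta_2-1}\comi{D_v}^{1/2-\tau}$ applied to a frequency-localized piece behaves like a bounded operator after absorbing $t^{\delta_2 - 1} \lesssim |\eta|^{-(1-2\tau)\lambda\cdot(\text{something})}$ — this dyadic-in-frequency argument, using the characterization \eqref{chara} and the bound $\sup_{t\le 1}t^a 2^{bj}\lesssim 1$ when $a,b$ are correctly related, is the technical heart. After that, the remaining steps (choosing $C_*$, $\rho$ large, invoking Proposition \ref{lem:mc0} and assembling $|g|_{X_{\rho,T}}+|g|_{Y_{\rho,T}}$) are verbatim the same as the end of the $\frac{\gamma}{2}+2s\ge 1$ proof, and yield \eqref{mainquan+} and hence Theorem \ref{thm:apri} in full.
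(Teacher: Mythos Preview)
Your overall scaffolding is correct: apply $\vec H^\alpha$ to \eqref{flineq}, use Leibniz and \eqref{kehigher}, derive an energy inequality analogous to \eqref{supal}, and handle the bilinear sum via the $X/Y$-norm bounds together with the combinatorial inequality (which you treat correctly, including the sub-multiplicativity of $(k!)^{(2\tau)^{-1}}$ for $(2\tau)^{-1}\ge 1$). The gap is in the commutator step, which is the technical heart of the Gevrey case.

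You propose to follow \eqref{dp1}--\eqref{ktd1} and repair the failure of $\comi{D_v}^{1/2}$-control when $\tau<\tfrac12$ by a dyadic-in-frequency bound of the form $\sup_{t\le 1} t^a 2^{bj}\lesssim 1$. That cannot work: $t$ and the frequency scale are independent, so no such uniform bound exists, and the written interpolation ``$\comi{D_v}^{1/2}\le C\comi{D_v}^\tau + C$'' is the wrong way round. The paper's mechanism is different and rests on two ingredients you do not invoke. First, one splits by Cauchy with $\comi{D_v}^{\pm\tau}$ (not $\pm\tfrac12$) as in \eqref{tranest}, and then interpolates $\|\comi{D_v}^{-\tau}h\|^2 \le \tilde\eps\|\comi{D_v}^\tau h\|^2 + \tilde\eps^{-(1-2\tau)/(2\tau)}\|\comi{D_v}^{\tau-1}h\|^2$ with the \emph{time-dependent} choice $\tilde\eps = \eps^2 t^{2\delta_1}\alpha_j^{-2}t^{-2(\delta_j-1)}$. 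Second --- and this is why mixed derivatives are forced --- the resulting $\comi{D_v}^\tau$-piece carries the factor $t^{2\delta_1}\|\comi{D_v}^\tau(\omega\partial_{v_1}H_{\delta_i}^{\alpha_i}H_{\delta_j}^{\alpha_j-1}g)\|^2$, and $t^{\delta_1}\partial_{v_1}$ is rewritten via \eqref{generate} as a linear combination of $H_{\delta_1}$ and $t^{\delta_1-\delta_2}H_{\delta_2}$, turning this into an order-$k$ mixed directional derivative absorbable on the left (see \eqref{mixedder}). The $\comi{D_v}^{\tau-1}$-piece carries $\alpha_j^{1/\tau}t^{(1/\tau)[(\delta_j-1)-(1-2\tau)\delta_1]}$, and the choice of $\delta_2$ in \eqref{de1de2} is made precisely so that $(\delta_j-1)-(1-2\tau)\delta_1\ge 0$ for both $j=1,2$; hence on $[0,T]\subset[0,1]$ this term is bounded by $C_\eps\,\alpha_j^{1/\tau}$ times the coercivity norm of the $(k-1)$-order derivative (see \eqref{intersec}).

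Consequently the commutator contributes $k^{1/(2\tau)}$ --- not $O(k)$ --- times the lower-order term, and it is exactly this power that matches the Gevrey weight ratio $L_{\rho,k}/L_{\rho,k-1}\sim (\rho\,k^{1/(2\tau)})^{-1}$ to produce the $\bar C/\rho$ on the right of \eqref{mainquan+}. Your arithmetic happens to close because $k^{1-1/(2\tau)}\le 1$, but it conceals the mechanism: if the commutator really cost only $O(k)$, one would obtain a Gevrey index strictly better than $(2\tau)^{-1}$, contradicting the sharpness discussion in Subsection~\ref{sharpG}. (A minor arithmetic slip: $\delta_1-\delta_2 = 2\tau\lambda-1$, not $2\tau\lambda$.)
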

 
\begin{proof} 
The proof is quite similar to that of Proposition \ref{thm: dide},  and the main difference arises from  the treatment of the commutator between $\partial_t+v\cdot\partial_x$  and the directional derivatives.  	 
	
Similar to \eqref{hk},  for any multi-index $\alpha=(\alpha_1,\alpha_2)\in\mathbb Z_+^2$ ,  we apply $\omega \vec{ H}^\alpha  $ to \eqref{Bolt} to obtain that 
\begin{align*}
	 (\pa_t+v\cdot\partial_x+\<v\>^2)\omega  \vec{ H}^\alpha g
 &=\omega      (\alpha_1 t^{\de_1-1}\partial_{v_1} H_{\delta_2}^{\alpha_2 } H_{\delta_1}^{\alpha_1-1}+\alpha_2 t^{\de_2-1}\partial_{v_1} H_{\delta_1}^{\alpha_1 } H_{\delta_2}^{\alpha_2-1} )g
 \\+& \sum_{\beta\leq \alpha}{\alpha\choose \beta} \omega Q\big ( \vec H ^\beta f, \ \vec H^{\alpha-\beta} g \big ),
\end{align*}
where we used the fact that 
\begin{align*}
	[\pa_t+v\cdot\partial_x, \ \vec H ^{\alpha}]&=
	[\pa_t+v\cdot\partial_x, \ H_{\delta_1}^{\alpha_1}]H_{\delta_2}^{\alpha_2}+H_{\delta_1}^{\alpha_1}[\pa_t+v\cdot\partial_x, \ H_{\delta_2}^{\alpha_2}]\\
	&= \alpha_1 t^{\de_1-1}\partial_{v_1} H_{\delta_1}^{\alpha_1-1} H_{\delta_2}^{\alpha_2}+ \alpha_2 t^{\de_2-1}\partial_{v_1} H_{\delta_1}^{\alpha_1 }H_{\delta_2}^{\alpha_2-1}
\end{align*}
due to \eqref{kehigher}.   Repeating the argument in 
  the  proof of Lemma \ref{lem:prelin},   we   conclude that, for any    $\alpha\in\mathbb Z_+^2$  with $\abs\alpha=k\geq 2,$
  \begin{equation}\label{supal}
	 \begin{aligned}
	&  \sup_{t\leq T}  \|\omega   \vec{ H}^\alpha g(t)\|_{\mathcal{H}^3_xL^2_v}+\bigg[ \int_0^{T} \Big(\normm{\omega    \vec{ H}^\alpha g}_{\mathcal H_x^3}^2 + \|\<v\>\omega  \vec{ H}^\alpha g\|^2_{\mathcal{H}^3_xL^2_v}\Big)dt\bigg]^{1\over2}\\
	&  \leq  
	 C\sup_{t\leq T}  \|\omega   \vec H^{\alpha} f \|_{\mathcal{H}^3_xL^2_v} \bigg[\int_0^T \Big(  \normm{\omega g}_{\mathcal H_x^{3}}^2+\norm{\comi v\omega g}_{\mathcal H_x^{3} L_v^2}^2\Big) dt\bigg]^{1\over2} \\
	&\quad+   C\bigg[  \sum_{\stackrel{1\leq i,j\leq 2}{ i\neq j}}\int_0^{T}  \big| \big (\omega     \alpha_j t^{\de_j-1}\partial_{v_1} H_{\delta_i}^{\alpha_i} H_{\delta_j}^{\alpha_j-1} g,\  \omega \vec{ H}^\alpha g\big )_{\mathcal{H}^3_xL^2_v}\big|dt\bigg]^{1\over2}  \\
	&\quad +  C\bigg[ \sum_{\stackrel{ \beta\leq \alpha}{ 1\leq \abs\beta \leq k-1}} {\alpha\choose \beta}\int_0^{T}   \big| \big(\omega Q ( \vec{ H}^\beta f,\  \vec{ H}^{\alpha-\beta}g), \ \omega \vec H^{\alpha} g\big)_{\mathcal{H}^3_xL^2_v}\big|dt\bigg]^{1\over2}. 	 \end{aligned}
\end{equation}
Next we deal with the second term on the right-hand side of \eqref{supal}, and the treatment is different from that of the counterpart in \eqref{energy}. 
For any  $\eps>0$, Cauchy inequality yields that 
\begin{equation}\label{tranest}
	\begin{aligned}
		&\int_0^{T}   \big|\big ( \omega     \alpha_j t^{\de_j-1}\partial_{v_1} H_{\delta_i}^{\alpha_i} H_{\delta_j}^{\alpha_j-1} g,\ \omega \vec H^{\alpha}g\big )_{\mathcal{H}^3_xL^2_v}\big|dt\\
	&	\leq \eps  \int_0^{T}\norm{\comi{D_v}^\tau (\omega \vec H^{\alpha} g)}_{\mathcal{H}^3_xL^2_v}^2 dt\\
	&\quad+    \int_0^{T}\eps^{-1}  \alpha_j^2 t^{2(\delta_j-1)}  \norm{\comi{D_v}^{-\tau}  (\omega  \partial_{v_1}H_{\delta_i}^{\alpha_i} H_{\delta_j}^{\alpha_j-1}  g)}_{\mathcal{H}^3_xL^2_v}^2dt,
	\end{aligned}
\end{equation}
recalling  $\tau=\frac{2s}{2-\ga}$ is defined in \eqref{tau}. 
The assumption that   $\frac{\ga}{2}+2s<1$  implies $\tau<\frac12$.  This enables us to  
 use  the interpolation inequality that 
$$
 \forall\ \tilde\eps>0,\quad \|\comi {D_v}^{-\tau}h\|^2_{L_v^{2}}\leq \tilde\eps  \|\comi {D_v}^{\tau} h\|^{2}_{L_v^2}+\tilde\eps^{-\frac{1-2\tau}{2\tau}}\|\comi {D_v}^{\tau-1}h\|^{2}_{L_v^2},
$$
with $\tilde \eps =\eps^2 t^{2\delta_1}\alpha_j^{-2}  t^{-2(\delta_j-1)}$ and $h= \omega  \partial_{v_1}H_{\delta_i}^{\alpha_i} H_{\delta_j}^{\alpha_j-1}  g;$ this gives
\begin{equation}
\begin{aligned}\label{eqpre}
	&\eps^{-1}  \alpha_j^2 t^{2(\delta_j-1)}  \norm{\comi{D_v}^{-\tau}  (\omega  \partial_{v_1}H_{\delta_i}^{\alpha_i} H_{\delta_j}^{\alpha_j-1}  g)}_{\mathcal{H}^3_xL^2_v}^2\\
	&\leq  \eps  t^{2\delta_1}\norm{\comi{D_v}^{\tau} (\omega   \partial_{v_1}H_{\delta_i}^{\alpha_i} H_{\delta_j}^{\alpha_j-1}  g)}_{\mathcal{H}^3_xL^2_v}^2\\
	&\quad+\eps^{\frac{\tau-1}{\tau}}\alpha_j^{\frac{1}{\tau}}  t^{\frac{1}{\tau}(\delta_j-1)} t^{-\frac{1 }{\tau}(1-2\tau)\delta_1}\norm{\comi{D_v}^{\tau-1}  (\omega   \partial_{v_1}H_{\delta_i}^{\alpha_i} H_{\delta_j}^{\alpha_j-1}  g)}_{\mathcal{H}^3_xL^2_v}^2.
\end{aligned}
\end{equation}

As for the last term on the right-hand side of \eqref{eqpre}, we use    definition  \eqref{de1de2}  of $\delta_j$   and  the fact that $\delta_1>\delta_2$, to compute, for $j=1, 2,$
 \begin{align*}
	\delta_j-1-(1-2\tau)\delta_1 \geq \delta_2-1-(1-2\tau)\delta_1\geq   (1-2\tau)\lambda-(1-2\tau)\lambda \geq 0.
	\end{align*}
This yields that, for any $t\in [0, T]$ with $T\leq 1$,
\begin{equation}\label{intersec}
\begin{aligned}
	&\eps^{\frac{\tau-1}{\tau}}\alpha_j^{\frac{1}{\tau}}  t^{\frac{1}{\tau}(\delta_j-1)} t^{-\frac{1 }{\tau}(1-2\tau)\delta_1}\norm{\comi{D_v}^{\tau-1}  (\omega   \partial_{v_1}H_{\delta_i}^{\alpha_i} H_{\delta_j}^{\alpha_j-1} g)}_{\mathcal{H}^3_xL^2_v}^2\\
	&\leq C_\eps \alpha_j^{\frac{1}{\tau}}   \norm{\comi{D_v}^{\tau-1}  (\omega  \partial_{v_1}H_{\delta_i}^{\alpha_i} H_{\delta_j}^{\alpha_j-1}  g)}_{\mathcal{H}^3_xL^2_v}^2\\
	&\leq C_\eps \alpha_j^{\frac{1}{\tau}}   \norm{\comi{D_v}^{\tau }  (\omega   H_{\delta_i}^{\alpha_i} H_{\delta_j}^{\alpha_j-1} g)}_{\mathcal{H}^3_xL^2_v}^2+C_\eps \alpha_j^{\frac{1}{\tau}}   \norm{\comi{v}   \omega   H_{\delta_i}^{\alpha_i} H_{\delta_j}^{\alpha_j-1}  g}_{\mathcal{H}^3_xL^2_v}^2\\
	&\leq C_\eps \alpha_j^{\frac{1}{\tau}} \Big(  \normm{  \omega   H_{\delta_i}^{\alpha_i} H_{\delta_j}^{\alpha_j-1}  g}_{\mathcal{H}^3_x}^2+  \norm{\comi{v}   \omega   H_{\delta_i}^{\alpha_i} H_{\delta_j}^{\alpha_j-1}  g}_{\mathcal{H}^3_xL^2_v}^2\Big),
\end{aligned}
\end{equation}
where in the second equality we used the fact that $\tau<\frac12$ and $|\partial_{v_1}\omega|\leq C\comi v\omega$ in view of \eqref{vweight}, and the last line follows from Corollary \ref{corollary:coer}.  

As for the first term on the right-hand side of \eqref{eqpre}, we use the fact that
\begin{eqnarray*}
	 t^{\delta_1}\partial_{v_1}=-\frac{\delta_1+ 1}{\delta_2-\delta_1} H_{\delta_1}+\frac{\delta_2+ 1}{\delta_2-\delta_1}t^{\delta_1-\delta_2}H_{\delta_2}
\end{eqnarray*}
in view of 
\eqref{generate},  to conclude that 
\begin{equation}\label{mixedder}
\begin{aligned}
	& \eps  t^{2\delta_1}\norm{\comi{D_v}^{\tau} (\omega   \partial_{v_1}H_{\delta_i}^{\alpha_i} H_{\delta_j}^{\alpha_j-1}  g)}_{\mathcal{H}^3_xL^2_v}^2= \eps  \norm{\comi{D_v}^{\tau} (\omega t^{\delta_1} \partial_{v_1}H_{\delta_i}^{\alpha_i} H_{\delta_j}^{\alpha_j-1}  g)}_{\mathcal{H}^3_xL^2_v}^2\\
	& \leq \eps C\Big(\norm{\comi{D_v}^{\tau} (\omega H_{\delta_1}H_{\delta_i}^{\alpha_i} H_{\delta_j}^{\alpha_j-1}  g)}_{\mathcal{H}^3_xL^2_v}^2+\norm{\comi{D_v}^{\tau} (\omega H_{\delta_2}H_{\delta_i}^{\alpha_i} H_{\delta_j}^{\alpha_j-1}  g)}_{\mathcal{H}^3_xL^2_v}^2\Big). 
	 \end{aligned} 
\end{equation}
Substituting the above estimate and \eqref{intersec} into \eqref{eqpre} yields that, for any    $\alpha\in\mathbb Z_+^2$  with $\abs\alpha=k,$
\begin{align*}
	&\eps^{-1}  \alpha_j^2 t^{2(\delta_j-1)}  \norm{\comi{D_v}^{-\tau}  (\omega  \partial_{v_1}H_{\delta_i}^{\alpha_i} H_{\delta_j}^{\alpha_j-1}  g)}_{\mathcal{H}^3_xL^2_v}^2\\
	&\leq \eps C\Big(\norm{\comi{D_v}^{\tau} (\omega H_{\delta_1}H_{\delta_i}^{\alpha_i} H_{\delta_j}^{\alpha_j-1}  g)}_{\mathcal{H}^3_xL^2_v}^2+\norm{\comi{D_v}^{\tau} (\omega H_{\delta_2}H_{\delta_i}^{\alpha_i} H_{\delta_j}^{\alpha_j-1}  g)}_{\mathcal{H}^3_xL^2_v}^2\Big)\\
	&\qquad+ C_\eps \alpha_j^{\frac{1}{\tau}} \Big(  \normm{  \omega   H_{\delta_i}^{\alpha_i} H_{\delta_j}^{\alpha_j-1}  g}_{\mathcal{H}^3_x}^2+  \norm{\comi{v}   \omega   H_{\delta_i}^{\alpha_i} H_{\delta_j}^{\alpha_j-1}  g}_{\mathcal{H}^3_xL^2_v}^2\Big).
\end{align*}
We integrate the above estimate over $[0,T]$ and then use 
  Corollary \ref{corollary:coer};  this implies that,  for any $\eps>0$ and any $\alpha\in\mathbb Z_+^2$ with $\abs\alpha=k$, 
\begin{multline*}
	 \bigg[ \int_0^{T}\eps^{-1}  \alpha_j^2 t^{2(\delta_j-1)}  \norm{\comi{D_v}^{-\tau} ( \omega  \partial_{v_1}H_{\delta_i}^{\alpha_i} H_{\delta_j}^{\alpha_j-1}  g)}_{\mathcal{H}^3_xL^2_v}^2dt\bigg]^{1\over2}\\
	  \leq \eps \sup_{\abs\alpha=k}\bigg[ \int_0^{T}\Big(  \normm{  \omega  \vec H^\alpha g}_{\mathcal{H}^3_x}^2+  \norm{\comi{v}   \omega    \vec H^\alpha  g}_{\mathcal{H}^3_xL^2_v}^2\Big)dt\bigg]^{1\over2}\\
	  +C_\eps k^{\frac{1}{2\tau}}  \sup_{\abs\beta=k-1} \bigg[\int_0^{T}\Big(  \normm{  \omega  \vec H^\beta g}_{\mathcal{H}^3_x}^2+  \norm{\comi{v}   \omega    \vec H^\beta  g}_{\mathcal{H}^3_xL^2_v}^2\Big) dt\bigg]^{1\over2}.
\end{multline*}
This with  \eqref{tranest} and Corollary \ref{corollary:coer}
 yields that, for any $\eps>0$ and  any $\alpha\in\mathbb Z_+^2$ with $\abs\alpha=k,$
 \begin{multline*}
 	\bigg[\int_0^{T}   \big|\big ( \omega     \alpha_j t^{\de_j-1}\partial_{v_1} H_{\delta_i}^{\alpha_i} H_{\delta_j}^{\alpha_j-1} g,\ \omega \vec H^{\alpha} g\big )_{\mathcal{H}^3_xL^2_v}\big|dt\bigg]^{1\over2}\\
 \leq 	\eps \sup_{\abs\alpha=k} \bigg[\int_0^{T}\Big(  \normm{  \omega  \vec H^\alpha g}_{\mathcal{H}^3_x}^2+  \norm{\comi{v}   \omega    \vec H^\alpha  g}_{\mathcal{H}^3_xL^2_v}^2\Big)dt\bigg]^{1\over2}\\
	  +C_\eps k^{\frac{1}{2\tau}} \sup_{\abs\beta=k-1} \bigg[\int_0^{T}\Big(  \normm{  \omega  \vec H^\beta g}_{\mathcal{H}^3_x}^2+  \norm{\comi{v}   \omega    \vec H^\beta g}_{\mathcal{H}^3_xL^2_v}^2\Big) dt\bigg]^{1\over2}.
 \end{multline*}
  Then we substitute  the above estimate into \eqref{supal} and then choose $\eps >0$ small sufficiently, this gives, for $k\geq 2,$ 
\beno
	&& \sup_{\abs\alpha=k}\  \sup_{t\leq T}  \|\omega   \vec{ H}^\alpha g(t)\|_{\mathcal{H}^3_xL^2_v}+ \sup_{\abs\alpha=k} \bigg[\int_0^{T} \Big(\normm{\omega    \vec{ H}^\alpha g}_{\mathcal H_x^3}^2 + \|\<v\>\omega  \vec{ H}^\alpha g\|^2_{\mathcal{H}^3_xL^2_v}\Big)dt\bigg]^{1\over2}\\
	&  \leq&  
	 C \sup_{\abs\alpha=k}\   \sup_{t\leq T}  \|\omega   \vec H^{\alpha} f \|_{\mathcal{H}^3_xL^2_v} \bigg[\int_0^T \Big(  \normm{\omega g}_{\mathcal H_x^{3}}^2+\norm{\comi v\omega g}_{\mathcal H_x^{3} L_v^2}^2\Big) dt\bigg]^{1\over2} \\
  \eeno\ben\label{s1s}
	&& +    C  k^{\frac{1}{2\tau}} \sup_{\abs\beta=k-1} \bigg[ \int_0^{T}\Big(  \normm{  \omega  \vec H^\beta g}_{\mathcal{H}^3_x}^2+  \norm{\comi{v}   \omega    \vec H^\beta  g}_{\mathcal{H}^3_xL^2_v}^2\Big) dt \bigg]^{1\over2}\\
	&&\notag +C \sup_{\abs\alpha=k} \bigg[\sum_{\stackrel{ \beta\leq \alpha}{ 1\leq \abs\beta \leq k-1}} {\alpha\choose \beta}\int_0^{T} \big| \big(\omega Q ( \vec{ H}^\beta f,\  \vec{ H}^{\alpha-\beta} g), \ \omega H_\delta^{k} g\big)_{\mathcal{H}^3_xL^2_v}\big|dt\bigg]^{1\over2},
 \een
which corresponds to estimate \eqref{++energy} in the case of $\frac{\gamma}{2}+2s\geq 1$. Note the main  difference between \eqref{++energy} and \eqref{s1s}  arises from  the second terms on the right-hand sides, and   here we have the factor  $k^{\frac{1}{2\tau}}$ in \eqref{s1s}.  So that in this case of $\frac{\gamma}{2}+2s< 1$,  only Gevrey regularity with index $\frac{1}{2\tau}$ may be expected.  For the last term in \eqref{s1s},  it   
  can be treated in a similar way to  that  after  \eqref{wq} by modifying the argument in \eqref{comp}.  Precisely,  corresponding to  \eqref{max} and \eqref{eng} we have
 that, for any $\abs \beta  \geq 1,$
\begin{equation*}
  \sup_{t\leq T} \norm{ \omega \vec H^\beta f}_{ \mathcal H_x^3  L_v^2} \leq \abs{f}_{X_{\rho,T}}\frac{ \rho^{\abs\beta-1} (\abs\beta !)^{(2\tau)^{-1}} }{(\abs\beta+1)^3}  
\end{equation*}
and
\begin{equation*}
 \bigg[\int_0^{T}\Big(  \normm{ \omega \vec H^\beta g}_{\mathcal H_x^3 }^2+ \norm{ \comi v\omega  \vec H^\beta g}_{\mathcal H_x^3L_v^2}^2\Big)dt\bigg]^{1\over2}
	 \leq \abs{g}_{Y_{\rho,T}}\frac{ \rho^{ \abs\beta-1} (\abs\beta!)^{(2\tau)^{-1}} }{(\abs\beta+1)^3}.
\end{equation*} 
This with the fact that
\begin{equation*}
	{\alpha\choose\beta}\leq {{\abs\alpha}\choose {\abs\beta}} 
	\end{equation*}
enables  us to follow the computation in   \eqref{comp}, to obtain that, for any  fixed $\alpha\in\mathbb Z_+^2$ with $\abs\alpha=k\geq 2,$ 
\begin{align*}
	& \sum_{\stackrel{ \beta\leq \alpha}{ 1\leq \abs\beta \leq k-1}} {\alpha\choose \beta}  \sup_{t\leq T} \|\om \vec H^\beta  f\|_{\mathcal{H}^3_xL^2_v}\bigg[\int_0^{T}\Big(\normm{\om \vec H^{\alpha-\beta}  g}_{\mathcal{H}^3_x}^2+\|\comi v\om \vec H^{\alpha-\beta} g\|_{\mathcal{H}^3_xL^2_v}^2\Big) dt\bigg]^{1\over2}\\
	 &\leq  C \abs{f}_{X_{\rho,T}} \abs{g}_{Y_{\rho,T}}\sum_{\stackrel{ \beta\leq \alpha}{ 1\leq \abs\beta \leq k-1}} \frac{|\alpha|!}{\abs\beta!(\abs\alpha-\abs\beta)!}\frac{ \rho^{\abs\beta-1}|\beta|!^{\frac{1}{2\tau}}}{(\abs\beta+1)^3}\frac{ \rho^{|\alpha|-\abs\beta-1}(|\alpha|-\abs\beta)!^{\frac{1}{2\tau}}}{(|\alpha|-\abs\beta+1)^3}\\
	 &\leq  \frac{C }{\rho} \abs{f}_{X_{\rho,T}} \abs{g}_{Y_{\rho,T}} \frac{  \rho^{|\alpha|-1}|\alpha|!^{\frac{1}{2\tau}}}{(|\alpha|+1)^3}\sum_{\stackrel{ \beta\leq \alpha}{ 1\leq \abs\beta \leq k-1}}\frac{ (|\alpha|+1)^3}{(\abs\beta+1)^3(|\alpha|-\abs\beta+1)^3}\\
	 &\leq   \frac{C}{\rho}\abs{f}_{X_{\rho,T}} \abs{g}_{Y_{\rho,T}} \frac{ \rho^{|\alpha|-1}|\alpha|!^{\frac{1}{2\tau}}}{(|\alpha|+1)^3},
\end{align*} 	
where in the second inequality   we use the facts that $m!^{\frac{1}{2\tau}-1} n!^{\frac{1}{2\tau}-1}\leq (m+n)!^{\frac{1}{2\tau}-1}$ and the last line follows from the fact that 
\begin{align*}
\sum_{\stackrel{ \beta\leq \alpha}{ 1\leq \abs\beta \leq k-1}}\frac{ (|\alpha|+1)^3}{(\abs\beta+1)^3(|\alpha|-\abs\beta+1)^3}\leq \sum_{\beta\in\mathbb Z_+^2}\frac{C}{(\abs\beta+1)^3}\leq \sum_{j=1}^{+\infty}\sum_{\stackrel{\beta\in\mathbb Z_+^2}{ \abs\beta=j}}\frac{C}{ \abs\beta ^3}\leq  \sum_{j=1}^{+\infty}\frac{C}{j^2}\leq C. 
\end{align*} 
The rest part for proving  \eqref{mainquan+} is quite similar to that in the previous Section \ref{sec:analy}. So we omit the detail for brevity.   Thus the proof of Proposition \ref{thm:sharpgev}  is completed.
    \end{proof}

\section{Linear Boltzmann equation}\label{sec:hinfty} 

This part is devoted to   proving     the existence, uniqueness and  smoothing effect existence of solutions to the following linear Boltzmann equation 
\begin{equation}\label{lincau+}
	\partial_tg+v\cdot\partial_xg=Q(f,g), \quad g|_{t=0}=f_{in},
\end{equation}
where $f_{in}$ satisfies the assumption in Theorem \ref{thm:Gevrey}. Recall the sets of $\mathcal A_T, \mathcal M_T(C_0)$ and  $\mathcal N_{T}(\rho,C_*)$ are given in Definitions \ref{def:c0mt} and \ref{defm}, that is,
\begin{equation*}
	\left\{
	\begin{aligned}
		& \mathcal A_T=\mathcal A_T(m_0,M_0,E_0,H_0)=\big\{f;\ \ f\geq 0\  \textrm{ and }\  f \ \textrm{satisfies} \   \eqref{aat}\big\},\\
		&\mathcal M_T(C_0)=\Big\{ f;\ \  \sup_{ t\leq T}\norm{\omega f}_{H_x^3 L_v^2 } \leq C_0 \Big\},\\
		&\mathcal N_{T}(\rho,C_*)=\big\{f; \  \    \abs{f}_{X_{\rho,T}}+\abs{h}_{Y_{\rho,T}} \leq C_*\big\}.
	\end{aligned}
	\right. 
\end{equation*}
With the spaces defined above, the main result of this section can be stated as follows.
 
 \begin{thm}\label{thm:lin}
 Suppose the initial datum $f_{in}\geq 0$ in 
\eqref{lincau+} satisfies the same assumption as that in Theorem \ref{thm:Gevrey}.  
Recall $C_0, C_1$ and $C_2$ are the constants  given in \eqref{c0} and \eqref{lower}.  Then there exists  a time $T_*>0$ and   three constants $\rho, C_*$ and $\Theta$,   all depending only on $C_0, C_1$ and $C_2,$     such that  for any given $f\in\mathcal  A_{T}\cap \mathcal M_{T}(C_0)\cap \mathcal N_{T}(\rho, C_*)$ with $T\leq T_*$, 
  the linear Boltzmann equation \eqref{lincau+} admits a local solution 
$
  	 g\in L^\infty([0,T]; \mathcal H_x^3 L_v^3),
$
 satisfying that
 \begin{equation}\label{thm611}
	\left\{
	\begin{aligned}
		& \abs{g}_{X_{\rho, T}}+\abs{g}_{Y_{\rho, T}}
<+\infty,\\
& 	\lim_{t\rightarrow 0} \norm{\omega {\bm D}^k g(t) }_{ \mathcal H_x^3  L_v^2} =0\  \textrm{ for any } \ k\geq 1,
	\end{aligned}
	\right. 
\end{equation}
with $\norm{\omega {\bm D}^k g}_{ \mathcal H_x^3  L_v^2}$ defined by \eqref{dk}. 
 Moreover,   if
 \begin{equation*}
 	 \bigg[\int_0^T \big( \normm{\omega  g}_{ \mathcal H_x^3   }^2  +\norm{ \comi{v}\omega g}_{ \mathcal H_x^3  L_v^2} ^2 \big)dt\bigg]^{1\over2} \leq \Theta,
 \end{equation*}
  then  $g\in \mathcal A_T\cap \mathcal M_T(C_0)\cap  \mathcal N_{T}(\rho, C_*).$
 \end{thm}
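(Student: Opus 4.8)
The plan is to obtain the solution by a viscous regularization of \eqref{lincau+}, to transfer to the regularized problems all the \emph{a priori} estimates already established for the model linear equation \eqref{flineq} in Sections \ref{sec:analy}--\ref{sec:gev}, and then to pass to the limit. Once a solution $g$ of \eqref{lincau+} satisfying \eqref{thm611} has been produced, the ``moreover'' part of the theorem is nothing but Theorem \ref{thm:apri} applied to $g$ (condition \eqref{apong} for $g$ being precisely \eqref{thm611}), which yields \eqref{3.7} and, after choosing $\Theta$ small as in Theorem \ref{thm:apri}, the membership $g\in\mathcal A_T\cap\mathcal M_T(C_0)\cap\mathcal N_T(\rho,C_*)$; so the core of the proof is the \emph{existence} of such a $g$. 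Uniqueness, if needed, follows from a one-sided $L^2$ estimate for the difference of two solutions, using the coercivity \eqref{lower} and Gronwall's inequality.

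First I would fix $f\in\mathcal A_T\cap\mathcal M_T(C_0)\cap\mathcal N_T(\rho,C_*)$ and, for $\varepsilon\in(0,1)$, solve
\[
\partial_t g_\varepsilon+v\cdot\partial_xg_\varepsilon-\varepsilon(\Delta_x+\Delta_v)g_\varepsilon=Q_\varepsilon(f,g_\varepsilon),\qquad g_\varepsilon|_{t=0}=f^\varepsilon_{in},
\]
where $Q_\varepsilon$ is the collision operator with the angular kernel cut off at $\theta\ge\varepsilon$ and $f^\varepsilon_{in}$ is a mollification of $f_{in}$ in $(x,v)$, chosen so that \eqref{finite} and \eqref{inquant} hold uniformly in $\varepsilon$. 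For each fixed $\varepsilon$ this is a non-degenerate parabolic problem with a bounded integral perturbation; a standard Galerkin scheme together with the energy identities furnished by Lemmas \ref{lem:j1} and \ref{lem:h3} (the added operator contributes $-\varepsilon\|\nabla_{x,v}(\omega g_\varepsilon)\|_{\mathcal H^3_xL^2_v}^2$ plus a commutator with $\omega$ that is of lower order and absorbed by Gronwall, all uniformly in $\varepsilon$) produces a unique solution $g_\varepsilon\in L^\infty([0,T];\mathcal H^3_xL^2_v)$ on all of $[0,T]$, which by parabolic regularity and the smoothness of the data is smooth in $(t,x,v)$ up to $t=0$. Hence all manipulations with the auxiliary vector fields $H_{\delta_i,j}$ are rigorous for $g_\varepsilon$, and since $H_{\delta_i,j}^k$ carries a factor $t^m$ with $m\ge k\ge1$, the initial conditions $\lim_{t\to0}\|\omega{\bm D}^kg_\varepsilon(t)\|_{\mathcal H^3_xL^2_v}=0$ hold for free.

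Next I would re-run the whole chain of estimates of Sections \ref{sec:analy}--\ref{sec:gev} for the regularized equation. The commutator identity \eqref{kehigher} is unaffected because $\Delta_{x,v}$ commutes with each $H_{\delta_i,j}$, the bounds \eqref{uppbound} and \eqref{lower} hold for $Q_\varepsilon$ with $\varepsilon$-independent constants, and the weight computations of Lemma \ref{Ighf} and Corollary \ref{cor:upper}, the elliptic estimate of Corollary \ref{corollary:coer}, and hence Lemmas \ref{lem:j1}--\ref{lem:j2} and Propositions \ref{lem:mc0}, \ref{thm: dide}, \ref{thm:sharpgev}, all go through with only non-positive or uniformly controlled extra contributions from the viscosity. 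Therefore, with $T_*,\rho,C_*,\Theta$ chosen exactly as in Theorem \ref{thm:apri} --- depending only on $C_0,C_1,C_2$ --- one gets, for every $T\le T_*$ and every $\varepsilon$: (i) finiteness of $|g_\varepsilon|_{X_{\rho,T}}+|g_\varepsilon|_{Y_{\rho,T}}$ (the induction on $k$ in Propositions \ref{thm: dide}, \ref{thm:sharpgev}, run with the norms truncated at an arbitrary level and then letting that level tend to infinity, each individual term being finite because $g_\varepsilon$ is smooth); (ii) the uniform bound $|g_\varepsilon|_{X_{\rho,T}}+|g_\varepsilon|_{Y_{\rho,T}}\le C_*$, by the bootstrap inequality \eqref{3.7} and the smallness of $\Theta$; (iii) $g_\varepsilon\in\mathcal A_T\cap\mathcal M_T(C_0)$, by the argument of Proposition \ref{lem:mc0} (the sign of $g_\varepsilon$ and the mass/energy/entropy bounds are preserved, the viscous corrections being favourable or vanishing with $\varepsilon$); and (iv) $\|\omega{\bm D}^kg_\varepsilon(t)\|^2_{\mathcal H^3_xL^2_v}\le C(t)$ with $C(0)=0$, $C$ continuous, uniformly in $\varepsilon$, from Gronwall applied to the $H^k_{\delta_i,j}$--energy identity whose initial data vanishes.

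Finally I would pass to the limit $\varepsilon\to0$ along a subsequence. Weak-$\ast$ convergence in $L^\infty([0,T];\mathcal H^3_xL^2_v)$, the uniform $Y_{\rho,T}$ bound, and weak lower semicontinuity of norms give $|g|_{X_{\rho,T}}+|g|_{Y_{\rho,T}}\le C_*$ and $g\in\mathcal M_T(C_0)$, while (iv) passes $\lim_{t\to0}\|\omega{\bm D}^kg(t)\|_{\mathcal H^3_xL^2_v}=0$ to the limit; an Aubin--Lions argument (velocity regularity from $\normm{\cdot}$, tightness from the $\comi v$--weight, equicontinuity in $t$ from the equation) gives a.e.\ convergence, hence $g\ge0$ together with the mass/energy/entropy bounds \eqref{aat}, so $g\in\mathcal A_T$, and lets one identify the limit of $Q_\varepsilon(f,g_\varepsilon)$ as $Q(f,g)$ via \eqref{uppbound} and the uniform bounds; thus $g$ solves \eqref{lincau+} and satisfies \eqref{thm611}, and the membership statement follows from Theorem \ref{thm:apri} as above. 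I expect the main obstacle to be the third step: verifying that the chosen regularization is compatible with \emph{all} the structural ingredients simultaneously --- coercivity \eqref{lower}, the weight identities behind Lemma \ref{Ighf} and Corollary \ref{cor:upper}, and above all the commutator identity \eqref{kehigher} --- so that the constants delivered by Propositions \ref{thm: dide} and \ref{thm:sharpgev} are genuinely $\varepsilon$-independent; a secondary technical point, needed to even start that machinery, is the a priori finiteness (for fixed $\varepsilon$) of the $X_{\rho,T}$ and $Y_{\rho,T}$ norms, which is where the smoothing of both the kernel and the initial datum is used.
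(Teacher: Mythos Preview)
Your overall strategy---regularize, transfer the \emph{a priori} estimates of Sections \ref{sec:analy}--\ref{sec:gev} to the regularized problem uniformly in $\varepsilon$, pass to the limit, then invoke Theorem \ref{thm:apri} for the ``moreover'' part---is exactly the paper's. The difference lies in the choice of regularization, and yours contains a genuine gap.

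You replace $Q$ by the angularly cut-off $Q_\varepsilon$ and assert that \eqref{lower} holds for $Q_\varepsilon$ with $\varepsilon$-independent constants. It does not: the coercivity term $C_1\normm{f}^2$ in \eqref{lower} comes precisely from the singularity $b(\cos\theta)\sim\theta^{-1-2s}$ near $\theta=0$, and cutting off at $\theta\ge\varepsilon$ makes the angular kernel integrable, so $Q_\varepsilon$ becomes an $L^2_v$-bounded operator with no $H^s$-smoothing; the constant $C_1$ collapses to zero as $\varepsilon\to0$. Lemma \ref{lem:j1}, and through it Lemma \ref{lem:prelin} and Propositions \ref{thm: dide} and \ref{thm:sharpgev}, all hinge on the negative term $-\tfrac{C_1}{2}\normm{\omega H_\delta^k g}_{\mathcal H^3_x}^2$ to close the energy; without it the $Y_{\rho,T}$ bound (which \emph{is} a $\normm{\cdot}$-norm) cannot be obtained. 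Your viscosity contributes only $\varepsilon\|\partial_{x,v}(\omega g_\varepsilon)\|^2$, which dominates $\normm{\cdot}^2$ with a prefactor $\varepsilon$, so the resulting bounds are not uniform.

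The paper sidesteps this by keeping the full non-cutoff $Q$---so Lemma \ref{lem:j1} and the coercivity constant $C_1$ are unchanged---and regularizing instead with $\epsilon\big(\comi v^{2/(1-s)}-\Delta_{x,v}\big)$. Since $Q(f,\cdot)$ is then an unbounded (order $2s$) operator on the right, existence for the regularized equation \eqref{app} is itself obtained by a further Picard iteration (Proposition \ref{prp:linpra}): one first solves \eqref{linparabolic} with $Q(f,h)$ as a frozen source, in an auxiliary space $Z_{\rho,T_\epsilon,\epsilon}$ (Definition \ref{assum+}) tailored to the viscous dissipation, and shows the map $h\mapsto g^\epsilon$ contracts on a possibly $\epsilon$-dependent interval $[0,T_\epsilon]$. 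The velocity weight $\comi v^{2/(1-s)}$ is chosen so that the interpolation \eqref{noreps} lets the viscous dissipation absorb $\normm{\cdot}$; the extra commutator $[\comi v^{2/(1-s)},H_\delta^k]$ this creates (absent in your scheme, since $\Delta_{x,v}$ commutes with $H_\delta$) is handled via \eqref{rs}. Once $g^\epsilon$ exists and is smooth, the unchanged coercivity allows one to re-run the argument of Theorem \ref{thm:apri} to get $\epsilon$-independent $X_{\rho,T}\cap Y_{\rho,T}$ bounds and extend to an $\epsilon$-independent lifespan (Proposition \ref{lem:exist}), after which compactness finishes as you describe.
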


{\bf Methodology.}   We will use standard iteration scheme to prove Theorem \ref{thm:lin}. To clarify the proof we list the outline as follows.

\underline{\it Step 1.} 	To avoid     formal computations,  we first consider a linear parabolic Cauchy problem  which can be regarded as a regularized version of the Boltzmann equation. Precisely,    let $0<\epsilon\ll 1$ be given small parameter  and let $h$ be a given function. Consider the following linear parabolic equation:
\begin{equation}
	\label{linparabolic}
	\begin{aligned}
		 \partial_t g^\epsilon  + v\cdot\pa_x g^\epsilon  +\epsilon\big(\comi v^{\frac{2 }{1-s}}- \Delta_{x,v}\big) g^\epsilon  =Q(f, h),\quad 
		g^\epsilon|_{t=0}=f_{in},
	\end{aligned}
		\end{equation}
where    $g^\epsilon$ is the unknown function. For the Cauchy problem \eqref{linparabolic},  we may take advantage of the classical existence and regularity theory for parabolic equations, to obtain the local  existence in 
  $L^\infty([0,T_\epsilon];  H^3_{x}L_v^2)\cap L^2([0,T_\epsilon];  H^4_{x} H_v^1)$ for some $\epsilon$-dependent lifespan $T_\epsilon$. Moreover,  similar to the smoothing effect for the heat equation,  the solution $g^\epsilon$ to \eqref{linparabolic} will become $H^{+\infty}$ smoothness for $0<t\leq T_\epsilon$.

 \underline{\it Step 2.} In the previous step, the lifespan $T_\epsilon$ may dependent on $\epsilon$ but is independent of  the given function $h$ in \eqref{linparabolic}.  This ensures the common lifespan $T_\epsilon$ for the iteration:
  \begin{equation}
	\label{itera1}
	\begin{aligned}
		 \partial_t g^{n}  + v\cdot\pa_x g^n  +\epsilon\big(\comi v^{\frac{2 }{1-s}}- \Delta_{x,v}\big) g^n  =Q(f,g^{n-1}),\quad 
		  g^n|_{t=0}=f_{in}.
	\end{aligned}
		\end{equation}
 As shown in the previous step, $\{g^n\}_{n\geq 0}$ is  a sequence of regular functions lying in $H^{+\infty}$ space at positive times. Moreover, we can show that $\{g^n\}_{n\geq 0}$ is Cauchy  sequence in the energy space which consists of $H^{+\infty}$-smooth functions at positive times.  So that the limit, denoted by $ g^\epsilon$,   belongs to the same space and thus is smooth at positive times. By letting $n\rightarrow +\infty$ in \eqref{itera1}, we see the limit $g^\epsilon$ will solve the equation
 \begin{equation}\label{app}
 \begin{aligned}
 	 \partial_t g^\epsilon +v\cdot\partial_xg^\epsilon+\epsilon\big(\comi v^{\frac{2 }{1-s}}- \Delta_{x,v}\big) g^\epsilon =Q(f, g^\epsilon),\quad g^\epsilon|_{t=0}=f_{in}.
 \end{aligned}
\end{equation}
 The key part is to remove the $\epsilon$-dependence of the lifespan $T_\epsilon$ and get the uniform estimate for $g^\epsilon$ with respect to $\epsilon.$

\underline{\it Step 3.} By compactness argument,   the family of solutions $g^{\epsilon}, 0<\epsilon\ll1,$ to \eqref{app}  contains a convergent subsequence and thus  its limit $g$, admitting the same regularity to that of $g^\epsilon$, will solve the linear Boltzmann equation 
   \begin{equation}\label{licau}
   \begin{aligned}
  		\partial_tg+v\cdot\partial_x g-Q(f,g)=0,\quad  g|_{t=0}=f_{in},
   \end{aligned}
  \end{equation}
  The crucial part here is to  show that $g$ is non-negativity and has bounded mass, energy and entropy.  
 
\subsection{Existence and smoothing effect for  the regularized  equation} 
In this part we study the regularized equation \eqref{linparabolic}, that is,
 \begin{equation}\label{regularied}
   \partial_t g^\epsilon  + v\cdot\pa_x g^\epsilon  +\epsilon\big(\comi v^{\frac{2 }{1-s}}- \Delta_{x,v}\big) g^\epsilon  =Q(f, h),\quad  g^\epsilon|_{t=0}=f_{in},
  \end{equation}  
  where $f,h$ are given functions.   
  Corresponding to the space $Y_{\rho, T}$ in Definition \ref{assum}, we introduce the following energy function space  for the above regularized equation.   

  \begin{definition}\label{assum+}  Given  small parameter $0<\epsilon\ll 1$, we   introduce   a Banach space $ Z_{\rho, T, \epsilon}$, equipped  with the   norm   $ \abs{\cdot}_{Z_{\rho,T,\epsilon}}$  defined  as below.   If  ${\gamma \over 2}+2s\geq 1$  we define 
  \begin{equation*} 
  \begin{aligned}
 &\abs{h}_{Z_{\rho,T,\epsilon}}  =  \sum_{1\leq j\leq3}\sum_{1\leq i\leq 2}\, \sup_{k\geq 0}\bigg[ L_{\rho,k}  \Big(   \int_0^T  \norm{ \comi{v}\omega H_{\delta_i,j}^k  h}_{ \mathcal H_x^3  L_v^2} ^2  dt \Big)^{1\over2} \bigg]\\
 &\ + \sum_{1\leq j\leq 3}\sum_{1\leq i\leq 2}\sup_{k\geq0}\bigg[ L_{\rho,k}   \bigg(\int_0^T \epsilon\Big(\norm{ \partial_{x,v}\omega H_{\delta_i,j}^k h}_{ \mathcal H_x^3  L_v^2}^2  +\norm{ \comi{v}^{\frac{1}{1-s}} \omega  H_{\delta_i,j}^k  h}_{ \mathcal H_x^3  L_v^2} ^2 \Big)dt\bigg) ^{1\over2}  \bigg] .  
 \end{aligned}
 \end{equation*} 
 If  ${\gamma \over 2}+2s< 1$  we define 
  \begin{equation*}
  \begin{aligned}
&\abs{h}_{Z_{\rho,T,\epsilon}} =  \sum_{ 1\leq j\leq 3}    \sup_{k\geq0}	\bigg[L_{\rho,k}\sup_{\abs\alpha=k}\bigg(\int_0^T  \norm{\comi v\omega \vec{ H}_{j}^\alpha h }_{ \mathcal H_x^3  L_v^2}^2 dt\bigg)^{1\over2} \bigg]\\
&\quad+ \sum_{ 1\leq j\leq 3}    \sup_{k\geq0}	\bigg[L_{\rho,k}\sup_{\abs\alpha=k}\bigg(\int_0^T\epsilon\big(\norm{ \partial_{x,v}\omega \vec{ H}_{j}^\alpha h }_{ \mathcal H_x^3L_v^2}^2+ \norm{\comi v^{\frac{1}{1-s}}\omega \vec{ H}_{j}^\alpha h }_{ \mathcal H_x^3  L_v^2}^2\big)dt\bigg)^{1\over2} \bigg].
\end{aligned}
 \end{equation*}
Here we used  the notations in \eqref{lrhok} and  \eqref{pkh}. 
 \end{definition} 
   
 \begin{rmk} The
 three norms in Definitions \ref{assum} and \ref{assum+} are linked by the following estimate:  for any $\tilde\eps >0$, 
 \begin{equation}\label{conclu}
	\abs{h}_{Y_{\rho,T}}   \leq \tilde\eps \abs{h}_{Z_{\rho,T,\epsilon}} +C_{\epsilon, \tilde\eps} T^{1\over2} \abs{h}_{X_{\rho,T}}, 
\end{equation}
where $C_{\epsilon, \tilde\eps} $  is a constant depending on $\tilde\eps $ and the small parameter $\epsilon$. To prove \eqref{conclu}, we use the interpolation inequality \eqref{tau -j}  in Appendix \ref{sec:appinter},    to conclude  that,  for any $\tilde\eps >0$,
\begin{equation}\label{noreps}
\begin{aligned}
	\normm{h}^2&\leq C\norm{\comi v^s \comi{D_v}^s h}_{L^2_v}^2 \leq \tilde \eps  \norm{ \comi{D_v}  h}_{L^2_v}^2 +C_{\tilde \eps}\norm{ \comi{ v}^{\frac{s}{1-s}} h}_{L^2_v}^2\\
&\leq  \tilde \eps \big(  \norm{ \comi{D_v}  h}_{L^2_v}^2+\norm{ \comi{ v}^{\frac{1}{1-s}} h}_{L^2_v}^2 \big)+C_{\tilde \eps}\norm{ h}_{L^2_v}^2.
\end{aligned}
\end{equation}
Then  assertion \eqref{conclu} follows in view of Definitions \ref{assum} and \ref{assum+}.    

 \end{rmk}

By direct verification, we can find   a    constant $R_s>0$ depending only on $s,$ such that
\begin{equation}\label{rs}
\forall\  j\geq 1, \ 	\forall \ v\in\mathbb R^3,\quad \sum_{1\leq i\leq 3 } \big|\partial_{v_i}^j\comi v^{\frac{2 }{1-s}}\big|\leq \comi v^{\frac{2 }{1-s}-1} R_s^{j}j!.
\end{equation}

\begin{prop}\label{prp:linpra}  
Let $X_{\rho,T}, Y_{\rho,T}$ and    $Z_{\rho,T,\epsilon}$ be  given in Definitions  \ref{assum} and \ref{assum+}. Suppose that $f$ satisfies   \begin{equation}\label{endif}
  	\abs{  f}_{X_{\rho,T}}\leq   C_*  
  \end{equation} 
for  some   $T,  C_*>0$ and some $\rho\geq 2R_s$ with $R_s$ given in \eqref{rs}.  
 Then there exists a small $T_\epsilon$, depending only on $\epsilon $  and $ C_*$ in \eqref{endif} as well as the constants $C_1, C_2$ in \eqref{lower},  such that
 if the given function $h$ in \eqref{regularied} satisfies  
 \begin{equation}\label{ah}
  \abs { h}_{X_{\rho,T_\epsilon}}+   \abs { h}_{Z_{\rho,T_\epsilon,\epsilon}}<+\infty,
 \end{equation}
 then the linear parabolic equation \eqref{regularied} 
 admits a   solution $g^\epsilon\in L^\infty([0,T_\epsilon];  \mathcal H_x^3 L_v^2)$ satisfying that
\begin{equation*}
	 \abs{  g^\epsilon}^2_{X_{\rho,T_\epsilon}}+ \abs{  g^\epsilon}^2_{Z_{\rho,T_\epsilon,\epsilon}} \leq 12 \norm{e^{a_0\comi v^2}f_{in}}_{\mathcal H_x^3 L_v^2}^2+ \frac{1}{2} \big( \abs{ h}_{X_{\rho,T_\epsilon}}^2+ \abs{h}_{Z_{\rho,T_\epsilon,\epsilon}}^2\big),
\end{equation*}	
and that
\begin{equation*}
\forall\  k\geq 1,\quad 	\lim_{t\rightarrow 0} \norm{\om{\bm D}^k g^\epsilon}_{\mathcal H_x^3 L_v^2}=0.
\end{equation*}
Recall $\norm{\om{\bm D}^k g^\epsilon}_{\mathcal H_x^3 L_v^2}$ is defined in \eqref{dk}. 
\end{prop}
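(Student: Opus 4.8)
The plan is to construct $g^\epsilon$ by classical linear parabolic theory and then to reproduce the energy hierarchy of Sections~\ref{sec:analy}--\ref{sec:gev}, with the coercivity of $Q$ replaced by the dissipation supplied by the regularizing operator $\epsilon(\comi v^{2/(1-s)}-\Delta_{x,v})$. First I would establish existence and smoothing for \eqref{regularied}: since the equation is linear in $g^\epsilon$, has smooth coefficients, and the source $Q(f,h)$ is, via Corollary~\ref{cor:upper} together with $\abs f_{X_{\rho,T}}\le C_*$ and \eqref{ah}, a fixed datum in the relevant forcing space (even after multiplication by the weight $\omega$, which is $\ge1$), a Galerkin scheme (Fourier in $x$, Hermite-type in $v$), or semigroup theory for the generator $-v\cdot\partial_x-\epsilon(\comi v^{2/(1-s)}-\Delta_{x,v})$, yields a global-in-time solution $g^\epsilon\in L^\infty_t\mathcal H^3_xL^2_v\cap L^2_tH^4_xH^1_v$. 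Bootstrapping parabolic regularity makes $g^\epsilon(t)$ lie in $H^{+\infty}_{x,v}$ for $t>0$, with $\norm{\partial_{x,v}^mg^\epsilon(t)}_{L^2}$ blowing up at worst like $(\epsilon t)^{-m/2}$ as $t\to0^+$. Thus $T_\epsilon$ is not limited by existence and will be shrunk only at the very end. All subsequent estimates are run on $G:=\omega g^\epsilon$; the commutators of $\omega$ with $\Delta_{x,v}$ and with $\comi v^{2/(1-s)}$ contribute only errors bounded by $C\comi v^2G^2$, which are absorbed by the confining term $\epsilon\comi v^{2/(1-s)}G^2$ since $2/(1-s)\ge2$ (cf. \eqref{vweight}).

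Next I would set up the differentiated energy estimates. For each $i\in\{1,2\}$ and $j\in\{1,2,3\}$ I would apply $\omega H_{\delta_i,j}^k$ (when $\tfrac\gamma2+2s\ge1$) or $\omega\vec H_j^\alpha$ with $\abs\alpha=k$ (when $\tfrac\gamma2+2s<1$) to \eqref{regularied} and pair with $\omega H_{\delta_i,j}^kg^\epsilon$, resp. $\omega\vec H_j^\alpha g^\epsilon$, in $\mathcal H^3_xL^2_v$, exactly as in Lemma~\ref{lem:prelin}, resp. Proposition~\ref{thm:sharpgev}. Three commutators occur. The first, $[H_{\delta_i,j}^k,\Delta_{x,v}]=0$, costs nothing. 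The third, the transport commutator $k\delta_it^{\delta_i-1}\partial_{v_1}H^{k-1}g^\epsilon$ (and its mixed analogue in the Gevrey case), is treated verbatim as in those proofs, via the interpolation for $\comi{D_v}^{\pm\tau}$ and Corollary~\ref{corollary:coer}, producing a factor $k$, resp. $k^{1/(2\tau)}$, times the level-$(k-1)$ dissipation. The second, and hardest, is $[H_{\delta_i,j}^k,\comi v^{2/(1-s)}]=\sum_{1\le\ell\le k}\binom k\ell t^{\ell\delta_i}\big(\partial_{v_j}^\ell\comi v^{2/(1-s)}\big)H^{k-\ell}$, whose coefficients are bounded by $t^{\ell\delta_i}R_s^\ell\ell!\,\comi v^{2/(1-s)-1}$ thanks to \eqref{rs}; to control $\epsilon\big(\comi v^{2/(1-s)-1}\omega H^{k-\ell}g^\epsilon,\ \omega H^kg^\epsilon\big)$ the key device is to split the weight as $\comi v^{2/(1-s)-1}=\comi v^{1/(1-s)}\comi v^{1/(1-s)-1}$, loading the full power $\comi v^{1/(1-s)}$ — hence part of the $\sqrt\epsilon$-dissipation at level $k-\ell$ recorded in $\abs{\,\cdot\,}_{Z_{\rho,T,\epsilon}}$ — on $H^{k-\ell}g^\epsilon$, and the smaller power $\comi v^{1/(1-s)-1}\lesssim\comi v^{1/(1-s)}+1$ (admissible since $s\le1$) on $H^kg^\epsilon$, a fraction of the level-$k$ dissipation. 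Finally the source $H^kQ(f,h)=\sum_{0\le\ell\le k}\binom k\ell Q(H^\ell f,H^{k-\ell}h)$ — the Leibniz rule being valid because $\partial_{x_j}$ and $\partial_{v_j}$ are derivations of $Q$, the latter by the velocity-translation covariance of the collision integral — would be estimated by Corollary~\ref{cor:upper} and the Sobolev product inequalities of Lemma~\ref{lem:j2}, then split by Young's inequality into a fraction of the level-$k$ dissipation plus $C_\epsilon\norm{\omega H^\ell f}_{\mathcal H^3_xL^2_v}^2$ times the level-$(k-\ell)$ dissipation of $h$.

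The last step is combinatorial bookkeeping, parallel to the end of Section~\ref{sec:analy}. Multiplying the level-$k$ inequality by $L_{\rho,k}$ and using $\sup_{t\le T}\norm{\omega H^\ell f}_{\mathcal H^3_xL^2_v}\le L_{\rho,\ell}^{-1}\abs f_{X_{\rho,T}}$ with the analogous dissipation bound for $h$, the convolution estimate $\sum_\ell(k+1)^3\big((\ell+1)^3(k-\ell+1)^3\big)^{-1}\le C$ from \eqref{comp}, the geometric bound $\sum_\ell(R_s/\rho)^\ell\le C$ — valid once $\rho\ge2R_s$, which is exactly what absorbs the factorials $R_s^\ell\ell!$ — and $L_{\rho,k}/L_{\rho,k-1}\sim\rho^{-1}k^{-1}$, resp. $\rho^{-1}k^{-1/(2\tau)}$, which absorbs the transport-commutator factor, a Gronwall argument on $[0,T_\epsilon]$ with $T_\epsilon$ so small that $e^{CT_\epsilon}\le2$ would give $\abs{g^\epsilon}_{X_{\rho,T_\epsilon}}^2+\abs{g^\epsilon}_{Z_{\rho,T_\epsilon,\epsilon}}^2\le C'\norm{e^{a_0\comi v^2}f_{in}}_{\mathcal H^3_xL^2_v}^2+C_\epsilon C_*^2\,\abs h_{Y_{\rho,T_\epsilon}}^2$, the initial datum being $\omega(0)f_{in}=e^{a_0\comi v^2}f_{in}$ at level $0$ and vanishing in the limit $t\to0$ at levels $k\ge1$. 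The interpolation \eqref{conclu} then gives $\abs h_{Y_{\rho,T_\epsilon}}\le\tilde\eps\abs h_{Z_{\rho,T_\epsilon,\epsilon}}+C_{\epsilon,\tilde\eps}T_\epsilon^{1/2}\abs h_{X_{\rho,T_\epsilon}}$, so taking $\tilde\eps$ small (depending on $\epsilon,C_*$) and then $T_\epsilon$ small makes the $h$-contribution $\le\tfrac12\big(\abs h_{X_{\rho,T_\epsilon}}^2+\abs h_{Z_{\rho,T_\epsilon,\epsilon}}^2\big)$ and, after accounting for the Gronwall factor $2$ and the number of directions, the $f_{in}$-contribution $\le12\norm{e^{a_0\comi v^2}f_{in}}_{\mathcal H^3_xL^2_v}^2$, which is the claimed bound. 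The vanishing $\norm{\omega{\bm D}^kg^\epsilon(t)}_{\mathcal H^3_xL^2_v}\to0$ as $t\to0$ would follow from Step~1, since $H_{\delta_i,j}^k$ carries a factor $\gtrsim t^{k\delta_i}$ with $\delta_i\ge1$, which dominates the parabolic blow-up $(\epsilon t)^{-k/2}$ of the $k$-th derivatives of $g^\epsilon$.

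The hard part will be the commutator $[H_{\delta_i,j}^k,\comi v^{2/(1-s)}]$ of Step~2: reconciling the factorial growth $R_s^\ell\ell!$ and the weight power $\comi v^{2/(1-s)-1}$ — which lies above the square root $\comi v^{1/(1-s)}$ of the confining dissipation that is actually available — with the $L_{\rho,k}$ bookkeeping and the $Z_{\rho,T,\epsilon}$-dissipation norms is precisely what forces both the restriction $\rho\ge2R_s$ and the weight-splitting device above.
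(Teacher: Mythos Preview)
Your overall strategy is sound and close to the paper's, but there is a concrete gap in your handling of the commutator $[\comi v^{2/(1-s)}, H_\delta^k]$. You propose to split $\comi v^{2/(1-s)-1}=\comi v^{1/(1-s)}\cdot\comi v^{1/(1-s)-1}$ with the \emph{full} weight $\comi v^{1/(1-s)}$ on the lower-order factor $H^{k-\ell}g^\epsilon$ and the \emph{smaller} weight on the top-order factor $H^kg^\epsilon$. After Young's inequality this does make the level-$k$ piece a small fraction of the dissipation, but the price is that the level-$(k-\ell)$ piece carries the reciprocal large constant and lands directly in the $Z$-norm of $g^\epsilon$ itself. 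Summing over $\ell$ with the coefficients $\frac{k!}{(k-\ell)!}R_s^\ell$ and applying the $L_{\rho,k}$ bookkeeping produces a term of size $C_0\,\abs{g^\epsilon}_{Z_{\rho,T_\epsilon,\epsilon}}$ with $C_0\sim\sum_{\ell\ge1}\frac{(k+1)^3}{(k-\ell+1)^3}(R_s/\rho)^\ell$, and for $\rho=2R_s$ this is merely $O(1)$, not small. That term cannot be absorbed into the left-hand side, so the estimate does not close --- neither in your global-in-$k$ Gronwall scheme nor, for the same reason, in an induction on $k$.

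The fix, which is exactly what the paper does (see the last two displays in the proof of claim \eqref{claim} and the treatment following \eqref{egk}), is to reverse the splitting: load the full weight $\comi v^{1/(1-s)}$ on the \emph{top-order} factor $H^kg^\epsilon$ (absorbed by the level-$k$ confining dissipation) and keep the smaller weight $\comi v^{1/(1-s)-1}$ on the \emph{lower-order} factor $H^{k-\ell}g^\epsilon$. The point is that the smaller weight can then be interpolated, $\norm{\comi v^{1/(1-s)-1}u}\le\tilde\eps\norm{\comi v^{1/(1-s)}u}+C_{\tilde\eps}\norm{u}$, so that after time-integration the lower-order contribution becomes $\tilde\eps\,\abs{g^\epsilon}_Z+C_{\tilde\eps}\epsilon^{1/2}T_\epsilon^{1/2}\abs{g^\epsilon}_X$, and this \emph{is} absorbed by first taking $\tilde\eps$ and then $T_\epsilon$ small. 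Two further differences worth noting: the paper runs the argument by \emph{induction on $k$} (Lemma~\ref{lem:smeshort}) rather than globally, and it justifies the level-$k$ energy identity not via the qualitative parabolic smoothing of your Step~1 but through an explicit mollifier $\Lambda_\vartheta^{-1}=(1-\vartheta\Delta_{x,v})^{-1}$ applied to $\omega H_\delta^kg^\epsilon$, with all commutators with $\Lambda_\vartheta^{-1}$ tracked and $\vartheta\to0$ taken only at the end. These are largely stylistic, but the induction sidesteps any need to know a priori that $\sup_k L_{\rho,k}\norm{\omega H_\delta^kg^\epsilon}$ is finite --- something your global Gronwall does require and which is not obvious from generic parabolic smoothing, since the implicit $k$-dependent constants in $\norm{\partial^kg^\epsilon(t)}\lesssim(\epsilon t)^{-k/2}$ typically carry their own factorial growth.
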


The proof of Proposition \ref{prp:linpra} relies on the classical existence theory for parabolic equations, since  the source term $Q(f,h)$ in \eqref{regularied} can be controlled in terms  of the diffusion part on the left-hand side. In fact,  by Corollary \ref{cor:upper}, 
\begin{multline*}
	\big|\big( \omega Q(f,   h),\  \omega g \big)_{\mathcal H_x^3 L_v^2}\big|\\
	 \leq \epsilon \big( \normm{\omega g  }_{\mathcal H_x^3 }^2+\norm{\comi v\omega g }_{\mathcal H_x^3 L_v^2}^2\big) + C \epsilon^{-1} \norm{\omega f}_{\mathcal H_x^3 L_v^2}^2\big( \normm{\omega h}_{\mathcal H_x^3 }^2+\norm{\comi v \omega h}_{\mathcal H_x^3 L_v^2}^2\big).
\end{multline*}
 This with \eqref{noreps} and the fact   $\abs{f}_{X_{\rho, T}}\leq   C_*$ yields  that,  for any $\tilde\eps>0,$
\begin{equation}\label{aprie}
\begin{aligned}
	& \big|\big( \omega Q(f,   h),\  \omega g \big)_{\mathcal H_x^3 L_v^2}\big| \leq \frac{\epsilon}{4}  \Big( \norm{  \partial_{x,v}  \omega    g  }_{\mathcal H_x^3 L_v^2}^2+   \norm{\comi v^{\frac{1}{1-s}} \omega   g }_{\mathcal H_x^3 L_v^2}^2\Big)+C\epsilon \norm{ \omega   g }_{\mathcal H_x^3 L_v^2}^2\\
	&\qquad+ C C_*^2 \epsilon^{-1} \Big(  \tilde \eps  \norm{  \partial_{x,v}  \omega   h }_{\mathcal H_x^3 L_v^2}^2+  \tilde \eps   \norm{\comi v^{\frac{1}{1-s}} \omega   h}_{\mathcal H_x^3 L_v^2}^2+C_{\tilde\eps}  \norm{ \omega   h}_{\mathcal H_x^3 L_v^2}^2\Big),
	\end{aligned}
\end{equation}
 where the constant $ C_{\tilde\eps }$ depends only on $\tilde\eps.$ 
 
 \begin{lem}[Existence of weak solutions]\label{lem:ws}
 Under the hypothesis of Proposition \ref{prp:linpra}, we can find
  a small  $T_\epsilon>0$, depending only on $\epsilon$ and $  C_*$ in \eqref{endif},   such that the linear parabolic equation  \eqref{regularied} admits a local solution $g^\epsilon\in L^\infty([0,T_\epsilon];  \mathcal H_x^3 L_v^2)$ satisfying  that
  \begin{equation}
  	\label{fenergy+}
  	\begin{aligned}
&\sup_{t\leq T_\epsilon} \norm{\omega  g^{\eps}}_{\mathcal H_x^3 L_v^2} \\
&\qquad+ \bigg[   \int_0^{T_\epsilon}\Big(\norm{\comi v \omega   g^\eps }_{\mathcal H_x^3 L_v^2}^2+  \epsilon \norm{  \partial_{x,v} (\omega    g )}_{\mathcal H_x^3 L_v^2}^2+ \epsilon    \norm{\comi v^{\frac{1}{1-s}} \omega   g }_{\mathcal H_x^3 L_v^2}^2\Big)dt\bigg]^{1\over2}\\
&\leq  \norm{e^{a_0\comi v^2}f_{in}}_{\mathcal H_x^3 L_v^2}+\frac1{24} \big(\abs{h}_{X_{\rho,T_\epsilon}}+\abs{h}_{Z_{\rho,T_\epsilon,\epsilon}}\big). 	
  	\end{aligned}
  \end{equation}
 \end{lem}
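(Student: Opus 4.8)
The plan is to establish Lemma \ref{lem:ws} by a Galerkin-type approximation scheme combined with the \emph{a priori} bound \eqref{aprie}, exploiting that the operator $-\Delta_{x,v}+\comi v^{2/(1-s)}$ on the left-hand side of \eqref{regularied} is a genuine (non-degenerate) elliptic operator with compact resolvent on $L^2_{x,v}$, so that the classical parabolic theory applies on any fixed time interval once the source $Q(f,h)$ is shown to lie in the right space. First I would fix the finite $N$ from Lemma \ref{lem:j1} so that $\mathcal H^3_x$ is a definite norm, and then set up the linear parabolic Cauchy problem for $g^\eps$ with the given source $Q(f,h)$: since $f\in\mathcal A_T\cap\mathcal M_T(C_0)$ and $h$ satisfies \eqref{ah}, Corollary \ref{cor:upper} together with \eqref{noreps} gives that $\omega Q(f,h)\in L^2([0,T];\mathcal H^3_xL^2_v)$ with a quantitative bound controlled by $|h|_{X_{\rho,T}}+|h|_{Z_{\rho,T,\eps}}$; the weight $\omega$ is smooth, bounded above and below on $[0,a_0/2]\times\R^3_v$ up to exponential factors in $v$, and $e^{a_0\comi v^2}f_{in}\in\mathcal H^3_xL^2_v$ by \eqref{inquant}, so $f_{in}$ itself lies in the energy space. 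Standard parabolic existence (e.g. via a spectral Galerkin approximation in $L^2_{x,v}$, or directly via the Lions–Lax–Milgram / Hille–Yosida framework for the maximal monotone operator $\eps(-\Delta_{x,v}+\comi v^{2/(1-s)})$ perturbed by the bounded-in-graph-norm transport term $v\cdot\partial_x$) then produces a weak solution $g^\eps\in L^\infty([0,T];L^2_{x,v})\cap L^2([0,T];H^1_{x,v}\cap L^2_{2/(1-s)})$, and propagation of $\mathcal H^3_x$-regularity follows by differentiating in $x$ (the operator commutes with $\partial_x^\alpha$ and the source retains its $\mathcal H^3_x$ bound).

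The next step is to turn this qualitative existence into the quantitative estimate \eqref{fenergy+} on a short interval $[0,T_\eps]$. Here I would take the $\mathcal H^3_xL^2_v$ inner product of the equation $(\partial_t+v\cdot\partial_x+\comi v^2)\omega g^\eps+\eps\omega(-\Delta_{x,v}+\comi v^{2/(1-s)})g^\eps = \omega Q(f,h) + (\text{commutator of }\eps(-\Delta_{x,v}+\comi v^{2/(1-s)})\text{ with }\omega)g^\eps$ with $\omega g^\eps$. The transport term contributes nothing after integration by parts (it is skew-adjoint on the torus), the $\comi v^2\omega g^\eps$ term is nonnegative and can be discarded, the diffusion term produces, after moving $\omega$ inside, the good terms $\eps\|\partial_{x,v}\omega g^\eps\|^2 + \eps\|\comi v^{1/(1-s)}\omega g^\eps\|^2$ up to lower-order errors absorbable using \eqref{vweight} and \eqref{rs} (the weight derivatives of $\comi v^{2/(1-s)}$ are controlled by $\comi v^{2/(1-s)-1}$, hence by an $\eps$-fraction of the good term plus $C\eps\|\comi v\omega g^\eps\|^2$), and the source term is handled by \eqref{aprie} with $\tilde\eps$ chosen small. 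This yields a differential inequality of the form $\frac{d}{dt}\|\omega g^\eps\|^2_{\mathcal H^3_xL^2_v} + c\big(\|\comi v\omega g^\eps\|^2 + \eps\|\partial_{x,v}\omega g^\eps\|^2+\eps\|\comi v^{1/(1-s)}\omega g^\eps\|^2\big) \le C_\eps\|\omega g^\eps\|^2 + C_\eps C_*^2\tilde\eps(\cdots h\cdots)+C_{\eps,\tilde\eps}\|\omega h\|^2$; integrating in time, using $g^\eps|_{t=0}=f_{in}$ with $\|\omega(0)g^\eps(0)\|_{\mathcal H^3_xL^2_v}\le\|e^{a_0\comi v^2}f_{in}\|_{\mathcal H^3_xL^2_v}$, and choosing $T_\eps$ small enough (depending on $\eps$ and $C_*$) so that the Gronwall factor $e^{C_\eps T_\eps}$ is close to $1$ and the time-integrated $h$-terms on $[0,T_\eps]$ absorb into a $\frac1{24}$ fraction of $|h|_{X_{\rho,T_\eps}}+|h|_{Z_{\rho,T_\eps,\eps}}$ (using that these norms already contain the $L^2_t$-integrals and $\sup_t$-norms), gives \eqref{fenergy+}. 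One should be careful that the $X$-norm of $h$ enters through $\|\omega h\|_{\mathcal H^3_xL^2_v}\le\sup_t\|\omega h\|\le|h|_{X_{\rho,T_\eps}}$, so the $C_{\eps,\tilde\eps}\int_0^{T_\eps}\|\omega h\|^2$ term is bounded by $C_{\eps,\tilde\eps}T_\eps|h|_{X_{\rho,T_\eps}}^2$, again small for small $T_\eps$.

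The main obstacle I anticipate is not any single estimate but the bookkeeping needed to make the constants line up: the short-time factor $T_\eps$ must be chosen \emph{after} $\eps$ and $C_*$ but \emph{independently of} the particular $h$ (this is what makes the common-lifespan iteration in Step 2 possible), so one must track carefully that every $h$-dependent term acquires a genuine power of $T_\eps$ or an adjustable $\tilde\eps$, and that the diffusion coefficient $\eps$ is never "used up" — the good terms $\eps\|\partial_{x,v}\omega g^\eps\|^2+\eps\|\comi v^{1/(1-s)}\omega g^\eps\|^2$ must survive with a fixed fraction after absorbing the weight-commutator errors (here \eqref{rs} with $\rho\ge 2R_s$ and the structure of $\omega$ are essential). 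A secondary technical point is justifying that the weak solution is regular enough for these integrations by parts to be legitimate; this is handled by first working with the Galerkin approximants (finite-dimensional, hence smooth in $t$), deriving the estimate uniformly, and then passing to the limit, the weak lower semicontinuity of the norms preserving \eqref{fenergy+}. Finally, the vanishing-trace condition $\lim_{t\to0}\|\omega{\bm D}^kg^\eps\|_{\mathcal H^3_xL^2_v}=0$ for $k\ge1$ follows because each $H_{\delta_i,j}^k$ (or $\vec H_j^\alpha$ with $|\alpha|=k\ge1$) carries an explicit positive power of $t$ in every term of \eqref{vecM}, so $\omega{\bm D}^kg^\eps\to0$ in $\mathcal H^3_xL^2_v$ as $t\to0$ provided $g^\eps$ and its spatial/velocity derivatives up to the needed order stay bounded near $t=0$ — which is guaranteed by the parabolic smoothing of \eqref{regularied} together with the already-established bound \eqref{fenergy+}; this point I would only sketch, as it is standard.
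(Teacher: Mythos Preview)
Your approach is essentially the same as the paper's: derive the differential inequality by pairing the weighted equation with $\omega g^\eps$ in $\mathcal H^3_xL^2_v$, use \eqref{aprie} for the source, absorb commutator errors from $[\omega,-\Delta_v]$ and $[\omega,\comi v^{2/(1-s)}]$ into the good diffusion terms, and choose $T_\eps$ small. The paper is terser (it does not write the commutators explicitly and just cites classical parabolic existence), so your extra detail on the Galerkin scheme and the weight commutators is fine and in fact fills in what the paper leaves implicit.

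Two small remarks. First, do not ``discard'' the $\comi v^2\omega g^\eps$ term: it is exactly what produces the $\norm{\comi v\omega g^\eps}_{\mathcal H^3_xL^2_v}^2$ contribution in \eqref{fenergy+}, and you do use it in the differential inequality you write one line later, so this is just a slip in wording. Second, the vanishing-trace assertion $\lim_{t\to0}\norm{\omega{\bm D}^k g^\eps}_{\mathcal H^3_xL^2_v}=0$ is \emph{not} part of the statement of Lemma~\ref{lem:ws}; in the paper it belongs to the next lemma (the smoothing/short-time step), and the argument there is not simply ``$H_\delta^k$ carries a power of $t$ and derivatives stay bounded''---the needed $v$-derivatives of $g^\eps$ are only in $L^2_t$, not $L^\infty_t$, so the paper instead shows $\int_0^{T_\eps}t^{-2\delta}\norm{g_k^\eps}^2\,dt<\infty$ via the $k{-}1$ level energy and then invokes continuity. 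Your heuristic for this point would need to be replaced by that argument, but since it is outside the present lemma you can simply drop that paragraph.
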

 
 \begin{proof} Taking $\mathcal H_x^3 L_v^2$ inner product with $\omega  g^\eps$ on both sides of the equation
 \begin{equation*}
     (\partial_t+v\cdot\partial_x +\comi v^2) \omega g^\epsilon=\omega Q(f,h)
 \end{equation*}
 and then using \eqref{aprie}, we have that,     observing $\epsilon\ll 1$,    
\begin{equation*}
\begin{aligned}
	&\frac12 \frac{d}{dt}\norm{\omega  g^\eps }_{\mathcal H_x^3 L_v^2}^2+\frac12 \norm{\comi v \omega   g^\eps }_{\mathcal H_x^3 L_v^2}^2+ \frac12\epsilon \Big( \norm{  \partial_{x,v} (\omega    g )}_{\mathcal H_x^3 L_v^2}^2+   \norm{\comi v^{\frac{1}{1-s}} \omega   g }_{\mathcal H_x^3 L_v^2}^2\Big) \\
		&\leq \frac{1}{96}\epsilon  \Big(   \norm{  \partial_{x,v} (\omega   h)}_{\mathcal H_x^3 L_v^2}^2+    \norm{\comi v^{\frac{1}{1-s}} \omega   h}_{\mathcal H_x^3 L_v^2}^2\Big)+ C_{\epsilon}    \norm{ \omega   h}_{\mathcal H_x^3 L_v^2}^2,
		\end{aligned}
\end{equation*}
where the constant $C_\epsilon$ depends only on $\epsilon$ and $ C_*$ in \eqref{endif}. 
The {\it a priori} estimate above enables us to apply the classic existence   theory for parabolic equations, to conclude that  there exists a small constant $T_\epsilon$ depending only on $\epsilon,$ such that 
if $h$ satisfies  \eqref{ah}, then  the
 Cauchy problem \eqref{regularied} admits a   solution $g^\epsilon\in L^\infty([0,T_\epsilon]; \mathcal H_x^3 L_v^2)$ satisfying that
\begin{equation*}
\begin{aligned}
	&\sup_{t\leq T_\epsilon} \norm{\omega  g^{\eps}}_{\mathcal H_x^3 L_v^2} +\bigg(\int_0^{T_\epsilon} \norm{\comi v \omega   g^\eps }_{\mathcal H_x^3 L_v^2}^2 dt \bigg)^{1\over 2} \\
	&\qquad\qquad +  \bigg(\epsilon\int_0^{T_\epsilon}\big( \norm{  \partial_{x,v} (\omega    g )}_{\mathcal H_x^3 L_v^2}^2+   \norm{\comi v^{\frac{1}{1-s}} \omega   g }_{\mathcal H_x^3 L_v^2}^2\big)dt\bigg)^{1\over 2}\\ 
&	 \leq  \norm{e^{a_0\comi v^2}f_{in}}_{\mathcal H_x^3 L_v^2}+ C_{\epsilon}   T_\epsilon^{1\over 2}   \sup_{t\leq T_\eps } \norm{ \omega   h}_{\mathcal H_x^3 L_v^2}\\
&\qquad+\frac1{24}\bigg(\epsilon \int_0^{T_\epsilon}\big(   \norm{  \partial_{x,v} (\omega   h)}_{\mathcal H_x^3 L_v^2}^2+    \norm{\comi v^{\frac{1}{1-s}} \omega   h}_{\mathcal H_x^3 L_v^2}^2\big)dt\bigg)^{1\over2}.
	\end{aligned}
\end{equation*}
This yields  estimate \eqref{fenergy+} if we choose $T_\epsilon$  small enough such that 
$
	C_\epsilon   T_\epsilon^{1\over 2} \leq \f 1{24}.
$
The proof of Lemma \ref{lem:ws} is completed.  
 \end{proof}

\begin{lem}[Smoothing effect of weak solutions and short-time behaviour]\label{lem:smeshort} Let $\frac{\gamma}{2}+2s\geq 1,$ and 
	 let $g^\epsilon\in L^\infty([0,T_\epsilon];  \mathcal H_x^3 L_v^2)$ be the weak solution to \eqref{regularied} constructed in the previous part, satisfying estimate \eqref{fenergy+}.  Then
	  for each $k\geq 1, $
 \begin{equation}\label{kgeq1}
  \begin{aligned}
	&L_{\rho,k}  \sup_{t\leq T_\epsilon}\norm{\omega  H_\delta^kg^\epsilon}_{\mathcal H_x^3 L_v^2}+  L_{\rho,k} \bigg( \int_0^{T_\epsilon}   \norm{\comi v    \omega H_\delta^k g^\epsilon  }_{\mathcal H_x^3 L_v^2} ^2 dt\bigg)^{1\over2}\\
	&\qquad  +  L_{\rho,k} \bigg[ \int_0^{T_\epsilon}  \Big (   \epsilon\norm{  \partial_{x,v} (  \omega H_\delta^k g^\epsilon  )}_{\mathcal H_x^3 L_v^2}^2+   \epsilon \norm{\comi v^{\frac{1}{1-s}}   \omega  H_\delta^k g^\epsilon  }_{\mathcal H_x^3 L_v^2}^2\Big)dt\bigg]^{1\over2}\\
	&\leq \norm{e^{a_0\comi v^2}f_{in}}_{\mathcal H_x^3 L_v^2}+ \frac1{24} \big(\abs{h}_{X_{\rho,T_\epsilon}}+\abs{h}_{Z_{\rho,T_\epsilon,\epsilon}}\big), 
	\end{aligned}
	\end{equation}
where we used the notations in \eqref{lrhok}  and $\delta=\delta_1$ or $\delta=\delta_2$ with $ \delta_1, \delta_2$ defined in \eqref{de1de2}. Moreover,
\begin{equation*}
	\forall\ k\geq 1,\quad \lim_{t\rightarrow 0}\norm{\omega H_\delta^kg^\epsilon}_{\mathcal H_x^3 L_v^2}=0.
\end{equation*}
\end{lem}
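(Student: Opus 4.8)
\textbf{Proof plan for Lemma \ref{lem:smeshort}.}
The plan is to differentiate the regularized equation \eqref{regularied} with respect to the vector field $H_\delta$ and run an energy estimate in $\mathcal H_x^3 L_v^2$ weighted by $\omega$, exactly parallel to the proof of Lemma \ref{lem:prelin} and Proposition \ref{thm: dide}, but now keeping track of the extra elliptic terms $\epsilon(\comi v^{\frac{2}{1-s}}-\Delta_{x,v})$ coming from the regularization. First I would note that since $g^\epsilon$ is $H^{+\infty}$-smooth at positive times (by parabolic regularity, as recorded after Lemma \ref{lem:ws}), all the computations below are rigorous. Applying $H_\delta^k$ to \eqref{regularied} and using the commutator identity \eqref{kehigher}, together with $[\comi v^{\frac{2}{1-s}}-\Delta_{x,v},\,H_\delta^k]$, produces source terms of three types: the transport commutator $k\delta t^{\delta-1}\partial_{v_1}H_\delta^{k-1}g^\epsilon$; the collision terms $\sum_{j}\binom{k}{j}Q(H_\delta^j f, H_\delta^{k-j}h)$ (note the source is $Q(f,h)$ with the \emph{given} $h$, not $g^\epsilon$, so no $Q(f,H_\delta^k g^\epsilon)$ coercive term appears); and the commutator of $H_\delta^k$ with the $\epsilon$-elliptic part, which by \eqref{rs} and the structure of $H_\delta$ generates lower-order terms controllable by $\epsilon$ times the good elliptic quantities plus harmless $\comi v$-weighted terms. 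I would then multiply by $\omega$, use \eqref{vweight} to convert $\partial_t\omega$ into the $\comi v^2$-damping, take the $\mathcal H_x^3 L_v^2$ inner product with $\omega H_\delta^k g^\epsilon$, and bound each piece.

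For the transport commutator term I would reproduce the argument of \eqref{dp1}--\eqref{ktd1}: since $\frac{\gamma}{2}+2s\geq1$ gives $\tau\geq\frac12$, the factor $kt^{\delta-1}\partial_{v_1}H_\delta^{k-1}g^\epsilon$ is absorbed using $\comi{D_v}^{1/2}\le\comi{D_v}^\tau$ and Corollary \ref{corollary:coer}, at the cost of a term $Ck^2(\normm{\omega H_\delta^{k-1}g^\epsilon}_{\mathcal H_x^3}^2+\|\comi v\omega H_\delta^{k-1}g^\epsilon\|_{\mathcal H_x^3 L_v^2}^2)$; here though, in the regularized setting, we may alternatively absorb this into the $\epsilon\|\partial_{x,v}(\omega H_\delta^k g^\epsilon)\|^2$ term, which is cleaner but makes constants $\epsilon$-dependent — acceptable since $T_\epsilon$ is already $\epsilon$-dependent. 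For the collision source I would use Corollary \ref{cor:upper} to get $\|\omega H_\delta^j f\|_{\mathcal H_x^3 L_v^2}$ against the triple-norm/$\comi v$-weighted norms of $\omega H_\delta^{k-j}h$ and of $\omega H_\delta^k g^\epsilon$, then apply Young's inequality to split off an $\epsilon$-small multiple of the good quantities of $g^\epsilon$. For the $\epsilon$-elliptic commutator I would use \eqref{rs} to see that each $v$-derivative hitting $\comi v^{\frac{2}{1-s}}$ loses one power of $\comi v$ and gains a factorial $R_s^j j!$, which after summing (and using $\rho\geq 2R_s$) is controlled by the $L_{\rho,k}$ weights; the key is that $\epsilon\comi v^{\frac{2}{1-s}-1}$ is dominated by a small multiple of $\epsilon\comi v^{\frac{2}{1-s}}$ plus a constant, so these contributions are absorbed.

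Assembling via Gronwall on $[0,T_\epsilon]$, multiplying through by $L_{\rho,k}$ and summing the resulting geometric/combinatorial series as in \eqref{comp} (the sum $\sum_{1\le j\le k-1}(k+1)^3/[(j+1)^3(k-j+1)^3]\le C$ is the only combinatorial input, and since $\frac{\gamma}{2}+2s\ge1$ we have $\max\{(2\tau)^{-1},1\}=1$, so $L_{\rho,k}=(k+1)^3/(\rho^{k-1}k!)$ and $k!$ exactly cancels the $k!$ from iterating the $k^2$-type loss), I would obtain an estimate of the shape
\begin{equation*}
\text{LHS of \eqref{kgeq1}} \le \norm{e^{a_0\comi v^2}f_{in}}_{\mathcal H_x^3 L_v^2} + \frac{C}{\rho}\big(\abs{h}_{X_{\rho,T_\epsilon}}+\abs{h}_{Z_{\rho,T_\epsilon,\epsilon}}\big) + C_\epsilon T_\epsilon^{1/2}\abs{h}_{X_{\rho,T_\epsilon}},
\end{equation*}
using \eqref{conclu} to trade $Y$-norms for $Z$-norms where needed; choosing $\rho$ large and then $T_\epsilon$ small (depending only on $\epsilon$ and $C_*$) makes the last two terms $\le\frac1{24}(\abs{h}_{X_{\rho,T_\epsilon}}+\abs{h}_{Z_{\rho,T_\epsilon,\epsilon}})$, which is \eqref{kgeq1}. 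Finally, the short-time statement $\lim_{t\to0}\|\omega H_\delta^k g^\epsilon\|_{\mathcal H_x^3 L_v^2}=0$ follows because each $H_\delta^k$ with $k\ge1$ carries a positive power of $t$ in front of every term (as $\delta\ge1$, the lowest power is $t^{\delta k}\ge t^k$) while $g^\epsilon$ stays bounded in $\mathcal H_x^3 L_v^2$ near $t=0$ by \eqref{fenergy+}; so the $t$-prefactors force the limit to vanish. The main obstacle I anticipate is bookkeeping the $\epsilon$-elliptic commutator terms so that the $R_s^j j!$ growth is genuinely absorbed by the $L_{\rho,k}$ weights uniformly in $k$ — this is where the hypothesis $\rho\geq 2R_s$ is used — and making sure that the $\comi v^{\frac{1}{1-s}}$ and $\partial_{x,v}$ pieces on the right side (the $Z$-norm of $h$) appear with a genuinely small constant rather than merely a bounded one, which dictates the order in which Young's inequality splittings are applied.
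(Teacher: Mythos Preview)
Your overall strategy---differentiate by $H_\delta^k$, run a weighted energy estimate, and control the three types of commutator/source terms---matches the paper's. But there are two genuine gaps.

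\textbf{Rigor via regularization and induction.} You write that ``$g^\epsilon$ is $H^{+\infty}$-smooth at positive times (by parabolic regularity, as recorded after Lemma~\ref{lem:ws}), so all computations are rigorous.'' That is not recorded after Lemma~\ref{lem:ws}: at that point one only has $g^\epsilon\in L^\infty([0,T_\epsilon];\mathcal H_x^3 L_v^2)$ together with the $L^2_t$ control in \eqref{fenergy+}. The $H^{+\infty}$ statement in the methodology outline is a forward reference to what Lemma~\ref{lem:smeshort} itself proves. The paper explicitly says ``we can not perform estimates directly for $\omega H_\delta^k g^\epsilon$ due to the low regularity of $g^\epsilon$,'' and resolves this by (i) introducing the mollifier $\Lambda_\vartheta^{-1}=(1-\vartheta\Delta_{x,v})^{-1}$ and working with $g_k^\epsilon:=\Lambda_\vartheta^{-1}\omega H_\delta^k g^\epsilon$, and (ii) running an \emph{induction on $k$}. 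The inductive hypothesis \eqref{min} is what feeds the bounds for the transport commutator term \eqref{eps1}, the $R_s^j$-sum from the $\epsilon$-elliptic commutator, and crucially the vanishing initial value \eqref{shtb}. After closing the estimate for $g_k^\epsilon$ uniformly in $\vartheta$, one lets $\vartheta\to 0$ by monotone convergence. Your plan skips this entire layer; without it the energy identity for $\omega H_\delta^k g^\epsilon$ is formal. (Relatedly, you speak of ``choosing $\rho$ large,'' but $\rho\ge 2R_s$ is a fixed hypothesis here; the paper gets the required smallness by choosing $\tilde\eps$ small and then $T_\epsilon$ small, cf.\ \eqref{eps1}, \eqref{ceps}.)

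\textbf{Short-time behaviour.} Your argument that ``each $H_\delta^k$ carries a positive power of $t$ while $g^\epsilon$ stays bounded, so the limit vanishes'' is not valid: $H_\delta^k g^\epsilon$ involves $k$ derivatives of $g^\epsilon$, and \eqref{fenergy+} gives only an $L^2_t$ (not $L^\infty_t$) bound on $\partial_{x,v}(\omega g^\epsilon)$, so the derivatives could blow up faster than the $t$-prefactor decays. The paper instead argues in two places: inside the induction it proves \eqref{shtb} for the mollified $g_k^\epsilon$ by writing $\|g_k^\epsilon\|\le C_\vartheta t^\delta\|\comi v\omega H_\delta^{k-1}g^\epsilon\|$ (the mollifier absorbing one derivative) and invoking the inductive $L^2_t$ bound at level $k-1$; and in Step~2 it shows $\int_0^{T_\epsilon}t^{-1}\|\omega H_\delta^k g^\epsilon\|^2\,dt<\infty$ via interpolation against the already-established $\epsilon\|\partial_{x,v}\omega H_\delta^k g^\epsilon\|$ and $\|\comi v\omega H_\delta^{k-1}g^\epsilon\|$ bounds, then uses continuity in $t$.
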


 \begin{proof} The proof is divided into two steps, one devoted to the smoothing effect and another to the short-time behavior.

{\it Step 1 (Smoothing effect).} We will use induction on $k$ to prove \eqref{kgeq1}. The validity of \eqref{kgeq1} for $k=0$ just follows from \eqref{fenergy+}.  
 Now let $k\geq 1$ and  assume that, for any $ m\leq k-1$,
	\begin{equation}\label{min}
	\begin{aligned}
		&L_{\rho,m}  \sup_{t\leq T_\epsilon}\norm{\omega  H_\delta^m g^\epsilon}_{\mathcal H_x^3 L_v^2}+  L_{\rho,m} \bigg( \int_0^{T_\epsilon}   \norm{\comi v    \omega  H_\delta^m g^\epsilon  }_{\mathcal H_x^3 L_v^2} ^2 dt\bigg)^{1\over2}\\
	&\qquad  +  L_{\rho,m} \bigg[ \int_0^{T_\epsilon}  \epsilon \Big (  \norm{  \partial_{x,v} (  \omega  H_\delta^m g^\epsilon  )}_{\mathcal H_x^3 L_v^2}^2+   \norm{\comi v^{\frac{1}{1-s}}   \omega  H_\delta^m g^\epsilon  }_{\mathcal H_x^3 L_v^2}^2\Big)dt\bigg]^{1\over2}\\
	 &\leq \norm{e^{a_0\comi v^2}f_{in}}_{\mathcal H_x^3 L_v^2}+\frac1{24} \big(\abs{h}_{X_{\rho,T_\epsilon}}+\abs{h}_{Z_{\rho,T_\epsilon,\epsilon}}\big).
	 \end{aligned}
	\end{equation}
	We will prove the above assertion still holds true for $m=k$.  
	Note we can not perform estimates directly for $\omega H_\delta^kg^\epsilon $
due to the  low regularity of $g^\epsilon$.  To avoid the formal computation,   we will work with  the regularization
   $\Lambda_\vartheta^{-1} \omega H_\delta^k g^{\epsilon} $ instead, where the regularization operator $\Lambda_\vartheta^{-1}$ is defined 
   by
   \begin{equation}\label{reguoperator}
	\Lambda_\vartheta:= 1-\vartheta \Delta_{x,v},     \quad   0<\vartheta\ll 1.
	\end{equation}
 Note $ \Lambda_\vartheta^{-1}$ is  a Fourier multiplier with symbol  $(1+\vartheta|\xi | ^2+\vartheta|\eta | ^2)^{-1}$. Here and below, $\xi$ and $\eta$ are the Fourier dual variable of $ x$ and $v,$ respectively.    Some basic properties for the regularization operator are listed in  Appendix \ref{secapp:reguopera} (see Lemma \ref{lem:regular}  therein).  In the proof, the generic constant, denoted by $C$, is independent of the parameter $\vartheta$ in \eqref{reguoperator}.
	
	To simplify the notation we denote
	\begin{equation}\label{gkeps+}
		g_k^\epsilon= \Lambda_\vartheta^{-1}  \omega   H_\delta^{k} g^\epsilon.
	\end{equation}
Then 	applying  $\Lambda_\vartheta^{-1}\omega  H_\delta^{k}  $ to the    parabolic equation \eqref{regularied},  we have that, similar to \eqref{hkf+},  
  \begin{align*}
	&(\partial_t   + v\cdot\pa_x+\comi v^2  )        g_k^\epsilon     +\epsilon\big(\comi v^{\frac{2 }{1-s}}- \Delta_{x,v}\big)   g_k^\epsilon  \\
	 & = \Lambda_\vartheta^{-1} \big( \omega   H_\delta^{k}  Q( f,   h)+k \delta t^{\delta-1}   \omega  \partial_{v_1} H_\delta^{k-1}   g^\epsilon\big)+[v\cdot \partial_x+\comi v^2+\epsilon\comi v^{\frac{2 }{1-s}},   \Lambda_\vartheta^{-1}]\omega  H_\delta^{k} g^\epsilon\\ 
	 &\qquad +\epsilon \Lambda_\vartheta^{-1}\omega[\comi v^{\frac{2 }{1-s}},\  H_\delta^{k}]g^\epsilon-\epsilon \Lambda_\vartheta^{-1}  [\Delta_{v},\  \omega]    H_\delta^{k} g^\epsilon,
\end{align*}
recalling $[\cdot, \cdot]$ stands for the commutator between two operators. 
We take $\mathcal H_x^3 L_v^2$ inner product with $g_k^\epsilon$, to obtain that
\begin{equation}\label{gkeps}
	\begin{aligned}
		&\frac{1}{2}\frac{d}{dt}\norm{g_k^\epsilon}_{\mathcal H_x^3 L_v^2}^2+\norm{\comi v g_k^\epsilon}_{\mathcal H_x^3 L_v^2}^2+\epsilon \norm{\partial_{x,v} g_k^\epsilon}_{\mathcal H_x^3 L_v^2}^2+\epsilon\norm{\comi v^{\frac{1 }{1-s}}g_k^\epsilon}_{\mathcal H_x^3 L_v^2}^2\\
		&\leq \big|\big(  \omega   H_\delta^k Q(f, h), \     \Lambda_\vartheta^{-1} g_k^\epsilon\big)_{\mathcal H_x^3 L_v^2}\big|+Ck\big|\big( \omega  \partial_{v_1} H_\delta^{k-1}   g^\epsilon, \     \Lambda_\vartheta^{-1} g_k^\epsilon\big)_{\mathcal H_x^3 L_v^2}\big|\\
		&\quad+\big|\big([v\cdot\partial_x+\comi v^2+\epsilon\comi v^{\frac{2 }{1-s}} , \ \Lambda_\vartheta^{-1}]\omega H_\delta^k  g^\epsilon,\   g_k^\epsilon\big)_{\mathcal H_x^3 L_v^2}\big| 	\\
 &\quad +  \big|\big(  \epsilon \Lambda_\vartheta^{-1}\omega[\comi v^{\frac{2 }{1-s}},\  H_\delta^{k}]g^\epsilon-\epsilon \Lambda_\vartheta^{-1}  [\Delta_v,\  \omega]    H_\delta^{k} g^\epsilon,\  g_k^\epsilon\big)_{\mathcal H_x^3 L_v^2}\big|.
	\end{aligned}
\end{equation}
We claim that
\begin{equation}\label{claim}
	\begin{aligned}
		&\big|\big(  \omega   H_\delta^k Q(f, h), \     \Lambda_\vartheta^{-1} g_k^\epsilon\big)_{\mathcal H_x^3 L_v^2}\big|+Ck\big|\big( \omega  \partial_{v_1} H_\delta^{k-1}   g^\epsilon, \     \Lambda_\vartheta^{-1} g_k^\epsilon\big)_{\mathcal H_x^3 L_v^2}\big|\\
		&\qquad+\big|\big([v\cdot\partial_x+\comi v^2+\epsilon\comi v^{\frac{2 }{1-s}} , \ \Lambda_\vartheta^{-1}]\omega H_\delta^k  g^\epsilon,\   g_k^\epsilon\big)_{\mathcal H_x^3 L_v^2}\big| 	\\
 &\qquad +  \big|\big(  \epsilon \Lambda_\vartheta^{-1}\omega[\comi v^{\frac{2 }{1-s}},\  H_\delta^{k}]g^\epsilon-\epsilon \Lambda_\vartheta^{-1}  [\Delta_v,\  \omega]    H_\delta^{k} g^\epsilon,\  g_k^\epsilon\big)_{\mathcal H_x^3 L_v^2}\big|\\
 &\leq \frac12\Big( \norm{\comi v g_k^\epsilon}_{\mathcal H_x^3 L_v^2}^2+\epsilon \norm{\partial_{x,v} g_k^\epsilon}_{\mathcal H_x^3 L_v^2}^2+\epsilon\norm{\comi v^{\frac{1 }{1-s}}g_k^\epsilon}_{\mathcal H_x^3 L_v^2}^2\Big) +C \norm{g_k^\epsilon}_{\mathcal H_x^3 L_v^2}^2\\
 &\quad+C_\epsilon   k^2   \Big(\normm{  \omega   H_\delta^{k-1}g^\epsilon}^2_{\mathcal{H}^3_x}+\| \comi v  \omega H_\delta^{k-1}g^\epsilon\|_{\mathcal{H}^3_xL^2_v}^2\Big)\\
		& \quad + C_\epsilon 
\bigg[\sum_{j=0}^k\binom{k}{j} \norm{\omega H_\delta^j f}_{\mathcal H_x^3 L_v^2}   \big( \normm{\omega H_\delta^{k-j}  h}_{\mathcal H_x^3 } +\norm{\comi v \omega  H_\delta^{k-j} h}_{\mathcal H_x^3 L_v^2}  \big) \bigg ]^{2}\\
		&\quad + C\bigg[\sum_{j=1}^{k}\frac{k!}{(k-j)!} R_s^j \epsilon^{1\over2}\norm{ \comi v^{ \frac{1 }{1-s}-1}  \omega H_\delta^{k-j} g^\epsilon}_{\mathcal H_x^3 L_v^2}\bigg]^2,
	\end{aligned}
\end{equation}
where in the last line $R_s$ is the constant  in \eqref{rs} depending only on $s$. 
The proof of \eqref{claim} is postponed to the end of this subsection.   
Combining \eqref{gkeps} and \eqref{claim} and then using Gronwall's inequality,  we have that, for any $t\in [0,T_\epsilon]$,
 \begin{equation}\label{egk}
 	\begin{aligned}
 		& \norm{g_k^\epsilon (t)}_{\mathcal H_x^3 L_v^2} +\bigg[\int_0^{T_\epsilon}\Big(\norm{\comi v g_k^\epsilon}_{\mathcal H_x^3 L_v^2}^2+\epsilon \norm{\partial_{x,v} g_k^\epsilon}_{\mathcal H_x^3 L_v^2}^2+\epsilon\norm{\comi v^{\frac{1 }{1-s}}g_k^\epsilon}_{\mathcal H_x^3 L_v^2}^2\Big)dt\bigg]^{1\over2}\\
 		&\leq \lim_{t\rightarrow 0} \norm{g_k^\epsilon (t)}_{\mathcal H_x^3 L_v^2} + C_\epsilon   k    \bigg[\int_0^{T_\epsilon}\Big(\normm{  \omega   H_\delta^{k-1}g^\epsilon}^2_{\mathcal{H}^3_x}+\| \comi v  \omega H_\delta^{k-1}g^\epsilon\|_{\mathcal{H}^3_xL^2_v}^2\Big)dt\bigg]^{1\over2}\\
		& \quad + C_\epsilon 
\sum_{j=0}^k\binom{k}{j} \bigg[\int_0^{T_\epsilon}\norm{\omega H_\delta^j f}_{\mathcal H_x^3 L_v^2}^2   \big( \normm{\omega H_\delta^{k-j}  h}_{\mathcal H_x^3 } +\norm{\comi v \omega  H_\delta^{k-j} h}_{\mathcal H_x^3 L_v^2}  \big)^2 dt \bigg ]^{1\over 2}\\
		&\quad + C\sum_{j=1}^{k}\frac{k!}{(k-j)!} R_s^j\bigg[ \int_0^{T_\epsilon} \epsilon  \norm{ \comi v^{ \frac{1 }{1-s}-1}  \omega H_\delta^{k-j} g^\epsilon}_{\mathcal H_x^3 L_v^2}^2dt\bigg]^{1\over2}.
 	\end{aligned}
 \end{equation}
For the first term on the right-hand side of \eqref{egk},  we claim that
\begin{equation}\label{shtb}
	\lim_{t\rightarrow 0}\norm{  g_k^\epsilon}_{\mathcal H_x^3 L_v^2}=0.
\end{equation} 
In fact, in view of \eqref{gkeps+} and \eqref{vweight},  we use Lemma \ref{lem:regular} in Appendix \ref{secapp:reguopera} to   compute  
\begin{equation*}
	\norm{  g_k^\epsilon}_{\mathcal H_x^3 L_v^2}=\norm{ \Lambda_\vartheta^{-1} \omega H_\delta H_\delta^{k-1} g^\epsilon}_{\mathcal H_x^3 L_v^2}\leq  C_\vartheta t^\delta \norm{  \comi v \omega   H_\delta^{k-1} g^\epsilon}_{\mathcal H_x^3 L_v^2} 
\end{equation*}
for some constant $C_\vartheta$ depending only on $\vartheta.$
This   with the inductive assumption \eqref{min} yields that
\begin{equation*}
	  \int_0^{T_\epsilon} t^{-2\delta}\norm{  g_k^\epsilon}_{\mathcal H_x^3 L_v^2}^2dt\leq C_\vartheta \int_0^{T_\epsilon}  \norm{  \comi v \omega   H_\delta^{k-1} g^\epsilon}_{\mathcal H_x^3 L_v^2}^2dt<+\infty. 
\end{equation*}
Observe $2\delta\geq 2$, and thus the above estimate, together with the continuity of the mapping $t\mapsto \norm{g_k^\epsilon(t)}_{\mathcal H_x^3 L_v^2}^2$, yields  assertion \eqref{shtb}.

For the second term on the right-hand side of \eqref{egk}, we use   \eqref{noreps} as well as the interpolation inequality, to obtain that
 \begin{equation}\label{eps1}
 	\begin{aligned}
 		&C_\epsilon   k    \bigg[\int_0^{T_\epsilon}\Big(\normm{  \omega   H_\delta^{k-1}g^\epsilon}^2_{\mathcal{H}^3_x}+\| \comi v  \omega H_\delta^{k-1}g^\epsilon\|_{\mathcal{H}^3_xL^2_v}^2\Big)dt\bigg]^{1\over2}\\
 		&\leq \frac{1}{64}   k    \bigg[\int_0^{T_\epsilon}\epsilon\Big(\norm{\partial_v  \omega   H_\delta^{k-1}g^\epsilon}^2_{\mathcal{H}^3_x L_v^2}+\| \comi v^{\frac{1}{1-s}}  \omega H_\delta^{k-1}g^\epsilon\|_{\mathcal{H}^3_xL^2_v}^2\Big)dt\bigg]^{1\over2}\\
 		&\quad+ C_\epsilon  k T_\epsilon^{1\over2} \sup_{t\leq T_\epsilon} \norm{  \omega   H_\delta^{k-1}g^\epsilon}_{\mathcal{H}^3_x L_v^2}  \\
 		&\leq \Big(\frac1{8}+C_\epsilon T_\epsilon^{1\over2}
 	\Big)\Big(\norm{e^{a_0\comi v^2}f_{in}}_{\mathcal H_x^3 L_v^2}+\frac1{24} \big(\abs{h}_{X_{\rho,T_\epsilon}}+\abs{h}_{Z_{\rho,T_\epsilon,\epsilon}}\big)\Big)  \frac{\rho^{k-1}k! }{(k+1)^3} \\
 		&\leq\frac14\Big(\norm{e^{a_0\comi v^2}f_{in}}_{\mathcal H_x^3 L_v^2}+\frac1{24} \big(\abs{h}_{X_{\rho,T_\epsilon}}+\abs{h}_{Z_{\rho,T_\epsilon,\epsilon}}\big)\Big)  \frac{\rho^{k-1}k! }{(k+1)^3},
 	\end{aligned}
 \end{equation}
the second inequality using the inductive assumption \eqref{min} and the last line holding provided we choose $T_\epsilon$ small sufficiently such that $C_\epsilon T_\epsilon^{1\over2}\leq 2^{-3}$.

Consider the third term on the right-hand side of \eqref{egk}. We use  Definition \ref{assum} and \eqref{max} and \eqref{eng} to compute   
\begin{equation*}
	\begin{aligned} 	
	&\sum_{j=1}^{k-1}\binom{k}{j} \sup_{t\leq T_\epsilon}\norm{\omega H_\delta^j f}_{\mathcal H_x^3 L_v^2} \bigg[\int_0^{T_\epsilon} \big( \normm{\omega H_\delta^{k-j}  h}_{\mathcal H_x^3 }^2+\norm{\comi v \omega  H_\delta^{k-j} h}_{\mathcal H_x^3 L_v^2}^2 \big)dt\bigg ]^{1\over2}\\
	&\leq \abs{f}_{X_{\rho,T_\epsilon}} \abs{h}_{Y_{\rho,T_\epsilon}}\sum_{j=1}^{k-1}\frac{k!}{j!(k-j)!} \frac{\rho^{j-1}j!}{(j+1)^3} \frac{\rho^{k-j-1} (k-j)! }{(k-j+1)^3}\\
	&\leq \frac{C}{\rho} 	 \abs{f}_{X_{\rho,T_\epsilon}} \abs{h}_{Y_{\rho,T_\epsilon}} \frac{\rho^{k-1} k! }{(k+1)^3},
 	\end{aligned}
\end{equation*}
where in the  last inequality we used  a similar computation to that in \eqref{comp}.  For $j=0$ or $j=k$, it follows  from \eqref{max} and \eqref{eng}  directly that  \begin{multline*}
	\sup_{t\leq T_\epsilon}\norm{\omega H_\delta^j f}_{\mathcal H_x^3 L_v^2} \bigg[\int_0^{T_\epsilon} \big( \normm{\omega H_\delta^{k-j}  h}_{\mathcal H_x^3 }^2+\norm{\comi v \omega  H_\delta^{k-j} h}_{\mathcal H_x^3 L_v^2}^2 \big)dt\bigg ]^{1\over2}\\
	\leq   \abs{f}_{X_{\rho,T_\epsilon}} \abs{h}_{Y_{\rho,T_\epsilon}}\frac{\rho^{k-1}k!}{(k+1)^3}.
\end{multline*}
Then combining the above inequalities  yields that, for any $\tilde\eps>0, $ 
\begin{equation*}
	\begin{aligned}
		&C_\epsilon 
\sum_{j=0}^k\binom{k}{j} \bigg[\int_0^{T_\epsilon}\norm{\omega H_\delta^j f}_{\mathcal H_x^3 L_v^2}^2   \big( \normm{\omega H_\delta^{k-j}  h}_{\mathcal H_x^3 } +\norm{\comi v \omega  H_\delta^{k-j} h}_{\mathcal H_x^3 L_v^2}  \big)^2 dt \bigg ]^{1\over 2}\\
&\leq C_\epsilon \sum_{j=0}^{k}\binom{k}{j} \sup_{t\leq T_\epsilon}\norm{\omega H_\delta^j f}_{\mathcal H_x^3 L_v^2} \bigg[\int_0^{T_\epsilon} \big( \normm{\omega H_\delta^{k-j}  h}_{\mathcal H_x^3 }^2+\norm{\comi v \omega  H_\delta^{k-j} h}_{\mathcal H_x^3 L_v^2}^2 \big)dt\bigg ]^{1\over2}\\
&\leq C_\epsilon  \abs{f}_{X_{\rho,T_\epsilon}} \abs{h}_{Y_{\rho,T_\epsilon}}\frac{\rho^{k-1}k! }{(k+1)^3}\\
&  \leq C_\epsilon   C_*   \Big( \tilde\eps \abs{h}_{Z_{\rho,T_\epsilon,\epsilon}}+ C_{\epsilon,\tilde\eps} T_\epsilon^{1\over 2}  \abs{h}_{X_{\rho,T_\epsilon}}\Big)\frac{\rho^{k-1}k! }{(k+1)^3}, 
	\end{aligned}
\end{equation*}
 where the last inequality follows from \eqref{endif} and \eqref{conclu}.  As a result, there exists a constant $\tilde C_\epsilon$, depending only on $\epsilon$ and $  C_*$, such that
 \begin{equation}\label{ceps}
	\begin{aligned}
	&C_\epsilon 
\sum_{j=0}^k\binom{k}{j} \bigg[\int_0^{T_\epsilon}\norm{\omega H_\delta^j f}_{\mathcal H_x^3 L_v^2}^2   \big( \normm{\omega H_\delta^{k-j}  h}_{\mathcal H_x^3 } +\norm{\comi v \omega  H_\delta^{k-j} h}_{\mathcal H_x^3 L_v^2}  \big)^2 dt \bigg ]^{1\over 2}\\
&\leq \Big(\frac{1}{96} \abs{h}_{Z_{\rho,T_\epsilon,\epsilon}}+\tilde C_\epsilon T_\epsilon^{1\over2} \abs{h}_{X_{\rho,T_\epsilon}}\Big)\frac{\rho^{k-1}k! }{(k+1)^3} \leq 	\frac {1}{96} \Big(  \abs{h}_{Z_{\rho,T_\epsilon,\epsilon}}+    \abs{h}_{X_{\rho,T_\epsilon}}\Big)\frac{\rho^{k-1}k! }{(k+1)^3},
	\end{aligned}
\end{equation}
  provided  $\tilde C_\epsilon T_\epsilon^{1\over 2}\leq \f1{96}.$

 For the last term on the right-hand side of \eqref{egk},  we use the fact that 
 \begin{equation*}
        \norm{ \comi v^{ \frac{1 }{1-s}-1}  \omega H_\delta^{k-j} g^\epsilon}_{\mathcal H_x^3 L_v^2}\leq \tilde\eps  \norm{\comi v^{ \frac{1 }{1-s}}  \omega H_\delta^{k-j} g^\epsilon}_{\mathcal H_x^3 L_v^2}+C_{\tilde\eps}\norm{  \omega H_\delta^{k-j} g^\epsilon}_{\mathcal H_x^3 L_v^2}
 \end{equation*}
 due to the 
 interpolation inequality, to conclude that, for any $\tilde\eps$,  
 \begin{equation*}
 	\begin{aligned}
 		&C\sum_{j=1}^{k}\frac{k!}{(k-j)!} R_s^j\bigg[ \int_0^{T_\epsilon} \epsilon  \norm{ \comi v^{ \frac{1 }{1-s}-1}  \omega H_\delta^{k-j} g^\epsilon}_{\mathcal H_x^3 L_v^2}^2dt\bigg]^{1\over2}\\
 		&\leq  \tilde \eps \sum_{j=1}^{k}\frac{k!}{(k-j)!} R_s^j\bigg[ \int_0^{T_\epsilon} \epsilon  \norm{ \comi v^{ \frac{1 }{1-s}}  \omega H_\delta^{k-j} g^\epsilon}_{\mathcal H_x^3 L_v^2}^2dt\bigg]^{1\over2}\\
 		&\qquad+C_{\tilde\eps}\epsilon^{1\over2} T_\epsilon ^{1\over2}\sum_{j=1}^{k}\frac{k!}{(k-j)!} R_s^j   \sup_{t\leq T_\epsilon}\norm{   \omega H_\delta^{k-j} g^\epsilon}_{\mathcal H_x^3 L_v^2}.
 	\end{aligned}
 \end{equation*} 
 This,  with the inductive assumption \eqref{min} and the fact that 
 \begin{equation*}
 	\sum_{j=1}^{k}\frac{k!}{(k-j)!} R_s^j \frac{\rho^{k-j}(k-j)!}{(k-j+1)^3}\leq CR_s \frac{\rho^{k-1}k! }{(k+1)^3}
 \end{equation*}
for $\rho\geq 2R_s$,
 yields  
 \begin{equation*}
 	\begin{aligned}
 		&C\sum_{j=1}^{k}\frac{k!}{(k-j)!} R_s^j\bigg[ \int_0^{T_\epsilon} \epsilon  \norm{ \comi v^{ \frac{1 }{1-s}-1}  \omega H_\delta^{k-j} g^\epsilon}_{\mathcal H_x^3 L_v^2}^2dt\bigg]^{1\over2}\\
 		& \leq   \Big(\frac{1}{8} + C  \epsilon^{1\over2} T_\epsilon^{1\over2} \Big)\Big(\norm{e^{a\comi v^2}f_{in}}_{\mathcal H_x^3 L_v^2}+\frac1{24} \big(\abs{h}_{X_{\rho,T_\epsilon}}+\abs{h}_{Z_{\rho,T_\epsilon,\epsilon}}\big)\Big)\frac{\rho^{k-1}k! }{(k+1)^3}\\
 		&  \leq   \frac14\Big(\norm{e^{a_0\comi v^2}f_{in}}_{\mathcal H_x^3 L_v^2}+\frac1{24} \big(\abs{h}_{X_{\rho,T_\epsilon}}+\abs{h}_{Z_{\rho,T_\epsilon,\epsilon}}\big)\Big)\frac{\rho^{k-1}k! }{(k+1)^3},
 	\end{aligned}
 \end{equation*}  
 the last inequality holding because of   $\epsilon\ll 1.$ 
Substituting  the above inequality and estimates \eqref{shtb}, \eqref{eps1} and \eqref{ceps} into \eqref{egk} yields that
\begin{equation*}
\begin{aligned}
&	\norm{g_k^\epsilon (t)}_{\mathcal H_x^3 L_v^2} +\bigg[\int_0^{T_\epsilon}\Big(\norm{\comi v g_k^\epsilon}_{\mathcal H_x^3 L_v^2}^2+\epsilon \norm{\partial_{x,v} g_k^\epsilon}_{\mathcal H_x^3 L_v^2}^2+\epsilon\norm{\comi v^{\frac{1 }{1-s}}g_k^\epsilon}_{\mathcal H_x^3 L_v^2}^2\Big)dt\bigg]^{1\over2}\\
&\leq \Big(\norm{e^{a_0\comi v^2}f_{in}}_{\mathcal H_x^3 L_v^2}+\frac1{24} \big(\abs{h}_{X_{\rho,T_\epsilon}}+\abs{h}_{Z_{\rho,T_\epsilon,\epsilon}}\big)\Big)\frac{\rho^{k-1}k! }{(k+1)^3}.
	\end{aligned}
\end{equation*}
Recall $g_k^\epsilon=\Lambda_{\vartheta}^{-1}\omega H_\delta^k g^\epsilon.$ 
Note the constants on the right-hand side are independent of $\vartheta$. Thus letting $\vartheta\rightarrow 0$ in the above estimate,  we use monotone convergence theorem to conclude that
\begin{equation*}
\begin{aligned}
&	\norm{\omega H_\delta^k g^\epsilon (t)}_{\mathcal H_x^3 L_v^2}\\
&\quad +\bigg[\int_0^{T_\epsilon}\Big(\norm{\comi v\omega H_\delta^k g^\epsilon  }_{\mathcal H_x^3 L_v^2}^2+\epsilon \norm{\partial_{x,v}\omega H_\delta^k g^\epsilon}_{\mathcal H_x^3 L_v^2}^2+\epsilon\norm{\comi v^{\frac{1 }{1-s}}\omega H_\delta^k g^\epsilon}_{\mathcal H_x^3 L_v^2}^2\Big)dt\bigg]^{1\over2}\\
&\leq \Big(\norm{e^{a_0\comi v^2}f_{in}}_{\mathcal H_x^3 L_v^2}+\frac1{24} \big(\abs{h}_{X_{\rho,T_\epsilon}}+\abs{h}_{Z_{\rho,T_\epsilon,\epsilon}}\big)\Big)\frac{\rho^{k-1}k! }{(k+1)^3}.
	\end{aligned}
\end{equation*}
This gives the validity of \eqref{min} for $m=k.$  Thus \eqref{kgeq1} follows for any $k\geq 1$.

{\it Step 2 (Short-time behavior).} Observe that there exists a constant $C_\delta$ depending only on $\delta\geq 1$ such that 
\begin{equation*}
t^{-2}\norm{\comi{D_{x,v}}^{-1}H_\delta h}_{\mathcal H_x^3 L_v^2}^2\leq C_\delta t^{2(\delta-1)}\norm{   h}_{\mathcal H_x^3 L_v^2}.
\end{equation*}
As a result, it follows from the interpolation as well as \eqref{vweight}  that, for any  $k\geq 1,$
\begin{equation*}
	\begin{aligned}
&\int_0^{T_\epsilon}	t^{-1}\norm{\omega H_\delta^k  g^\epsilon}_{\mathcal H_x^3 L_v^2}^2dt \\
&\leq C\int_0^{T_\epsilon}
 \norm{\partial_{x,v}\omega H_\delta^k g^\epsilon}_{\mathcal H_x^3 L_v^2}^2dt +  C\int_0^{T_\epsilon} t^{-2} \norm{\comi{D_{x,v}}^{-1}\omega H_\delta^k g^\epsilon}_{\mathcal H_x^3 L_v^2}^2 dt\\
&\leq C\int_0^{T_\epsilon}\big (\norm{\partial_{x,v}\omega H_\delta^{k} g^\epsilon}_{\mathcal H_x^3 L_v^2}^2+ \norm{\comi v \omega H_\delta^{k-1} g^\epsilon}_{\mathcal H_x^3 L_v^2}^2\big)dt<+\infty,
	\end{aligned}
\end{equation*}
the last inequality using \eqref{kgeq1}. This, with the continuity of the mapping $$t\mapsto \norm{\omega H_\delta^k  g^\epsilon}_{\mathcal H_x^3 L_v^2},$$  yields that  $\lim_{t\rightarrow 0}\norm{\omega H_\delta^k  g^\epsilon}_{\mathcal H_x^3 L_v^2}=0.$  The proof of Lemma \ref{lem:smeshort} is completed.
 \end{proof}

The rest part of this subsection is devoted to proving estimate \eqref{claim}.

	\begin{proof}
		[Proof of assertion \eqref{claim}]
By   Leibniz's formula and Corollary \ref{cor:upper} as well as \eqref{ux} in Appendix \ref{secapp:reguopera},  it follows that  
\begin{equation*} 
\begin{aligned}
	& \big|\big(   \omega   H_\delta^k Q(f, h), \  \Lambda_\vartheta^{-1}  g_k^\epsilon\big)_{\mathcal H_x^3 L_v^2}\big|  \leq \epsilon  \big( \normm{  g_k^\epsilon}_{\mathcal H_x^3 }^2 +\norm{\comi v  g_k^\epsilon}_{\mathcal H_x^3 L_v^2}^2 \big)  \\
	& \qquad \qquad+ C\epsilon^{-1}
\bigg[\sum_{j=0}^k\binom{k}{j} \norm{\omega H_\delta^j f}_{\mathcal H_x^3 L_v^2}   \big( \normm{\omega H_\delta^{k-j}  h}_{\mathcal H_x^3 } +\norm{\comi v \omega  H_\delta^{k-j} h}_{\mathcal H_x^3 L_v^2}  \big) \bigg ]^{2}.	\end{aligned}
	\end{equation*}
 We use \eqref{ktd1} as well as \eqref{ux} to conclude 
\begin{equation*}
	\begin{aligned}
		& k\big|\big( \omega  \partial_{v_1} H_\delta^{k-1}   g^\epsilon, \     \Lambda_\vartheta^{-1} g_k^\epsilon\big)_{\mathcal H_x^3 L_v^2}\big|\\
		& \leq  \epsilon     \big(\normm{g_k^\epsilon}^2_{\mathcal{H}^3_x}+\| \comi v g_k^\epsilon\|_{\mathcal{H}^3_xL^2_v}^2\big)  + C_\epsilon   k^2   \big(\normm{  \omega   H_\delta^{k-1}g^\epsilon}^2_{\mathcal{H}^3_x}+\| \comi v  \omega H_\delta^{k-1}g^\epsilon\|_{\mathcal{H}^3_xL^2_v}^2\big).
	\end{aligned}
\end{equation*}
Thus we combine the two estimates above and the fact that
\begin{equation*}
\epsilon  \big( \normm{  g_k^\epsilon}_{\mathcal H_x^3 }^2 +\norm{\comi v  g_k^\epsilon}_{\mathcal H_x^3 L_v^2}^2 \big)\leq  \frac{\epsilon}{16}\Big(  \norm{\partial_{v} g_k^\epsilon}_{\mathcal H_x^3 L_v^2}^2+ \norm{\comi v^{\frac{1 }{1-s}}g_k^\epsilon}_{\mathcal H_x^3 L_v^2}^2\Big) 
		  +C\norm{g_k^\epsilon}_{\mathcal H_x^3 L_v^2}^2
\end{equation*}
due to \eqref{noreps},  to get that
\begin{equation}\label{ohdk}
	\begin{aligned}
		&\big|\big(   \omega   H_\delta^k Q(f, h), \  \Lambda_\vartheta^{-1}  g_k^\epsilon\big)_{\mathcal H_x^3 L_v^2}\big|+ k\big|\big( \omega  \partial_{v_1} H_\delta^{k-1}   g^\epsilon, \     \Lambda_\vartheta^{-1} g_k^\epsilon\big)_{\mathcal H_x^3 L_v^2}\big|\\
		&\leq  \frac{\epsilon}{8}\Big(  \norm{\partial_{v} g_k^\epsilon}_{\mathcal H_x^3 L_v^2}^2+ \norm{\comi v^{\frac{1 }{1-s}}g_k^\epsilon}_{\mathcal H_x^3 L_v^2}^2\Big) 
		  +C\norm{g_k^\epsilon}_{\mathcal H_x^3 L_v^2}^2\\
		  &\quad+ C_\epsilon   k^2   \Big(\normm{  \omega   H_\delta^{k-1}g^\epsilon}^2_{\mathcal{H}^3_x}+\| \comi v  \omega H_\delta^{k-1}g^\epsilon\|_{\mathcal{H}^3_xL^2_v}^2\Big)\\
		& \quad + C_\epsilon 
\bigg[\sum_{j=0}^k\binom{k}{j} \norm{\omega H_\delta^j f}_{\mathcal H_x^3 L_v^2}   \big( \normm{\omega H_\delta^{k-j}  h}_{\mathcal H_x^3 } +\norm{\comi v \omega  H_\delta^{k-j} h}_{\mathcal H_x^3 L_v^2}  \big) \bigg ]^{2}.
	\end{aligned}
\end{equation}
It remains to deal with the commutators. We write 
 \begin{equation*}
 [v\cdot\partial_x, \ \Lambda_\vartheta^{-1}]  =[v\cdot\partial_x, \ \Lambda_\vartheta^{-1}]\Lambda_\vartheta \Lambda_\vartheta^{-1}=-\Lambda_\vartheta^{-1}[v\cdot\partial_x, \ \Lambda_\vartheta]  \Lambda_\vartheta^{-1} = -2 \vartheta \Lambda_\vartheta^{-1} \partial_v\cdot\partial_x    \Lambda_\vartheta^{-1}.
 \end{equation*}
Note that  $\vartheta \Lambda_\vartheta^{-1} \partial_v\cdot\partial_x $ is uniformly (w.r.t. $\vartheta$) bounded in $\mathcal H_x^3 L_v^2$, and thus
 \begin{equation}\label{comvx}
 	\big|\big( [v\cdot\partial_x, \ \Lambda_\vartheta^{-1}]\omega H_\delta^k g^\epsilon,\  g_k^\epsilon\big)_{\mathcal H_x^3 L_v^2}\big|\leq \norm{  g_k^\epsilon}_{\mathcal H_x^3 L_v^2}^2.
 \end{equation}
 Similarly, using the relationship 
 \begin{align*}
 &	 [\comi v^2+\epsilon\comi v^{\frac{2 }{1-s}} , \ \Lambda_\vartheta^{-1}] h  =-\Lambda_\vartheta^{-1}[\comi v^2+\epsilon\comi v^{\frac{2 }{1-s}} , \ \Lambda_\vartheta] \Lambda_\vartheta^{-1}h\\
  &= -2\Lambda_\vartheta^{-1}\vartheta\partial_v \Big (\big[\partial_v (\comi v^2 +\epsilon\comi v^{\frac{2 }{1-s}})\big] \Lambda_\vartheta^{-1} h\Big)+\vartheta \Lambda_\vartheta^{-1}    \Big ( \partial_v^2 (\comi v^2 +\epsilon\comi v^{\frac{2 }{1-s}})  \Big) \Lambda_\vartheta^{-1} h,   
 	 \end{align*}
 	 we have, with help of Lemma \ref{lem:regular} in Appendix \ref{secapp:reguopera},  
 	 \begin{equation}\label{comvepsil}
 	 	\begin{aligned}
 	 		&\big|\big([\comi v^2+\epsilon\comi v^{\frac{2 }{1-s}},  \Lambda_\vartheta^{-1}]\omega H_\delta^k g^\epsilon, g_k^\epsilon\big)_{\mathcal H_x^3 L_v^2}\big| \\
 	 		& \leq C\big( \norm{\comi v^{\frac12}     g_k^\epsilon }_{\mathcal H_x^3 L_v^2}^2+\epsilon \norm{\comi v^{\frac{1+s}{2(1-s)}}    g_k^\epsilon }_{\mathcal H_x^3 L_v^2}^2 \big) \\
 	 	& 	\leq  \frac18  \big( \norm{\comi v   g_k^\epsilon }_{\mathcal H_x^3 L_v^2}^2+\epsilon \norm{\comi v^{\frac{1}{1-s}}    g_k^\epsilon }_{\mathcal H_x^3 L_v^2}^2 \big)+C\norm{  g_k^\epsilon}_{\mathcal H_x^3 L_v^2}^2.
 	 \end{aligned}
 	 \end{equation}
 	 
 Observe  
 \begin{equation*}
	\Lambda_\vartheta^{-1} [\Delta_v,\   \omega]   = \Lambda_\vartheta^{-1} [\Delta_v,\   \omega] \omega^{-1}   \Lambda_\vartheta \Lambda_\vartheta^{-1} \omega =-\Lambda_\vartheta^{-1} \omega[\Delta_v,\  \omega^{-1}   \Lambda_\vartheta]  \Lambda_\vartheta^{-1} \omega,
\end{equation*}
which implies that
\begin{align*}
	 \big|\big(\epsilon \Lambda_\vartheta^{-1}  [\Delta_v,   \omega]   H_\delta^k g^\epsilon,  g_k^\epsilon\big)_{\mathcal H_x^3 L_v^2}\big|  &\leq   \frac18   \epsilon \norm{ \partial_v   g_k^\epsilon }_{\mathcal H_x^3 L_v^2}^2+ C\epsilon \norm{\comi v g_k^\epsilon}_{\mathcal H_x^3 L_v^2}^2\\
	 &\leq    \frac18   \epsilon\Big( \norm{ \partial_v   g_k^\epsilon }_{\mathcal H_x^3 L_v^2}^2+ \norm{ \comi v^{\frac{1 }{1-s}}g_k^\epsilon}_{\mathcal H_x^3 L_v^2} ^2\Big)+C \norm{g_k^\epsilon}_{\mathcal H_x^3 L_v^2}^2.
\end{align*}
Finally, 
   by Leibniz's formula  we compute 
\begin{equation*}
	\begin{aligned}
		&\big|\big(  \epsilon \Lambda_\vartheta^{-1}\omega[\comi v^{\frac{2 }{1-s}},   H_\delta^{k}]g^\epsilon,   g_k^\epsilon\big)_{\mathcal H_x^3 L_v^2}\big|
		\leq \sum_{j=1}^{k}\binom{k}{j}\big|\big(  \epsilon \Lambda_\vartheta^{-1}\omega\big(H_\delta^j\comi v^{\frac{2 }{1-s}}\big) H_\delta^{k-j} g^\epsilon,  g_k^\epsilon\big)_{\mathcal H_x^3 L_v^2}\big|\\
		&\leq \epsilon\sum_{j=1}^{k}\binom{k}{j}\norm{ \comi v^{-\frac{1 }{1-s}} \Lambda_\vartheta^{-1}\omega\big(H_\delta^j\comi v^{\frac{2 }{1-s}}\big) H_\delta^{k-j} g^\epsilon}_{\mathcal H_x^3 L_v^2} \norm{ \comi v^{\frac{1 }{1-s}}g_k^\epsilon}_{\mathcal H_x^3 L_v^2}\\
		&\leq \frac18 \epsilon \norm{ \comi v^{\frac{1 }{1-s}}g_k^\epsilon}_{\mathcal H_x^3 L_v^2} ^2+ C\epsilon\bigg[\sum_{j=1}^{k}\frac{k!}{(k-j)!}R^j_s \norm{ \comi v^{ \frac{1 }{1-s}-1}  \omega H_\delta^{k-j} g^\epsilon}_{\mathcal H_x^3 L_v^2}\bigg]^2, 
	\end{aligned}
\end{equation*}
the last line using  \eqref{rs} as well as  \eqref{b4} in Appendix \ref{secapp:reguopera}.  Combining the above inequalities we conclude that
\begin{multline*}
		  \big|\big(  \epsilon \Lambda_\vartheta^{-1}\omega[\comi v^{\frac{2 }{1-s}},\  H_\delta^{k}]g^\epsilon-\epsilon \Lambda_\vartheta^{-1}  [\Delta_v,\  \omega]    H_\delta^{k} g^\epsilon,\  g_k^\epsilon\big)_{\mathcal H_x^3 L_v^2}\big|\\
		\leq \frac14 \epsilon \Big( \norm{ \partial_v   g_k^\epsilon }_{\mathcal H_x^3 L_v^2}^2+ \norm{ \comi v^{\frac{1 }{1-s}}g_k^\epsilon}_{\mathcal H_x^3 L_v^2} ^2\Big)+C \norm{g_k^\epsilon}_{\mathcal H_x^3 L_v^2}^2\\
		 + C\epsilon\bigg[\sum_{j=1}^{k}\frac{k!}{(k-j)!}R_s^j \norm{ \comi v^{ \frac{1 }{1-s}-1}  \omega H_\delta^{k-j} g^\epsilon}_{\mathcal H_x^3 L_v^2}\bigg]^2.
	\end{multline*}
This with \eqref{ohdk}, \eqref{comvx} and \eqref{comvepsil} yields the desired  \eqref{claim}. The proof of \eqref{claim}  is thus completed.  
\end{proof}
  
 \begin{proof}[Completing the proof of Proposition \ref{prp:linpra}]
 When  $\frac{\gamma}{2}+2s\geq1 $ 
   the desired results in Proposition \ref{prp:linpra} just follow derived from Lemmas \ref{lem:ws} and  \ref{lem:smeshort}.  The remaining case of  $\frac{\gamma}{2}+2s<1 $ can be treated in a similar way, so we omit it for brevity. 
\end{proof}

\subsection{Existence and smoothing effect for the approximation equation} 
In this part we  
  	consider the following   approximation equation   of \eqref{licau}:     
\begin{equation}\label{app+}
	 \partial_t g^\epsilon +v\cdot\partial_xg^\epsilon+\epsilon\big(\comi v^{\frac{2 }{1-s}}- \Delta_{x,v}\big) g^\epsilon =Q(f, g^\epsilon),\quad g^\epsilon|_{t=0}=f_{in},
\end{equation}
with given parameter $0<\epsilon\ll 1$. 

\begin{prop}
	\label{lem:exist}
	Let $C_0$ be the constant given in \eqref{c0}, and
let $\mathcal A_T, \mathcal M_T(C_0)$  be given in Definition \ref{def:c0mt}  and  let $\mathcal N_{T}(\rho, C_*)$ be  given  in Definition \ref{defm}.  Then  there exists a time $T_*>0$ and  three constants $\rho, \tilde C, C_*>0,$ all depending only on $C_0$ in \eqref{c0} and the numbers $C_1,C_2$ in \eqref{lower} but independent of $\epsilon$,  such that for any given  $T\leq T_*$,  if 
\begin{equation}\label{famn}
	f\in\mathcal A_T\cap \mathcal M_T(C_0)\cap \mathcal N_{T}(\rho, C_*),
\end{equation}
then the approximation equation \eqref{app+} admits a local    solution  $g^\eps\in L^\infty\big ([0,T]; \ \mathcal H_x^3 L_v^2\big )$ satisfying that
\begin{equation*}
\forall\ k\geq 1, \quad 	\lim_{t\rightarrow 0}\norm{\omega  {\bm D}^kg^\epsilon}_{\mathcal H_x^3 L_v^2}=0,
\end{equation*}
  and  that 
\begin{equation}\label{gepxy}
	  \abs{ g^\epsilon }_{X_{\rho,T}}+\abs{ g^\epsilon }_{Y_{\rho,T}}\leq \tilde CC_*\bigg(\int_0^T\big( \normm{\om g^\epsilon}_{\mathcal H_x^3  }^2+\norm{\comi v \om g^\epsilon}_{\mathcal H_x^3 L_v^2}^2\big)dt\bigg)^{1\over 2}+\frac{C_*}{2}. 
\end{equation}
Here we used the notation in \eqref{dk}.
\end{prop}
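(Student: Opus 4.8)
\textbf{Proof proposal for Proposition \ref{lem:exist}.}

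The plan is to run the standard iteration scheme outlined at the beginning of Section \ref{sec:hinfty}, using Proposition \ref{prp:linpra} to produce, for each fixed $\epsilon$, a solution $g^\epsilon$ of the approximation equation \eqref{app+}, and then to invoke the linear \emph{a priori} estimate of Theorem \ref{thm:apri} (together with Proposition \ref{lem:mc0}) to obtain the $\epsilon$-uniform bounds \eqref{gepxy}. First I would fix $T_*, \rho, C_*, \tilde C$ as in Theorem \ref{thm:apri}, enlarging $\rho$ if necessary so that $\rho \geq 2R_s$ (the requirement of Proposition \ref{prp:linpra}, with $R_s$ from \eqref{rs}), and shrinking $T_*$ as needed; all these constants depend only on $C_0, C_1, C_2$ and not on $\epsilon$. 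Given $f$ satisfying \eqref{famn}, I would define the iterates $g^n$ by $g^0 = f_{in}$ (or the trivial extension) and, for $n\geq 1$, let $g^n$ solve the linear parabolic equation \eqref{itera1}, i.e. \eqref{regularied} with $h = g^{n-1}$. Proposition \ref{prp:linpra} applies at each step: choosing the radius of the norms on $g^{n-1}$ to be controlled, it yields $g^n \in L^\infty([0,T_\epsilon];\mathcal H_x^3 L_v^2)$ with
\begin{equation*}
\abs{g^n}^2_{X_{\rho,T_\epsilon}}+\abs{g^n}^2_{Z_{\rho,T_\epsilon,\epsilon}} \leq 12\norm{e^{a_0\comi v^2}f_{in}}_{\mathcal H_x^3L_v^2}^2 + \tfrac12\big(\abs{g^{n-1}}^2_{X_{\rho,T_\epsilon}}+\abs{g^{n-1}}^2_{Z_{\rho,T_\epsilon,\epsilon}}\big),
\end{equation*}
and $\lim_{t\to 0}\norm{\omega{\bm D}^k g^n}_{\mathcal H_x^3L_v^2}=0$ for all $k\geq 1$. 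A routine induction then shows $\abs{g^n}^2_{X_{\rho,T_\epsilon}}+\abs{g^n}^2_{Z_{\rho,T_\epsilon,\epsilon}} \leq 24\norm{e^{a_0\comi v^2}f_{in}}_{\mathcal H_x^3L_v^2}^2$ for all $n$, a bound uniform in $n$ (and, for fixed $\epsilon$, this gives a common lifespan $T_\epsilon$).

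Next I would show $\{g^n\}$ is Cauchy. Subtracting the equations for $g^{n+1}$ and $g^n$, the difference $w^n = g^{n+1}-g^n$ solves a linear parabolic equation with source $Q(f, g^n - g^{n-1}) = Q(f, w^{n-1})$ and zero initial data. Taking the $\mathcal H_x^3 L_v^2$ inner product with $\omega w^n$, using the parabolic dissipation on the left and estimate \eqref{aprie}-type bounds on the right (Corollary \ref{cor:upper} to handle $Q(f,w^{n-1})$, absorbing the top-order part into the $\epsilon$-diffusion), and then Gronwall, gives a contraction estimate of the form $\sup_{t\leq T_\epsilon}\norm{\omega w^n(t)}^2_{\mathcal H_x^3L_v^2} + \text{(dissipation)} \leq \tfrac12 \big(\text{same for } w^{n-1}\big)$ after possibly shrinking $T_\epsilon$ (which is allowed — $T_\epsilon$ already depends on $\epsilon$). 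Hence $g^n \to g^\epsilon$ in the energy space, and by lower semicontinuity $g^\epsilon$ inherits the uniform bound $\abs{g^\epsilon}_{X_{\rho,T_\epsilon}}+\abs{g^\epsilon}_{Z_{\rho,T_\epsilon,\epsilon}} < \infty$ as well as the short-time behavior $\lim_{t\to 0}\norm{\omega{\bm D}^k g^\epsilon}_{\mathcal H_x^3L_v^2}=0$. Passing to the limit $n\to\infty$ in \eqref{itera1} identifies $g^\epsilon$ as a solution of \eqref{app+}.

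Finally I would upgrade the lifespan from $T_\epsilon$ to the $\epsilon$-independent $T_*$, and derive \eqref{gepxy}. The key point is that $g^\epsilon$ solves a linear Boltzmann equation \emph{plus} the extra dissipative term $\epsilon(\comi v^{2/(1-s)} - \Delta_{x,v})g^\epsilon$ on the left-hand side; since this term has a favorable (coercive) sign, rerunning the derivation of Theorem \ref{thm:apri} — i.e. applying $H_\delta^k$ (or $\vec H^\alpha$) to \eqref{app+}, multiplying by $\omega$, and using \eqref{kehigher}, Lemmas \ref{lem:j1}, \ref{lem:j2}, Corollaries \ref{corollary:coer}, \ref{cor:upper} exactly as in Sections \ref{sec:analy}-\ref{sec:gev}, with the commutators $[\epsilon\comi v^{2/(1-s)}, H_\delta^k]$ and $[\epsilon\Delta_{x,v},\cdot]$ absorbed into the extra dissipation precisely as in the proof of Lemma \ref{lem:smeshort} — yields estimate \eqref{3.7}, i.e. \eqref{gepxy}, with constants uniform in $\epsilon$. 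Combined with Proposition \ref{lem:mc0} (whose proof goes through for \eqref{app+} since the added term only helps control the moments), this gives $g^\epsilon \in \mathcal A_T \cap \mathcal M_T(C_0) \cap \mathcal N_T(\rho, C_*)$ on $[0,T]$ for any $T\leq T_*$, which means the a priori bounds do not degenerate and the solution extends to $T_*$ by a continuation argument. I expect the main obstacle to be bookkeeping: ensuring that none of the constants $\rho, C_*, \tilde C, T_*$ acquires $\epsilon$-dependence, which requires checking that every place where the $\epsilon$-terms enter (the commutator estimates in \eqref{claim}, the interpolation \eqref{noreps}, the choice of $N$ in Lemma \ref{lem:j1}) produces bounds that are either uniform in $\epsilon$ or are absorbed by the $\epsilon$-diffusion rather than competing with it — this is exactly the "remove the $\epsilon$-dependence of the lifespan" step flagged as crucial in the methodology.
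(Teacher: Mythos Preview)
Your proposal is correct and follows essentially the same two-step strategy as the paper: iterate via Proposition \ref{prp:linpra} on the short interval $[0,T_\epsilon]$, pass to the limit to get a solution $g^\epsilon$ of \eqref{app+}, then rerun the machinery of Theorem \ref{thm:apri} (with the extra $\epsilon$-dissipation helping rather than hurting) to obtain the $\epsilon$-uniform bound \eqref{gepxy} and extend by continuation to $[0,T_*]$.

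A few cosmetic differences worth noting. The paper takes $g^0=0$ rather than $f_{in}$, and for the Cauchy step it simply reapplies Proposition \ref{prp:linpra} to the difference $\phi^n=g^{n+1}-g^n$ (which solves the same linear parabolic problem with source $Q(f,\phi^{n-1})$ and zero data), immediately yielding the contraction $\abs{\phi^n}^2_{X_{\rho,T_\epsilon}}+\abs{\phi^n}^2_{Z_{\rho,T_\epsilon,\epsilon}}\leq\tfrac12(\cdots)$ in the full $X\cap Z$ norms. This is slightly cleaner than your direct energy estimate in the base norm plus lower semicontinuity, and in particular the convergence in $X_{\rho,T_\epsilon}$ delivers the short-time behavior $\lim_{t\to0}\norm{\omega{\bm D}^k g^\epsilon}_{\mathcal H_x^3L_v^2}=0$ for the limit without a separate argument. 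Finally, you do not need Proposition \ref{lem:mc0} applied to $g^\epsilon$: the coercivity in Lemma \ref{lem:j1} only requires $f\in\mathcal A_T\cap\mathcal M_T(C_0)$, which is part of the hypothesis \eqref{famn}, so the continuation argument runs on the linear estimate alone.
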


\begin{proof}
 We will proceed to prove	Proposition \ref{lem:exist}  through the following two steps, one devoted  to  proving the existence of solutions to \eqref{app+} in $X_{\rho, T_\epsilon}\cap Z_{\rho, T_\epsilon,\epsilon}$ for some $\epsilon$-dependent lifespan $T_\epsilon$.  In the second one we will remove the $\epsilon$-dependence of the lifespan   by  deriving  uniform (in $\epsilon$) estimates  for $g^\epsilon,$ and show that the solution indeed belong to $X_{\rho, T}\cap Y_{\rho, T}$ for some time $T$ independent of $\epsilon.$  
 
	 {\it Step 1).} Let $g^0=0$ and let $g^n,n\geq1,$  solve  the following linear parabolic equations:
\begin{equation}\label{Picard+++}
		 \partial_t g^n   + v\cdot\pa_xg^n +\epsilon\big(\comi v^{\frac{2 }{1-s}}- \Delta_{x,v}\big) g^n =Q(f,g^{n-1}),\quad		 g^n|_{t=0}=f_{in}. 
\end{equation}
Then by Proposition  \ref{prp:linpra}, for any $n\geq 1$, it holds that $g^n\in  L^\infty\big ([0,T_\epsilon]; \ \mathcal H_x^3 L_v^2\big )$ with the lifespan $T_\epsilon$  independent of $n,$ and moreover we have  that 
\begin{equation}\label{limgn}
		  \abs{g^n}_{X_{\rho,T_\epsilon}}^2+  \abs{g^n}_{Z_{\rho,T_\epsilon,\epsilon}}^2  \leq 12 \norm{e^{a_0\comi v^2} f_{in}}^2_{\mathcal H^3_xL^2_v}+ \frac12 \big(\abs{g^{n-1}}_{X_{\rho,T_\epsilon}}^2+  \abs{g^{n-1}}_{Z_{\rho,T_\epsilon,\epsilon}}^2\big) 		\end{equation}
		and that 
		\begin{equation}\label{stba}
			\forall\  k\geq 1,\quad 	\lim_{t\rightarrow 0} \norm{\om{\bm D}^k g^n}_{\mathcal H_x^3 L_v^2}=0.
		\end{equation}
Denoting 
\begin{eqnarray*}
 \phi^n=g^{n+1}-g^{n},\quad n\geq 1,
\end{eqnarray*}
we have, for $n\geq 1,$ 
\begin{equation*}
	\partial_t  \phi^n   + v\cdot\pa_x \phi^n +\epsilon\big(\comi v^{\frac{2 }{1-s}}- \Delta_{x,v}\big)  \phi^n=Q(f,  \phi^{n-1}), \quad \phi^n|_{t=0}=0. 
\end{equation*}
Using Proposition \ref{prp:linpra} again yields that  
\begin{equation*}
\begin{aligned}
	\forall\ n\geq 1,\quad   \abs{\phi^n}_{X_{\rho,T_\epsilon}}^2+  \abs{\phi^n}_{Z_{\rho,T_\epsilon,\epsilon}}^2  \leq \frac12 \big(\abs{\phi^{n-1}}_{X_{\rho,T_\epsilon}}^2+  \abs{\phi^{n-1}}_{Z_{\rho,T_\epsilon,\epsilon}}^2\big),
	\end{aligned}
\end{equation*}
which implies that
\begin{equation*}
	\forall\ n\geq 1,\quad  \abs{\phi^n}_{X_{\rho,T_\epsilon}}^2+  \abs{\phi^n}_{Z_{\rho,T_\epsilon,\epsilon}}^2  	 \leq  C2^{-n}. 
\end{equation*}
Thus $\{g^n\}_{n\geq 1}$ is a Cauchy sequence in  $X_{\rho, T_\epsilon }$ and $Z_{\rho, T_\epsilon,\epsilon}$. Then 
  there exists a $g^\eps$ such that 
  \begin{equation*}
\lim_{n\rightarrow+\infty}	 \abs{g^n-g^\epsilon}_{X_{\rho,T_\epsilon}}=\lim_{n\rightarrow+\infty}	 \abs{g^n-g^\epsilon}_{Z_{\rho,T_\epsilon,\epsilon}} =0. 
\end{equation*}
Letting $n\rightarrow+\infty$ in \eqref{Picard+++},     the limit $g^\epsilon $  solves equation \eqref{app+}  satisfying that
\begin{equation}\label{smreg}
		\abs{g^\epsilon}^2_{X_{\rho,T_\epsilon}}+\abs{g^\epsilon}^2_{Z_{\rho,T_\epsilon,\epsilon}}  \leq 24 \norm{e^{a_0\comi v^2} f_{in}}^2_{\mathcal H^3_xL^2_v}
\end{equation}
in view of \eqref{limgn}. Moreover, it follows from \eqref{stba} that
\begin{equation*}
\forall\  k\geq 1,\quad 	\lim_{t\rightarrow 0} \norm{\om{\bm D}^k g^\epsilon}_{\mathcal H_x^3 L_v^2}=0.
\end{equation*}

{\it Step 2).}  Let  $g^\epsilon \in  L^\infty\big ([0,T_\epsilon]; \ \mathcal H_x^3 L_v^2\big )$ be the solution constructed in the previous step satisfying condition \eqref{smreg}. 
 This smooth regularity  enables us to  avoid the formal computation when performing estimates on  $g^\epsilon$. So that we may repeat the proof of  Theorem \ref{thm:apri} with slight modification, to conclude that  if $f$ satisfies \eqref{famn} then
 	\begin{equation*}
 		\abs{g^\epsilon}_{X_{\rho,T_\epsilon}}+\abs{g^\epsilon}_{Y_{\rho,T_\epsilon}}
\leq \tilde C C_* \bigg(\int_0^{T_\epsilon}\big( \normm{\om g^\epsilon}_{\mathcal H_x^3  }^2+\norm{\comi v \om g^\epsilon}_{\mathcal H_x^3 L_v^2}^2\big)dt\bigg)^{1\over 2} + \frac{ C_*}{2},
 	\end{equation*}
 where $\tilde C$ is a constant independent of $\epsilon.$
Thus we  apply the standard continuous induction argument to extend the lifespan to a time interval $[0,T]$ with $T$ independent of  $\epsilon,$ such that $g^\epsilon \in  L^\infty\big ([0,T]; \ \mathcal H_x^3 L_v^2\big )$ satisfying \eqref{gepxy}.    Then we 
 complete the proof of Proposition \ref{lem:exist}. 
\end{proof}

 \subsection{Completing the proof of Theorem \ref{thm:lin}}
 
  The existence part  follows from Proposition \ref{lem:exist}. In fact,  let $g^\epsilon \in  L^\infty\big ([0,T]; \ \mathcal H_x^3 L_v^2\big )$ be the solution to 
  \begin{equation*}
  	\partial_t g^\epsilon +v\cdot\partial_xg^\epsilon+\epsilon\big(\comi v^{\frac{2 }{1-s}}- \Delta_{x,v}\big) g^\epsilon =Q(f, g^\epsilon),\quad g^\epsilon|_{t=0}=f_{in},
  \end{equation*}
  constructed in Proposition \ref{lem:exist}. The direct energy method implies that
  \beno
  \int_0^T\big( \normm{\om g^\epsilon}_{\mathcal H_x^3  }^2+\norm{\comi v \om g^\epsilon}_{\mathcal H_x^3 L_v^2}^2\big)dt\leq C, 
  \eeno
  where the constant  $C$ does not depend on $\epsilon$. This together with \eqref{gepxy} yields that
\begin{equation*}
	  \abs{ g^\epsilon }_{X_{\rho,T}}+\abs{ g^\epsilon }_{Y_{\rho,T}}\leq 
   \tilde CC_*\bigg(\int_0^T\big( \normm{\om g^\epsilon}_{\mathcal H_x^3  }^2+\norm{\comi v \om g^\epsilon}_{\mathcal H_x^3 L_v^2}^2\big)dt\bigg)^{1\over 2}+\frac{C_*}{2}
   \leq C.
\end{equation*}
 Then by the standard compact argument, we can find a  subsequence $ g^{\epsilon_j} $ of $ g^{\epsilon}$ which  converges    in $X_{\rho,T}\cap Y_{\rho,T}$ as $\epsilon_j\rightarrow 0$.  Then the limit, denoted by $g$,  solves 
  the linear Boltzmann equation \eqref{lincau+}, satisfying  condition \eqref{thm611} in Theorem \ref{thm:lin}. The rest part is similar to that in the proof of Theorem \ref{thm:apri}. We omit it for brevity.

\section{Existence, uniqueness and smoothing effect for the nonlinear problem}\label{sec:sobsmo} 
This part is devoted to proving Theorem \ref{thm:Gevrey}.    To do so, let $f^0=e^{-2a_0\comi v^2}$ and we consider the following iteration equations:
\begin{equation}\label{lin}
 	\partial_{t}f^{n}+v\cdot\partial_{x}f^{n} 
 	=Q(f^{n-1},f^{n}), \quad f^n|_{t=0}=f_{in}, \quad n\geq 1,
\end{equation}
where $f_{in}$ is the initial datum to the Boltzmann equation \eqref{Bolt}  satisfying the conditions in Theorem \ref{thm:Gevrey}.   We will apply Theorem   \ref{thm:lin} to prove Theorem \ref{thm:Gevrey} and the crucial part is to derive the common lifespan for all $n\geq1.$  
   Precisely, we will show that 
\begin{equation*}
	f^{n}\in \mathcal A_T\cap \mathcal M_T(C_0)\cap \mathcal N_T(\rho, C_*)\Longrightarrow f^{n+1}\in \mathcal A_T\cap \mathcal M_T(C_0)\cap \mathcal N_T(\rho, C_*),
\end{equation*}
where the constants $T, C_0, \rho$ and $C_*$ are independent of $n.$ Recall   $\mathcal A_T, \mathcal M_T(C_0)$ and $\mathcal N_{T}(\rho, C_*)$  are given in Definitions \ref{def:c0mt}  and  \ref{defm}.

\begin{proof}
	[Proof of Theorem \ref{thm:Gevrey}]  We proceed to prove the main result through the following   steps.    Let $C_0$ be given in \eqref{c0}.
Accordingly, let $\Theta, T_*\leq 1 $ and   $ \rho, C_*\geq 1 $ be the constants determined in Theorem \ref{thm:apri},  which depend only on $C_0$ in \eqref{c0} and the constants $C_1$ and $C_2$ in \eqref{lower}. Let $m_0, M_0, E_0$ and  $H_0$ be the numbers in  \eqref{aat}.   
In the following discussion    if necessary we may  shrink $m_0, \Theta$  and increase $C_*, \rho, M_0, H_0$ and $E_0$.

\underline{\it Step (i).}	
Direct verification shows that 
  $f^0=e^{-2a_0\comi v^2}$ satisfies that      
\begin{equation*}
	f^0\in \mathcal A_{T_*}   \cap \mathcal M_{T_*}(C_0)
\end{equation*}
and moreover
\begin{equation}\label{gaes}
	f^0\in   \mathcal N_{T_*}(\rho, C_*), 
\end{equation}
seeing  \cite[estimate (4.17)]{MR4612704} for instance for the proof of \eqref{gaes}.      
This enables us to apply  Theorem  \ref{thm:lin}, 
to  conclude that the linear equation 
\begin{equation*}
 	\partial_{t}f^{1}+v\cdot\partial_{x}f^{1} 
 	=Q(f^{0},f^{1}), \quad f^1|_{t=0}=f_{in},
\end{equation*}
admits a local solution $f^1\in L^\infty([0, T_1]; \mathcal H_x^3 L_v^2)$ for some $T_1\leq T_*$ with $T_*$ the lifespan given in Theorem \ref{thm:lin}.  Moreover,  similar to \eqref{inistep}, we have 
\begin{equation*}
	\int_0^{T_1}\big(\normm{ \omega f^1}_{\mathcal H_x^3}^2+\norm{ \comi v \omega f^1}_{\mathcal H_x^3 L_v^2}^2)dt  \leq C\norm{e^{a_0\comi v^2}f_{in}}^2_{\mathcal H^3_xL^2_v},
\end{equation*} 
which implies that
\begin{equation}\label{ubu}
	\lim_{T\rightarrow 0}\int_0^{T}\big(\normm{ \omega f^1}_{\mathcal H_x^3}^2+\norm{ \comi v \omega f^1}_{\mathcal H_x^3 L_v^2}^2)dt =0.
\end{equation}
Thus, in view of \eqref{ubu},  by shrinking $T_1$ if necessary,  we have 
\begin{equation*}
\bigg(	\int_0^{T_1}\big(\normm{ \omega f^1}_{\mathcal H_x^3}^2+\norm{ \comi v \omega f^1}_{\mathcal H_x^3 L_v^2}^2dt\bigg)^{1\over 2} \leq \frac{\Theta}{4(1+C_*)},
\end{equation*}
which with   Theorem  \ref{thm:lin}, yields 
   \begin{equation*}
 	f^1\in  \mathcal A_{T_1}\cap \mathcal M_{T_1}(C_0) \cap \mathcal N_{T_1}\big (\rho, C_*\big).
 \end{equation*}
In summary, we have   that 
\begin{equation*}
	f^1\in \mathcal A_{T_1}\cap \mathcal M_{T_1}(C_0) \cap \mathcal N_{T_1}\big (\rho, C_*\big)
\end{equation*}
and
\begin{equation*}
	\bigg(	\int_0^{T_1}\big(\normm{ \omega f^1}_{\mathcal H_x^3}^2+\norm{ \comi v \omega f^1}_{\mathcal H_x^3 L_v^2}^2dt\bigg)^{1\over 2} \leq \frac{\Theta}{4(1+C_*)}.
\end{equation*}
By standard iteration, we can find a sequence of positive
numbers  
\begin{equation*}
	T_* \geq T_1\geq T_2\cdots\geq T_n\geq \cdots,
\end{equation*} 
 such that,   for any $ n\geq 1, $
\ben\label{ann}
\left\{
\begin{aligned}
&	f^n\in \mathcal A_{T_n}\cap \mathcal M_{T_n}(C_0) \cap \mathcal N_{T_n}\big (\rho, C_*\big),\\
&\bigg(	\int_0^{T_n}\big(\normm{ \omega f^n}_{\mathcal H_x^3}^2+\norm{ \comi v \omega f^n}_{\mathcal H_x^3 L_v^2}^2dt\bigg)^{1\over 2} \leq \frac{\Theta}{4(1+C_*)}.
\end{aligned}
\right. 
\een

\underline{\it Step (ii).} 
Choose a large integer $n_0\geq 1$   such that 
 \begin{equation}\label{n0}
 	2^{-n_0}  \Big(1+\sup_{t\leq T_1 } \norm{ \omega (f^1-f^0)}_{\mathcal H_x^3 L_v^2} \Big)\leq \frac{\Theta}{4(1+C_*)}.
 \end{equation}
Moreover, define $T$ by setting
 \begin{equation}\label{T}
 	T=\min\big\{T_1, T_2,\cdots, T_{n_0}, T_*\big\}= T_{n_0}.
 \end{equation}
 Then  it follows from \eqref{ann} that 
\begin{equation}\label{nn0}
\forall\ n \leq n_0, \quad 	f^n\in \mathcal A_{T}\cap \mathcal M_{T}(C_0)  \cap \mathcal N_{T}\big (\rho, C_*\big)
\end{equation}
and that
\begin{equation}\label{nn11}
\forall\ n \leq n_0,  \quad  	\bigg[\int_0^{T} \big( \normm{\omega   f^n }_{\mathcal H_x^3}^2 + \norm{\comi v \omega f^n }_{\mathcal H_x^3 L_v^2}^2\big)dt\bigg]^{1\over2}
 \leq \frac{\Theta}{4(1+C_*)}.
\end{equation}

\underline{\it Step (iii).} This step is devoted to show that
 \begin{equation}\label{thetafn}
\forall \ n\geq 1,\quad  	\bigg[\int_0^{T} \big( \normm{\omega   f^n }_{\mathcal H_x^3}^2 + \norm{\comi v \omega f^n }_{\mathcal H_x^3 L_v^2}^2\big)dt\bigg]^{1\over2} \leq \Theta
 \end{equation} 
with $T$ defined by \eqref{T}, which is independent of $n.$

 To do so, define the difference $\zeta^n$ by   setting 
 	\begin{equation}\label{zn}
 		\zeta^n=f^{n+1}-f^{n},  \quad n\geq1. 
 	\end{equation}
 	It follows from \eqref{zn} and \eqref{lin} that, for any $n\geq 1,$
 	\begin{equation}\label{dieq}
 		\partial_{t}\zeta^n+v\cdot\partial_{x}\zeta^n 
 	=Q(f^n, \ \zeta^{n})+Q(\zeta^{n-1},\ f^{n}),\quad \zeta^n|_{t=0}=0. 
 	\end{equation} 
 Since $f^n\in\mathcal A_T\cap\mathcal M_T(C_0),$ then 	we use Lemma \ref{lem:j1} and Corollary \ref{cor:upper} to conclude that, for any $n\geq 1,$
 \begin{equation} \label{znfn}
 	 \begin{aligned}
 	&\sup_{t\leq T } \norm{ \omega \zeta^n}_{\mathcal H_x^3 L_v^2} +\bigg[\int_0^T\big (  \normm{ \omega   \zeta^n }_{\mathcal H_x^3 }^2 + \norm{\comi v \omega  \zeta^n}_{\mathcal H_x^3 L_v^2}^2\big )dt\bigg]^{1\over2} \\
 	&\qquad\qquad  \leq \tilde C  \sup_{t\leq T } \norm{ \omega \zeta^{n-1}}_{\mathcal H_x^3 L_v^2} \bigg[\int_0^T\big (  \normm{ \omega   f^n }_{\mathcal H_x^3 }^2 + \norm{\comi v \omega f^n}_{\mathcal H_x^3 L_v^2}^2\big )dt\bigg]^{1\over2},
 \end{aligned}
 \end{equation} 
 where $\tilde C$ only depend on $C_0,C_1$ and $C_2$ in  \eqref{c0} and \eqref{lower}. As a result, for any $n\leq n_0$, we substitute  \eqref{nn11} into the above estimate,   to get 
\begin{equation*}
\begin{aligned}
	&\sup_{t\leq T } \norm{ \omega \zeta^n}_{\mathcal H_x^3 L_v^2} +\bigg[\int_0^T\big (  \normm{ \omega   \zeta^n }_{\mathcal H_x^3 }^2 + \norm{\comi v \omega  \zeta^n}_{\mathcal H_x^3 L_v^2}^2\big )dt\bigg]^{1\over2}\\
	&  \leq \tilde C  \sup_{t\leq T } \norm{ \omega \zeta^{n-1}}_{\mathcal H_x^3 L_v^2}\frac{\Theta}{4}\leq \frac14\sup_{t\leq T } \norm{ \omega \zeta^{n-1}}_{\mathcal H_x^3 L_v^2}, \end{aligned}
\end{equation*}
provided $\tilde C\Theta\leq \frac12.$
This yields that
\begin{equation}\label{znsmall}
	\forall\ n\leq n_0, \quad \sup_{t\leq T } \norm{ \omega \zeta^n}_{\mathcal H_x^3 L_v^2} +\bigg[\int_0^T\big (  \normm{ \omega   \zeta^n }_{\mathcal H_x^3 }^2 + \norm{\comi v \omega  \zeta^n}_{\mathcal H_x^3 L_v^2}^2\big )dt\bigg]^{1\over2}\leq  \hat C4^{-n},
\end{equation}
 where $\hat C:=\sup_{t\leq T}\norm{ \omega (f^1-f^0)}_{\mathcal H_x^3 L_v^2}=\sup_{t\leq T } \norm{ \omega \zeta^0}_{\mathcal H_x^3 L_v^2}$.  
 Consequently, using \eqref{znsmall} and \eqref{nn11} as well as  the estimate
 \begin{multline}\label{trineq}
 	\bigg[\int_0^T\big (  \normm{ \omega   f^{n_0+1} }_{\mathcal H_x^3 }^2 + \norm{\comi v \omega  f^{n_0+1}}_{\mathcal H_x^3 L_v^2}^2\big )dt\bigg]^{1\over2}\\
 	\leq \bigg[\int_0^T\big (  \normm{ \omega   \zeta^{n_0} }_{\mathcal H_x^3 }^2 + \norm{\comi v \omega  \zeta^{n_0}}_{\mathcal H_x^3 L_v^2}^2\big )dt\bigg]^{1\over2}\\
 	+\bigg[\int_0^T\big (  \normm{ \omega   f^{n_0} }_{\mathcal H_x^3 }^2 + \norm{\comi v \omega  f^{n_0}}_{\mathcal H_x^3 L_v^2}^2\big )dt\bigg]^{1\over2},
 \end{multline}
we obtain 
 \begin{equation}\label{fn01}
 	\begin{aligned}
 		\bigg[\int_0^T\big (  \normm{ \omega   f^{n_0+1} }_{\mathcal H_x^3 }^2 + \norm{\comi v \omega  f^{n_0+1}}_{\mathcal H_x^3 L_v^2}^2\big )dt\bigg]^{1\over2}\leq \hat C4^{-n_0}+\frac{\Theta}{4}\leq \big(1+2^{-n_0}\big)\frac{\Theta}{4},
 	\end{aligned}
 \end{equation}
 the last inequality using \eqref{n0}.  This with \eqref{nn0} enables us to apply Theorem \ref{thm:lin} to conclude that
 \begin{equation*}
 	f^{n_0+1}\in  \mathcal A_{T}\cap \mathcal M_{T}(C_0) \cap \mathcal N_{T}\big (\rho, C_*\big),
 \end{equation*}
 and moreover, in view of \eqref{znfn} and \eqref{fn01}, 
 \begin{equation*}
 	\begin{aligned}
 	&\sup_{t\leq T } \norm{ \omega \zeta^{n_0+1}}_{\mathcal H_x^3 L_v^2} +\bigg[\int_0^T\big (  \normm{ \omega   \zeta^{n_0+1} }_{\mathcal H_x^3 }^2 + \norm{\comi v \omega  \zeta^{n_0+1}}_{\mathcal H_x^3 L_v^2}^2\big )dt\bigg]^{1\over2}\\
	&  \leq \tilde C  \sup_{t\leq T } \norm{ \omega \zeta^{n_0}}_{\mathcal H_x^3 L_v^2} \big(1+2^{-n_0}\big) \frac{\Theta}{4}\leq \frac14 \big(1+2^{-n_0}\big)\sup_{t\leq T } \norm{ \omega \zeta^{n_0}}_{\mathcal H_x^3 L_v^2}\\
	&\leq \hat C\big(1+2^{-n_0}\big) 4^{-(n_0+1)},
 	\end{aligned}
 \end{equation*}
 the last line following from \eqref{znsmall}. The above estimate and \eqref{fn01} enable us to apply induction on $n,$ to  conclude that
   \begin{equation}\label{n011}
 \forall \ n\geq n_0+1,\quad    \sup_{t\leq T } \norm{ \omega \zeta^{n}}_{\mathcal H_x^3 L_v^2}\leq \hat C \Big(1+\sum_{j=n_0}^{n-1}2^{-j}\Big) 4^{-n},
\end{equation} 
and that
 \begin{equation}\label{estiman}
 \forall \ n\geq n_0+1,\quad     \bigg[\int_0^T\big (  \normm{ \omega   f^{n} }_{\mathcal H_x^3 }^2 + \norm{\comi v \omega  f^{n}}_{\mathcal H_x^3 L_v^2}^2\big )dt\bigg]^{1\over2}\leq   \Big(1+\sum_{j=n_0}^{n-1}2^{-j}\Big)\frac{\Theta}{4}.
\end{equation} 
In fact,  for any given $n\geq n_0+2,$ we apply a similar estimate to \eqref{trineq}  and the  
inductive assumption;  this yields that
  \begin{equation*}
 	\begin{aligned}
 		& \bigg[\int_0^T\big (  \normm{ \omega   f^{n} }_{\mathcal H_x^3 }^2 + \norm{\comi v \omega  f^{n}}_{\mathcal H_x^3 L_v^2}^2\big )dt\bigg]^{1\over2}\\
 		&\leq \hat C \Big(1+\sum_{j=n_0}^{n-2}2^{-j}\Big) 4^{-(n-1)}+\Big(1+\sum_{j=n_0}^{n-2}2^{-j}\Big)\frac{\Theta}{4}\\
 		&\leq   \Big(1+\sum_{j=n_0}^{n-2}2^{-j}\Big) 2^{-n} \frac{\Theta}{4}+\Big(1+\sum_{j=n_0}^{n-2}2^{-j}\Big)\frac{\Theta}{4}\\
 		&\leq   2^{-n+1} \frac{\Theta}{4}+\Big(1+\sum_{j=n_0}^{n-2}2^{-j}\Big)\frac{\Theta}{4}  \leq   \Big(1+\sum_{j=n_0}^{n-1}2^{-j}\Big)\frac{\Theta}{4},
 	\end{aligned}
 \end{equation*}
 the second inequality using \eqref{n0}.   Moreover, 
using \eqref{znfn} and the  inductive assumption as well as the above estimate, we have, for any $n\geq n_0+2,$
\begin{align*}
	\sup_{t\leq T } \norm{ \omega \zeta^{n}}_{\mathcal H_x^3 L_v^2}&\leq  \tilde C \bigg[ \hat C \Big(1+\sum_{j=n_0}^{n-2}2^{-j}\Big) 4^{-(n-1)} \bigg] \Big(1+\sum_{j=n_0}^{n-1}2^{-j}\Big)\frac{\Theta}{4}\\
	&\leq \bigg[ \hat C \Big(1+\sum_{j=n_0}^{n-2}2^{-j}\Big) 4^{-(n-1)} \bigg] \Big(1+\sum_{j=n_0}^{n-1}2^{-j}\Big)\frac{1}{8}\leq \hat C \Big(1+\sum_{j=n_0}^{n-1}2^{-j}\Big) 4^{-n}.
\end{align*}
 Thus assertions \eqref{n011}  and \eqref{estiman} hold.   Combining  \eqref{estiman} and \eqref{nn11} yields the desired assertion \eqref{thetafn}.

 \underline{\it Step (iv).}  Estimate  \eqref{thetafn} enables us  to apply Theorem \ref{thm:lin}, to conclude that    there exist a time $T$ independent of $n,$  such that for each $n\geq 1$,  we can find a solution $f^n\in L^\infty([0,T]; \mathcal H_x^3 L_v^2)$ to the iteration equation \eqref{lin}, satisfying that   
 \begin{equation}\label{nn00}
   \forall\  n\geq1, \quad f^n\in \mathcal A_{T} \cap \mathcal M_{T}(C_0)\cap \mathcal N_{T}\big (\rho, C_*\big),
\end{equation}
and that
\begin{equation}\label{ddfn}
  \forall\  n\geq1,\quad  \bigg[\int_0^T\big (  \normm{ \omega   f^{n} }_{\mathcal H_x^3 }^2 + \norm{\comi v \omega  f^{n}}_{\mathcal H_x^3 L_v^2}^2\big )dt\bigg]^{1\over2}\leq \Theta.
\end{equation}
Moreover, substituting the above estimate into \eqref{znfn} yields, for any $n\geq 1,$
 \begin{equation} \label{+znfn++}
 	 \begin{aligned}
 	&\sup_{t\leq T } \norm{ \omega \zeta^n}_{\mathcal H_x^3 L_v^2} +\bigg[\int_0^T\big (  \normm{ \omega   \zeta^n }_{\mathcal H_x^3 }^2 + \norm{\comi v \omega  \zeta^n}_{\mathcal H_x^3 L_v^2}^2\big )dt\bigg]^{1\over2} \\
 	&\qquad\qquad  \leq \tilde C \Theta \sup_{t\leq T } \norm{ \omega \zeta^{n-1}}_{\mathcal H_x^3 L_v^2}\leq \frac 12 \sup_{t\leq T } \norm{ \omega \zeta^{n-1}}_{\mathcal H_x^3 L_v^2}\leq \hat C2^{-n}.
 \end{aligned}
 \end{equation} 
  
  \underline{\it Step  (v)  (Existence).} 
 Let $f^n$ be the solutions to the iteration equations \eqref{lin} satisfying \eqref{nn00} and \eqref{ddfn}.    In this step we will show that $\{f^n\}_{n\geq 1}$ is a Cauchy sequence in $X_{\rho, T}\cap Y_{\rho, T}$, that is, 
 \begin{equation}\label{dizn}
 \forall\ n\geq 1,\quad 	\abs{\zeta^n}_{X_{\rho, T}}+	\abs{\zeta^n}_{Y_{\rho, T}} \leq  \tilde C 2^{-n},
 \end{equation} 
recalling $\zeta^n=f^{n+1}-f^n.$   
  
  Without loss of generality   it suffices to prove \eqref{dizn} for the case of  $\frac{\gamma}{2}+2s\geq 1.$
  Similar to \eqref{riindu}, we derive from equation \eqref{dieq} that, for any $k\geq 2, $
 \begin{equation*} 
	 \begin{aligned}
	&  \sup_{t\leq T}  \|\omega  H_\delta^{k} \zeta^n(t)\|_{\mathcal{H}^3_xL^2_v}+ \bigg[\int_0^{T} \Big(\normm{\omega   H_\delta^k  \zeta^n}_{\mathcal H_x^3}^2 + \|\<v\>\omega H_\delta^{k}\zeta^n\|^2_{\mathcal{H}^3_xL^2_v}\Big)dt\bigg]^{1\over2}\\
	&  \leq  C\sup_{0<t\leq T}  \|\omega  H_\delta^{k} f^n(t)\|_{\mathcal{H}^3_xL^2_v} \bigg[\int_0^T \Big(  \normm{\omega \zeta^n}_{\mathcal H_x^{3}}^2+\norm{\comi v\omega \zeta^n}_{\mathcal H_x^{3} L_v^2}^2\Big) dt\bigg]^{1\over2} \\
	&+  C k\bigg[  \int_0^{T}   \Big(\normm{  \omega   H_\delta^{k-1}\zeta^n}^2_{\mathcal{H}^3_x}+\| \comi v  \omega H_\delta^{k-1}\zeta^n\|_{\mathcal{H}^3_xL^2_v}^2\Big) dt\bigg]^{1\over2}\\
	&+  C\sum_{j=1}^{k-1} {k\choose j} \sup_{t\leq T}\norm{\omega H_\delta^j f^n}_{\mathcal{H}^3_xL^2_v}\bigg[\int_0^{T}  \big( \normm{\omega H_\delta^{k-j}\zeta^n}_{\mathcal H_x^2}^2+\norm{\comi v\omega H_\delta^{k-j}\zeta^n}_{\mathcal H_x^3L_v^2}^2\big)dt \bigg]^{1\over2}\\
	&+  C\sum_{j=0}^{k} {k\choose j} \sup_{t\leq T}\norm{\omega H_\delta^j \zeta^{n-1}}_{\mathcal{H}^3_xL^2_v}\bigg[\int_0^{T}  \big( \normm{\omega H_\delta^{k-j}f^n}_{\mathcal H_x^2}^2+\norm{\comi v\omega H_\delta^{k-j}f^n}_{\mathcal H_x^3L_v^2}^2\big)dt \bigg]^{1\over2}.	
	 \end{aligned}
\end{equation*}
Thus for $k\geq 2$, we repeat the argument after \eqref{riindu},  to conclude  that 
 \begin{equation*}
 		 \begin{aligned}
	& L_{\rho,k}  \sup_{t\leq T}  \|\omega  H_\delta^{k} \zeta^n(t)\|_{\mathcal{H}^3_xL^2_v}+ L_{\rho,k} \bigg[ \int_0^{T} \Big(\normm{\omega   H_\delta^k  \zeta^n}_{\mathcal H_x^3}^2 + \|\<v\>\omega H_\delta^{k}\zeta^n\|^2_{\mathcal{H}^3_xL^2_v}\Big)dt\bigg]^{1\over2}\\
	&  \leq    C  |f^n|_{X_{\rho, T}}  \bigg[\int_0^T \Big(  \normm{\omega \zeta^n}_{\mathcal H_x^{3}}^2+\norm{\comi v\omega \zeta^n}_{\mathcal H_x^{3} L_v^2}^2\Big) dt\bigg]^{1\over2}\\
	&\quad+  \frac{  C}{\rho}\big(1+  |f^{n}|_{X_{\rho, T}} \big) |\zeta^{n}|_{Y_{\rho, T}}    + \frac{   C}{\rho}  |\zeta^{n-1}|_{X_{\rho, T}}  |f^{n}|_{Y_{\rho, T}}  +   C |f^{n}|_{Y_{\rho, T}} \sup_{t\leq T} \norm{\omega \zeta^{n-1}}_{\mathcal H_x^3 L_v^2} \\
	&\quad    +    C    |\zeta^{n-1}|_{X_{\rho, T}}   \bigg[\int_0^{T} \big( \normm{\omega   f^n }_{\mathcal H_x^3}^2 + \norm{\comi v \omega f^n }_{\mathcal H_x^3 L_v^2}^2\big)dt\bigg]^{1\over2}\\
	& \leq   C  C_* 2^{-n} +   \frac{   C (1+C_*)}{\rho}  |\zeta^{n}|_{Y_{\rho, T}}   + \frac{   CC_*}{\rho}  |\zeta^{n-1}|_{X_{\rho, T}}+  C C_* 2^{-(n-1)} +  C \Theta  |\zeta^{n-1}|_{X_{\rho, T}},
	 \end{aligned}
 \end{equation*}
where in the last line we use \eqref{nn00}, \eqref{ddfn} and \eqref{+znfn++}.  For $0\leq k\leq 1$,   direct verification shows that
\begin{multline*}
	L_{\rho,k}  \sup_{t\leq T}  \|\omega  H_\delta^{k} \zeta^n(t)\|_{\mathcal{H}^3_xL^2_v}+ L_{\rho,k} \bigg[ \int_0^{T} \Big(\normm{\omega   H_\delta^k  \zeta^n}_{\mathcal H_x^3}^2 + \|\<v\>\omega H_\delta^{k}\zeta^n\|^2_{\mathcal{H}^3_xL^2_v}\Big)dt\bigg]^{1\over2}\\
	\leq  C  C_* 2^{-n} +    C C_* 2^{-(n-1)} +  C \Theta  |\zeta^{n-1}|_{X_{\rho, T}}.
\end{multline*}
 Moreover, the above two estimates still hold true   if we replace  $H_\delta$ by 
  	\begin{equation*}
 		\frac{1}{\delta+1}t^{\delta+1} \partial_{x_j}+ t^{\delta} \partial_{v_j} \textrm{ with } j=2 \textrm{ or } 3.
 	\end{equation*}
  As a result, following the argument after \eqref{mainest+}, we get  
  \begin{equation*}
   |\zeta^{n}|_{X_{\rho, T}}   + |\zeta^{n}|_{Y_{\rho, T}}   \leq 	  C  C_* 2^{-n} +   \frac{1}{2} |\zeta^{n-1}|_{X_{\rho, T}}
  \end{equation*}
  and thus
  \begin{equation*}
  	 |\zeta^{n}|_{X_{\rho, T}}   + |\zeta^{n}|_{Y_{\rho, T}}   \leq 	\tilde  C 2^{-n}. 
  \end{equation*}
 This implies that  $\big\{ f^n\big\}_{n\geq 1}$   is a   Cauchy sequences in   $X_{\rho, T}$ and $Y_{\rho, T}$, and  the limit, denoted by $f$,  solves the nonlinear Boltzmann equation \eqref{Bolt} with initial datum $f_{in}$. Moreover it follows from \eqref{nn00} and \eqref{ddfn} that
\begin{equation}\label{ft}
	f\in \mathcal A_T\cap \mathcal M _T(C_0) \cap \mathcal N_{T}(\rho, C_*)
\end{equation}
and
\begin{equation}\label{thta}
\bigg[\int_0^T\big (  \normm{ \omega  f }_{\mathcal H_x^3 }^2 + \norm{\comi v \omega f}_{\mathcal H_x^3 L_v^2}^2\big )dt\bigg]^{1\over2}\leq \Theta.
\end{equation}

 \underline{\it Step (vi) (uniqueness).} Let $f\in X_{\rho, T}\cap Y_{\rho, T}\subset L^\infty([0,T]; \mathcal H_x^3 L_v^2)$ be the solution to   the nonlinear Boltzmann equation \eqref{Bolt}  constructed in the previous step, and    let $g\in L^\infty([0,T]; \mathcal H_x^3 L_v^2)$ be any  solution  to \eqref{Bolt},  satisfying condition \eqref{aat} and that $g\geq 0.$ 
 	Denote 
	\begin{equation*}
		 \zeta=g-f.
	\end{equation*}
	Then 
	\begin{equation*}
\partial_t 	 \zeta+v\cdot\partial_x	 \zeta=Q(g, 	 \zeta) +Q(	 \zeta, f),\quad 	 \zeta|_{t=0}=0.
	\end{equation*}
 Similar to \eqref{znfn}, we have
 \begin{equation*}
 	 \begin{aligned}
 	&\sup_{t\leq T } \norm{ \omega \zeta}_{\mathcal H_x^3 L_v^2} +\bigg[\int_0^T\big (  \normm{ \omega   \zeta  }_{\mathcal H_x^3 }^2 + \norm{\comi v \omega  \zeta }_{\mathcal H_x^3 L_v^2}^2\big )dt\bigg]^{1\over2} \\
 	&   \leq \tilde C  \sup_{t\leq T } \norm{ \omega \zeta}_{\mathcal H_x^3 L_v^2} \bigg[\int_0^T\big (  \normm{ \omega  f }_{\mathcal H_x^3 }^2 + \norm{\comi v \omega f}_{\mathcal H_x^3 L_v^2}^2\big )dt\bigg]^{1\over2}\\
 	&\leq  \tilde C \Theta  \sup_{t\leq T } \norm{ \omega \zeta}_{\mathcal H_x^3 L_v^2}\leq \frac12   \sup_{t\leq T } \norm{ \omega \zeta}_{\mathcal H_x^3 L_v^2},
 \end{aligned}
 \end{equation*} 
 in the last line we used \eqref{thta} and the fact that $\tilde C\Theta\leq \frac12.$
 Thus $\zeta\equiv 0$ in $L^\infty([0,T]; \mathcal H_x^3 L_v^2)$.  The uniqueness assertion  in Theorem \ref{Bolt} follows.

\underline{\it Step (vii)  (smoothing effect).} 	
It remains to show the quantitative estimate \eqref{alpha1}, and this part is quite similar to that in \cite[Subsection 4.2]{chenlixu2024}.   
For  $j=1$ or $j=2$ it holds that,  in view of \eqref{ft}, 
\begin{equation*}
\forall \ k\in\mathbb Z_+,\quad  L_{\rho, k} \sup_{t \leq T}\norm{   H_{\delta_j}^{k}f(t)}_{H^3_xL^2_v}  
\leq  C_*. 
\end{equation*}
Thus
\begin{equation}\label{hkdel}
\forall \ k\in\mathbb Z_+,\quad 	\sup_{t \leq T}\norm{   H_{\delta_j}^{k}f(t)}_{H^3_xL^2_v}\leq C_*\rho^k(k!)^{\max\{(2\tau)^{-1}, 1\}}.
\end{equation}
Recall   $\delta_1,\delta_2$ are defined in  \eqref{de1de2}. Moreover, observe (cf. \cite[inequality (4.19) in Subsection 4.2]{chenlixu2024})  
  \begin{equation}\label{pse1}
\forall\ k\in\mathbb Z_+,\quad 	\norm{(A_1+A_2)^k f}_{H^3_xL^2_v}\leq 2^{k} \norm{ A_1^k f}_{H^3_xL^2_v}+2^{k} \norm{ A_2^k f}_{H^3_xL^2_v},
 \end{equation}
where $A_j, j=1,2,$ are any   two Fourier multipliers  with symbols $ a_j=a_j(\xi,\zeta)$, that is,
 \begin{eqnarray*}
 \mathcal F_{x,v}	(A_j f)(\xi,\zeta)=a_j(\xi,\zeta)\mathcal F_{x,v}f(\xi,\zeta),
 \end{eqnarray*}
 with $\mathcal F_{x,v} f   $   the full Fourier transform in $(x,v)$.  Then
  we use \eqref{generate} and  then apply \eqref{pse1}  with
   \begin{align*}
   A_1=	\frac{(\delta_2+ 1)(\delta_1+1)}{\delta_2-\delta_1}  H_{\delta_1},\quad A_2=-\frac{(\delta_2+ 1)(\delta_1+1)}{\delta_2-\delta_1} t^{\delta_1-\delta_2}H_{\delta_2},
   \end{align*}
 to compute that, recalling $\delta_1>\delta_2,$
  \begin{align*}
 	\sup_{0<t\leq T}t^{(\lambda+1)k} \norm{\partial_{x_1}^{k}f(t)}_{H^3_xL^2_v}  &=   \sup_{0<t\leq T} \norm{(A_1+A_2)^kf(t)}_{H^3_xL^2_v} \\
 	& \leq 2^{k} \sup_{0<t\leq T} \norm{ A_1^kf(t)}_{H^3_xL^2_v}  +2^{k} \sup_{0<t\leq T} \norm{ A_2^kf(t)}_{H^3_xL^2_v} \\ \
 	&\leq \tilde C^{k}  \sup_{0<t\leq T}\inner{\norm{ H_{\delta_1}^{k}f }_{H^3_xL^2_v} +\norm{ H_{\delta_2}^{k}f }_{H^3_xL^2_v}},
 	 	\end{align*}
 	where $\tilde C$ is a constant depending only on $\delta_1,\delta_2$.  Combining the above estimate   with \eqref{hkdel}, we conclude that
 	\begin{equation*}
 		\sup_{t\leq T}t^{(\lambda+1)k} \norm{\partial_{x_1}^{k}f(t)}_{H^3_xL^2_v}\leq  2C_*(\tilde C\rho)^{k} (k!)^{\max\{(2\tau)^{-1}, 1\}}.
 	\end{equation*}
 Similarly, the above estimate is also true with $\partial_{x_1}$ replaced by $\partial_{x_2}$ or $\partial_{x_3}$. This, with the fact that
\begin{eqnarray*}
\forall\ \alpha\in\mathbb Z_+^3,\quad 	\norm{\partial_x^\alpha f}_{H^3_xL^2_v}\leq \sum_{1\leq j\leq 3}\norm{\partial_{x_j}^{\abs\alpha}f}_{H^3_xL^2_v},
\end{eqnarray*}
gives
\begin{eqnarray*}
	\forall\ \alpha\in\mathbb Z_+^3,\quad 	\sup_{t\leq T}t^{(\lambda+1)\abs\alpha}\norm{\partial_x^\alpha f(t)}_{H^3_xL^2_v}\leq  6C_* (\tilde C\rho)^{\abs\alpha}(\abs\alpha!)^{\max\{(2\tau)^{-1}, 1\}}.
\end{eqnarray*}
In the same way, we have
  \begin{eqnarray*}
\forall\ \beta\in\mathbb Z_+^3, \quad 	\sup_{t\leq T}t^{  \lambda \abs\beta} \norm{\partial_{v}^{\beta}f(t)}_{H^3_xL^2_v}    \leq  6C_*(\tilde C\rho)^{\abs\beta} (\abs\beta!)^{\max\{(2\tau)^{-1}, 1\}}.
 	\end{eqnarray*}
 As a result,  combining the   two estimates above implies that
 (see \cite[Subsection 4.2]{chenlixu2024} for detail)
 \begin{equation*}
 	\forall \ \alpha , \beta \in \mathbb{Z}_{+}^3, \quad 	\sup_{ t\leq T}t^{(\lambda+1)\abs\alpha+ \lambda \abs\beta} \norm{\partial_x^{\alpha}\partial_{v}^{\beta}f(t)}_{H^3_xL^2_v} \leq    C^{|\alpha|+|\beta|+1} \big[(|\alpha|+|\beta|)!\big] ^{\max\{(2\tau)^{-1}, 1\}}
 \end{equation*}
  for some constant $C>0.$  The proof of Theorem \ref{thm:Gevrey} is thus completed. 
\end{proof}

\appendix 

\section{Some auxiliary inequalities}\label{sec:appinter}

\begin{lem} 
For any regular function $h$  and any  $\vep>0$, we have that
\begin{align}\label{tau -j}
\|\<v\>^s\<D_v\>^sh\|_{L^2_v}\leq \vep \|\<D_v\>  h\|_{L^2_v}+C_{\vep}\|\<v\>^{\frac{s}{1-s}} h\|_{L^2_v}.
\end{align}
\end{lem}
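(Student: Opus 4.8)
The plan is to reduce \eqref{tau -j} to an elementary numerical inequality by means of the phase--space dyadic decomposition recalled in \eqref{chara}. Observe that all three quantities in \eqref{tau -j} are of the form $\norm{\comi v^{p}\comi{D_v}^{m}h}_{L^2_v}$: the left-hand side corresponds to $(p,m)=(s,s)$, the derivative term to $(p,m)=(0,1)$, and the weight term to $(p,m)=\bigl(\tfrac{s}{1-s},0\bigr)$. Hence, by \eqref{chara},
\[
\norm{\comi v^{s}\comi{D_v}^{s}h}_{L_v^2}^2\sim\sum_{j,k=-1}^\infty 2^{2sk}2^{2sj}\norm{\Delta_j\mathcal P_k h}_{L^2}^2,\qquad
\norm{\comi{D_v}h}_{L_v^2}^2\sim\sum_{j,k=-1}^\infty 2^{2j}\norm{\Delta_j\mathcal P_k h}_{L^2}^2,
\]
and similarly $\norm{\comi v^{s/(1-s)}h}_{L_v^2}^2\sim\sum_{j,k}2^{2sk/(1-s)}\norm{\Delta_j\mathcal P_k h}_{L^2}^2$. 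The point of using \eqref{chara} rather than manipulating the operators directly is that $\comi v^s$ and $\comi{D_v}^s$ need not commute, whereas the doubly localized sums are insensitive to the order of the two operators.

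First I would establish, for every $j,k\ge-1$ and every $\vep>0$, the pointwise bound
\[
2^{2sj}2^{2sk}\le \vep\,2^{2j}+C_{\vep}\,2^{\frac{2s}{1-s}k},
\]
which follows from the weighted arithmetic--geometric mean inequality $a^{\theta}b^{1-\theta}\le\theta a+(1-\theta)b$ (valid since $\theta:=s\in(0,1)$) applied with $a=\vep' 2^{2j}$ and $b=(\vep')^{-s/(1-s)}2^{\frac{2s}{1-s}k}$: indeed $a^{s}b^{1-s}=2^{2sj}2^{2sk}$, so that $2^{2sj}2^{2sk}\le s\vep' 2^{2j}+(1-s)(\vep')^{-s/(1-s)}2^{\frac{2s}{1-s}k}$, and one sets $\vep=s\vep'$ and $C_{\vep}=(1-s)(\vep/s)^{-s/(1-s)}$.

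Then I would multiply this inequality by $\norm{\Delta_j\mathcal P_k h}_{L^2}^2$, sum over $j,k\ge-1$, and invoke the three equivalences above; this yields $\norm{\comi v^{s}\comi{D_v}^{s}h}_{L_v^2}^2\le C\vep\,\norm{\comi{D_v}h}_{L_v^2}^2+C_{\vep}\,\norm{\comi v^{s/(1-s)}h}_{L_v^2}^2$, and taking square roots, using $\sqrt{x+y}\le\sqrt x+\sqrt y$, and relabelling $\vep$ and $C_{\vep}$, gives exactly \eqref{tau -j}. There is no genuine obstacle here beyond bookkeeping: the only mildly delicate point is keeping the coefficient of $\norm{\comi{D_v}h}_{L_v^2}$ arbitrarily small while the weight exponent on the remaining term is forced to equal $\tfrac{s}{1-s}$, and this is automatic once the Hölder exponents $s$ and $1-s$ are matched as in the AM--GM step above.
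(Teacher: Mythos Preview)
Your proposal is correct and essentially identical to the paper's own proof: both use the dyadic characterization \eqref{chara} to reduce to the numerical inequality $2^{2sj}2^{2sk}\le \vep\,2^{2j}+C_{\vep}\,2^{2sk/(1-s)}$, which is Young's inequality with conjugate exponents $(1/s,1/(1-s))$.
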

\begin{proof}
It follows from \eqref{chara} and Young's inequality that, for any $\varepsilon>0,$
\begin{align*}
\|\<v\>^s\comi{D_v}^s h\|^2_{L^2_v}&\leq C\sum_{k,j=-1}^{+\infty} 2^{2ks} 2^{2js}\|\Delta_j\cP_k h\|^2_{L^2_v}   \\
&\leq \varepsilon  \sum_{k,j=-1}^{+\infty} 2^{2j}\|\Delta_j\cP_k h\|^2_{L^2_v} + C_{\varepsilon}\sum_{k,j=-1}^{+\infty} 2^{2 k s\frac{1}{1-s}} \|\Delta_j\cP_k h\|^2_{L^2_v}\\
&\leq \varepsilon    \|\<D_v\>  h\|^{2}_{L^2_v}+C_{\varepsilon} 
\|\<v\>^{\frac{s}{1-s}} h\|^{2}_{L^2_v}.
\end{align*}
 The proof is thus completed. 
\end{proof}

\begin{lem}\label{lem:A.2}
Let $F:\R^+\rightarrow\R^+$ be a smooth function satisfying $F(x)\leq C x$ and $0\leq F'\leq C$ for some constant $C$,  and let $g:\R^3\rightarrow\R^+$ be a positive function. Then it holds that 
\beno
\forall\  0<s<1,\quad \|F(g)\|_{H^s}\leq \tilde C\|g\|_{H^s},
\eeno
where the constant $\tilde C$ depends only on $C$ and $s.$
In particular, we have that
\begin{equation*}
\|\log(1+g)\|_{H^s}+\Big\|\f g{1+g}\Big\|_{H^s}\leq \tilde C\|g\|_{H^s}.
\end{equation*}
\end{lem}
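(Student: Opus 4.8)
The plan is to reduce the statement to the Gagliardo--Slobodeckij characterization of the fractional Sobolev norm and then exploit that $F$ is globally Lipschitz on $[0,\infty)$. Recall that for $0<s<1$ and $u\in H^s(\mathbb{R}^3)$ one has the equivalence
\[
\|u\|_{H^s}^2 \sim \|u\|_{L^2}^2 + \iint_{\mathbb{R}^3\times\mathbb{R}^3}\frac{|u(x)-u(y)|^2}{|x-y|^{3+2s}}\,dx\,dy ,
\]
with constants depending only on $s$ (this follows from the Fourier-side identity $\|u\|_{\dot H^s}^2 = c_s\iint |u(x)-u(y)|^2|x-y|^{-3-2s}\,dx\,dy$). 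Since all the bounds in the statement are \emph{a priori}, I would first assume $g$ is smooth and, say, compactly supported, and recover the general case afterwards by density.

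First I would handle the $L^2$ part: from $F(x)\le Cx$ for $x\ge 0$ together with $g\ge 0$ we get the pointwise bound $0\le F(g)\le Cg$, hence $\|F(g)\|_{L^2}\le C\|g\|_{L^2}$. Next, for the Gagliardo seminorm, the mean value theorem and the assumption $0\le F'\le C$ give the pointwise Lipschitz estimate $|F(g(x))-F(g(y))|\le C|g(x)-g(y)|$ for all $x,y\in\mathbb{R}^3$, so that
\[
\iint_{\mathbb{R}^3\times\mathbb{R}^3}\frac{|F(g(x))-F(g(y))|^2}{|x-y|^{3+2s}}\,dx\,dy \;\le\; C^2 \iint_{\mathbb{R}^3\times\mathbb{R}^3}\frac{|g(x)-g(y)|^2}{|x-y|^{3+2s}}\,dx\,dy .
\]
Combining the two estimates and invoking the norm equivalence in both directions yields $\|F(g)\|_{H^s}\le \tilde C\|g\|_{H^s}$ with $\tilde C$ depending only on $C$ and $s$. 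For the two explicit functions one only needs to verify the hypotheses: $F_1(x)=\log(1+x)$ satisfies $F_1'(x)=(1+x)^{-1}\in[0,1]$ and $F_1(x)\le x$, while $F_2(x)=x/(1+x)$ satisfies $F_2'(x)=(1+x)^{-2}\in[0,1]$ and $F_2(x)\le x$; both fit the framework with $C=1$, so the general bound applies directly.

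There is no real obstacle here: the composition is with a globally Lipschitz function on $[0,\infty)$, so no paraproduct decomposition or nonlinear cancellation is needed, and the only point requiring a word of care is the density argument ensuring the Gagliardo characterization and the manipulations above are legitimate for the (merely $H^s$, possibly only measurable) function $g$ at hand. If one prefers to avoid the integral characterization altogether, an alternative I would keep in reserve is to interpolate the elementary bounds $\|F(g)\|_{L^2}\le C\|g\|_{L^2}$ and $\|\nabla F(g)\|_{L^2}=\|F'(g)\nabla g\|_{L^2}\le C\|\nabla g\|_{L^2}$ between $L^2$ and $H^1$, but this requires $g\in H^1$ and hence again a density/approximation step, so the Gagliardo route is cleaner.
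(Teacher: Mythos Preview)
Your proposal is correct and follows essentially the same approach as the paper: both invoke the Gagliardo--Slobodeckij characterization of $H^s$ for $0<s<1$, use $F(x)\le Cx$ for the $L^2$ part and the Lipschitz bound $|F(g(x))-F(g(y))|\le C|g(x)-g(y)|$ (from $0\le F'\le C$) for the seminorm, then check that $\log(1+x)$ and $x/(1+x)$ satisfy the hypotheses with $C=1$. The paper does not spell out the density/approximation step you mention, but otherwise the arguments are identical.
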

\begin{proof}
Note that for $s\in(0,1)$ we have the following characterization of Sobolev spaces (cf.\cite[Proposition 1.59]{MR2768550}): 
\beno
\|F(g)\|_{H^s}^2\sim \|F(g)\|^2_{L^2}+\int_{\R^3\times\R^3}\f{\big|F\big (g(x+y)\big )-F\big (g(x)\big )\big|^2}{|y|^{3+2s}}dxdy,
\eeno
recalling that by $A\sim B$ we mean $C^{-1}A\leq B\leq CA$ for some generic constant $C$.

Since $0\leq F(x)\leq C x$, then $\|F(g)\|_{L^2}\leq C\|g\|_{L^2}$. Moreover, by $0<F'\leq C$ we have 
\beno
\big|F\big (g(x+y)\big )-F\big (g(x)\big )\big | \leq C |g(x+y)-g(x)|,
\eeno
and thus
\beno
\int_{\R^3\times\R^3}\f{\big|F\big (g(x+y)\big )-F\big (g(x)\big )\big | ^2}{|y|^{3+2s}}dxdy\leq C \int_{\R^3\times\R^3}\f{|g(x+y)-g(x)|^2}{|y|^{3+2s}}dxdy.
\eeno
This yields the first assertion  in Lemma \ref{lem:A.2}. The second assertion is just a consequence of the first one.  The proof is completed. 
\end{proof}

\section{Properties of the regularization operator}\label{secapp:reguopera}

We list some properties of the  regularization operator  $\Lambda_\vartheta ^{-1}$ which is defined in \eqref{reguoperator}. 

\begin{lem}\label{lem:regular}
The following assertions holds ture.
\begin{enumerate}[label=(\roman*), leftmargin=*, widest=ii]
	\item There exists a constant $C_\vartheta$ depending only on  $\vartheta$ such that 	for any $  p, q\in\mathbb R,$ it holds that
  \begin{equation*} 
  		\norm{\Lambda_\vartheta^{-1}  g}_{H_x^p H_v^q} 
\leq C_{\vartheta}  \norm{g}_{H_x^{p-2} H_v^{q}}\ \textrm{ and }\  \norm{\Lambda_\vartheta^{-1}  g}_{H_x^p H_v^q} 
\leq C_{\vartheta}  \norm{g}_{H_x^{p} H_v^{q-2}}.
	\end{equation*}
\item For  any $g\in H_x^p H_v^q$ with $p,q\in\mathbb R$ we have, for each $1\leq j\leq 3,$
 \begin{equation}\label{ubd}
	\norm{  \Lambda_\vartheta^{-1}  g}_{H_x^p H_v^q}+\norm{\vartheta^{1\over2}  \Lambda_\vartheta^{-1} \partial_{x_j} g}_{H_x^p H_v^q}+\norm{\vartheta  \Lambda_\vartheta^{-1} \partial_{x_j}^2 g}_{H_x^p H_v^q}\leq 3\norm{ g}_{H_x^p H_v^q}.
\end{equation}
The above estimate still holds true if we replace    $\partial_{x_j}$ therein  by   $\partial_{v_j}$.    
\item There exists a constant $C$ independent of $\vartheta, $ such  that  for each $1\leq j\leq 3$ and for any regular $g$ it holds that
\begin{equation}\label{ux}
	  \normm{   \Lambda_\vartheta^{-1}  g}_{H_x^p}+\normm{ \vartheta^{1\over2}    \Lambda_\vartheta^{-1} \partial_{v_j} g}_{H_x^p} +\normm{\vartheta    \Lambda_\vartheta^{-1} \partial_{v_j}^2 g}_{H_x^p}  \leq C\normm{ g}_{H_x^p}.
\end{equation}
The above estimate still holds with $\partial_{v_j}$ replaced by $\partial_{x_j.}$
\end{enumerate}
\end{lem}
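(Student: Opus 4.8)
\textbf{Proof proposal for Lemma \ref{lem:regular}.}

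The plan is to treat all three assertions by passing to the Fourier side, where $\Lambda_\vartheta^{-1}$, $\partial_{x_j}$, $\partial_{v_j}$ all become multiplication operators and the whole lemma reduces to pointwise bounds on the corresponding symbols. Write $\mathcal F_{x,v}g(\xi,\eta)$ for the full Fourier transform; then $\Lambda_\vartheta^{-1}$ has symbol $m_\vartheta(\xi,\eta)=(1+\vartheta|\xi|^2+\vartheta|\eta|^2)^{-1}$, while $\partial_{x_j}$ and $\partial_{v_j}$ have symbols $i\xi_j$ and $i\eta_j$. For (i), the $H_x^pH_v^q$-norm is $\|\comi\xi^p\comi\eta^q\mathcal F_{x,v}g\|_{L^2}$, so the claim $\|\Lambda_\vartheta^{-1}g\|_{H_x^pH_v^q}\le C_\vartheta\|g\|_{H_x^{p-2}H_v^q}$ follows once we check $\comi\xi^2 m_\vartheta(\xi,\eta)\le C_\vartheta$ uniformly in $(\xi,\eta)$; this is immediate since $\comi\xi^2\le 1+|\xi|^2\le \max\{1,\vartheta^{-1}\}(1+\vartheta|\xi|^2+\vartheta|\eta|^2)$, so one may take $C_\vartheta=\max\{1,\vartheta^{-1}\}$. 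The companion estimate with $H_x^pH_v^{q-2}$ on the right is identical after swapping the roles of $\xi$ and $\eta$.

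For (ii), the three symbols to bound are $m_\vartheta$, $\vartheta^{1/2}\xi_j m_\vartheta$ and $\vartheta\xi_j^2 m_\vartheta$. One has $0\le m_\vartheta\le 1$ trivially; $\vartheta^{1/2}|\xi_j|m_\vartheta\le \vartheta^{1/2}|\xi|(1+\vartheta|\xi|^2)^{-1}\le \tfrac12$ by the elementary inequality $2ab\le a^2+b^2$ with $a=\vartheta^{1/2}|\xi|$, $b=1$; and $\vartheta\xi_j^2m_\vartheta\le \vartheta|\xi|^2(1+\vartheta|\xi|^2)^{-1}\le 1$. Adding these three bounds (each multiplied against $\comi\xi^p\comi\eta^q|\mathcal F_{x,v}g|$ and taking $L^2$ norms) gives the constant $3$ in \eqref{ubd}, uniformly in $\vartheta$. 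The same computation with $\eta_j$ in place of $\xi_j$ handles the velocity-derivative version. I would phrase this as: the multiplier norm of each of the three operators on any $H_x^pH_v^q$ is $\le 1$, hence the sum is $\le 3$.

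For (iii), the triple-norm $\normm{\cdot}$ defined in \eqref{trinorm} involves $\comi v^{\gamma/2}\comi{D_v}^s$ and $\comi v^{\gamma/2}(-\triangle_{\SS^2})^{s/2}$, so $\Lambda_\vartheta^{-1}$ and the velocity multiplier $\comi v^{\gamma/2}$ do \emph{not} commute and a purely Fourier-side argument no longer closes by itself. The plan here is: since $\Lambda_\vartheta^{-1}=(1-\vartheta\Delta_{x,v})^{-1}$ and the operators appearing in $\normm{\cdot}$ act only in $v$ and in $x$ only through the (commuting) multiplier $\comi{D_x}^p$, it suffices to work at fixed $x$-frequency and bound, uniformly in $\vartheta$, the velocity operators $T_\vartheta g:=\comi v^{\gamma/2}\comi{D_v}^s\Lambda_\vartheta^{-1}g$ (and the $\partial_{v_j}$, $\partial_{v_j}^2$ variants, and the $(-\triangle_{\SS^2})^{s/2}$ variant) in terms of $\comi v^{\gamma/2}\comi{D_v}^s g$. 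One way to do this cleanly: commute $\Lambda_\vartheta^{-1}$ past the weight, writing $\comi v^{\gamma/2}\Lambda_\vartheta^{-1}=\Lambda_\vartheta^{-1}\comi v^{\gamma/2}+\Lambda_\vartheta^{-1}[\comi v^{\gamma/2},\,\vartheta\Delta_v]\Lambda_\vartheta^{-1}$, and note $[\comi v^{\gamma/2},\vartheta\Delta_v]=\vartheta\big(2(\partial_v\comi v^{\gamma/2})\cdot\partial_v+(\Delta_v\comi v^{\gamma/2})\big)$ with $|\partial_v\comi v^{\gamma/2}|\le C\comi v^{\gamma/2-1}\le C\comi v^{\gamma/2}$ and similarly for the second derivative; then the factors $\vartheta^{1/2}\Lambda_\vartheta^{-1}\partial_{v}$ and $\vartheta\Lambda_\vartheta^{-1}\partial_v^2$ are uniformly bounded by part (ii), and the remaining $\comi{D_v}^s$ (resp. $(-\triangle_{\SS^2})^{s/2}$) is moved onto $g$ using that it commutes with $\Lambda_\vartheta^{-1}$ and, for the $\SS^2$-term, with the relevant pieces after a further but harmless commutator that is again of the schematic form above. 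Collecting the estimates yields $\normm{\Lambda_\vartheta^{-1}g}_{H_x^p}+\normm{\vartheta^{1/2}\Lambda_\vartheta^{-1}\partial_{v_j}g}_{H_x^p}+\normm{\vartheta\Lambda_\vartheta^{-1}\partial_{v_j}^2 g}_{H_x^p}\le C\normm{g}_{H_x^p}$ with $C$ independent of $\vartheta$; the $\partial_{x_j}$ version is easier since $\partial_{x_j}$ commutes with every factor of $\normm{\cdot}$ and one is back to part (ii).

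The routine parts are (i) and (ii), which are one-line symbol estimates. The genuine obstacle is (iii): keeping the commutator between $\Lambda_\vartheta^{-1}$ and the non-smooth velocity weight $\comi v^{\gamma/2}$ (and, for the spherical-Laplacian term, its interaction with the angular derivatives) under control \emph{uniformly in $\vartheta$}. The key point making it work is that every commutator term comes with an extra power of $\vartheta$ that is exactly absorbed by a factor $\vartheta^{1/2}\Lambda_\vartheta^{-1}\partial_v$ or $\vartheta\Lambda_\vartheta^{-1}\partial_v^2$, which part (ii) shows has $\vartheta$-independent operator norm; the weight derivatives only improve (or do not worsen) the $\comi v$-power, so nothing is lost there.
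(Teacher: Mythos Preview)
Your treatment of (i) and (ii) is correct and matches the paper's ``straightforward verification'': these are pure symbol bounds on the Fourier side, and your explicit computations recover the constant $3$ in \eqref{ubd}.

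For (iii) you take a different route from the paper. The paper does \emph{not} expand the commutator by hand; instead it checks that the symbol $m_\vartheta(\xi,\eta)=(1+\vartheta|\xi|^2+\vartheta|\eta|^2)^{-1}$ satisfies the $S^0_{1,0}$-type bounds $|\partial_\xi^\alpha\partial_\eta^\beta m_\vartheta|\le C_{\alpha,\beta}\comi{(\xi,\eta)}^{-|\alpha|-|\beta|}$ \emph{uniformly in} $\vartheta$, and then invokes standard pseudo-differential calculus to obtain the clean commutator estimate
\[
\big\|[\comi v^{\ell},\Lambda_\vartheta^{-1}]h\big\|_{L^2_v}\le C_\ell\|\comi v^{\ell-1}h\|_{L^2_v},
\]
which is exactly \eqref{b4}. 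From there the two pieces of $\normm{\cdot}$ are handled in one stroke: for the $\comi{D_v}^s$-piece one uses $[\comi{D_v}^s,\Lambda_\vartheta^{-1}]=0$ and \eqref{b4} with $\ell=\gamma/2$; for the spherical piece one uses the crucial fact (which you miss) that $[(-\Delta_{\mathbb S^2})^{s/2},\Lambda_\vartheta^{-1}]=0$, since $\Delta_{\mathbb S^2}$ commutes with $\Delta_v$ and with $\Delta_x$. So there is no ``further but harmless commutator'' for the angular term; it simply commutes.

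Your explicit-commutator scheme can be made to work, but as written it has a gap. After one commutation you produce terms of the schematic form $\vartheta^{1/2}\,\comi v^{\gamma/2-1}\,(\vartheta^{1/2}\partial_v\Lambda_\vartheta^{-1})h$ or $\vartheta\,\comi v^{\gamma/2-2}\,\Lambda_\vartheta^{-1}h$, and you still need to bound $\|\comi v^{\gamma/2-1}\Lambda_\vartheta^{-1}h\|$ by $\|\comi v^{\gamma/2}h\|$; this is the \emph{same} weighted boundedness you are trying to prove (with a different exponent), so the argument is circular unless you either (a) iterate and absorb using the extra factor $\vartheta^{1/2}$, which closes only because the paper takes $0<\vartheta\ll1$, or (b) prove \eqref{b4} directly. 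The paper's symbol-calculus route avoids this loop entirely and gives a $\vartheta$-independent constant without any smallness absorption. If you want to keep your hands-on computation, you should make the absorption step explicit: write $\comi v^{\gamma/2}\Lambda_\vartheta^{-1}\comi v^{-\gamma/2}=\big(I-\Lambda_\vartheta^{-1}(\vartheta a+\vartheta b\!\cdot\!\partial_v)\big)^{-1}\Lambda_\vartheta^{-1}$ with bounded $a,b$, and note that $\|\Lambda_\vartheta^{-1}(\vartheta a+\vartheta b\!\cdot\!\partial_v)\|_{L^2\to L^2}\le C\vartheta^{1/2}<1$ for $\vartheta$ small, so the Neumann series converges.
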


\begin{proof}
	Assertions (i) and (ii) just follows from the straightforward  verification.  To prove Assertion (iii) ,  we use the fact that
	\begin{equation*}
	\forall \ \alpha, \beta  \in\mathbb Z_+^3,\quad  \partial_{\xi}^\alpha\partial_{\eta}^\beta \big[   (1+\vartheta| \xi|^2+\vartheta| \eta|  ^2)^{-1} \big]\leq   C_{\alpha, \beta}\inner {1+\abs\alpha+\abs \eta }^{-1}
\end{equation*}
with $  C_{\alpha, \beta}$ constants depending only on $\alpha$ and $\beta$   but independent of $\vartheta. $ This with the help of pseudo-differential calculus (cf. \cite{MR2599384} for instance) implies that, recalling $[\cdot, \cdot]$ stands for the commutator  between two operators, 
\begin{equation}\label{b4}
 \forall\  \ell \in\mathbb R, \quad 	\norm{ [\comi v^{\ell},  \  \Lambda_\vartheta^{-1}]   g}_{L_v^2}\leq C_\ell \norm{ \comi v^{\ell-1}  g}_{ L_v^{2}},
\end{equation}
with $C_\ell $  constants depending only on $\ell $ but independent of $\vartheta.$  
This,  with \eqref{ubd} as well as \eqref{trinorm} and the fact that $[(-\triangle_{\SS^2})^{s/2},   \Lambda_\vartheta^{-1}]=0$,   implies
\begin{align*}
\norm{\comi v^{\frac{\gamma}{2}}(-\triangle_{\SS^2})^{s/2}   \Lambda_\vartheta^{-1} g}_{L_v^2}&\leq \norm{   \comi v^{\frac{\gamma}{2}}(-\triangle_{\SS^2})^{s/2} g}_{L_v^2}+\norm{ [ \Lambda_\vartheta^{-1}, \comi v^{\frac{\gamma}{2}}] (-\triangle_{\SS^2})^{s/2}g}_{L_v^2}\\
&	\leq \norm{   \comi v^{\frac{\gamma}{2}}(-\triangle_{\SS^2})^{s/2}g}_{L_v^2}+C\norm{   \comi v^{\frac{\gamma}{2}-1+s}  \comi{ D_v}^s   g}_{L_v^2}\leq C\normm{g}
\end{align*}
for some constant $C$ independent of $\vartheta.$  Similarly, 
 \begin{equation*}
 	\norm{\comi v^{\frac{\gamma}{2}}\comi{  D_v}^s   \Lambda_\vartheta^{-1} g}_{L_v^2}\leq C\norm{\comi v^{\frac{\gamma}{2}}\comi{  D_v}^s   g}_{L_v^2} \leq C\normm{g}.
 \end{equation*}
The above two estimates   still holds   if we replace   $  \Lambda_\vartheta^{-1}$ by $\vartheta^{1\over2}  \Lambda_\vartheta^{-1} \partial_{v_1} $ and $\vartheta      \Lambda_\vartheta^{-1} \partial_{v_1}^2.$ We have proven  estimate \eqref{ux}. The proof of Lemma \ref{lem:regular} is completed.  
\end{proof}

\medskip
  \noindent{\bf Acknowledgements}.
W.-X. Li was supported by NSFC (Nos.  12325108,  12131017, 12221001),  and  the Natural Science Foundation of Hubei Province (No. 2019CFA007).



\end{document}